\newtheorem{thm}{Theorem}[section]
\newtheorem{lemma}[thm]{Lemma}
\newtheorem{rem}[thm]{Remark}
\newtheorem{remark}[thm]{Remark}
\newcommand{\beq}{\begin{equation}}
\newcommand{\eeq}{\end{equation}}
\newcommand{\beqa}{\begin{eqnarray}}
\newcommand{\eeqa}{\end{eqnarray}}
\newcommand{\beqas}{\begin{eqnarray*}}
\newcommand{\eeqas}{\end{eqnarray*}}
\newcommand{\bi}{\begin{itemize}}
\newcommand{\ei}{\end{itemize}}
\newcommand{\R}{\mathbb{R}}
\newcommand{\inner}[2]{\langle #1,#2\rangle}
\newcommand{\ri}{\mathrm{ri}\,}
\newcommand{\inte}{\mathrm{int}\,}
\newcommand{\dom}{\mathrm{dom}\,}
\newcommand{\Dom}{\mathrm{Dom}\,}
\def\dom{{\rm dom}}
\def\Dom{{\rm Dom}}
\def\ri {{\rm ri}}
\def\R{{\mathbb{R}}}
\newcommand{\transp}{{\scriptscriptstyle \top}}
\begin{document}

\title[]{Stochastic Dynamic Cutting Plane for multistage stochastic convex programs}

\maketitle

\vspace*{0.5cm}

%\date{Version from \today }%Received: date / Accepted: date}
% The correct dates will be entered by the editor

\begin{center}
\begin{tabular}{ccc}
\begin{tabular}{c}
Vincent Guigues\\
School of Applied Mathematics, FGV\\
Praia de Botafogo, Rio de Janeiro, Brazil\\ 
{\tt vincent.guigues@fgv.br}
\end{tabular}&

&
\begin{tabular}{c}
Renato D.C.\ Monteiro\\
Georgia Institute of Technology\\
Atlanta, Georgia 30332, USA,\\
{\tt renato.monteiro@isye.gatech}\\
\end{tabular}
\end{tabular}
\end{center}

\date{}

\begin{abstract} We introduce StoDCuP (Stochastic Dynamic Cutting Plane),
an extension of the Stochastic Dual Dynamic Programming (SDDP) algorithm 
to solve multistage stochastic convex optimization problems. At each iteration, the algorithm builds lower bounding
affine functions not only for the cost-to-go functions, as SDDP does, but also for some or all nonlinear cost and constraint
functions. We show the almost sure convergence of StoDCuP.
We also introduce an inexact variant of StoDCuP where all subproblems are solved approximately (with bounded errors) and show the almost sure convergence 
of this variant for vanishing errors. Finally, numerical experiments are presented 
on nondifferentiable
multistage stochastic programs
where Inexact StoDCuP computes
a good approximate policy quicker than
StoDCuP while SDDP and the previous
inexact variant of SDDP 
combined with  
Mosek library to solve subproblems
were not able to solve the 
differentiable reformulation of the
problem.
\end{abstract}

\par {\textbf{Keywords:}} Stochastic programming, Inexact cuts for value functions, SDDP, Inexact SDDP.\\

% REQUIRED
\par {\textbf{AMS subject classifications:}} 90C15, 90C90.

\section{Introduction}

Risk-neutral multistage stochastic programs (MSPs) aim at minimizing the expected value of the total cost
over a given optimization period of $T$ stages while satisfying almost surely for every stage some
constraints depending on an underlying stochastic process. These optimization problems are useful for many
real-life applications but are challenging to solve, see for instance \cite{shadenrbook} and references therein for a thorough discussion on MSPs. Popular solution methods for MSPs are based on decomposition techniques such as Approximate Dynamic Programming \cite{powellbook},
Lagrangian relaxation, or Stochastic Dual Dynamic Programming (SDDP) \cite{pereira}.
SDDP is a sampling-based extension
of \cite{birgemulti}, itself a multistage extension of the L-shaped
method \cite{vwets}. The SDDP method
builds linearizations of the convex cost-to-go
functions at trial points 
computed on scenarios of the underlying stochastic
process
generated
randomly along iterations.
The use of such cutting plane
models for the objective function
in the context of deterministic convex
optimization dates back to
 Kelley's cutting plane method \cite{kelley}
and has later been extended in many
variants such as 
subgradient \cite{kiwiel1},
bundle \cite{kiwiel2,lembundle},
and level \cite{lemnestnem} variants.
Kelley's algorithm was also generalized
by Benders to solve \cite{benders}
mixed-variables programming problems.
Recently, several enhancements of SDDP 
have been proposed, see for instance \cite{shapsddp}, \cite{guiguesrom10}, \cite{philpmatos}, \cite{kozmikmorton} for risk-averse variants,
\cite{philpot}, \cite{lecphilgirar12}, \cite{guiguessiopt2016} for convergence analysis, \cite{shapding} for the application of SDDP
to periodic stochastic programs,
and \cite{ruialex}, \cite{guiguesinexact2018} 
to speed up the convergence of the method. In particular, in \cite{guiguesinexact2018}, Inexact SDDP was proposed, which incorporates inexact cuts in SDDP
(for both linear and nonlinear programs). The idea of Inexact SDDP is to allow us to solve approximately some
or all primal and dual subproblems in the forward and backward passes of SDDP. This extension and the study of Inexact SDDP was motivated
by the following reasons:
\begin{itemize}
\item[(i)] solving to a very high accuracy nonlinear programs can take a significant amount of time or may even be impossible
whereas linear programs (of similar sizes) can be solved exactly or to high accuracy quicker. Examples of convex but challenging
to solve subproblems include
semidefinite programs \cite{hsdpbook},
quadratically constrained
quadratic programs with degenerate
quadratic forms (see the numerical
experiments of Section \ref{sec:numexp}), or some high dimensional 
nondifferentiable problems.
For subproblems where it is difficult
or impossible
to get optimal solutions, if we
are able to provide a feasible
primal-dual solution, we should
be able to derive an extension of
SDDP, i.e., cuts for the cost-to-go
functions, from approximate
subproblem 
primal-dual solutions. Therefore one has to study how 
to extend the SDDP algorithm to still derive
valid cuts and a converging Inexact SDDP or an Inexact SDDP with controlled accuracy when only approximate primal and dual solutions
are computed for nonlinear MSPs.
\item[(ii)] As explained in \cite{guiguesinexact2018}, numerical experiments 
(see for instance \cite{guiguesejor17, guiguesbandarra18, guilejtekregsddp}) 
indicate that very loose cuts are computed in the first iterations of SDDP
and it may be useful to compute with less accuracy these cuts for the first
iterations. Using this strategy, it was shown in \cite{guiguesinexact2018} that for several instances of a portfolio problem, Inexact SDDP
can converge (i.e., satisfy the stopping criterion) quicker than SDDP.
\end{itemize}

In this paper, we extend \cite{guiguesinexact2018} in two ways:

\begin{itemize}
 \item a natural way of taking advantage of observation (i) above in the context of SDDP applied to nonlinear problems, consists in linearizing some or all nonlinear objective and
constraint functions of the subproblems solved along the iterations of the method at the optimal solutions of these subproblems. When all nonlinear functions are
linearized, all subproblems solved
in the iterations of SDDP are linear
programs which allows us to
avoid having to solve 
difficult problems
that cannot be solved with high accuracy. However, to the best of our knowledge, this variant of SDDP, that we term as StoDCuP (Stochastic Dynamic Cutting Plane) has not been proposed and studied
so far in the literature (SDDP does
build linearizations for the cost-to-go functions
but not for some or all of  the remaining nonlinear objective and constraint functions). In this context, the goal of this paper is to propose and study StoDCuP. 
\item As far as (ii) is concerned, it is interesting
to notice that it is easy to incorporate inexact cuts in StoDCuP (i.e., to derive an inexact variant of StoDCuP), control the quality of 
these cuts (see Lemma \ref{lemmacutsbis}), and show the convergence of this method (see Theorem \ref{convproofStoDCuPinex} below). This comes
from the fact that we can easily compute a cut for the value function of a linear program (and in StoDCuP all subproblems solved are linear programs) from any feasible primal-dual solution
since the corresponding dual objective is linear, see Proposition 2.1 in \cite{guiguesinexact2018}. On the contrary, deriving valid (inexact) cuts from approximate primal-dual solutions of the subproblems solved in SDDP applied to nonlinear problems and showing the convergence
of the corresponding variant of Inexact SDDP is technical and the computation of inexact cuts may require solving additional subproblems, see \cite{guiguesinexact2018} for details. Moreover, Inexact SDDP from 
\cite{guiguesinexact2018} applies
to differentiable multistage
convex stochastic programs
while 
both StoDCuP and Inexact StoDCuP
apply to more general differentiable
or  nondifferentiable multistage
convex stochastic programs.
\end{itemize}

The outline of the paper is the following. To ease the presentation and analysis of StoDCuP, we start in Section \ref{sec:dcup} with its deterministic counterpart,
called DCuP (Dynamic Cutting Plane) which solves  convex Dynamic Programming equations linearizing cost-to-go, constraint, and objective functions.
Starting with the deterministic case allows us to focus on the differences between traditional Dual Dynamic Programming and its convergence analysis
with DCuP and its convergence analysis. 
In Section \ref{stodcup}, we introduce forward StoDCuP and prove the almost sure convergence of the method. 
In Section \ref{variantsstodcup}, we present 
 Inexact StoDCuP, an inexact variant
 of StoDCuP which builds
inexact cuts on the basis of approximate primal-dual solutions of the subproblems solved along the iterations of the method. We also prove
the almost sure convergence of Inexact StoDCuP for vanishing noises. 
Our convergence proofs of 
DCuP and StoDCuP are based
on the convergence analysis of
SDDP for nonlinear problems
in \cite{guiguessiopt2016}
but additional technical results
are needed due to the linearizations
of cost and constraint functions,
see Lemma \ref{lm:2.1}-(c),(d), 
Lemma \ref{lm:2.2}, Lemma \ref{lipcontQt}-(b), 
Theorem \ref{convprooftheorem}-(i),(ii),
Lemma \ref{lipcontQtstob}-(c), Lemmas \ref{lemmaft}, \ref{lemmacuts}, and Theorem \ref{convproofStoDCuP}. Finally, numerical experiments are presented in Section \ref{sec:numexp}
on nondifferentiable
multistage stochastic programs.
Two variants of  Inexact StoDCuP 
are presented: with and without cut selection
strategies.
In all instances tested, at least one inexact variant
computes
a good approximate policy quicker than
StoDCuP while SDDP and the previous
inexact variant of SDDP from \cite{guiguesinexact2018}
combined with  
Mosek library to solve subproblems
were not able to solve a
differentiable reformulation of the
problem (recall that such reformulation is
necessary to use the inexact variant of SDDP
from \cite{guiguesinexact2018} which applies
to differentiable stochastic programs).

\par We will use the following notation:
\begin{itemize}
\item For a real-valued convex function $f$, we denote
by $\ell_{f}(\cdot; x_0)$ an arbitrary lower bounding linearization of $f$ at $x_0$, i.e.,
$\ell_{f}(\cdot; x_0)= f(x_0 ) + s_f( x_0 )^{\top} (\cdot - x_0)$ where $s_f( x_0)$
is an arbitrary subgradient of $f$ at $x_0$.
 \item The domain of a point to set operator $T: A \rightrightarrows B$ is given by
 Dom($T$)$=\{a \in A : T(a) \neq \emptyset\}$.
 \item For vectors $x, y \in \mathbb{R}^n$,
 $\langle x , y \rangle = x^\top y$ is the usual scalar
 product between $x$ and $y$.
 \item For $a \in \mathbb{R}^n$,  $\bar B(a;{\varepsilon})
 =\{x \in \mathbb{R}^n : \|x-a\|_2 \leq \varepsilon \}$.
 \item The domain of a  convex function $f:X \rightarrow (-\infty,\infty]$
 is $\mbox{dom}(f)=\{x \in X : f(x)<\infty\}$.
 \item The relative interior ri $X$ of a set $X$
 is the set $\{x \in X: \exists \varepsilon>0 : 
\bar B(x;{\varepsilon}) \cap \mbox{Aff}(X) \subset X\}$.
\item The subdifferential of the convex function $f:X \rightarrow (-\infty,\infty]$ at $x$ is
$$
\partial f(x)=\{s : f(y) \geq f(x) + \langle s, y-x \rangle\;
\forall y \in X\}.
$$
\item The indicator function $\delta_X(\cdot)$ of the
set $X$ is given by $\delta_X(x)=0$ if $x \in X$
and $\delta_X(x)=\infty$ otherwise.
\item A function $f:\mathbb{R}^n \rightarrow (-\infty,\infty]$
is proper if there is $x$ such that $f(x)$ is finite.
\item {\textbf{e}} is a vector of ones whose dimension
depends on the context.
\end{itemize}

\section{The DCuP (Dynamic Cutting Plane) algorithm}\label{sec:dcup}

\subsection{Problem formulation and assumptions}

Given $x_0 \in \mathbb{R}^n$, consider the optimization problem
\begin{equation}\label{defpb}
\left\{
\begin{array}{l}
\displaystyle \inf_{x_1,\ldots,x_T \in \mathbb{R}^n}\;\sum_{t=1}^{T} f_t(x_{t}, x_{t-1}) \\
g_t(x_t, x_{t-1}) \leq 0,\;\;\displaystyle A_{t} x_{t} + B_{t} x_{t-1} = b_t, \;t=1,\ldots,T,\\
x_t \in \mathcal{X}_t,\; t=1,\ldots,T,\\
\end{array}
\right.
\end{equation}
where $A_t$ and $B_t$ are matrices of appropriate dimensions,
$f_t: \R^n \times \R^n \to (-\infty,\infty]$ and
 $g_t:  \R^n \times \R^n \to  (-\infty,\infty]^p$.
 In this problem, for each step $t$, we have nonlinear 
and linear coupling constraints,  $g_t(x_t, x_{t-1}) \leq 0$ and $A_{t} x_{t} + B_{t} x_{t-1} = b_t$
respectively, and  set constraints 
$x_t \in \mathcal{X}_t$. 
Without loss of generality, nonlinear noncoupling constraints $h_{t}(x_t) \leq 0$ can be 
dealt with by incorporating them %the corresponding component  functions $h_{t i}$
into the constraint $g_t(x_t,x_{t-1}) \le 0$. 
For convenience, we use the short notation
\begin{equation}
X_t( x_{t-1} ) :=
\{ x_t \in \mathcal{X}_t:\;g_t(x_t, x_{t-1}) \leq 0,\;\;\displaystyle A_{t} x_{t} + B_{t} x_{t-1} = b_t \}
%\cap \dom f_t(\cdot,x_{t-1})
\label{eq:Xt}
\end{equation}
and
\begin{equation}
X_t^0( x_{t-1} ) = X_t( x_{t-1} ) \cap \ri \, \mathcal{X}_t.
\label{eq:riXt}
\end{equation}

%Given $x_0 \in \mathbb{R}^n$, consider the optimization problem
%\begin{equation}\label{defpb}
%\left\{
%\begin{array}{l}
%\displaystyle \min_{x_1,\ldots,x_T \in \mathbb{R}^n}\;\sum_{t=1}^{T} f_t(x_{t}, x_{t-1}) \\
%g_t(x_t, x_{t-1}) \leq 0,\;\;\displaystyle A_{t} x_{t} + B_{t} x_{t-1} = b_t, \;t=1,\ldots,T,\\
%x_t \in \mathcal{X}_t,\; t=1,\ldots,T,\\
%\end{array}
%\right.
%\end{equation}
%where $A_t$ and $B_t$ are matrices of appropriate dimensions,
%$f_t: \R^n \times \R^n \to (-\infty,\infty]$ and
% $g_t:  \R^n \times \R^n \to  (-\infty,\infty]^p$.
% In this problem, for each step $t$, we have nonlinear 
%and linear coupling constraints,  $g_t(x_t, x_{t-1}) \leq 0$ and $A_{t} x_{t} + B_{t} x_{t-1} = b_t$
%respectively, and  set constraints 
%$x_t \in \mathcal{X}_t$. 
%For convenience, we use the short notation
%\begin{equation}
%X_t( x_{t-1} ) =
%\{ x_t \in \mathcal{X}_t:\;g_t(x_t, x_{t-1}) \leq 0,\;\;\displaystyle A_{t} x_{t} + B_{t} x_{t-1} = b_t \}
%\label{eq:Xt}
%\end{equation}
%and
%\begin{equation}
%X_t^0( x_{t-1} ) =
%\{ x_t \in \ri \, \mathcal{X}_t:\;g_t(x_t, x_{t-1}) \leq 0,\;\;\displaystyle A_{t} x_{t} + B_{t} x_{t-1} = b_t \}.
%\label{eq:riXt}
%\end{equation}

With this notation, the dynamic programming equations corresponding to problem \eqref{defpb} are
\begin{equation} \label{eq:Qt}
\mathcal{Q}_{t}(x_{t-1})=
\left\{
\begin{array}{l}
\displaystyle \inf_{x_t \in \mathbb{R}^n}\;F_t(x_{t}, x_{t-1}):=f_t(x_{t}, x_{t-1}) + \mathcal{Q}_{t+1}(x_{t})\\
x_t \in X_t( x_{t-1} ),
\end{array}
\right.
\end{equation}
for $t=1,\ldots,T$, and $\mathcal{Q}_{T+1} \equiv 0$.
The cost-to-go function $\mathcal{Q}_{t+1}(x_{t})$ represents the optimal
total cost for time steps $t+1, \ldots,T$, starting from state $x_t$ at the beginning of step $t+1$.
%Clearly, it follows from the definitions of $\mathcal{Q}_{t}$, $X_t$ and $X^0_t$ that
%\[
%\dom \mathcal{Q}_{t} = \Dom X_{t} \supset \Dom X_t^0;
%\]
Clearly, it follows from the above definition that
\beq \label{eq:simobs}
\Dom(X^0_t) \subset \Dom(X_t) \quad \forall t=1,\ldots,T.
\eeq

%With this notation, the dynamic programming equations corresponding to problem \eqref{defpb} are
%\begin{equation} \label{eq:Qt}
%\mathcal{Q}_{t}(x_{t-1})=
%\left\{
%\begin{array}{l}
%\displaystyle \min_{x_t \in \mathbb{R}^n}\;F_t(x_{t}, x_{t-1}):=f_t(x_{t}, x_{t-1}) + \mathcal{Q}_{t+1}(x_{t})\\
%x_t \in X_t( x_{t-1} ),
%\end{array}
%\right.
%\end{equation}
%for $t=1,\ldots,T$, with $\mathcal{Q}_{T+1} \equiv 0$.
%Cost-to-go function $\mathcal{Q}_{t+1}(x_{t})$ represents the optimal
%total cost for time steps $t+1, \ldots,T$, starting from state $x_t$ at the beginning of step $t+1$.

Setting $\mathcal{X}_0=\{x_0\}$, the following assumptions are made throughout this section.\\

{\bf Assumption (H1):}
\begin{itemize}
\item[1)] For $t=1,\ldots,T$:
\begin{itemize}
\item[a)] $\mathcal{X}_{t} \subset \mathbb{R}^n$ is nonempty, convex, and compact;
\item[b)] $f_t$ is a proper lower-semicontinuous convex function such that
$\mathcal{X}_{t} \small{\times} \mathcal{X}_{t-1} \subset \inte (\dom(f_t))$;
\item[c)] each of the $p$ components $g_{t i},i=1,\ldots,p$, of $g_t$ is a 
proper lower-semicontinuous convex function
such that $\mathcal{X}_{t} \small{\times} \mathcal{X}_{t-1} \subset \inte (\dom(g_{ti}))$.
\end{itemize}
\item[2)] $X_1(x_0) \ne \emptyset$ and
$\mathcal{X}_{t-1} \subset \inte \left[  \Dom(X_{t}^0) \right]$
for every $t=2,\ldots,T$.\\
\end{itemize}

The following simple lemma states a few consequences of the above assumptions.

\begin{lemma} \label{lm:2.1}
The following statements hold:
%for every $t=2,\ldots,T+1$ and $i=1\ldots,p$:
\begin{itemize}
\item[(a)]
for every $t=1,\ldots,T$, $\mathcal{Q}_{t+1}$ is a convex function such that
\[
\mathcal{X}_{t} \subset \inte \left( \dom(\mathcal{Q}_{t+1}) \right);
%supset \Dom X_{t+1} \supset \mathcal{X}_t + \bar B(0;{\varepsilon});
\]
\item[(b)]
for every $t=1,\ldots,T$, $\mathcal{Q}_{t+1}$ is Lipschitz continuous on $\mathcal{X}_{t}$;
\item[(c)] for every $t=1,\ldots,T$, $i=1,\ldots,p$, 
and $(x_t,x_{t-1}) \in \mathcal{X}_{t} \small{\times} \mathcal{X}_{t-1}$,
\[
\partial f_t(x_t,x_{t-1}) \ne \emptyset, \quad \partial g_{ti}(x_t,x_{t-1})  \ne \emptyset;
\]
\item[(d)] for every $t=1,\ldots,T$, $i=1,\ldots,p$, the sets
\begin{align*}
\bigcup \left\{ \partial f_t(x_t,x_{t-1}) :  (x_t,x_{t-1}) \in \mathcal{X}_{t} \small{\times} \mathcal{X}_{t-1} \right\}, \quad
\bigcup \left\{ \partial g_{ti}(x_t,x_{t-1}) :  (x_t,x_{t-1}) \in \mathcal{X}_{t} \small{\times} \mathcal{X}_{t-1} \right\}
\end{align*}
are bounded.
\end{itemize}
\end{lemma}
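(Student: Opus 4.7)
My plan is to establish parts (a) and (b) jointly by backward induction on $t$, starting from the base case $\mathcal{Q}_{T+1} \equiv 0$ which trivially has both properties (convex, Lipschitz, finite on all of $\R^n$). Parts (c) and (d) can be handled separately, directly from Assumption (H1)-1b,c) together with classical results of convex analysis, and do not require the induction.

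For the inductive step of (a), assume the statements hold at stage $t+1$. Joint convexity of $F_t(x_t,x_{t-1}) = f_t(x_t,x_{t-1}) + \mathcal{Q}_{t+1}(x_t)$ follows from the convexity of $f_t$ and of $\mathcal{Q}_{t+1}$. Furthermore the graph of $X_t$ is convex since the $g_{ti}$ are convex, the linear coupling constraint is affine, and $\mathcal{X}_t$ is convex. Convexity of $\mathcal{Q}_t$, which is the partial minimization of a jointly convex function over a convex graph multifunction, is then a standard fact. For the domain inclusion $\mathcal{X}_{t-1} \subset \inte(\dom \mathcal{Q}_t)$, I first note that (H1)-2) produces an open neighborhood $U$ of $\mathcal{X}_{t-1}$ contained in $\Dom(X_t^0)$; combined with \eqref{eq:simobs} this gives $U \subset \Dom(X_t)$, so $X_t(x_{t-1}) \ne \emptyset$ for every $x_{t-1} \in U$. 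Using compactness of $\mathcal{X}_t$, (H1)-1b) and the inductive hypothesis $\mathcal{X}_t \subset \inte(\dom \mathcal{Q}_{t+1})$, one can shrink $U$ so that $F_t$ is finite on $\mathcal{X}_t \times U$. Since $X_t(x_{t-1}) \subset \mathcal{X}_t$ is closed (by lower semicontinuity of the $g_{ti}$) hence compact, and $F_t$ is lsc, the infimum is attained and finite; hence $\mathcal{Q}_t$ is finite on $U$, proving $\mathcal{X}_{t-1} \subset \inte(\dom \mathcal{Q}_t)$. For (b), I would invoke the classical result that a proper convex function which is finite on an open set containing a compact set $K$ is Lipschitz continuous on $K$; this applies to $\mathcal{Q}_{t+1}$ on $\mathcal{X}_t$ in view of (a).

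For (c), the statement that $\mathcal{X}_t \times \mathcal{X}_{t-1} \subset \inte(\dom f_t)$ from (H1)-1b) (and the analogous inclusion for each $g_{ti}$) combined with the standard fact that a proper convex function has nonempty subdifferential at any interior point of its domain yields the nonemptiness of $\partial f_t(x_t,x_{t-1})$ and $\partial g_{ti}(x_t,x_{t-1})$ on $\mathcal{X}_t \times \mathcal{X}_{t-1}$. For (d), I would use the standard local boundedness property: the subdifferential mapping of a proper convex function is locally bounded on the interior of its domain, so a straightforward compactness/covering argument applied to $\mathcal{X}_t \times \mathcal{X}_{t-1}$ — which is compact and contained in the interior of $\dom f_t$ (resp.\ $\dom g_{ti}$) — yields the claimed global bound.

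The main obstacle will be the inductive step of (a), namely verifying $\mathcal{X}_{t-1} \subset \inte(\dom \mathcal{Q}_t)$. This requires simultaneously ensuring nonemptiness of $X_t(x_{t-1})$ on an open neighborhood of $\mathcal{X}_{t-1}$ and finiteness of $f_t$ and $\mathcal{Q}_{t+1}$ at the relevant points; the key ingredient is the strict feasibility hypothesis (H1)-2), which supplies the neighborhood via the relative-interior formulation of $X_t^0$ and is then transferred to $X_t$ through the inclusion \eqref{eq:simobs}.
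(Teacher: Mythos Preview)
Your proposal is correct and follows essentially the same approach as the paper: backward induction on $t$ for (a), with convexity of $\mathcal{Q}_t$ via partial minimization of a jointly convex function and the domain inclusion via (H1)-2) combined with \eqref{eq:simobs}; then (b) from (a) via the standard Lipschitz-on-compacta result for convex functions (the paper cites Rockafellar Theorem~10.4), and (c)--(d) directly from (H1)-1) and the classical subdifferential facts (the paper cites Rockafellar Theorems~23.4 and~24.7). The only cosmetic difference is that you argue finiteness of $\mathcal{Q}_t$ on the neighborhood $U$ via attainment (compact feasible set, lsc objective), whereas the paper simply verifies that $X_t(x_{t-1}) \cap \dom F_t(\cdot,x_{t-1}) \ne \emptyset$; both are valid.
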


\begin{proof}
 %(see ? complete reference \cite{hhlem}).
(a)
The proof is by backward  induction on $t$. The result clearly holds for $t=T$ since
$\mathcal{Q}_{T+1} \equiv 0$.
Assume now that $\mathcal{Q}_{t+1}$ is a convex function such that
$\mathcal{X}_t \subset \inte \left( \dom(\mathcal{Q}_{t+1}) \right)$ for some $2 \le t \le T$.
Then, condition 1) of Assumption (H1) implies that the function
$(x_t,x_{t-1}) \mapsto F_t(x_t,x_{t-1}) + \delta_{X_t(x_{t-1})}(x_t)$ is convex. %Since $\mathcal{Q}_{t}$ is the infimum convolution (???) of the latter function, 
This conclusion together with the definition of $\mathcal{Q}_{t}$ and 
the discussion following Theorem 5.7 of \cite{Rockafellar70} then imply that $\mathcal{Q}_{t}$ is a convex function.
Moreover, conditions 1)b) and 2) of Assumption (H1) and relation \eqref{eq:simobs}  imply that there exists $\varepsilon>0$ such that for every
 $x_{t-1} \in  \mathcal{X}_{t-1} + \bar B(0,\varepsilon)$,
 \[
 \dom(f_t(\cdot,x_{t-1})) \supset \mathcal{X}_t, \quad X_t(x_{t-1}) \ne \emptyset.
 \]
The induction hypothesis, the latter observation, and relations \eqref{eq:Xt} and \eqref{eq:Qt},
 then imply that  
 \[
  X_{t}( x_{t-1} )  \cap \dom(F_{t}(\cdot,x_{t-1}))  = X_{t}( x_{t-1} )  \cap   \dom(f_{t}(\cdot,x_{t-1})) \cap  \dom(\mathcal{Q}_{t+1})
  \supset  X_{t}( x_{t-1} )  \cap \mathcal{X}_t = X_{t}( x_{t-1} ) \ne \emptyset
  \]
  for  every
$x_{t-1} \in  \mathcal{X}_{t-1} + \bar B(0,\varepsilon)$. Since by \eqref{eq:Qt},
\begin{align*}
\dom(\mathcal{Q}_{t}) &= \{ x_{t-1} \in \R^n : 
 X_{t}( x_{t-1} )  \cap \dom(F_{t}(\cdot,x_{t-1})) \ne \emptyset  \},
%&= \{ x_{t-1} \in \R^n :  X_{t}( x_{t-1} )  \cap   \dom f_{t}(\cdot,x_{t-1}) \cap  \dom \mathcal{Q}_{t+1}  \ne \emptyset \},
\end{align*}
we then conclude that $\mathcal{X}_{t-1} + \bar B(0,\varepsilon) \subset \dom(\mathcal{Q}_{t})$, and hence that
$\mathcal{X}_{t-1} \subset \inte (\dom(\mathcal{Q}_{t}))$. We have thus proved that (a) holds.

b) This statement follows from statement a) and  Theorem 10.4 of \cite{Rockafellar70}.

c-d) These two statements follow from conditions 1)a), 1)b) and 1)c) of Assumption (H1) together with
Theorem 23.4 and 24.7 of \cite{Rockafellar70}.
\end{proof}

\subsection{Forward DCuP}
Before formally describing the DCuP algorithm, we give some motivation for it.
At iteration $k \ge 1$ and stage $t=1,\ldots,T$, the algorithm uses the following approximation to the
function $\mathcal{Q}_{t}(\cdot)$ defined in \eqref{eq:Qt}:
\begin{equation}\label{approxqt}
{\underline{\mathcal{Q}}}_t^{k-1}  (x_{t-1}) = \min \left\{ f_t^{k-1}(x_t, x_{t-1} ) + \mathcal{Q}^{k-1}_{t+1} ( x_t ) : x_t \in X_t^{k-1} ( x_{t-1} ) \right \} 
\end{equation}
where
\begin{equation} \label{Xtkd}
X_t^{k-1} ( x_{t-1} ) =\{ x_t \in \mathcal{X}_t: \;g_t^{k-1}(x_t, x_{t-1}) \leq 0,\;\displaystyle A_{t} x_{t} + B_{t} x_{t-1} = b_t \}
\end{equation}
and
$f_t^{k-1}, g_t^{k-1}$, and $\mathcal{Q}_{t+1}^{k-1}$ are polyhedral functions minorizing
$f_t, g_t$ and $\mathcal{Q}_{t+1}$, respectively, i.e.,
\begin{equation} \label{eq:lowerfcts}
f_t^{k-1} \le f_t, \quad g_t^{k-1} \le g_t, \quad \mathcal{Q}^{k-1}_{t+1} \le  \mathcal{Q}_{t+1}.
\end{equation}
For $t=T+1$, we actually assume that $\mathcal{Q}_{T+1}^{k-1} \equiv 0$, and hence that
$\mathcal{Q}_{T+1}^{k} = \mathcal{Q}_{T+1}$. Moreover, we also assume that
${\underline{\mathcal{Q}}}_{T+1}^{k-1} \equiv 0$, and hence
${\underline{\mathcal{Q}}}_{T+1}^{k-1} = \mathcal{Q}_{T+1}$.

%by polyhedral lower bounding functions $f_t^k, g_t^k$,
%and $\mathcal{Q}_t^k$, respectively satisfying:
%$$
%f_1^k \le f_1, g_1^k \le g_1, f_t^k \leq f_t, \,g_t^k \leq g_t,\mathcal{Q}^{k}_{t} \le \mathcal{Q}_{t},\;t=2,\ldots,T, 
%$$
%and
%\begin{equation} \label{eq:DPP-lower-functs}
%\mathcal{Q}^{k}_{T+1} = \mathcal{Q}_{T+1} \equiv 0, \quad \mathcal{Q}^{k}_t \le \mathcal{Q}_t \quad t=2,3,\ldots,T.
%\end{equation}
%For $t=1,2,\ldots,T+1$ and $k\geq 0$ let ${\underline{\mathcal{Q}}}_t^k : \mathcal{X}_{t-1} \to \mathbb{R}$ given by
%${\underline{\mathcal{Q}}}_{T+1}^k \equiv 0$ for $t=T+1$  and 
%\[
%{\underline{\mathcal{Q}}}_t^k  (x_{t-1}) = \min \left\{ f_t^k(x_t, x_{t-1} ) + \mathcal{Q}^{k}_{t+1} ( x_t ) : x_t \in X_t^k ( x_{t-1} ) \right \} \quad \forall x_{t-1} \in \mathcal{X}_{t-1},
%\]
%for $t<T$ where 
%\begin{equation} \label{Xtkd}
%X_t^{k} ( x_{t-1} ) =\{ x_t \in \mathcal{X}_t: \;g_t^{k}(x_t, x_{t-1}) \leq 0,\;\displaystyle A_{t} x_{t} + B_{t} x_{t-1} = b_t \}.
%\end{equation}

Observe that for every $k \ge 0$, $t=1,\cdots,T$, and
$x_{t-1} \in \mathcal{X}_{t-1}$, relations \eqref{Xtkd} and \eqref{eq:lowerfcts} imply that
\begin{equation} \label{??}
X_t(x_{t-1}) \subset X_t^{k} ( x_{t-1} ) \subset \mathcal{X}_{t}
\end{equation}
and 
$$
f_t^k(\cdot,x_{t-1}) + \mathcal{Q}_{t+1}^k(\cdot ) \leq f_t(\cdot, x_{t-1}) + \mathcal{Q}_{t+1}(\cdot),
$$
and hence that
\begin{equation}\label{lowboundqt}
{\underline{\mathcal{Q}}}_t^k \le \mathcal{Q}_t, \quad \forall \, t=1,2,\ldots,T,\,\,\forall \, k \geq 0.
\end{equation}
At iteration $k$, feasible points
$x_1^k,\ldots,x_T^k$ are computed recursively as follows:
for $t=1,\ldots,T$, $x_t^k$ is set to be an optimal solution of
subproblem \eqref{approxqt} with $x_{t-1}=x_{t-1}^k$ with the convention 
that $x_0^k = x_0$. These points in turn are used to
compute new affine functions minorizing $f_t$, $g_t$ and $\mathcal{Q}_{t}$ which
are then added to the bundle of affine functions describing
$f_t^{k-1}$, $g_t^{k-1}$, and 
$\mathcal{Q}_{t}^{k-1}$ to obtain new lower bounding approximations
$f_t^k, g_t^k$, and $\mathcal{Q}_{t}^k$ for
$f_t, g_t$ and $\mathcal{Q}_{t}$, respectively.

The precise description of DCuP algorithm is as follows.\\
\rule{\linewidth}{1pt}
\par {\textbf{DCuP (Dynamic Cutting Plane) with linearizations computed in a forward pass.}}\\
\rule{\linewidth}{1pt}
\par {\textbf{Step 0. Initialization.}} 
For every $t=1,\ldots,T$, let affine functions $f_t^0$ and $g_t^0$ such that $f_t^0 \leq f_t$ and $g_t^0 \leq g_t$,
and a piecewise linear function
$\mathcal{Q}^{0}_t : \mathcal{X}_{t-1} \to \mathbb{R}$ such that  $\mathcal{Q}_t^0 \leq \mathcal{Q}_t$
be given.  We write  $\mathcal{Q}_t^0$
as 
$\mathcal{Q}_t^0(x_{t-1})=\theta_t^0 +
\langle \beta_t^0 , x_{t-1} \rangle$,
set
$\mathcal{Q}_{T+1}^0 \equiv 0$, and $k=1$.\\
\par {\textbf{Step 1. Forward pass.}} 
Set $\mathcal{C}_{T+1}^k = \mathcal{Q}_{T+1}^k \equiv 0$ and
$x_0^{k} = x_0$. For $t=1,2,\ldots,T$, do:
\begin{itemize}
\item[a)]
find an optimal solution $x_t^{k}$ of
\begin{equation}\label{forreg10}
{\underline{\mathcal{Q}}}_t^{k-1} ( x_{t-1}^{k} )   = 
\left\{
\begin{array}{l}
\displaystyle \inf_{x_t \in \mathbb{R}^n}  f_t^{k-1}(x_t, x_{t-1}^{k} ) + \mathcal{Q}^{k-1}_{t+1}( x_t )\\
x_t \in X_t^{k-1}( x_{t-1}^{k}  ),
\end{array}
\right.
\end{equation}
where $X_t^{k}(\cdot)$ is as in \eqref{Xtkd};
\item[b)]
compute function values and subgradients
%$f_t(x_t^{k}, x_{t-1}^{k} )$ and $g_{ti}(x_t^{k}, x_{t-1}^{k})$, and subgradients
of $f_t$ and $g_{t i}$, $i=1,\ldots,p$,  at $( x_t^{k}, x_{t-1}^{k} )$,
and let 
$\ell_{f_t}(\cdot; (x_t^{k}, x_{t-1}^{k} ) ) $ and $\ell_{g_{t i}}(\cdot; (x_t^{k}, x_{t-1}^{k} ))$
denote the corresponding linearizations;
\item[c)]
set
\begin{align}
f_t^{k} &= \max\Big( f_t^{k-1} \,,\, \ell_{f_t}\left((\cdot,\cdot); ( x_t^{k}, x_{t-1}^{k}   ) \right ) \Big), \label{eq:ftk0}\\
%g_t^{k+1} &=& (g_{t 1}^{k+1},\ldots,g_{t p}^{k+1})\mbox{ where } 
g_{ti}^{k} &= \max\Big( g_{t i}^{k-1} \,,\, \ell_{g_{t i}}\left((\cdot,\cdot); (x_t^{k}, x_{t-1}^{k} ) \right) \Big), \quad \forall i=1,\ldots,p, \label{eq:gtk0}
\end{align}
and define $g_t^{k} := ( g_{t 1}^{k},\ldots,g_{t p}^{k} )$;
\item[d)]
if $t \ge 2$, then compute $\beta_t^{k} \in \partial {\underline{\mathcal{Q}}}_t^{k-1} ( x_{t-1}^{k}  )$
and denote the corresponding linearization of ${\underline{\mathcal{Q}}}_t^{k-1}$ as
\[
\mathcal{C}_t^{k} (\cdot ) := {\underline{\mathcal{Q}}}_t^{k-1} ( x_{t-1}^{k} ) + 
\langle \beta_t^{k} ,  \cdot - x_{t-1}^{k} \rangle;
\]
moreover, set
\beq \label{eq:Qtk}
\mathcal{Q}_{t}^{k} = \max\{ \mathcal{Q}^{k-1}_{t}, \mathcal{C}_t^{k}  \};
\eeq
\end{itemize}
%where $\mathcal{C}_t^k$ is a new cut for $\mathcal{Q}_t$ given by
%%of  ${\underline{\mathcal{Q}}}_t^{k-1} (  \cdot )$ at $x_{t-1}^{k}$, 
%\[
%\mathcal{C}_t^k ( x_{t-1} ) := {\underline{\mathcal{Q}}}_t^{k-1} ( x_{t-1}^{k} ) + 
%\langle \beta_t^k ,  x_{t-1} - x_{t-1}^{k} \rangle
%\]
%for $\mathcal{Q}_t$,
%making up the new approximation
%$\mathcal{Q}_{t}^k = \max\{ \mathcal{Q}^{k-1}_{t}, \mathcal{C}_t^k  \}$ for $\mathcal{Q}_t$.
\par {\textbf{Step 2.}} Set $k \leftarrow k+1$ and go to Step 1.\\
\rule{\linewidth}{1pt}

We now make a few remarks about DCuP.
First, Lemma \ref{lm:2.1}(c) guarantees  the existence of the subgradients,
and hence the linearizations, of
the functions $f_t$ and $g_{ti}$, $i=1,\ldots,p$, at any point
$(x_t,x_{t-1}) \in \mathcal{X}_t \times \mathcal{X}_{t-1}$, and hence that the functions
$f_t^{k}$ and $g_t^{k}$ in Step 1 are well-defined.
Second, in view of the definition of $x_t^{k}$ in Step a), we have that
$x_t^{k} \in X_t^{k-1}( x_{t-1}^{k}  ) \subset \mathcal{X}_{t}$ for every $t=1,\ldots, T$ and
$k \ge 0$.
Third, Lemma \ref{lm:2.2}(b) below and the previous remark guarantee
the existence of the subgradient $\beta_t^{k}$ in Step d).
Fourth, we dicuss in Subsection \ref{subsection:subg} ways of computing  this subgradient.
% for the special case in which ???????? .

%We now make a few remarks about DCuP. First, in view of the definition of $f_t^k$ and 
%$g_{ti}^k$ in \eqref{eq:ftk0} and \eqref{eq:gtk0}, respectively,
%and the remark following Assumption (H1), it follows that
%functions $f_t^k$ and  $g_{ti}^k$ are Lipschitz continuous on $\mathcal{X}_{t} \small{\times} \mathcal{X}_{t-1}$.

In the remaining part of this subsection, we provide the convergence analysis of DCuP.
The following result states some  basic properties about the functions involved in DCuP.

\begin{lemma} \label{lm:2.2}
The following statements hold:
%for every $k \geq 1$ and $t=1,\ldots,T$:
\begin{itemize}
\item[(a)]
 for every $k\ge 1$ and $t=1,\ldots,T$, we have
\begin{align}
& f_t^{k} \le f_t^{k+1} \le f_t, \quad  g_t^{k} \le g_t^{k+1} \le g_t, \quad 
%\mathcal{Q}_t^{k} \le \mathcal{Q}_t^{k+1} \le \mathcal{Q}_t 
 \label{l2.2a}  \\
&X_t(x_{t-1}) \subset X_t^{k+1} ( x_{t-1} ) \subset X_t^{k} ( x_{t-1} ) \subset \mathcal{X}_{t-1}
\quad \forall x_{t-1} \in \R^n,  \label{l2.2b}  \\
&\mathcal{Q}_{t+1}^{k} \le \mathcal{Q}_{t+1}^{k+1} \le \mathcal{Q}_{t+1}, \label{l2.2c} \\
&{\underline{\mathcal{Q}}}_t^{k} \le {\underline{\mathcal{Q}}}_t^{k+1} \le \mathcal{Q}_t.  \label{l2.2d} 
\end{align}
\item[(b)]
For every $k \ge 1$ and $t=2,\ldots,T$, function ${\underline{\mathcal{Q}}}_{t}^{k}$ is convex and
$\inte(\dom({\underline{\mathcal{Q}}}_{t}^k))
%= \Dom X_{t+1}^k
\supset \mathcal{X}_{t-1}$; 
as a consequence, $\partial {\underline{\mathcal{Q}}}_{t}^k(x_{t-1}) \ne \emptyset$
 for every $x_{t-1} \in \mathcal{X}_{t-1}$. %, $\partial {\underline{\mathcal{Q}}}_{t}^k(x_{t-1}) \ne \emptyset$.
 %Lipschitz continuous
% on $\mathcal{X}_t$;
\end{itemize}
\end{lemma}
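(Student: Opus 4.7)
The plan is to prove (a) and (b) in a staged manner, being careful that the hardest part of (a) (the bound $\mathcal{Q}_{t+1}^{k+1}\le \mathcal{Q}_{t+1}$) logically depends on (b), while (b) only depends on the easy parts of (a).

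First I would dispatch the statements about $f_t^k$ and $g_t^k$. By induction on $k$: the base case $f_t^0\le f_t$, $g_t^0\le g_t$ is the initialization hypothesis. For the step, Lemma \ref{lm:2.1}(c) gives that each linearization $\ell_{f_t}(\cdot;(x_t^k,x_{t-1}^k))$ and $\ell_{g_{ti}}(\cdot;(x_t^k,x_{t-1}^k))$ is a valid affine minorant of $f_t$ and $g_{ti}$ respectively, so taking a max with $f_t^{k-1}\le f_t$ preserves the minorant property. The chain $f_t^k\le f_t^{k+1}$ is immediate from the max. The inclusions in \eqref{l2.2b} then follow from the definition \eqref{Xtkd} and the fact that on the constraint side $g_t^k\le g_t^{k+1}\le g_t$ tightens the sublevel sets.

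Next I would establish (b) for all $k\ge 0$ and $t=2,\ldots,T$, using only the $f_t^k,g_t^k$ bounds just proved together with the obvious bound $\mathcal{Q}_{t+1}^k\le\mathcal{Q}_{t+1}^{k+1}$. Convexity of $\underline{\mathcal{Q}}_t^k$ follows exactly as in Lemma \ref{lm:2.1}(a): the map
\[
(x_t,x_{t-1})\mapsto f_t^k(x_t,x_{t-1})+\mathcal{Q}_{t+1}^k(x_t)+\delta_{X_t^k(x_{t-1})}(x_t)
\]
is convex because $f_t^k$, $g_t^k$, $\mathcal{Q}_{t+1}^k$ are convex polyhedral and the equality constraint is linear; the marginal $\underline{\mathcal{Q}}_t^k$ is therefore convex (Theorem 5.7 of \cite{Rockafellar70}). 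For the domain property, Assumption (H1)-2) yields $\varepsilon>0$ with $X_t^0(x_{t-1})\ne\emptyset$, and thus $X_t(x_{t-1})\ne\emptyset$, for all $x_{t-1}\in\mathcal{X}_{t-1}+\bar B(0,\varepsilon)$. By \eqref{l2.2b} this gives $X_t^k(x_{t-1})\ne\emptyset$ on the same neighborhood, and since $X_t^k(x_{t-1})\subset\mathcal{X}_t$ (compact) while $f_t^k+\mathcal{Q}_{t+1}^k$ is finite-valued polyhedral on $\mathcal{X}_t\times\mathcal{X}_{t-1}$, the infimum in \eqref{approxqt} is finite on the neighborhood. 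Hence $\mathcal{X}_{t-1}\subset\inte(\dom(\underline{\mathcal{Q}}_t^k))$, and nonemptiness of $\partial\underline{\mathcal{Q}}_t^k(x_{t-1})$ on $\mathcal{X}_{t-1}$ follows from Theorem 23.4 of \cite{Rockafellar70}.

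Finally I would return to the remaining items of (a), namely the bounds on $\mathcal{Q}_{t+1}^k$ and $\underline{\mathcal{Q}}_t^k$, by \emph{backward} induction on $t$ with an inner induction on $k$. The monotonicity $\mathcal{Q}_{t+1}^k\le\mathcal{Q}_{t+1}^{k+1}$ is built into \eqref{eq:Qtk} via the max, and then $\underline{\mathcal{Q}}_t^k\le\underline{\mathcal{Q}}_t^{k+1}$ follows from combining $f_t^k\le f_t^{k+1}$, $\mathcal{Q}_{t+1}^k\le\mathcal{Q}_{t+1}^{k+1}$, and $X_t^{k+1}(x_{t-1})\subset X_t^k(x_{t-1})$ in \eqref{approxqt}. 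For the upper bounds, the base case $t=T$ uses $\mathcal{Q}_{T+1}^k\equiv 0=\mathcal{Q}_{T+1}$. Assuming $\mathcal{Q}_{t+1}^k\le\mathcal{Q}_{t+1}$ for all $k$, the inclusion $X_t(x_{t-1})\subset X_t^k(x_{t-1})$ together with $f_t^k\le f_t$ and $\mathcal{Q}_{t+1}^k\le \mathcal{Q}_{t+1}$ gives $\underline{\mathcal{Q}}_t^k\le\mathcal{Q}_t$ on $\dom(\mathcal{Q}_t)$. Now invoking (b), the subgradient $\beta_t^{k+1}\in\partial\underline{\mathcal{Q}}_t^k(x_{t-1}^{k+1})$ exists, so by convexity $\mathcal{C}_t^{k+1}\le\underline{\mathcal{Q}}_t^k\le\mathcal{Q}_t$ on $\mathcal{X}_{t-1}$; inner induction on $k$ via \eqref{eq:Qtk} then yields $\mathcal{Q}_t^{k+1}\le\mathcal{Q}_t$, closing the backward induction on $t$.

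The main obstacle is the circular feel between (a) and (b): the cut $\mathcal{C}_t^{k+1}$ only makes sense once we know $\underline{\mathcal{Q}}_t^k$ has a nonempty subdifferential at $x_{t-1}^{k+1}$, while at first glance (b) seems to need the full bound $\mathcal{Q}_{t+1}^k\le\mathcal{Q}_{t+1}$. The resolution, which is the delicate point, is to notice that (b) only requires $\underline{\mathcal{Q}}_t^k$ to be convex with $\mathcal{X}_{t-1}$ in the interior of its domain, and both properties hinge solely on the $f_t^k,g_t^k$ bounds and on $X_t(x_{t-1})\subset X_t^k(x_{t-1})$, not on any upper bound for $\mathcal{Q}_{t+1}^k$. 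Once this order is respected the nested inductions go through cleanly.
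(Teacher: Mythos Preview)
Your proof is correct and proceeds along the same general lines as the paper, but with one genuine organizational difference worth noting.

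The paper proves all of (a) first and then derives (b) almost for free: once $\underline{\mathcal{Q}}_t^k\le\mathcal{Q}_t$ is known from \eqref{l2.2d}, the domain inclusion is just $\inte(\dom\underline{\mathcal{Q}}_t^k)\supset\inte(\dom\mathcal{Q}_t)\supset\mathcal{X}_{t-1}$, the last step being Lemma~\ref{lm:2.1}(a). You instead establish (b) \emph{directly} from the polyhedral structure---the objective $f_t^k+\mathcal{Q}_{t+1}^k$ is a finite-valued max of affine functions on all of $\mathbb{R}^n\times\mathbb{R}^n$, and $X_t^k(x_{t-1})\supset X_t(x_{t-1})\ne\emptyset$ on a neighborhood of $\mathcal{X}_{t-1}$ by (H1)-2)---so the infimum is finite there without ever invoking $\mathcal{Q}_{t+1}^k\le\mathcal{Q}_{t+1}$. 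This buys you an explicit resolution of the circularity you flag: the existence of $\beta_t^{k+1}$ (needed to even define $\mathcal{C}_t^{k+1}$ and hence $\mathcal{Q}_t^{k+1}$) is established before you use it, whereas the paper simply assumes the algorithm is well-defined while proving (a) and confirms this a posteriori via (b). Both are legitimate; yours is more careful, the paper's is shorter.

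For the bound $\mathcal{Q}_t^k\le\mathcal{Q}_t$ itself, the paper runs a single induction on $k$ simultaneously over all $t$, while you do backward induction on $t$ with an inner induction on $k$; the two schemes are interchangeable here. One small remark: your opening sentence of the (b) paragraph mentions ``the obvious bound $\mathcal{Q}_{t+1}^k\le\mathcal{Q}_{t+1}^{k+1}$'' as an ingredient, but your actual argument for (b) never uses it---you may want to drop that phrase to keep the logical dependencies clean.
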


\begin{proof}
(a)
Relations \eqref{l2.2a} and \eqref{l2.2b}  follow immediately from
 the initialization of DCuP described in step 0,
the recursive definitions of
$f_t^{k}$ and $g_t^k$ %and $\mathcal{Q}_t^k$
in \eqref{eq:ftk0} and \eqref{eq:gtk0},
%and \eqref{eq:Qtk},
respectively, the definition of $X_t^k(\cdot)$ in \eqref{Xtkd}, %inclusion \eqref{??}
and the fact that 
\[
\ell_{f_t}((\cdot,\cdot); (x_t^{k}, x_{t-1}^{k} ) ) \le f_t(\cdot,\cdot),\
\quad \ell_{g_{t i}}((\cdot.\cdot); (x_t^{k}, x_{t-1}^{k} )) \le g_{t i}(\cdot.\cdot).
%\quad \mathcal{C}^k_t \le {\underline{\mathcal{Q}}}_t^{k}.
\]
Next note that the inequalities in \eqref{l2.2d}  follow immediately from the respective ones in \eqref{l2.2a},
\eqref{l2.2b} and \eqref{l2.2c}, together with
 relations  \eqref{eq:Qt} and \eqref{forreg10}.
 It then remains to show that the inequalities in \eqref{l2.2c} hold.
 Indeed,  the inequality $\mathcal{Q}_{t+1}^{k} \le \mathcal{Q}_{t+1}^{k+1}$
 follows immediately from \eqref{eq:Qtk} with $t=t+1$.
We will now show that  inequalities $\mathcal{Q}_{t}^{k} \le \mathcal{Q}_{t}$ for every $t=2,\ldots,T+1$
implies that
$\mathcal{Q}_{t}^{k+1} \le \mathcal{Q}_{t}$ for every $t=2,\ldots,T+1$,
and hence that the second inequality in \eqref{l2.2c} follows
from a simple inductive argument on $k$.
Indeed, first observe that the inequality $\mathcal{Q}_{t+1}^{k} \le \mathcal{Q}_{t+1}$ implies that
${\underline{\mathcal{Q}}}_{t}^{k} \le {\mathcal{Q}}_{t}$.
Next observe that the construction of $\mathcal{C}_t^{k+1}$ in Step d) of DCuP implies that
 $\mathcal{C}_t^{k+1} \le {\underline{\mathcal{Q}}}_t^{k}$, and hence that
 $\mathcal{C}_t^{k+1} \le {\mathcal{Q}}_t$. It then follows from \eqref{eq:Qtk} and the inequality
 $\mathcal{Q}_{t}^{k} \le \mathcal{Q}_{t}$  that $\mathcal{Q}_{t}^{k+1} \le \mathcal{Q}_{t}$.
 We have thus shown that $\mathcal{Q}_{t}^{k} \le \mathcal{Q}_{t}$ for every $t=2,\ldots,T+1$
 implies that $\mathcal{Q}_{t}^{k+1} \le \mathcal{Q}_{t}$ for every $t=2,\ldots,T$. Since the latter inequality for
 $t=T+1$ is straightforward and $\mathcal{Q}_{t}^{0} \le \mathcal{Q}_{t}$ for $t=2,\ldots,T$, \eqref{l2.2c} follows.
 
 %{\underline{\mathcal{Q}}}_t^{k}$

(b) 
The assertion that ${\underline{\mathcal{Q}}}_{t}^{k}$ is a convex function follows from the fact that
$\mathcal{Q}_{t+1}^k$ is convex and the same arguments used in Lemma \ref{lm:2.1} to show that
$\mathcal{Q}_t$ is convex. The assertion that
$\dom(\underline{\mathcal{Q}}_{t}^k) \supset \mathcal{X}_{t-1}$
follows from the fact that by \eqref{l2.2d} we have $\underline{\mathcal{Q}}_{t}^k \le \mathcal{Q}_t$, and hence
that 
\[
\inte \left( \dom(\underline{\mathcal{Q}}_{t}^k) \right) \supset \inte \left(\dom(\mathcal{Q}_t) \right) \supset \mathcal{X}_{t-1},
\]
where the last inclusion is due to Lemma \ref{lm:2.1}(a).
\end{proof}

The following technical result is useful to establish uniform Lipschitz continuity of convex functions.
\begin{lemma} \label{lm:2.3}
%Assume that $K \subset \R^n$ is a nonempty compact set and 
Assume that $\phi^-$ and $\phi^+$ are proper convex functions such that $\phi^- \le \phi^+$.
%$\phi^-(x) > -\infty$ for every $x \in \R^n$  and
%\[
%\phi^- \le \phi^+, \quad K \subset \inte ( \dom \phi^+).
%\]
Then, for any nonempty compact set $K \subset \inte ( \dom(\phi^+))$, there exists a scalar $L=L(K) \ge 0$ satisfying the following property:
any convex function $\phi$ such that $\phi^- \le \phi \le \phi^+$ is $L$-Lipschitz continuous on $K$.
\end{lemma}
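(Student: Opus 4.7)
My plan is to reduce the claim to the standard fact that a convex function which is bounded on a neighborhood of a compact set is Lipschitz on that set, with Lipschitz constant controlled by the bound and the size of the neighborhood. The key is to produce \emph{uniform} upper and lower bounds on any admissible $\phi$ over a slightly enlarged set around $K$, using $\phi^+$ and $\phi^-$ respectively, and this is the only nontrivial step.

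First, since $K$ is compact and $K \subset \inte(\dom(\phi^+))$, a standard compactness argument yields some $\varepsilon>0$ such that $K_{2\varepsilon}:=K+\bar B(0;2\varepsilon) \subset \inte(\dom(\phi^+))$. By Theorem 10.1 of \cite{Rockafellar70}, $\phi^+$ is continuous on $\inte(\dom(\phi^+))$, hence bounded above on the compact set $K_{2\varepsilon}$ by some constant $M^+$. For the lower bound, note that $\phi^- \le \phi^+ < \infty$ on $K_{2\varepsilon}$ and $\phi^-$ is proper, so $K_{2\varepsilon}\subset \dom(\phi^-)$. The main technical point is then to upgrade this to $K_\varepsilon:=K+\bar B(0;\varepsilon) \subset \inte(\dom(\phi^-))$: every $x\in K_\varepsilon$ satisfies $\bar B(x;\varepsilon) \subset K_{2\varepsilon} \subset \dom(\phi^-)$, and since $\dom(\phi^-)$ is convex, the existence of a ball around $x$ contained in $\dom(\phi^-)$ places $x$ in $\inte(\dom(\phi^-))$. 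Applying Theorem 10.1 of \cite{Rockafellar70} again, $\phi^-$ is continuous on $\inte(\dom(\phi^-))$, hence bounded below on the compact set $K_\varepsilon$ by some constant $m^-$.

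Set $M := \max\{|M^+|,|m^-|\}$. For any convex $\phi$ with $\phi^- \le \phi \le \phi^+$, we obtain $|\phi(x)|\le M$ for every $x\in K_\varepsilon$; in particular $\phi$ is real-valued there. The final step is the classical convexity argument: given $y,z\in K$ with $y\ne z$, define $w := z + \varepsilon (z-y)/\|z-y\|_2 \in K_\varepsilon$, and observe that
\[
z = \lambda w + (1-\lambda) y, \qquad \lambda = \frac{\|z-y\|_2}{\|z-y\|_2+\varepsilon} \le \frac{\|z-y\|_2}{\varepsilon}.
\]
Convexity gives $\phi(z)-\phi(y) \le \lambda(\phi(w)-\phi(y)) \le 2M\|z-y\|_2/\varepsilon$, and exchanging the roles of $y$ and $z$ yields $|\phi(y)-\phi(z)| \le (2M/\varepsilon)\|y-z\|_2$. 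Hence $L := 2M/\varepsilon$, which depends only on $K$, $\phi^+$, and $\phi^-$, is a uniform Lipschitz constant valid for every admissible $\phi$ on $K$.

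The only subtle step in this plan is securing the uniform lower bound on $\phi$: it is not immediate from $\phi^- \le \phi$ that $\phi^-$ itself is bounded below on $K_\varepsilon$, because a proper convex function need not be continuous on the whole of its domain, only on its relative interior. The trick, encapsulated above, is to exploit the room gained by shrinking from $K_{2\varepsilon}$ to $K_\varepsilon$ to push $K_\varepsilon$ into $\inte(\dom(\phi^-))$, after which continuity of $\phi^-$ delivers the bound. Everything else is routine.
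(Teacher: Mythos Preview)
Your proof is correct and follows essentially the same strategy as the paper's: bound $\phi$ above by $\phi^+$ on an $\varepsilon$-enlargement of $K$ and below by $\phi^-$, then invoke convexity (the paper bounds subgradients via the subgradient inequality at a point $x\in K$ and a displaced point in $K_\varepsilon$, whereas you use the chord inequality directly). One simplification worth noting: since $\phi^-\le\phi^+$ gives $\dom(\phi^-)\supset\dom(\phi^+)$ and hence $\inte(\dom(\phi^-))\supset\inte(\dom(\phi^+))\supset K$, the paper only needs $\phi^-$ bounded below on $K$ itself, so your $2\varepsilon$-enlargement maneuver to push $K_\varepsilon$ into $\inte(\dom(\phi^-))$ is unnecessary.
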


\begin{proof}
Let $\phi$ be a convex function such that $\phi^- \le \phi \le \phi^+$ and let
$K \subset \inte ( \dom(\phi^+))$ be a nonempty compact set.
%These two conditions together with the properness
%of  $\phi^-$ then imply that 
Since $\phi^-$ and $\phi^+$ are  proper, it then follows that $\phi$ is proper and $\dom(\phi) \supset \dom(\phi^+)$, and hence that
$\inte(\dom(\phi^{-}) ) \supset \inte(\dom(\phi) )  \supset \inte(\dom(\phi^+) ) \supset K$.
Hence, in view of Theorem 23.4 of \cite{Rockafellar70}, we conclude that
$\partial \phi(x) \ne \emptyset$ for every $x \in K$. We now claim that there exists $L$ such that
$\|\beta\| \le L$ for every $\beta \in \partial \phi(x)$ and $x \in K$. This claim in turn can be easily seen to imply that the conclusion of
the lemma holds. To show the claim, let $x \in K$ and $0 \ne \beta \in \partial \phi(x)$ be given.
The inclusion $K \subset \inte ( \dom(\phi^+))$ implies the existence of $\varepsilon >0$ such that
$K_\varepsilon := K + \bar B(0;\varepsilon) \subset \inte(\dom(\phi^+))$. Let
\[
y_\varepsilon:= x+ \varepsilon \frac{\beta}{\|\beta\|}, \quad \theta^+ := \max_{y \in K_\varepsilon} \phi^+(y),
\quad \theta^-:=\min_{y \in K} \phi^-(y).
\]
Clearly, $y_\varepsilon \in K_\varepsilon$ due to the definition of $K_\varepsilon$ and the facts that $x \in K$ and $\|y_\varepsilon-x\| \le \varepsilon$.
Moreover, using the fact that every proper convex function is continuous in the interior of its domain, we then
conclude that the proper convex functions $\phi^+$ and $\phi^-$ are continuous on $K_\varepsilon$ and $K$, respectively,
since these two sets lie in the interior of their domains, respectively.
Hence, it follows from Weierstrass' theorem that $\theta^+$ and $\theta^-$
are both finite due to the compactness of $K$ and $K_\varepsilon$,
respectively.
Using the facts that $x \in K$, $y_\varepsilon \in K_\varepsilon$,
$\beta \in \partial \phi(x)$ and $\phi^+ \ge \phi$, the definitions of $\theta^+$ and $\theta^-$, and the definition of subgradient,
it then follows that
\begin{align*}
\theta^+ \ge \phi^+(y_\varepsilon) \ge \phi(y_\varepsilon) \ge \phi(x) + \inner{\beta}{y_\varepsilon-x} = \phi(x) + \varepsilon \|\beta\| \ge \theta^-  + \varepsilon \|\beta\|
\end{align*}
and hence that the claim holds with $L=(\theta^+-\theta^-)/\varepsilon$.
\end{proof}

\begin{lemma}\label{lipcontQt}
The following statements hold:
\begin{itemize}
\item[(a)] 
For each $t=2,\ldots,T$, there exist $L_t \ge 0$ such that
the functions $\mathcal{Q}^k_{t}$ and $\underline{\mathcal{Q}}_t^k$
 are $L_t$-Lipschitz continuous on $\mathcal{X}_{t-1}$ for every $k \ge 1$;
 \item[(b)] For each $t=1,\ldots,T$, there exist $\hat L_t\ge 0$ such that
 the functions $f_t^k$ and $g_{t i}^k$ are $\hat L_t$-Lipschitz continuous functions on $\mathcal{X}_t \times \mathcal{X}_{t-1}$
 for every $k \ge 1$ and $i=1,\ldots,p$.
 \end{itemize}
%\begin{itemize}
%\item[(a)]
%$\mathcal{Q}^k_{t}$  is $L$-Lipschitz continuous on $\mathcal{X}_t$;
%\item[(b)]
%$\underline{\mathcal{Q}}_t^k$ is $L$-Lipschitz continuous on $\mathcal{X}_t$
%\end{itemize}
\end{lemma}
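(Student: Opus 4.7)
The plan is to reduce both claims to a single invocation of Lemma \ref{lm:2.3}, whose hypothesis asks for a proper convex sandwich $\phi^- \le \phi \le \phi^+$ on a compact set contained in $\inte(\dom(\phi^+))$. For each family the strategy is to identify $k$-independent envelopes furnished by the initialization (below) and by the exact functions $f_t,g_{ti},\mathcal{Q}_t$ (above).

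I would prove part (b) first. Fix $t$ and $i$; Lemma \ref{lm:2.2}(a) immediately yields the sandwiches $f_t^0 \le f_t^k \le f_t$ and $g_{ti}^0 \le g_{ti}^k \le g_{ti}$ for every $k\ge 1$. Since $f_t^0$ and $g_{ti}^0$ are affine (hence proper convex), and Assumption (H1)(1)(b)--(c) asserts that $f_t$ and $g_{ti}$ are proper convex with $\mathcal{X}_t\times \mathcal{X}_{t-1}\subset \inte(\dom(f_t))\cap\inte(\dom(g_{ti}))$, Lemma \ref{lm:2.3} applied with $K=\mathcal{X}_t\times \mathcal{X}_{t-1}$ delivers uniform Lipschitz constants for the families $\{f_t^k\}_k$ and $\{g_{ti}^k\}_k$; I would set $\hat L_t$ to be the maximum of these constants over $i=1,\ldots,p$. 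The functions $f_t^k$ and $g_{ti}^k$ are convex because they are maxes of affine functions, so all hypotheses of Lemma \ref{lm:2.3} are met.

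For part (a) I would treat $\mathcal{Q}_t^k$ and $\underline{\mathcal{Q}}_t^k$ separately. For $\mathcal{Q}_t^k$, Lemma \ref{lm:2.2}(a) gives $\mathcal{Q}_t^0 \le \mathcal{Q}_t^k \le \mathcal{Q}_t$, with $\mathcal{Q}_t^0$ polyhedral (proper convex) and, by Lemma \ref{lm:2.1}(a), $\mathcal{X}_{t-1} \subset \inte(\dom(\mathcal{Q}_t))$; Lemma \ref{lm:2.3} with $K = \mathcal{X}_{t-1}$ then yields a $k$-uniform constant. For $\underline{\mathcal{Q}}_t^k$ the upper envelope is again $\mathcal{Q}_t$ by Lemma \ref{lm:2.2}(a), convexity is Lemma \ref{lm:2.2}(b), but a $k$-independent lower envelope must be built by hand. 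Using $f_t^k \ge f_t^0$, $\mathcal{Q}_{t+1}^k \ge \mathcal{Q}_{t+1}^0$, and the inclusion $X_t^k(x_{t-1}) \subset \mathcal{X}_t$ from \eqref{l2.2b} (with $X_t^k(x_{t-1})$ nonempty on $\mathcal{X}_{t-1}$ because it contains $X_t(x_{t-1})$, nonempty by Assumption (H1)(2) and \eqref{eq:simobs}), I would bound
\[
\underline{\mathcal{Q}}_t^k(x_{t-1}) \;\ge\; \min_{x_t \in \mathcal{X}_t}\bigl[f_t^0(x_t,x_{t-1}) + \mathcal{Q}_{t+1}^0(x_t)\bigr] \;\ge\; \min_{(x_t,y)\in \mathcal{X}_t\times \mathcal{X}_{t-1}}\bigl[f_t^0(x_t,y) + \mathcal{Q}_{t+1}^0(x_t)\bigr] =: c_t,
\]
which is finite by continuity of $f_t^0$, $\mathcal{Q}_{t+1}^0$ and compactness of $\mathcal{X}_t\times\mathcal{X}_{t-1}$. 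Taking $\phi^-\equiv c_t$ (a constant, hence proper convex) and $\phi^+=\mathcal{Q}_t$ in Lemma \ref{lm:2.3} yields a uniform Lipschitz constant for $\underline{\mathcal{Q}}_t^k$, and choosing $L_t$ as the larger of the two constants finishes (a).

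The main technical point is producing the $k$-independent pointwise lower envelope for $\underline{\mathcal{Q}}_t^k$. A direct Lipschitz analysis of the value function $x_{t-1}\mapsto \underline{\mathcal{Q}}_t^k(x_{t-1})$ would require stability estimates for the $k$-dependent feasibility multifunction $X_t^k(\cdot)$, but the sandwich approach sidesteps this entirely by absorbing all $k$-dependence into the monotone sequences $(f_t^k)$, $(g_t^k)$, $(\mathcal{Q}_{t+1}^k)$ already controlled by Lemma \ref{lm:2.2} and then exploiting compactness of $\mathcal{X}_t\times\mathcal{X}_{t-1}$.
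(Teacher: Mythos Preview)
Your proof is correct and follows essentially the same strategy as the paper: sandwich each family between $k$-independent proper convex envelopes and invoke Lemma \ref{lm:2.3}. The only difference is in the lower envelope for $\underline{\mathcal{Q}}_t^k$: the paper applies Lemma \ref{lm:2.3} with $\phi^-=\underline{\mathcal{Q}}_t^0$ (using the monotonicity $\underline{\mathcal{Q}}_t^0\le\underline{\mathcal{Q}}_t^k$, which follows from $f_t^0\le f_t^k$, $g_t^0\le g_t^k$, $\mathcal{Q}_{t+1}^0\le\mathcal{Q}_{t+1}^k$ exactly as in the proof of \eqref{l2.2d}), whereas you relax the feasible set to $\mathcal{X}_t$ and produce a constant lower bound $c_t$. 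Both choices satisfy the hypotheses of Lemma \ref{lm:2.3}; your route is marginally more elementary since it avoids checking that $\underline{\mathcal{Q}}_t^0$ is proper convex, while the paper's route is slightly tighter and more symmetric with the other three applications.
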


\begin{proof}
Let $t \in \{2,\ldots,T\}$ be given. The existence of $L_t$ satisfying (a) follows
from Lemmas \ref{lm:2.1} and \ref{lm:2.2}, and applying Lemma \ref{lm:2.3} twice, the first time  with
$K=\mathcal{X}_{t-1}$, $\phi^+=\mathcal{Q}_t$ and $\phi^-=\mathcal{Q}_t^0$, and the second time
with $K=\mathcal{X}_{t-1}$, $\phi^+=\mathcal{Q}_t$ and $\phi^-=\underline{\mathcal{Q}}_t^0$.
Moreover, the existence of $\hat L_t$ satisfying (b) follows
from Lemma \ref{lm:2.2}, and applying Lemma \ref{lm:2.3} twice, the first time with
$K=\mathcal{X}_t \times \mathcal{X}_{t-1}$, $\phi^+=f_t$ and $\phi^-=f_t^0$, and the second time
with $K=\mathcal{X}_t \times \mathcal{X}_{t-1}$, $\phi^+=g_{ti}$ and $\phi^-=g_{ti}^0$ for $i=1,\ldots,p$.
\end{proof}

We now state a result whose proof is given in Lemma 5.2 of  \cite{lecphilgirar12}.
%The result is stated in a slight different way than that of the latter result, namely, it does not require convexity of the functions involved.
Even though the latter result assumes convexity of the functions involved in its statement,
its proof does not make use of this assumption.
For this reason, we state the result here in a slightly more general way than it is stated in Lemma 5.2 of  \cite{lecphilgirar12}.

%\end{proof}

\begin{lemma}\label{techlemmasequence}
Lemma 5.2 in \cite{lecphilgirar12}.
Assume that $Y \subset \R^n$ is a compact set, $f : \R^n \to (-\infty,\infty]$ is a function and $\{f_k: \R^n \to (\infty,\infty]\}_{k=1}^\infty$ is a sequence of functions such that, for some integer $k_0>0$ and scalar $L>0$, we have:
\begin{itemize}
\item[(a)]
$
f^{k-k_0}(y) \leq f^{k}(y) \leq f(y)<\infty$ for every $k \ge k_0 + 1$ and $ y \in Y$;
\item[(b)]
$f^k$ is $L$-Lipschitz continuous on $Y$ for every $k \ge 1$.
\end{itemize}
%Assume that $f$ is convex  and $Y$ is compact, and suppose that for any integer $\kappa$ the sequence of $L$-Lipschitz convex functions 
%$f^k, k \in \mathbb{N}$ satisfies
%$$
%f^{k-\kappa}(y) \leq f^k(y) \leq f(y) \;\mbox{for all }y \in Y.
%$$
Then, for any infinite sequence $\{y^k\}_{k=1}^\infty \subset Y$, we have
$$
\lim_{k \rightarrow +\infty}[f(y^k)-f^k(y^k)]=0 \Longleftrightarrow \lim_{k \rightarrow +\infty}[f(y^k)-f^{k-k_0}(y^k)]=0.
$$
\end{lemma}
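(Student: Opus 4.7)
The plan is to prove the equivalence by showing the two directions separately, with the reverse implication being an immediate squeeze and the forward implication requiring a more delicate compactness plus pigeonhole argument.

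For the easy direction ($\Leftarrow$), I would simply observe that hypothesis (a) gives $f^{k-k_0}(y^k)\le f^k(y^k)\le f(y^k)$ for all $k\ge k_0+1$, so
\[
0 \;\le\; f(y^k)-f^k(y^k)\;\le\; f(y^k)-f^{k-k_0}(y^k),
\]
and the right-hand side tending to $0$ forces the left-hand side to tend to $0$ as well.

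The forward direction ($\Rightarrow$) is the core of the argument. I would argue by contradiction: assume there exists $\delta>0$ and a subsequence $\{k_j\}$ (with $k_j\to\infty$) such that $f(y^{k_j})-f^{k_j-k_0}(y^{k_j})\ge\delta$. Combining with the hypothesis $f(y^{k_j})-f^{k_j}(y^{k_j})\to 0$, I obtain that eventually
\[
f^{k_j}(y^{k_j})-f^{k_j-k_0}(y^{k_j})\;\ge\;\delta/2.
\]
Now I would use the compactness of $Y$ to pass to a further subsequence along which $y^{k_j}\to y^*\in Y$. By the uniform $L$-Lipschitz property in (b), both $|f^{k_j}(y^{k_j})-f^{k_j}(y^*)|$ and $|f^{k_j-k_0}(y^{k_j})-f^{k_j-k_0}(y^*)|$ are bounded by $L\|y^{k_j}-y^*\|\to 0$, so eventually
\[
f^{k_j}(y^*)-f^{k_j-k_0}(y^*)\;\ge\;\delta/4.
\]

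The last step exploits the monotonicity structure of (a) at the single point $y^*$. Writing $a_k:=f^k(y^*)$, hypothesis (a) says $a_{k-k_0}\le a_k\le f(y^*)<\infty$, so for each residue $r\in\{0,1,\ldots,k_0-1\}$ the subsequence $\{a_{r+m k_0}\}_{m\ge 0}$ is non-decreasing and bounded above, hence convergent; in particular its consecutive differences $a_{r+m k_0}-a_{r+(m-1)k_0}$ tend to $0$ as $m\to\infty$. By the pigeonhole principle, some residue class $r$ contains infinitely many indices $k_j$, and along that sub-subsequence $a_{k_j}-a_{k_j-k_0}$ is exactly such a consecutive difference and therefore tends to $0$, contradicting the lower bound $\delta/4$.

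The main obstacle I anticipate is keeping the extraction of subsequences organized: one needs successively a subsequence where the lower bound $\delta$ holds, a further subsequence where $y^{k_j}$ converges (to transfer the bound to a single point via Lipschitz continuity), and finally a pigeonhole subsequence aligning all indices in one residue class modulo $k_0$, which is what makes the monotonicity of (a) usable even though the sequence $\{f^k\}$ itself is only monotone in steps of size $k_0$ rather than in $k$.
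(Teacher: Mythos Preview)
Your proof is correct. The reverse implication is indeed an immediate squeeze using hypothesis~(a), and your forward-direction argument---contradiction, compactness to localize at a single point $y^*$, uniform Lipschitz continuity to transfer the gap, then the pigeonhole/residue-class trick to extract a telescoping contradiction from the $k_0$-step monotonicity---is sound and cleanly organized.

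As for comparison with the paper: the paper does not supply its own proof of this lemma. It simply states the result and refers the reader to Lemma~5.2 of Girardeau, Lecl\`ere, and Philpott (cited as \cite{lecphilgirar12}), noting only that the convexity assumption present in that reference is not actually used. So there is nothing in the present paper to compare your argument against; your proof stands on its own and fills in exactly what the paper omits.
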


We are now ready to provide the main result of this subsection, i.e., the convergence analysis of DCuP.

\begin{thm}\label{convprooftheorem}  Let Assumption (H1) hold. Define
{\small{
$$
\mathcal{H}(t)
\left\{
\begin{array}{ll}
(i) &\displaystyle \varlimsup_{k \rightarrow +\infty} g_{t i}(x_t^{k}, x_{t-1}^{k} )  \le 0, \quad i=1,\ldots,p,\\
(ii)& \displaystyle \lim_{k \rightarrow + \infty} \mathcal{Q}_t( x_{t-1}^{k} ) - {\underline{\mathcal{Q}}}_t^{k-1}  (x_{t-1}^{k} )=
\displaystyle \lim_{k \rightarrow + \infty} \mathcal{Q}_t( x_{t-1}^{k} ) - {\underline{\mathcal{Q}}}_t^k  (x_{t-1}^{k} )=0,\\
(iii) & \displaystyle \lim_{k \rightarrow + \infty} \mathcal{Q}_t( x_{t-1}^{k} ) - \sum_{\tau=t}^T f_{\tau}(x_{\tau}^{k}, x_{\tau-1}^{k} ) =0,\\
(iv)& 
\displaystyle \lim_{k \rightarrow + \infty} \mathcal{Q}_t( x_{t-1}^{k} ) - \mathcal{Q}_t^k  (x_{t-1}^{k} ) =0. 
\end{array}
\right.
$$
}}
Then $\mathcal{H}(t)$-(i) holds for $t=1,\ldots,T$,
$\mathcal{H}(t)$-(ii),(iii) hold for $t=1,\ldots,T+1$,
and $\mathcal{H}(t)$-(iv) holds for $t=2,\ldots,T+1$. 
In particular, the limit of the sequence of upper bounds $(\sum_{t=1}^T f_t(x_t^{k} , x_{t-1}^{k} ) )_{k \geq 1}$ 
and of lower bounds 
$({\underline{\mathcal{Q}}}_1^{k-1}  (x_{0} ))_{k \geq 1}$
is the optimal value
$\mathcal{Q}_1( x_0 ) $ of \eqref{defpb} and 
any accumulation point of the sequence $(x_1^{k},\ldots,x_T^{k})$ is an optimal solution to \eqref{defpb}.
\end{thm}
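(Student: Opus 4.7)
The plan is to proceed by backward induction on $t$, starting from $t = T+1$ where $\mathcal{H}(T+1)$ holds trivially because $\mathcal{Q}_{T+1} \equiv \mathcal{Q}_{T+1}^k \equiv \underline{\mathcal{Q}}_{T+1}^k \equiv 0$. In the inductive step I would assume $\mathcal{H}(t+1)$ and establish $\mathcal{H}(t)$ by verifying the four claims in the order (i) $\to$ (ii) $\to$ (iv) $\to$ (iii). Throughout, the iterates $(x_t^k, x_{t-1}^k)$ lie in the compact set $\mathcal{X}_t \times \mathcal{X}_{t-1}$ by \eqref{l2.2b} and Lemma~\ref{lm:2.1}, so by a standard subsequence argument it is enough to verify each limit along an arbitrary convergent subsequence $(x_t^{k_j}, x_{t-1}^{k_j}) \to (\bar x_t, \bar x_{t-1})$.

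For (i), I would fix indices $k_j < k_i$ and exploit that, by \eqref{eq:gtk0}, the linearization $\ell_{g_{t\ell}}(\,\cdot\,;(x_t^{k_j},x_{t-1}^{k_j}))$ is one of the pieces aggregated into $g_{t\ell}^{k_i-1}$, so the feasibility inequality $g_{t\ell}^{k_i-1}(x_t^{k_i},x_{t-1}^{k_i}) \le 0$ implies
\[
g_{t\ell}(x_t^{k_j},x_{t-1}^{k_j}) \le M_t\,\|(x_t^{k_i},x_{t-1}^{k_i})-(x_t^{k_j},x_{t-1}^{k_j})\|,
\]
where $M_t$ is the uniform subgradient bound from Lemma~\ref{lm:2.1}(d). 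Letting $i\to\infty$ for fixed $j$ and then $j\to\infty$ delivers $\mathcal{H}(t)$-(i). For (ii), the monotonicity and uniform upper bound in Lemma~\ref{lm:2.2} together with the uniform Lipschitz bound from Lemma~\ref{lipcontQt}(a) let me apply Lemma~\ref{techlemmasequence} with $f=\mathcal{Q}_t$ and $f^k=\underline{\mathcal{Q}}_t^{k}$, so it suffices to show the limit along one subsequence. Along the chosen subsequence I would prove the squeeze $\underline{\mathcal{Q}}_t^{k_j-1}(x_{t-1}^{k_j}) \to \mathcal{Q}_t(\bar x_{t-1})$ using feasibility of $\bar x_t$ for $X_t(\bar x_{t-1})$ provided by (i), convergence $f_t^{k_j-1}(x_t^{k_j},x_{t-1}^{k_j}) \to f_t(\bar x_t,\bar x_{t-1})$ by an analogous cut-based squeeze, and the inductive hypothesis $\mathcal{H}(t+1)$-(iv) applied to $\mathcal{Q}_{t+1}^{k_j-1}(x_t^{k_j})$. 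Property (iv) is then immediate from the sandwich $\underline{\mathcal{Q}}_t^{k-1}(x_{t-1}^k) = \mathcal{C}_t^k(x_{t-1}^k) \le \mathcal{Q}_t^k(x_{t-1}^k) \le \mathcal{Q}_t(x_{t-1}^k)$ coming from step d) of DCuP and Lemma~\ref{lm:2.2}(a); and (iii) is obtained by decomposing the optimum via \eqref{forreg10} as $\underline{\mathcal{Q}}_t^{k_j-1}(x_{t-1}^{k_j}) = f_t^{k_j-1}(x_t^{k_j},x_{t-1}^{k_j}) + \mathcal{Q}_{t+1}^{k_j-1}(x_t^{k_j})$ and invoking $\mathcal{H}(t+1)$-(iii)--(iv).

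The hard part will be executing the convergence $\underline{\mathcal{Q}}_t^{k_j-1}(x_{t-1}^{k_j}) \to \mathcal{Q}_t(\bar x_{t-1})$ in the proof of (ii) rigorously: classical SDDP only approximates the cost-to-go, but here both the objective $f_t$ and the constraints $g_t$ are replaced by polyhedral underestimates, so one must simultaneously control how the approximate feasible sets $X_t^k(x_{t-1})$ shrink to $X_t(x_{t-1})$ and how $f_t^k$ rises to $f_t$ near the limit. I would split this into $\limsup_j \underline{\mathcal{Q}}_t^{k_j-1}(x_{t-1}^{k_j}) \le \mathcal{Q}_t(\bar x_{t-1})$, which follows from $\underline{\mathcal{Q}}_t^{k} \le \mathcal{Q}_t$ (Lemma~\ref{lm:2.2}(a)) and Lipschitz continuity of $\mathcal{Q}_t$ on $\mathcal{X}_{t-1}$ from Lemma~\ref{lipcontQt}(a), and $\liminf_j \underline{\mathcal{Q}}_t^{k_j-1}(x_{t-1}^{k_j}) \ge \mathcal{Q}_t(\bar x_{t-1})$, which demands the new ingredients of the paper's analysis, namely the uniform Lipschitz bounds from Lemmas~\ref{lm:2.1}(d) and \ref{lipcontQt}(b), to conclude that $\bar x_t$ is feasible for $X_t(\bar x_{t-1})$ and attains $f_t(\bar x_t, \bar x_{t-1}) + \mathcal{Q}_{t+1}(\bar x_t) \le \lim_j \underline{\mathcal{Q}}_t^{k_j-1}(x_{t-1}^{k_j})$. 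Once $\mathcal{H}(1)$ is in place, evaluating at $x_{0}^k = x_0$ identifies $\mathcal{Q}_1(x_0)$ as the common limit of the lower bounds $\underline{\mathcal{Q}}_1^{k-1}(x_0)$ and, via $\mathcal{H}(1)$-(iii), of the upper bounds $\sum_{t=1}^T f_t(x_t^k,x_{t-1}^k)$, and continuity of $f_t$ together with closedness of $X_t$ yield that any accumulation point of $(x_1^k,\ldots,x_T^k)$ is an optimal solution of \eqref{defpb}.
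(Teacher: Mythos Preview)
Your proposal is essentially correct and follows the same backward-induction skeleton as the paper, but it differs in how the key asymptotic estimates are obtained. The paper does not pass to convergent subsequences for items (i) and for the $f_t$-part of (ii); instead it exploits the tightness $g_{t i}^{k}(x_t^{k},x_{t-1}^{k})=g_{t i}(x_t^{k},x_{t-1}^{k})$ and $f_t^{k}(x_t^{k},x_{t-1}^{k})=f_t(x_t^{k},x_{t-1}^{k})$ (since the linearization at a point equals the function there) and then invokes Lemma~\ref{techlemmasequence} with $k_0=1$ to shift from index $k$ to $k-1$. Your two-index subgradient estimate for (i) and your ``cut-based squeeze'' for $f_t$ are valid alternatives that unpack what Lemma~\ref{techlemmasequence} does, at the cost of carrying a fixed convergent subsequence everywhere. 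The paper only resorts to a subsequence/contradiction argument for the $\liminf$ part of \eqref{anteschaintd}, which is exactly your $\liminf$ step for (ii).

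Two places in your write-up need tightening. First, when you invoke $\mathcal{H}(t+1)$-(iv) for $\mathcal{Q}_{t+1}^{k_j-1}(x_t^{k_j})$, note that the induction hypothesis gives the statement for $\mathcal{Q}_{t+1}^{k}$, not $\mathcal{Q}_{t+1}^{k-1}$; the paper closes this gap with another application of Lemma~\ref{techlemmasequence} (see \eqref{eq:Qktt}), and you should do the same or appeal to the monotonicity $\mathcal{Q}_{t+1}^{k-1}\le\mathcal{Q}_{t+1}^{k}\le\mathcal{Q}_{t+1}$ along your subsequence. Second, in your $\liminf$ step the inequality should read $f_t(\bar x_t,\bar x_{t-1})+\mathcal{Q}_{t+1}(\bar x_t)\ge \mathcal{Q}_t(\bar x_{t-1})$ (once $\bar x_t\in X_t(\bar x_{t-1})$), not $\le$; and Lipschitz continuity of $\mathcal{Q}_t$ itself comes from Lemma~\ref{lm:2.1}(b), not Lemma~\ref{lipcontQt}(a).
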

\begin{proof} We first  prove $\mathcal{H}(t)$-(i) for $t=1,\ldots,T$.
Let $t \in \{1,\ldots,T\}$ be given and define  the sequence $\{y_t^k\}$
as $y_t^{k}=(x_t^{k} , x_{t-1}^{k})$ for every $k \geq 1$.
In view of Lemma \ref{lm:2.2},
we have %$g_{t i} \geq g_{t i}^{k} \geq \ell_{g_{t i}}(\cdot ; y_t^k )$ for every $i \le p$, which implies
$g_{t i}(y_t^k) \geq g_{t i}^{k}(y_t^k) \geq \ell_{g_{t i}}(y_t^k ; y_t^k )=g_{t i}(y_t^k)$, and hence
\begin{equation}\label{tangeantefhd}
g_t^{k} ( y_t^k   )  = g_t( y_t^k ), \;\forall \;k \geq 1.
\end{equation}
Due to Lemma \ref{lipcontQt}-(b), functions $g_{t i}^k$ are convex 
$\hat L_t$-Lipschitz continuous on $\mathcal{X}_t \small{\times} \mathcal{X}_{t-1}$.
Therefore, recalling \eqref{tangeantefhd}, we can apply Lemma \ref{techlemmasequence} to 
$f=g_{t i}$, $f^k=g_{t i}^{k}$, $y^k=y_t^k$, $Y=\mathcal{X}_t \small{\times} \mathcal{X}_{t-1}$ for $i=1,\ldots,p$,
to obtain 
\begin{equation}\label{limghTd}
\begin{array}{l}
\lim_{k \rightarrow +\infty} g_{t}( x_t^{k} , x_{t-1}^{k} )-g_t^{k-1}( x_t^{k} , x_{t-1}^{k} )=0.\\
\end{array}
\end{equation}
%Let us fix $i \in \{1,\ldots,p\}$.
The latter conclusion together with the fact that $x_t^{k} \in X_t^{k-1}(x_{t-1}^{k} )$,
and hence  $g_{t}^{k-1}( x_t^{k} ,x_{t-1}^{k} ) \leq 0$, for every $k \ge 1$, then implies that $\mathcal{H}(t)$-(i) holds.

Let us now show  $\mathcal{H}(1)$-(ii), (iii) and
$\mathcal{H}(t)$-(ii)-(iii), (iv) for $t=2,\ldots,T+1$
by backward  induction on $t$. $\mathcal{H}(T+1)$-(ii), (iii), (iv) trivially holds.
Now,  fix $t \in \{1,\ldots,T\}$ and assume that $\mathcal{H}(t+1)$-(ii), (iii), (iv) holds.
We will show
that $\mathcal{H}(t)$-(ii), (iii) holds and that $\mathcal{H}(t)$-(iv) holds if $t \geq 2$.
Indeed,
since $f_t \geq f_t^{k} \geq \ell_{f_t}(\cdot ; y_t^{k} )$ and $f_t(y^k_t) = \ell_{f_t}(y_t^k ; y_t^{k} )$, we conclude that
%$f_t( y_t^{k} ) \geq f_t^{k} (  y_t^k    )  \geq  \ell_{f_t}(  y_t^k  ; y_t^k ) = f_t( y_t^k )$
% for all $k \geq 1$, 
$f_t^{k} (y_t^k  )  = f_t( y_t^k )$ for every $k \ge 1$,
%\end{equation}
and hence that $\lim_{k \rightarrow +\infty} f_t( y_t^k )-f_t^{k}( y_t^k )=0$.
Recalling by Lemma \ref{lipcontQt}-(b) that $f_t^k$ is $\hat L_t$-Lipschitz continuous on
$\mathcal{X}_{t} \small{\times} \mathcal{X}_{t-1}$ and using Lemma \ref{techlemmasequence}
with $f=f_t$, $f^k=f_t^{k}$, $(y^k)=( y_t^k )$, and
$Y=\mathcal{X}_{t} \small{\times} \mathcal{X}_{t-1}$, we conclude that
%(observe that the assumptions of the lemma are satisfied; in particular due to (H1)-(b) functions $f_t^k$ are convex Lipschitz continuous),
%we obtain 
\begin{equation}\label{limftd}
\lim_{k \rightarrow +\infty} f_t( x_t^{k} , x_{t-1}^{k} )-f_t^{k-1}( x_t^{k} , x_{t-1}^{k} )=0.
\end{equation}
Moreover, by the induction hypothesis $\mathcal{H}(t+1)$-(iv), we have
$
\lim_{k \rightarrow +\infty}  \mathcal{Q}_{t+1}^{k}(  x_t^{k}  ) - \mathcal{Q}_{t+1}( x_{t}^{k} ) = 0.
$
Recalling by Lemma \ref{lipcontQt}-(a) that functions 
$\mathcal{Q}_t^k$ are $L_t$-Lipschitz continuous on $\mathcal{X}_{t-1}$, we can use
Lemma \ref{techlemmasequence} with $k_0=1$, $f=\mathcal{Q}_{t+1}, f^k=\mathcal{Q}_{t+1}^k, y^k=x_{t}^{k}$ and 
$Y=\mathcal{X}_t$, to obtain 
\begin{equation} \label{eq:Qktt}
\lim_{k \rightarrow +\infty}  \mathcal{Q}_{t+1}^{k-1}(  x_t^{k}  ) - \mathcal{Q}_{t+1}( x_{t}^{k} ) = 0.
\end{equation}
Now, using Lemma \ref{lm:2.2}, we easily see that the objective function $f_{t}^{k-1}(\cdot , x_{t-1}^{k} )  +  \mathcal{Q}_{t+1}^{k-1}( \cdot)$ and feasible region $ X^{k-1}_t( x_{t-1}^{k} )$ of \eqref{forreg10}
satisfies $ f_{t}^{k-1}(\cdot , x_{t-1}^{k} )  +  \mathcal{Q}_{t+1}^{k-1}( \cdot) \le F_t(\cdot,x_{t-1}^k) $
and $X_t^{k-1} (x_{t-1}^{k} ) \supseteq X_t( x_{t-1}^{k} )$.  Since $x_t^k$ is an optimal solution of \eqref{forreg10} and $\mathcal{Q}_t( x_{t-1}^{k} )$
is the optimal value of $\min \{ F_t(x_t,x_{t-1}^k): x_t \in X_t( x_{t-1}^{k} )\}$ due to \eqref{eq:Qt}, we then conclude that
$f_{t}^{k-1}(x_t^{k} , x_{t-1}^{k} )  +  \mathcal{Q}_{t+1}^{k-1}(  x_t^{k}  ) \leq \mathcal{Q}_t( x_{t-1}^{k} )$. Hence, we conclude that
%Since $g_t^k \le g_t$, it follows from \eqref{eq:Xt} and \eqref{Xtkd} that $X_t^{k-1} (x_{t-1}^{k} ) \supseteq X_t( x_{t-1}^{k} )$.
%This conclusion together with the relation 
%$$
%f_t^{k-1}(\cdot,x_{t-1}^k) + \mathcal{Q}_{t+1}^{k-1}(\cdot) \leq f_t(\cdot,x_{t-1}^k) + \mathcal{Q}_{t+1}(\cdot), 
%$$
%and the fact that  $x_t^k$ is an optimal solution of \eqref{forreg1} imply that
%$f_{t}^{k-1}(x_t^{k} , x_{t-1}^{k} )  +  \mathcal{Q}_{t+1}^{k-1}(  x_t^{k}  ) \leq \mathcal{Q}_t( x_{t-1}^{k} )$.
%Since the sequence $f_{t}^{k-1}(x_t^{k} , x_{t-1}^{k} )  +  \mathcal{Q}_{t+1}^{k-1}(  x_t^{k}  ) - \mathcal{Q}_t( x_{t-1}^{k} )$ is bounded from above, it has a finite
%limit sup which satisfies
%\begin{equation}\label{limftQtd}
%\varlimsup_{k \rightarrow +\infty}  f_{t}^{k-1}(x_t^{k} , x_{t-1}^{k} )  +  \mathcal{Q}_{t+1}^{k-1}(  x_t^{k}  ) - \mathcal{Q}_t( x_{t-1}^{k} ) \leq 0.
%\end{equation}
%The induction hypothesis $\mathcal{H}(t+1)$-(iii) gives 
%$$
%\lim_{k \rightarrow +\infty}  \mathcal{Q}_{t+1}^{k}(  x_t^{k}  ) - \mathcal{Q}_{t+1}( x_{t}^{k} ) = 0.
%$$
%Applying Lemma \ref{techlemmasequence} to $f=\mathcal{Q}_{t+1}, f^k=\mathcal{Q}_{t+1}^k, y^k=x_{t}^{k}$, $Y=\mathcal{X}_t$,
%we obtain
%$$
%\lim_{k \rightarrow +\infty}  \mathcal{Q}_{t+1}^{k-1}(  x_t^{k}  ) - \mathcal{Q}_{t+1}( x_{t}^{k} ) = 0.
%$$
%Together with \eqref{limftd} and \eqref{limftQtd} this relation, implies
\begin{align*}
0 & \ge \varlimsup_{k \rightarrow +\infty}  f_{t}^{k-1}(x_t^{k} , x_{t-1}^{k} )  +  \mathcal{Q}_{t+1}^{k-1}(  x_t^{k}  ) - \mathcal{Q}_t( x_{t-1}^{k} ) 
%=   \varlimsup_{k \rightarrow +\infty}  f_{t}^{k}(x_t^{k} , x_{t-1}^{k} )  +  \mathcal{Q}_{t+1}(  x_t^{k}  ) - \mathcal{Q}_t( x_{t-1}^{k} )  \\
 = \varlimsup_{k \rightarrow +\infty}  f_{t}(x_t^{k} , x_{t-1}^{k} )  +  \mathcal{Q}_{t+1}(  x_t^{k}  ) - \mathcal{Q}_t( x_{t-1}^{k} )
\end{align*}
where the equality is due to \eqref{limftd} and \eqref{eq:Qktt}.
We now claim that
\begin{equation}\label{anteschaintd}
%\lim_{k \rightarrow +\infty} \mathcal{Q}_t(x_{t-1}^{k} )  - f_t^{k-1}(x_t^{k}, x_{t-1}^{k} ) - \mathcal{Q}_{t+1}^{k-1}(x_t^{k} )=
\lim_{k \rightarrow +\infty}  f_{t}(x_t^{k} , x_{t-1}^{k} )  +  \mathcal{Q}_{t+1}(  x_t^{k}  ) - \mathcal{Q}_t( x_{t-1}^{k} )=0. 
\end{equation}
Indeed, assume by contradiction that the above claim does not hold. Then, it follows from the last conclusion before the claim that
\begin{equation}\label{liminftsupd}
\begin{array}{lll}
\varliminf_{k \rightarrow +\infty} \;  f_{t}(x_t^{k} , x_{t-1}^{k} )  +  \mathcal{Q}_{t+1}(  x_t^{k}  ) - \mathcal{Q}_t( x_{t-1}^{k} ) < 0.
\end{array}
\end{equation}
Since
$\{(x_t^{k} , x_{t-1}^{k})\}$ is a sequence lying in the compact set $\mathcal{X}_t \small{\times} \mathcal{X}_{t-1}$,
it has a subsequence $\{(x_t^{k} , x_{t-1}^{k})\}_{k \in K}$ converging to some $(x_t^*, x_{t-1}^*) \in \mathcal{X}_t \small{\times} \mathcal{X}_{t-1}$.
Hence, in view of $\mathcal{H}(t)$-(i), \eqref{liminftsupd}, and the fact that $f_t$ and $g_t$ are lower semi-continuous on $\mathcal{X}_t \small{\times} \mathcal{X}_{t-1}$ and
$\mathcal{Q}_t$ (resp. $\mathcal{Q}_{t+1}$) is lower semi-continuous on $\mathcal{X}_{t-1}$ (resp. $\mathcal{X}_t$), we conclude that
$$
 g_t(x^*_t,x^*_{t-1}) \le 0, \quad f_{t}(x_t^{*} , x_{t-1}^{*} )  +  \mathcal{Q}_{t+1}(  x_t^{*}  ) -\mathcal{Q}_t( x_{t-1}^* )< 0
$$
and hence that $x_t^* \in X_t( x_{t-1}^* )$ (recall that $\mathcal{X}_t$ is closed) and $F_t(x_t^{*} , x_{t-1}^{*} )< \mathcal{Q}_t( x_{t-1}^* )$ due to the
definition of $X_t$ and $F_t$ in \eqref{eq:Xt} and \eqref{eq:Qt}, respectively.
Since this contradicts the definition of 
$\mathcal{Q}_t$ in \eqref{eq:Qt},
the above claim follows.
%Therefore \eqref{liminftsupd} must hold and we have shown  
%\begin{equation}\label{anteschaintd}
%\lim_{k \rightarrow +\infty} \mathcal{Q}_t(x_{t-1}^{k} )  - f_t^{k-1}(x_t^{k}, x_{t-1}^{k} ) - \mathcal{Q}_{t+1}^{k-1}(x_t^{k} )=
%\lim_{k \rightarrow +\infty} \mathcal{Q}_t(x_{t-1}^{k} )  - f_t(x_t^{k}, x_{t-1}^{k} ) - \mathcal{Q}_{t+1}(x_t^{k} )=0. 
%\end{equation}
Combining 
$$
\begin{array}{lll}
0 \leq \mathcal{Q}_t( x_{t-1}^{k} ) - {\underline{\mathcal{Q}}}_t^{k} ( x_{t-1}^{k} ) &  \leq & \mathcal{Q}_t( x_{t-1}^{k} ) - {\underline{\mathcal{Q}}}_t^{k-1} ( x_{t-1}^{k} ),\\
&= & \mathcal{Q}_t( x_{t-1}^{k} ) - f_t^{k-1} (x_t^{k} , x_{t-1}^{k} )-\mathcal{Q}_{t+1}^{k-1}(x_t^{k} ) \mbox{ [by definition of }x_t^k]
\end{array}
$$
with relations \eqref{limftd}, \eqref{eq:Qktt}, \eqref{anteschaintd} we obtain $\lim_{k \rightarrow +\infty} \mathcal{Q}_t( x_{t-1}^{k} ) - {\underline{\mathcal{Q}}}_t^{k} ( x_{t-1}^{k} ) = 0$.
Also observe that 
$$
\begin{array}{l}
\displaystyle \lim_{k \rightarrow +\infty} \mathcal{Q}_t(x_{t-1}^{k} )  - \sum_{\tau=t}^T f_\tau(x_\tau^{k}, x_{\tau-1}^{k} )\\
 = \underbrace{\displaystyle \lim_{k \rightarrow +\infty} \mathcal{Q}_t(x_{t-1}^{k} )  - f_t(x_t^k, x_{t-1}^k) - \mathcal{Q}_{t+1}( x_t^k )}_{=0 \mbox{ by }\eqref{anteschaintd}} + 
\underbrace{\displaystyle \lim_{k \rightarrow +\infty}  \mathcal{Q}_{t+1}( x_t^k ) - \sum_{\tau=t+1}^T f_\tau(x_\tau^{k}, x_{\tau-1}^{k} )}_{=0 \mbox{ using }\mathcal{H}(t+1)-(iii)},\\
=0,
\end{array}
$$
and we have shown $\mathcal{H}(t)$-(ii),(iii).

Finally, if $t \geq 2$, $\mathcal{H}(t)$-(iv) follows from
\begin{eqnarray*}
0 \leq \mathcal{Q}_t( x_{t-1}^{k} ) - \mathcal{Q}_t^k ( x_{t-1}^{k} )  & \leq  &   \mathcal{Q}_t( x_{t-1}^{k} ) - \mathcal{C}_t^k ( x_{t-1}^{k} )\mbox{ since }\mathcal{Q}_t^k \geq \mathcal{C}_t^k,\\
& = & \mathcal{Q}_t( x_{t-1}^{k} ) - f_t^{k-1} (x_t^{k} , x_{t-1}^{k} )-\mathcal{Q}_{t+1}^{k-1}(x_t^{k} ) \mbox{ [by definition of }x_t^k]
\end{eqnarray*}
combined with relations \eqref{limftd}, \eqref{eq:Qktt}, \eqref{anteschaintd}. 
\hfill
\end{proof}

\if{

\subsection{Forward-Backward DCuP}

It is also possible to have for each iteration both a forward and backward
pass and compute cuts for $\mathcal{Q}_t$ in the backward passes and cuts
for $f_t, g_t$ in both the forward and backward passes. The corresponding
extension of the algorithm is given below and the convergence of this variant
of DCuP is given in Theorem \ref{convproofstodcupbf}.\\
\rule{\linewidth}{1pt}
\par {\textbf{Forward-Backward DCuP (Dynamic Cutting Plane) with linearizations computed in  forward and backward passes.}}\\
\rule{\linewidth}{1pt}
\par {\textbf{Step 0. Initialization.}} Let $\mathcal{Q}^{0}_t : \mathcal{X}_{t-1} \to \mathbb{R},\,t=2,\ldots,T+1$, be affine functions 
satisfying 
$\mathcal{Q}_t^0 \leq \mathcal{Q}_t, t=2,\ldots,T,$
and $\mathcal{Q}_{ T+1}^0 \equiv 0$, and let $f_t^0, g_t^0: \mathcal{X}_{t}\small{\times}\mathcal{X}_{t-1} \to \mathbb{R},\,t=1,\ldots,T$, be 
affine functions such that $f_t^0 \leq f_t, g_t^0 \leq g_t$. Set $k=1$.\\
\par {\textbf{Step 1. Forward pass.}} Setting $x_0^{2k-1} = x_0$, for $t=1,2,\ldots,T$,
compute an optimal solution $x_t^{2k-1}$ of
\begin{equation}\label{forreg}
\left\{
\begin{array}{l}
\displaystyle \min_{x_t}  \;f_t^{2k-2}(x_t, x_{t-1}^{2k-1} ) + \mathcal{Q}^{k-1}_{t+1}( x_t )\\
x_t \in X_t^{2k-2}( x_{t-1}^{2k-1}  ),
\end{array}
\right.
\end{equation}
where $X_t^{2k-2}$ is given by \eqref{Xtkd} with $k-1$
replaced by $2k-2$.
Compute $f_t(x_t^{2k-1}, x_{t-1}^{2k-1} )$, $g_t(x_t^{2k-1}, x_{t-1}^{2k-1})$, 
and subgradients
of $f_t$, $g_{t i}$, $i=1,\ldots,p$, at $( x_t^{2k-1}, x_{t-1}^{2k-1} )$ 
with corresponding
linearizations $\ell_{f_t}(\cdot; (x_t^{2k-1}, x_{t-1}^{2k-1} ) ) $ and $\ell_{g_{t i}}(\cdot; (x_t^{2k-1}, x_{t-1}^{2k-1} )) $.
Define
\begin{eqnarray}
f_t^{2k-1} &=& \max\Big( f_t^{2k-2}, \ell_{f_t}(\cdot; ( x_t^{2k-1}, x_{t-1}^{2k-1}   ) ) \Big), \label{eq:ftk0bb}\\
g_t^{2k-1} &=& (g_{t 1}^{2k-1},\ldots,g_{t p}^{2k-1})\mbox{ where } g_{t i}^{2 k-1} = \max\Big( g_{t i}^{2k-2},  \ell_{g_{t i}}(\cdot; (x_t^{2k-1}, x_{t-1}^{2k-1} )) \Big),i=1,\ldots,p. \label{eq:gtk0bb}
\end{eqnarray}
\par {\textbf{Step 2. Backward pass.}} For $t=T, T-1, \ldots,1$,
solve the problem 
\begin{equation}\label{backpassd} 
{\underline{\mathcal{Q}}}_t^{k} ( x_{t-1}^{2k-1} ) := \left\{
\begin{array}{l}
\displaystyle \min_{x_t}  \;f_t^{2k-1}(x_t, x_{t-1}^{2k-1} ) + \mathcal{Q}^{k}_{t+1}( x_t )\\
x_t \in X_t^{2k-1}( x_{t-1}^{2k-1}  ).
\end{array}
\right.
\end{equation}
\par Denoting by $x_t^{2k}$ an optimal solution of \eqref{backpassd}, 
compute $f_t( x_t^{2k}, x_{t-1}^{2k-1} )$, $g_t( x_t^{2k}, x_{t-1}^{2k-1})$,
and subgradients
of $f_t$ and $g_{t i}$, $i=1,\ldots,p$, at $( x_t^{2 k}, x_{t-1}^{2k-1} )$,
with corresponding
linearizations 
$
\ell_{f_t}(\cdot; ( x_t^{2 k}, x_{t-1}^{2k-1} ) )$, $\ell_{g_{t i}}(\cdot; ( x_t^{2k}, x_{t-1}^{2k -1} )).
$
Define
\begin{eqnarray}
f_t^{2k} &=& \max\Big( f_t^{2k-1}, \ell_{f_t}(\cdot; ( x_t^{2k}, x_{t-1}^{2k-1}   ) ) \Big), \label{eq:ftk0b}\\
g_t^{2k} &=& (g_{t 1}^{2k},\ldots,g_{t p}^{2k})\mbox{ where } g_{t i}^{2 k} = \max\Big( g_{t i}^{2k-1},  \ell_{g_{t i}}(\cdot; (x_t^{2k}, x_{t-1}^{2k-1} )) \Big),i=1,\ldots,p. \label{eq:gtk0b}
\end{eqnarray}
\par If $t \geq 2$, take a subgradient $\beta_t^k$ of ${\underline{\mathcal{Q}}}_t^k (  \cdot )$ at $x_{t-1}^{2k-1}$, and store the new cut
\[
\mathcal{C}_t^k ( x_{t-1} ) := {\underline{\mathcal{Q}}}_t^k ( x_{t-1}^{2k-1} ) + \langle \beta_t^k ,  x_{t-1} - x_{t-1}^{2k-1} \rangle
\]
for $\mathcal{Q}_t$,
making up the new approximation
$\mathcal{Q}_{t}^k = \max\{ \mathcal{Q}^{k-1}_{t}, \mathcal{C}_t^k  \}$.
\par {\textbf{Step 4.}} Do $k \leftarrow k+1$ and go to Step 1.\\
\rule{\linewidth}{1pt}

\begin{thm}\label{convproofstodcupbf} 
Let Assumption (H1) hold. Define
{\small{
$$
\mathcal{H}(t)
\left\{
\begin{array}{ll}
(i) & \lim_{k \rightarrow +\infty} \max( g_t(x_t^{2k-1}, x_{t-1}^{2k-1} ) , 0 )  = 0,\; \lim_{k \rightarrow +\infty} \max( g_t(x_t^{2k}, x_{t-1}^{2k-1} ) , 0 )  = 0,\\ 
(ii)&\lim_{k \rightarrow + \infty} \mathcal{Q}_t( x_{t-1}^{2k-1} ) - {\underline{\mathcal{Q}}}_t^k  (x_{t-1}^{2k-1} ) =0,\\
(iii)    &  \lim_{k \rightarrow + \infty} \mathcal{Q}_t( x_{t-1}^{2k-1} ) - \sum_{\tau=t}^T f_{\tau}(x_{\tau}^{2k-1}, x_{\tau-1}^{2k-1} ) =0,\\
(iv)& \lim_{k \rightarrow + \infty} \mathcal{Q}_t( x_{t-1}^{2k-1} ) -  \mathcal{Q}_t^{k} ( x_{t-1}^{2k-1} ) =0.
\end{array}
\right.
$$
}}
Then $\mathcal{H}(t)$-(i) holds for $t=1,\ldots,T$,
$\mathcal{H}(t)$-(ii),(iii) hold for $t=1,\ldots,T+1$,
and $\mathcal{H}(t)$-(iv) holds for $t=2,\ldots,T+1$. Moreover, the limit of the sequence $(\sum_{t=1}^T f_t(x_t^{2k-1} , x_{t-1}^{2k-1} ) )_{k \geq 1}$ is the optimal value
$\mathcal{Q}_1( x_0 ) $ of \eqref{defpb} and 
any accumulation point of the sequence $(x_1^{2k-1},\ldots,x_T^{2k-1})$ is an optimal solution to \eqref{defpb}.
\end{thm}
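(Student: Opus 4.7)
The plan is to first establish $\mathcal{H}(t)$-(i) for every $t \in \{1,\ldots,T\}$ as a preliminary step, and then prove $\mathcal{H}(t)$-(ii),(iii),(iv) jointly by backward induction on $t$, with the base case $t=T+1$ holding trivially since $\mathcal{Q}_{T+1}\equiv 0 \equiv \mathcal{Q}_{T+1}^k \equiv \underline{\mathcal{Q}}_{T+1}^k$. The main workhorse throughout will be Lemma \ref{techlemmasequence}, which lets us freely shift the iteration index on the uniformly Lipschitz, monotonically improving families of cutting-plane approximations supplied by Lemmas \ref{lm:2.2} and \ref{lipcontQt}.

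For $\mathcal{H}(t)$-(i), the key observation is that because $g_{ti}^k$ is built as the maximum of $g_{ti}^{k-1}$ and the exact linearization of $g_{ti}$ at the current trial point, the equality $g_{ti}^k(x_t^k,x_{t-1}^k)=g_{ti}(x_t^k,x_{t-1}^k)$ holds automatically. Combining this with the uniform Lipschitz continuity of the family $\{g_{ti}^k\}$ on $\mathcal{X}_t\times\mathcal{X}_{t-1}$ (Lemma \ref{lipcontQt}(b)) and the monotonicity $g_{ti}^{k-1}\le g_{ti}^k\le g_{ti}$ (Lemma \ref{lm:2.2}(a)), an application of Lemma \ref{techlemmasequence} with $k_0=1$ yields $g_{ti}(x_t^k,x_{t-1}^k)-g_{ti}^{k-1}(x_t^k,x_{t-1}^k)\to 0$, and the feasibility constraint $g_{ti}^{k-1}(x_t^k,x_{t-1}^k)\le 0$ embedded in \eqref{forreg10} closes the argument.

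For the inductive step at stage $t$, I would apply the same shifting trick to $f_t$ (again using $f_t^k(x_t^k,x_{t-1}^k)=f_t(x_t^k,x_{t-1}^k)$) and independently to $\mathcal{Q}_{t+1}^k$, the latter fed by the inductive input $\mathcal{H}(t+1)$-(iv) together with the uniform Lipschitz continuity of $\{\mathcal{Q}_{t+1}^k\}$ on $\mathcal{X}_t$. This produces both $f_t(x_t^k,x_{t-1}^k)-f_t^{k-1}(x_t^k,x_{t-1}^k)\to 0$ and $\mathcal{Q}_{t+1}^{k-1}(x_t^k)-\mathcal{Q}_{t+1}(x_t^k)\to 0$. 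Since $x_t^k$ is optimal in \eqref{forreg10}, whose objective minorizes $F_t(\cdot,x_{t-1}^k)$ and whose feasible set contains $X_t(x_{t-1}^k)$, one obtains $f_t^{k-1}(x_t^k,x_{t-1}^k)+\mathcal{Q}_{t+1}^{k-1}(x_t^k)\le\mathcal{Q}_t(x_{t-1}^k)$, whence $\varlimsup_k[f_t(x_t^k,x_{t-1}^k)+\mathcal{Q}_{t+1}(x_t^k)-\mathcal{Q}_t(x_{t-1}^k)]\le 0$.

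The main obstacle will be the reverse inequality. I expect to argue by contradiction: if the liminf were strictly negative, I would pass to a subsequence $(x_t^k,x_{t-1}^k)\to(x_t^*,x_{t-1}^*)\in\mathcal{X}_t\times\mathcal{X}_{t-1}$, invoke $\mathcal{H}(t)$-(i) together with lower semi-continuity of each $g_{ti}$ to force $g_t(x_t^*,x_{t-1}^*)\le 0$ and hence $x_t^*\in X_t(x_{t-1}^*)$, and then use lower semi-continuity of $f_t$ and $\mathcal{Q}_{t+1}$ to extract a feasible point of \eqref{eq:Qt} whose objective is strictly below $\mathcal{Q}_t(x_{t-1}^*)$, contradicting the definition of $\mathcal{Q}_t$. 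From this limit, $\mathcal{H}(t)$-(ii) drops out by sandwiching $0\le \mathcal{Q}_t(x_{t-1}^k)-\underline{\mathcal{Q}}_t^k(x_{t-1}^k)\le \mathcal{Q}_t(x_{t-1}^k)-\underline{\mathcal{Q}}_t^{k-1}(x_{t-1}^k)=\mathcal{Q}_t(x_{t-1}^k)-f_t^{k-1}(x_t^k,x_{t-1}^k)-\mathcal{Q}_{t+1}^{k-1}(x_t^k)$; $\mathcal{H}(t)$-(iii) follows by telescoping with $\mathcal{H}(t+1)$-(iii); and $\mathcal{H}(t)$-(iv) is immediate from $\mathcal{Q}_t^k\ge\mathcal{C}_t^k$ and $\mathcal{Q}_t^k\le\mathcal{Q}_t$. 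Specializing to $t=1$ gives the stated convergence of the upper bound $\sum_{t=1}^T f_t(x_t^k,x_{t-1}^k)$ (from (iii)) and lower bound $\underline{\mathcal{Q}}_1^{k-1}(x_0)$ (from (ii)) to $\mathcal{Q}_1(x_0)$, and a final lower semi-continuity argument applied to any accumulation point of $(x_1^k,\ldots,x_T^k)$ certifies its optimality in \eqref{defpb}.
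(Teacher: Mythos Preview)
Your argument is essentially a correct proof of Theorem~\ref{convprooftheorem} (Forward DCuP), but the stated theorem concerns the Forward--Backward variant, where the indexing and the definition of $\underline{\mathcal{Q}}_t^k$ are different. Two genuine gaps result.

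First, $\mathcal{H}(t)$-(i) contains \emph{two} limits: one for the forward points $(x_t^{2k-1},x_{t-1}^{2k-1})$ and one for the backward points $(x_t^{2k},x_{t-1}^{2k-1})$. You only treat the forward sequence. The backward sequence needs the same tangency/shifting argument applied to $y_t^{2k}=(x_t^{2k},x_{t-1}^{2k-1})$, using that $x_t^{2k}\in X_t^{2k-1}(x_{t-1}^{2k-1})$ in the backward problem \eqref{backpassd} gives $g_t^{2k-1}(x_t^{2k},x_{t-1}^{2k-1})\le 0$.

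Second, and more substantively, in Forward--Backward DCuP the quantity $\underline{\mathcal{Q}}_t^k(x_{t-1}^{2k-1})$ is the optimal value of the \emph{backward} problem \eqref{backpassd}, solved with the already-updated approximations $f_t^{2k-1}$ and $\mathcal{Q}_{t+1}^k$; its optimizer is $x_t^{2k}$, not the forward trial point $x_t^{2k-1}$. Hence your identity ``$\underline{\mathcal{Q}}_t^{k-1}(x_{t-1}^k)=f_t^{k-1}(x_t^k,x_{t-1}^k)+\mathcal{Q}_{t+1}^{k-1}(x_t^k)$'' (which in Forward DCuP is just the definition of the forward optimum) has no direct analogue linking $\underline{\mathcal{Q}}_t^k$ to $x_t^{2k-1}$. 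The missing ingredient is the comparison between the two subproblems: since $X_t^{2k-1}(x_{t-1}^{2k-1})\subset X_t^{2k-2}(x_{t-1}^{2k-1})$ and $f_t^{2k-1}+\mathcal{Q}_{t+1}^k\ge f_t^{2k-2}+\mathcal{Q}_{t+1}^{k-1}$, the backward optimal value dominates the forward one, i.e.
\[
\underline{\mathcal{Q}}_t^k(x_{t-1}^{2k-1})\ \ge\ f_t^{2k-2}(x_t^{2k-1},x_{t-1}^{2k-1})+\mathcal{Q}_{t+1}^{k-1}(x_t^{2k-1}).
\]
This inequality is what replaces your equality in the sandwich for (ii), and it is precisely what ties the backward-pass cut value $\mathcal{C}_t^k(x_{t-1}^{2k-1})=\underline{\mathcal{Q}}_t^k(x_{t-1}^{2k-1})$ back to the forward trial point $x_t^{2k-1}$, so that (iii) can telescope and (iv) follows from $\mathcal{Q}_t^k\ge\mathcal{C}_t^k$. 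Once you insert this comparison and handle both sequences in (i), the rest of your outline goes through unchanged and matches the paper's proof.
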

\begin{proof} For $t=1,\ldots,T$, let us define the sequence $(y_t^k)_k$
by $y_t^{2k}=(x_t^{2k} , x_{t-1}^{2k-1})$ and $y_t^{2k-1}=(x_t^{2k-1} , x_{t-1}^{2k-1})$ for all $k \geq 1$.
Let us first show $\mathcal{H}(t)$-(i), $t=1,\ldots,T$.
Let $t \in  \{1,\ldots,T\}$.  
Since $x_t^{2k-1} \in X_t^{2k-2}(x_{t-1}^{2k-1} )$, we have $g_t^{2k-2}( x_t^{2k-1} ,x_{t-1}^{2k-1} ) \leq 0$ and therefore
if $g_t(x_t^{2k-1}, x_{t-1}^{2k-1} ) \geq 0$ we have
\begin{equation}\label{limaxg}
\max(g_t(x_t^{2k-1}, x_{t-1}^{2k-1} )  , 0  ) = g_t(x_t^{2k-1}, x_{t-1}^{2k-1} ) \leq g_t(x_t^{2k-1}, x_{t-1}^{2k-1} ) - g_t^{2k-2}(x_t^{2k-1}, x_{t-1}^{2k-1} ).
\end{equation}
Since $g_t^{2k-2} \leq g_t$, the above relation also holds when $g_t(x_t^{2k-1}, x_{t-1}^{2k-1} ) \leq 0$ and therefore
holds for every $k \geq 1$. Similarly, since $x_t^{2k} \in X_t^{2k-1}(x_{t-1}^{2k-1} )$ we have $g_t^{2k-1}( x_t^{2k} ,x_{t-1}^{2k-1} ) \leq 0$
which implies 
\begin{equation}\label{limaxg2}
\max(g_t(x_t^{2k}, x_{t-1}^{2k-1} )  , 0  ) \leq g_t(x_t^{2k}, x_{t-1}^{2k-1} ) - g_t^{2k-1}(x_t^{2k}, x_{t-1}^{2k-1} )
\end{equation}
for all $k \geq 1$.
Next,  $g_{t i} \geq g_{t i}^{k} \geq \ell_{g_{t i}}(\cdot ; y_t^k )$, which implies
\begin{equation}\label{tangeantefh}
g_t^{k} ( y_t^k   )  = g_t( y_t^k ), \;\forall \;k \geq 1.
\end{equation}
Using \eqref{tangeantefh} and applying Lemma \ref{techlemmasequence} to 
$f=g_t$, $f^k=g_t^{k}$, $y^k=y_t^k$ (observe that the assumptions  of the lemma are satisfied), 
we obtain 
\begin{equation}\label{limghT}
\left\{
\begin{array}{l}
\lim_{k \rightarrow +\infty} g_t( x_t^{2k-1} , x_{t-1}^{2k-1} )-g_t^{2k-2}( x_t^{2k-1} , x_{t-1}^{2k-1} )=0,\\
\lim_{k \rightarrow +\infty} g_t( x_t^{2k} , x_{t-1}^{2k-1} )-g_t^{2k-1}( x_t^{2k} , x_{t-1}^{2k-1} )=0.
\end{array}
\right.
\end{equation}
Combining \eqref{limaxg}, \eqref{limaxg2}, and \eqref{limghT}  we get 
\begin{equation}\label{limfing}
 \lim_{k \rightarrow +\infty} \max( g_t(x_t^{2k-1}, x_{t-1}^{2k-1} )  , 0  ) = 0,\;\lim_{k \rightarrow +\infty} \max(g_t(x_t^{2k}, x_{t-1}^{2k-1} ), 0  ) = 0,
\end{equation}
which achieves the proof of $\mathcal{H}(t)$-(i).

Let us now show $\mathcal{H}(1)$-(ii), (iii) and
$\mathcal{H}(t)$-(ii)-(iii), (iv) for $t=2,\ldots,T+1$
by backward  induction on $t$. Clearly, $\mathcal{H}(T+1)$-(ii),(iii), (iv) holds.
Now,  fix $t \in \{1,\ldots,T\}$ and assume that $\mathcal{H}(t+1)$-(ii), (iii), (iv) holds
We will show
that $\mathcal{H}(t)$-(ii), (iii) holds and that $\mathcal{H}(t)$-(iv) holds if $t \geq 2$.
Since $f_t \geq f_t^{k} \geq \ell_{f_t}(\cdot ; y_t^{k} )$, 
we have 
$f_t( y_t^{k} ) \geq f_t^{k} (  y_t^k    )  \geq  \ell_{f_t}(  y_t^k  ; y_t^k ) = f_t( y_t^k )$
and therefore for all $k \geq 1$, 
\begin{equation}\label{tangeantef}
f_t^{k} (y_t^k  )  = f_t( y_t^k ).
\end{equation}
From \eqref{tangeantef}, $\lim_{k \rightarrow +\infty} f_t( y_t^k )-f_t^{k}( y_t^k )=0$ and
applying Lemma \ref{techlemmasequence} to $f=f_t$, $f^k=f_t^{k}$, and $(y^k)=( y_t^k )$ (observe that the assumptions  of the lemma are satisfied),
we obtain 
\begin{equation}\label{limfT}
\left\{
\begin{array}{l}
\lim_{k \rightarrow +\infty} f_t( x_t^{2k-1} , x_{t-1}^{2k-1} )-f_t^{2k-2}( x_t^{2k-1} , x_{t-1}^{2k-1} )=0,\\
\lim_{k \rightarrow +\infty} f_t( x_t^{2k} , x_{t-1}^{2k-1} )-f_t^{2k-1}( x_t^{2k} , x_{t-1}^{2k-1} )=0.
\end{array}
\right.
\end{equation}
\if{
Clearly relations $g_t \geq g_t^{2k-2}$ imply $X_t^{2k-2} (x_{t-1}^{2k-1} ) \supseteq X_t( x_{t-1}^{2k-1} )$
and since $x_t^{2k-1} \in X_t^{2k-2} (x_{t-1}^{2k-1} )$, we deduce 
$f_{t}^{2k-2}(x_t^{2k-1} , x_{t-1}^{2k-1} ) \leq \mathcal{Q}_t( x_{t-1}^{2k-1} )$.
Since the sequence $f_{t}^{2k-2}(x_t^{2k-1} , x_{t-1}^{2k-1} ) - \mathcal{Q}_t( x_{t-1}^{2k-1} )$ is bounded it has a finite
limit superior which satisfies
$$
\varlimsup_{k \rightarrow +\infty}  f_{t}^{2k-2}(x_t^{2k-1} , x_{t-1}^{2k-1} ) - \mathcal{Q}_t( x_{t-1}^{2k-1} ) \leq 0
$$
and using \eqref{limfT}, this implies
\begin{equation}\label{limsupT}
\varlimsup_{k \rightarrow +\infty}  f_{t}(x_t^{2k-1} , x_{t-1}^{2k-1} ) - \mathcal{Q}_t( x_{t-1}^{2k-1} ) \leq 0.
\end{equation}
Let us now show by contradiction that 
\begin{equation}\label{liminfT}
\varliminf_{k \rightarrow +\infty}  f_{t}(x_t^{2k-1} , x_{t-1}^{2k-1} ) - \mathcal{Q}_t( x_{t-1}^{2k-1} ) \geq  0.
\end{equation}
Assume that \eqref{liminfT} does not hold.
It follows that there exists a subsequence $( x_t^{k}, x_{t-1}^{k}  )_{k \in K}$ of $(x_t^{2k-1} , x_{t-1}^{2k-1})$ in 
$\mathcal{X}_t \small{\times} \mathcal{X}_{t-1}$
such that 
$$
\lim_{k \rightarrow +\infty, k \in K}  f_{t}(x_t^{k} , x_{t-1}^{k} ) - \mathcal{Q}_t( x_{t-1}^{k} ) < 0.
$$
Since the sequence $(x_t^{k}, x_{t-1}^{k})_{k \in K}$ belongs to the compact set $\mathcal{X}_t \small{\times} \mathcal{X}_{t-1}$, we can assume
without loss of generality that it converges to some $(x_t^* , x_{t-1}^* ) \in \mathcal{X}_t \small{\times} \mathcal{X}_{t-1}$.
Using continuity of $f_t$ and $\mathcal{Q}_t$ this implies
\begin{equation}\label{fTlessQT}
f_t(x_t^* , x_{t-1}^* ) < \mathcal{Q}_t ( x_{t-1}^* ).
\end{equation}
Using and the continuity of $g$ and \eqref{limfing} 
we also have 
$$
\max( g_t(x_t^* , x_{t-1}^* ) , 0 ) = \lim_{k \rightarrow +\infty, k \in K} \max( g_t(x_t^{k} , x_{t-1}^{k} ) , 0 ) \leq 0.
$$
Therefore $g_t(x_{t}^* , x_{t-1}^* ) \leq 0$ and similarly using \eqref{limfinh} and the continuity of $h_t$ we obtain
$h_T( x_T^* ) \leq 0$. We deduce that
$x_T^* \in X_T(x_{T-1}^* ) $ which is a contradiction with \eqref{fTlessQT}.
Therefore \eqref{liminfT} holds and we have shown that 
$$
\lim_{k \rightarrow +\infty}  \mathcal{Q}_T( x_{T-1}^{2k-1}  ) - f_{T}(x_T^{2k-1}  , x_{T-1}^{2k-1} )=
\lim_{k \rightarrow +\infty}  \mathcal{Q}_T( x_{T-1}^{2k-1} ) - f_{T}^{2k-2}(x_T^{2k-1}  , x_{T-1}^{2k-1} )=0.
$$
Now note that compared to problem \eqref{forreg}, the 
optimization problem \eqref{backpassd} has a smaller feasible set and a 
larger objective function, therefore the optimal value 
${\underline{\mathcal{Q}}}_T^k ( x_{T-1}^{2k-1} )$ of \eqref{backpass}
cannot be less than the optimal value of \eqref{forreg}:
\begin{equation}\label{backaboveforw}
{\underline{\mathcal{Q}}}_T^k ( x_{T-1}^{2k-1} ) \geq  f_T^{2k-2}( x_T^{2k-1} , x_{T-1}^{2k - 1} ). 
\end{equation}
Finally, $\mathcal{H}(T)-(ii)$ follows from the following chain of inequalities:
\begin{eqnarray*}
0 \leq \mathcal{Q}_T( x_{T-1}^{2k-1} ) - \mathcal{Q}_T^k ( x_{T-1}^{2k-1} )  & \leq  &   \mathcal{Q}_T( x_{T-1}^{2k-1} ) - \mathcal{C}_T^k ( x_{T-1}^{2k-1} )\mbox{ since }\mathcal{Q}_T^k \geq \mathcal{C}_T^k,\\
& = & \mathcal{Q}_T( x_{T-1}^{2k-1} ) - {\underline{\mathcal{Q}}}_T^k ( x_{T-1}^{2k-1} )\mbox{ by definition of }\mathcal{C}_T^k,\\
& \leq & \mathcal{Q}_T( x_{T-1}^{2k-1} ) - f_T^{2k-2}( x_T^{2k-1} , x_{T-1}^{2k - 1} )\mbox{ using }\eqref{backaboveforw}.
\end{eqnarray*}
This completes the proof of $\mathcal{H}(T)$. Let us now assume that $\mathcal{H}(t+1)$ holds for some $t \in \{2,\ldots,T-1\}$ and let us show
$\mathcal{H}(t)$. As above, using once again Lemma \ref{techlemmasequence} and the fact that 
$f_t \geq f_t^{k} \geq f_t^{k-1}$, $f_t(y_t^k )=f_t^{k}( y_t^k )$,
$g_t \geq g_t^{k} \geq g_t^{k-1}$, $g_t( y_t^k )=g_t^{k}(y_t^k)$,  we get $\mathcal{H}(t)-(i)$ and
\begin{equation}\label{limft}
\lim_{k \rightarrow +\infty} f_t( x_t^{2k-1} , x_{t-1}^{2k-1} )-f_t^{2k-2}( x_t^{2k-1} , x_{t-1}^{2k-1} )=0.
\end{equation}
}\fi
Using the fact that $x_t^{2k-1} \in X_t^{2k-2} (x_{t-1}^{2k-1} ) \supseteq X_t( x_{t-1}^{2k-1} )$,
the relation 
$f_{t}^{2k-2}(\cdot , x_{t-1}^{2k-1} )  +  \mathcal{Q}_{t+1}^{k-1}(\cdot ) \leq 
f_{t}(\cdot , x_{t-1}^{2k-1} )  +  \mathcal{Q}_{t+1}( \cdot )$,
and recalling defintions of $x_t^{2k-1}$ and $\mathcal{Q}_t$, we get 
$$f_{t}^{2k-2}(x_t^{2k-1} , x_{t-1}^{2k-1} )  +  \mathcal{Q}_{t+1}^{k-1}(  x_t^{2k-1}  ) \leq \mathcal{Q}_t( x_{t-1}^{2k-1} ).
$$
Since the sequence $f_{t}^{2k-2}(x_t^{2k-1} , x_{t-1}^{2k-1} )  +  \mathcal{Q}_{t+1}^{k-1}(  x_t^{2k-1}  ) - \mathcal{Q}_t( x_{t-1}^{2k-1} )$ is bounded it has a finite
limit superior which satisfies
\begin{equation}\label{limftQt}
\varlimsup_{k \rightarrow +\infty}  f_{t}^{2k-2}(x_t^{2k-1} , x_{t-1}^{2k-1} )  +  \mathcal{Q}_{t+1}^{k-1}(  x_t^{2k-1}  ) - \mathcal{Q}_t( x_{t-1}^{2k-1} ) \leq 0.
\end{equation}
The induction hypothesis gives 
$$
\lim_{k \rightarrow +\infty}  \mathcal{Q}_{t+1}^{k}(  x_t^{2k-1}  ) - \mathcal{Q}_{t+1}( x_{t}^{2k-1} ) = 0.
$$
Applying Lemma \ref{techlemmasequence} to $f=\mathcal{Q}_{t+1}, f^k=\mathcal{Q}_{t+1}^k, y^k=x_{t}^{2k-1}$ (observe that the assumptions  of the lemma are satisfied), we obtain
$$
\lim_{k \rightarrow +\infty}  \mathcal{Q}_{t+1}^{k-1}(  x_t^{2k-1}  ) - \mathcal{Q}_{t+1}( x_{t}^{2k-1} ) = 0.
$$
Together with \eqref{limfT}, \eqref{limftQt} this relation, implies
\begin{equation}\label{ftkqtk}
\begin{array}{l}
\varlimsup_{k \rightarrow +\infty}  f_{t}^{2k-2}(x_t^{2k-1} , x_{t-1}^{2k-1} )  +  \mathcal{Q}_{t+1}^{k-1}(  x_t^{2k-1}  ) - \mathcal{Q}_t( x_{t-1}^{2k-1} ) \\ 
=   \varlimsup_{k \rightarrow +\infty}  f_{t}^{2k-1}(x_t^{2k} , x_{t-1}^{2k-1} )  +  \mathcal{Q}_{t+1}^{k}(  x_t^{2k-1}  ) - \mathcal{Q}_t( x_{t-1}^{2k-1} )  \\
 = \varlimsup_{k \rightarrow +\infty}  f_{t}(x_t^{2k-1} , x_{t-1}^{2k-1} )  +  \mathcal{Q}_{t+1}(  x_t^{2k-1}  ) - \mathcal{Q}_t( x_{t-1}^{2k-1} ) \leq 0.
\end{array}
\end{equation}
Let us now show by contradiction that 
\begin{equation}\label{liminftsup}
\begin{array}{lll}
\varliminf_{k \rightarrow +\infty} \;  f_{t}(x_t^{2k-1} , x_{t-1}^{2k-1} )  +  \mathcal{Q}_{t+1}(  x_t^{2k-1}  ) - \mathcal{Q}_t( x_{t-1}^{2k-1} ) \geq 0.
\end{array}
\end{equation}
If \eqref{liminftsup} does not hold, using the fact that
$(x_t^{2k-1} , x_{t-1}^{2k-1})$ is a sequence from the compact set $\mathcal{X}_t \small{\times} \mathcal{X}_{t-1}$
and the lower semicontinuity of $f_t, g_t, \mathcal{Q}_{t+1}, \mathcal{Q}_t$, we can find a 
subsequence $(x_t^{k} , x_{t-1}^{k})_{k \in K}$ converging to some $(x_t^*, x_{t-1}^*) \in \mathcal{X}_t \small{\times} \mathcal{X}_{t-1}$
such that 
$$
f_t( x_t^* , x_{t-1}^* ) + \mathcal{Q}_{t+1}(x_t^* ) < \mathcal{Q}_t( x_{t-1}^* ),
$$
$g_t( x_t^* , x_{t-1}^* ) \leq 0$, and $x_t^* \in X_t( x_{t-1}^* )$,
which is a contradiction. Therefore \eqref{liminftsup} must hold and we have shown  
\begin{equation}\label{anteschaint}
\lim_{k \rightarrow +\infty} \mathcal{Q}_t(x_{t-1}^{2k-1} )  - f_t^{2k-2}(x_t^{2k-1}, x_{t-1}^{2k-1} ) - \mathcal{Q}_{t+1}^{k-1}(x_t^{2k-1} )=
\lim_{k \rightarrow +\infty} \mathcal{Q}_t(x_{t-1}^{2k-1} )  - f_t(x_t^{2k-1}, x_{t-1}^{2k-1} ) - \mathcal{Q}_{t+1}(x_t^{2k-1} )=0. 
\end{equation}
As before, note that the optimal value of \eqref{backpassd} is larger than the optimal value of \eqref{forreg}, i.e.,
\begin{equation}\label{lbchain}
{\underline{\mathcal{Q}}}_t^k ( x_{t-1}^{2k-1} ) \geq  f_t^{2k-2} (x_t^{2k-1} , x_{t-1}^{2k-1} )+\mathcal{Q}_{t+1}^{k-1}(x_t^{2k-1} ),
\end{equation}
implying
\begin{eqnarray}\label{eqhti3}
0 \leq \mathcal{Q}_t( x_{t-1}^{2k-1} ) - {\underline{\mathcal{Q}}}_t^k ( x_{t-1}^{2k-1} )
 \leq  \mathcal{Q}_t( x_{t-1}^{2k-1} ) - f_t^{2k-2} (x_t^{2k-1} , x_{t-1}^{2k-1} )-\mathcal{Q}_{t+1}^{k-1}(x_t^{2k-1} ),
\end{eqnarray}
which, together with \eqref{anteschaint}, gives $\mathcal{H}(t)$-(ii).
Next,
$$
\begin{array}{l}
\displaystyle \lim_{k \rightarrow +\infty} \mathcal{Q}_t(x_{t-1}^{2k-1} )  - \sum_{\tau=t}^T f_\tau(x_\tau^{2k-1}, x_{\tau-1}^{2k-1} )\\
 = \underbrace{\displaystyle \lim_{k \rightarrow +\infty} \mathcal{Q}_t(x_{t-1}^{2k-1} )  - f_t(x_t^{2k-1}, x_{t-1}^{2k-1}) - \mathcal{Q}_{t+1}( x_t^{2k-1} )}_{=0 \mbox{ by }\eqref{anteschaint}} + 
\underbrace{\displaystyle \lim_{k \rightarrow +\infty}  \mathcal{Q}_{t+1}( x_t^{2k-1} ) - \sum_{\tau=t+1}^T f_\tau(x_\tau^{2k-1}, x_{\tau-1}^{2k-1} )}_{=0 \mbox{ using }\mathcal{H}(t+1)-(iii)},\\
=0,
\end{array}
$$
and we obtain $\mathcal{H}(t)$-(iii). Finally, for $t \geq 2$,
\begin{eqnarray*}
0 \leq \mathcal{Q}_t( x_{t-1}^{2k-1} ) - \mathcal{Q}_t^k ( x_{t-1}^{2k-1} )  & \leq  &   \mathcal{Q}_t( x_{t-1}^{2k-1} ) - \mathcal{C}_t^k ( x_{t-1}^{2k-1} )\mbox{ since }\mathcal{Q}_t^k \geq \mathcal{C}_t^k,\\
& = & \mathcal{Q}_t( x_{t-1}^{2k-1} ) - {\underline{\mathcal{Q}}}_t^k ( x_{t-1}^{2k-1} )\mbox{ by definition of }\mathcal{C}_t^k
\end{eqnarray*}
which combines with \eqref{eqhti3} to show $\mathcal{H}(t)$-(iv).$\hfill$
\end{proof}

}\fi

\subsection{Computation of the subgradient in Step d) of DCuP} \label{subsection:subg}

This subsection explains how to compute
a subgradient $\beta_t^k$ of 
${\underline{\mathcal{Q}}}_t^{k-1} (\cdot )$ at $x_{t-1}^{k}$ in Step d) of DCuP.
 
Observe that we can express 
${\underline{\mathcal{Q}}}_t^{k-1}$ as
\begin{equation}\label{forreg100}
{\underline{\mathcal{Q}}}_t^{k-1} ( x_{t-1} )   = 
\left\{
\begin{array}{l}
\displaystyle \min_{x_t \in \mathbb{R}^n, f, \theta \in \mathbb{R}}  f + \theta\\
x_t \in \mathcal{X}_t,\\
f \geq \ell_{f_t}(x_t, x_{t-1} ,(x_t^j, x_{t-1}^j)),j=1,\ldots,k-1,\\
\theta \geq {\underline{\mathcal{Q}}}_{t+1}^{i-1} ( x_{t}^{i} ) + 
\langle \beta_{t+1}^i ,  x_{t} - x_{t}^{i} \rangle,\;i=1,\ldots,k-1,\\
\ell_{g_{t i}}(x_t, x_{t-1}, (x_t^j, x_{t-1}^j)) \leq 0,\;j=1,\ldots,k-1,i=1,\ldots,p,\\
A_t x_t + B_t x_{t-1} = b_t.
\end{array}
\right.
\end{equation}
Due to Assumption (H1)-2), for every 
$x_{t-1} \in \mathcal{X}_{t-1}$, there exists
$x_t \in \mbox{ri}( \mathcal{X}_t )$
such that 
$A_t x_t + B_t x_{t-1} = b_t$
and 
$g_t(x_t, x_{t-1}) \leq 0$, which implies
that for every $i=1,\ldots,p$, and $j=1,\ldots,k-1$,
we have
$$\ell_{g_{t i}}(x_t, x_{t-1}, (x_t^j, x_{t-1}^j))
\leq g_{t i}(x_t, x_{t-1}) \leq 0$$
and therefore  Slater constraint qualification holds
for problem \eqref{forreg100} for every 
$x_{t-1} \in \mathcal{X}_{t-1}$.
Next observe that 
due to the compactness of 
$\mathcal{X}_t$
the objective function  
of \eqref{forreg100} bounded 
from below on the feasible set. 
It follows that the optimal value of 
\eqref{forreg100} is finite and by the Duality Theorem, we can write problem \eqref{forreg100} as the 
optimal value of the corresponding dual problem.
To write this dual, it is convenient to rewrite 
${\underline{\mathcal{Q}}}_t^{k-1}$ on $\mathcal{X}_{t-1}$ as
\begin{equation}\label{forreg100b}
{\underline{\mathcal{Q}}}_t^{k-1} ( x_{t-1} )   = 
\left\{
\begin{array}{l}
\displaystyle \min_{x_t \in \mathbb{R}^n, f, \theta \in \mathbb{R}}  f + \theta\\
x_t \in \mathcal{X}_t,\\
f {\textbf{e}} \geq A_t^{k-1} x_t + B_t^{k-1} x_{t-1} + C_t^{k-1},\\
\theta {\textbf{e}} \geq \theta_{t+1}^{0:k-1} + \beta_{t+1}^{0:k-1} x_t,\\
D_t^{k-1} x_t + E_t^{k-1} x_{t-1} + H_t^{k-1} \leq 0,\\
A_t x_t + B_t x_{t-1}= b_t,
\end{array}
\right.
\end{equation}
where \textbf{e} is a vector of ones of dimension $k-1$ and
$A_t^{k-1}, B_t^{k-1}, D_t^{k-1}, E_t^{k-1}, \beta_{t+1}^{1:k-1}$ 
(resp. $C_t^{k-1}, H_t^{k-1}, \theta_{t+1}^{1:k-1}$) are matrices (resp. vectors) of appropriate dimensions. 
In particular, $\beta_{t+1}^{0:k-1}$ is a matrix
with $k$ rows with $(i+1)$-th row equal to 
$(\beta_{t+1}^i)^{\top}$ and $\theta_{t+1}^{0:k-1}$ is
a vector of size $k$ with first
component equal to $\theta_{t+1}^0$
and for $i \geq 2$ component
$i$ given by 
$\theta_{t+1}^{i-1} = {\underline{\mathcal{Q}}}_{t+1}^{i-2} ( x_{t}^{i-1} ) - 
\langle \beta_{t+1}^{i-1} , x_{t}^{i-1} \rangle$.

We now write the dual of \eqref{forreg100b} as 
\begin{equation}\label{dualqbartk}
{\underline{\mathcal{Q}}}_t^{k-1} ( x_{t-1} )   = 
\left\{
\begin{array}{l}
\displaystyle \max_{\alpha, \mu,  \delta, \lambda} 
h_{t, x_{t-1}}(\alpha , \lambda , \mu , \delta  )  \\
\alpha \geq 0, \mu \geq 0, \delta \geq 0, \lambda,
\end{array}
\right.
\end{equation}
where dual function $h_{t, x_{t-1}}$ 
is given by 
\begin{equation}\label{defdualfh}
h_{t, x_{t-1}}(\alpha , \lambda , \mu , \delta )=
\left\{
\begin{array}{l}
\displaystyle \min_{x_t \in \mathbb{R}^n, f, \theta \in \mathbb{R}} L_{t, x_{t-1}}(x_t, f, \theta ;\alpha , \lambda , \mu , \delta)  \\
x_t \in \mathcal{X}_t,
\end{array}
\right.
\end{equation}
with Lagrangian $L_{t, x_{t-1}}(x_t,f,\theta;\alpha , \lambda , \mu , \delta)$ given by
$$
\begin{array}{lll}
L_{t, x_{t-1}}(x_t, f, \theta ;\alpha , \lambda , \mu , \delta) 
& = & f+\theta +  
\langle  \alpha ,
A_t^{k-1} x_t + B_t^{k-1} x_{t-1} + C_t^{k-1} - f {\textbf{e}} \rangle 
+ \langle \lambda , A_t x_t + B_t x_{t-1} -b_t \rangle \\
&  & + \langle \mu , D_t^{k-1} x_t + E_t^{k-1} x_{t-1} + H_t^{k-1}  \rangle + \langle  
\delta  ,   \theta_{t+1}^{1:k-1} + \beta_{t+1}^{1:k-1} x_t  - \theta {\textbf{e}} \rangle.
\end{array}
$$
With this notation, we have the following
characterization of $\partial {\underline{\mathcal{Q}}}_t^{k-1}(x_{t-1}^k )$:
\begin{lemma} \label{lemmsubdiffqtk}
Let Assumption (H1) hold. Then the subdifferential of ${\underline{\mathcal{Q}}}_t^{k-1}$
at $x_{t-1}^k$ is the set of points 
of form
\begin{equation}\label{formulasubgradientqtbar}
B_t^{\top} \lambda + (B_t^{k-1})^{\top} \alpha 
+ (E_{t}^{k-1})^{\top} \mu
\end{equation}
where $(\alpha, \lambda, \mu)$ is such 
that there is $\delta$ satisfying 
$(\alpha, \lambda, \mu, \delta)$ is an optimal solution of dual
problem  \eqref{dualqbartk} written for 
$x_{t-1}=x_{t-1}^k$.
\end{lemma}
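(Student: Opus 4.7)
The plan is to use strong duality for \eqref{forreg100b} together with the affine dependence of its Lagrangian on $x_{t-1}$ to realize $\underline{\mathcal{Q}}_t^{k-1}$ as a pointwise supremum of affine functions whose slopes are exactly of the announced form.

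First I would record that Slater's condition (established in the paragraph preceding the lemma) together with the finiteness of the primal value (from compactness of $\mathcal{X}_t$ and lower-semicontinuity of the objective of \eqref{forreg100b}) yields strong duality with dual attainment: for every $x_{t-1}\in\mathcal{X}_{t-1}$,
\[
\underline{\mathcal{Q}}_t^{k-1}(x_{t-1}) \;=\; \max_{\alpha,\mu,\delta\ge 0,\,\lambda}\, h_{t,x_{t-1}}(\alpha,\lambda,\mu,\delta),
\]
and the max is attained. Next I would unpack the $x_{t-1}$-dependence in the Lagrangian of \eqref{defdualfh}: only the three terms $\langle\alpha,B_t^{k-1}x_{t-1}\rangle$, $\langle\lambda,B_t x_{t-1}\rangle$, and $\langle\mu,E_t^{k-1}x_{t-1}\rangle$ involve $x_{t-1}$. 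Since the inner infimum in $(x_t,f,\theta)$ is independent of $x_{t-1}$, the dual function decomposes as
\[
h_{t,x_{t-1}}(\alpha,\lambda,\mu,\delta) \;=\; \tilde h(\alpha,\lambda,\mu,\delta) + \langle s(\alpha,\lambda,\mu),\, x_{t-1}\rangle,
\]
where $s(\alpha,\lambda,\mu):=(B_t^{k-1})^\top\alpha + B_t^\top\lambda + (E_t^{k-1})^\top\mu$ and $\tilde h$ does not depend on $x_{t-1}$. Each dual-optimal tuple at $x_{t-1}^k$ therefore furnishes an affine minorant of $\underline{\mathcal{Q}}_t^{k-1}$ touching it at $x_{t-1}^k$ and having slope $s(\alpha,\lambda,\mu)$, which immediately gives the inclusion $\supseteq$ in the lemma.

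For the reverse inclusion I would invoke the classical subdifferential formula for a pointwise supremum of convex functions attained at a point (see the discussion following Theorem 23.4 in \cite{Rockafellar70}, or a Danskin-type result in convex form): $\partial\underline{\mathcal{Q}}_t^{k-1}(x_{t-1}^k)$ equals the closed convex hull of the gradients at $x_{t-1}^k$ of the attaining affine selectors. Since the set of dual optima is convex and $s$ is linear in the dual variables, its image under $s$ is already convex; moreover, $x_{t-1}^k \in \inte \dom(\underline{\mathcal{Q}}_t^{k-1})$ by Lemma \ref{lm:2.2}(b), so the subdifferential is compact and no closure operation is needed. The main obstacle is therefore essentially bookkeeping: verifying the hypotheses of the chosen sup-rule --- attainment of the max, the interior condition, and nonemptiness of the dual-optimal set --- each of which is already in place from the preceding setup, so that the formula reduces exactly to the announced description of $\partial\underline{\mathcal{Q}}_t^{k-1}(x_{t-1}^k)$.
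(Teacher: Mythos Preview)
Your forward inclusion is correct: any dual-optimal tuple at $x_{t-1}^k$ yields an affine minorant of $\underline{\mathcal{Q}}_t^{k-1}$ touching at $x_{t-1}^k$, hence a subgradient of the form \eqref{formulasubgradientqtbar}. The reverse inclusion, however, is not adequately justified. The sup-rule you invoke---that $\partial(\sup_y \psi_y)(x_0)$ equals the closed convex hull of the gradients of the active selectors---requires compactness of the index set (or a comparable hypothesis such as in the Ioffe--Tikhomirov/Valadier theorem), which fails here: the dual feasible set is unbounded in $\lambda$ and possibly in $\mu$. Without such a hypothesis only the inclusion $\partial \supseteq \overline{\co}\,s(D^*)$ is standard. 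Moreover, even granting the sup-rule, your claim that ``the subdifferential is compact, so no closure is needed'' does not close the gap: you would then have $s(D^*) \subseteq \partial\underline{\mathcal{Q}}_t^{k-1}(x_{t-1}^k) \subseteq \overline{s(D^*)}$, but the lemma asserts equality with $s(D^*)$ itself, so you still must show $s(D^*)$ is closed---and linear images of closed unbounded convex sets need not be closed.

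The paper takes a different route that avoids this issue entirely. It writes $\underline{\mathcal{Q}}_t^{k-1}(x_{t-1})$ as the infimum over $(x_t,f,\theta)$ of $f+\theta+\mathbb{I}_{\mathcal{S}_k}(x_t,f,\theta,x_{t-1})$ and applies Theorem 24(a) of Rockafellar \cite{rock74}, which characterizes subgradients of a marginal function directly: $\beta_t^k \in \partial\underline{\mathcal{Q}}_t^{k-1}(x_{t-1}^k)$ if and only if $(0,0,0,\beta_t^k) \in [0;1;1;0] + \mathcal{N}_{\mathcal{S}_k}(x_t^k,f_{tk},\theta_{tk},x_{t-1}^k)$. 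The normal cone is then decomposed via the intersection $\mathcal{S}_k = (\mathcal{X}_t\times\mathbb{R}^2\times\mathbb{R}^n)\cap C_k\cap D$, and the resulting conditions are identified with the KKT conditions for \eqref{forreg100b}, i.e., with dual optimality in \eqref{dualqbartk}. This gives both inclusions simultaneously. Your approach can be repaired by invoking this perturbation-duality characterization directly instead of the sup-rule, at which point it essentially coincides with the paper's argument.
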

\begin{proof} Defining
$$
\mathcal{S}_k = \mathcal{X}_t \small{\times} \mathbb{R} \small{\times} \mathbb{R} \small{\times} \mathbb{R}^n  
\cap C_k \cap D,
$$
where 
$$
\begin{array}{lll}
C_k & = &  \left\{ 
(x_t, f, \theta, x_{t-1}) : 
\left\{
\begin{array}{l}
A_t^{k-1} x_t + B_t^{k-1} x_{t-1} + C_t^{k-1} \leq f {\textbf{e}},\\
\theta_{t+1}^{0:k-1} + \beta_{t+1}^{0:k-1} x_t \leq \theta {\textbf{e}},\\
D_t^{k-1} x_t + E_t^{k-1} x_{t-1} + H_t^{k-1} \leq 0\\
\end{array}
\right.
\right\} \\
D & =  & \{ 
(x_t, f, \theta, x_{t-1}) : A_t x_t + B_t x_{t-1}= b_t
\},
\end{array}
$$
we have 
\begin{equation}\label{valuefunctionqtk}
{\underline{\mathcal{Q}}}_t^{k-1}(x_{t-1}^k )=
\left\{
\begin{array}{l}
\inf \;f + \theta +\mathbb{I}_{\mathcal{S}_k}(x_t,f,\theta , x_{t-1}^k)\\
x_t \in \mathbb{R}^n, f, \theta \in \mathbb{R}.
\end{array}
\right.
\end{equation}
Using Theorem 24(a) in Rockafellar \cite{rock74}, we have
\begin{equation}\label{subdiffvaluef1}
\begin{array}{lll}
\beta_t^k \in \partial {\underline{\mathcal{Q}}}_t^{k-1}(x_{t-1}^k ) &\Leftrightarrow &(0,0,0,\beta_t^k) \in \partial 
( f + \theta  +\mathbb{I}_{\mathcal{S}_k}  )(x_t^k,f_{t k}, \theta_{t k}, x_{t-1}^k)\\
&\Leftrightarrow &(0,0,0,\beta_t^k)   \in [0;1;1;0] +  \mathcal{N}_{\mathcal{S}_k}(x_t^k,f_{t k}, \theta_{t k}, x_{t-1}^k), \;\;(a)
\end{array}
\end{equation}
where $f_{t k}$ and $\theta_{t k}$ are the optimal values of respectively
$f$ and $\theta$ in \eqref{forreg100b}
written for $x_{t-1}=x_{t-1}^k$.
For equivalence \eqref{subdiffvaluef1}-(a), we have used the fact that 
$(x_t,f,\theta,x_{t-1}) \rightarrow f+\theta$ and 
$\mathbb{I}_{\mathcal{S}_k}$ are proper,
finite at $(x_t^k,f_{t k}, \theta_{t k}, x_{t-1}^k)$, and
the intersection of the relative interior of the domain of these functions,
i.e., set $\mbox{ri}(\mathcal{S}_k)$, is nonempty. Next,
\begin{equation}\label{formulanormalsk}
\mathcal{N}_{ \mathcal{S}_k}(x_t^k,f_{t k}, \theta_{t k}, x_{t-1}^k)=\mathcal{N}_{C_k}(x_t^k,f_{t k}, \theta_{t k}, x_{t-1}^k)+ \mathcal{N}_{D}(x_t^k,f_{t k}, \theta_{t k}, x_{t-1}^k)+\mathcal{N}_{\mathcal{X}_t \small{\times} \mathbb{R} \small{\times} \mathbb{R} \small{\times} \mathbb{R}^n}(x_t^k,f_{t k}, \theta_{t k}, x_{t-1}^k),
\end{equation}
and standard calculus on normal cones gives
\begin{equation}\label{normalconeDk}
\begin{array}{lll}
\mathcal{N}_{\mathcal{X}_t \small{\times} \mathbb{R} \small{\times} \mathbb{R} \small{\times} \mathbb{R}^n}(x_t^k,f_{t k}, \theta_{t k}, x_{t-1}^k) &=& \mathcal{N}_{\mathcal{X}_t}(x_t^k) \small{\times}  \{0\} \small{\times}  \{0\}\small{\times}  \{0\},\\
\mathcal{N}_{D}(x_t^k,f_{t k}, \theta_{t k}, x_{t-1}^k)&=&
\Big\{[A_t^{\top}; 0;0;B_t^{\top} ] \lambda \;:\;\lambda \in \mathbb{R}^q\Big\},
\end{array}
\end{equation}
and $\mathcal{N}_{C_k}(x_t^k,f_{t k}, \theta_{t k}, x_{t-1}^k)$ is the set of points of form 
\begin{equation}\label{optcondnormal}
\left( 
\begin{array}{c}
(A_t^{k-1})^{\top} \alpha + (\beta_{t+1}^{0:k-1})^{\top} \delta + (D_t^{k-1})^{\top} \mu \\
-{\textbf{e}}^{\top} \alpha \\
-{\textbf{e}}^{\top} \delta \\
(B_t^{k-1})^{\top} \alpha + (E_t^{k-1})^{\top} \mu
\end{array}
\right) 
\end{equation}
where $\alpha,\delta,\mu$ satisfy 
\begin{equation}\label{satisfyalpha}
\begin{array}{c}
\alpha, \delta, \mu \geq 0,\\
\left(   
\begin{array}{l}
\alpha \\
\delta \\
\mu
\end{array}
\right)^{\top}
\left( 
\begin{array}{l}
A_t^{k-1} x_t^k + B_t^{k-1} x_{t-1}^k + C_t^{k-1} - f_{t k} {\textbf{e}}\\
\theta_{t+1}^{0:k-1} + \beta_{t+1}^{0:k-1} x_t^k - \theta_{t k} {\textbf{e}}\\
D_t^{k-1} x_t^k + E_t^{k-1} x_{t-1}^k + H_t^{k-1} \\
\end{array}
\right)=0.
\end{array}
\end{equation}
Combining \eqref{subdiffvaluef1}, \eqref{formulanormalsk}, \eqref{normalconeDk}, \eqref{optcondnormal}, we see that 
$\beta_t^k \in \partial {\underline{\mathcal{Q}}}_t^{k}(x_{t-1}^k )$ if and only if
$\beta_t^k$ is of form 
\eqref{formulasubgradientqtbar}
where $\alpha, \lambda, \mu$ satisfies \eqref{satisfyalpha} and 
\begin{equation}\label{satisfyalphabis}
\begin{array}{lll}
0 & \in & \mathcal{N}_{\mathcal{X}_t}(x_t^k) + A_t^{\top} \lambda +
(A_t^{k-1})^{\top} \alpha + (\beta_{t+1}^{0:k-1}) \delta  + (D_{t}^{k-1})^{\top} \mu, \\
0 & = & 1 - {\textbf{e}}^{\top} \alpha, \\
0 & = & 1 - {\textbf{e}}^{\top} \delta.
\end{array}
\end{equation}
Finally, it suffices to observe that $\alpha, \lambda, \mu$ satisfies \eqref{satisfyalpha}
and \eqref{satisfyalphabis}  if and only if $\alpha, \lambda, \mu, \delta$ is an optimal solution of dual
problem \eqref{dualqbartk}. 
\end{proof}
\vspace*{0.2cm}
\par Using the previous lemma and denoting by
$(\alpha_t^k, \lambda_t^k, \mu_t^k, \delta_t^k)$ an optimal solution of \eqref{dualqbartk} written for 
$x_{t-1}=x_{t-1}^k$, we have that
\begin{equation}\label{formulabetatk}
\beta_t^k = 
(B_t^{k-1})^{\top} \alpha_t^k +
B_t^{\top} \lambda_t^k
+ (E_{t}^{k-1})^{\top} \mu_t^k
\in \partial  {\underline{\mathcal{Q}}}_t^{k-1} ( x_{t-1}^k ).
\end{equation}

\begin{remark}
When $\mathcal{X}_t$ is polyhedral, formula
 \eqref{formulabetatk} follows from Duality for linear programming.
 For a more general convex set $\mathcal{X}_t$,
 formula  \eqref{formulabetatk} directly follows
 from applying
to value function 
${\underline{\mathcal{Q}}}_t^{k-1}$
Lemma 2.1 in \cite{guiguessiopt2016} or Proposition 3.2 in \cite{guiguesinexactsmd} 
which respectively provide 
a characterization of the subdifferential and subgradients
for value functions of general convex optimization problems
(whose argument is in the objective function and in linear and nonlinear coupling constraints of the corresponding
optimization problem). The proof of Lemma
\ref{lemmsubdiffqtk}
is a  proof of relation \eqref{formulabetatk} 
specializing to the particular
case of value function
${\underline{\mathcal{Q}}}_t^{k-1}$ the proof of Lemma 2.1 in \cite{guiguessiopt2016}.
\end{remark}

\section{The StoDCuP (Stochastic Dynamic Cutting Plane) algorithm}\label{stodcup}

\subsection{Problem formulation and assumptions}

We consider multistage stochastic nonlinear optimization problems of the form
\begin{equation}\label{pbtosolve}
\begin{array}{l}
\displaystyle 
\min_{x_1 \in X_1( x_0 , \xi_1)} f_1(x_1,x_0,\xi_1 ) +
\mathbb{E}\left[ \min_{x_2 \in X_2( x_1 , \xi_2)} f_2(x_2,x_1,\xi_2) +
\mathbb{E}\left[ \ldots + \mathbb{E}\left[ \min_{x_T \in X_T( x_{T-1} , \xi_T)} f_T(x_{T},x_{T-1},\xi_T) \right] \right] \right],
\end{array}
\end{equation}
where $x_0$ is given,  $(\xi_t)_{t=2}^T$ is a stochastic process, $\xi_1$ is deterministic, and
$$ 
X_t( x_{t-1} , \xi_t)= \{x_t \in \mathbb{R}^n : \displaystyle A_{t} x_{t} + B_{t} x_{t-1} = b_t,
g_t(x_t, x_{t-1}, \xi_t) \leq 0, x_t \in \mathcal{X}_t \}.
$$
In the constraint set above, $\mathcal{X}_t$ is polyhedral and $\xi_t$ contains in particular the random elements in matrices $A_t, B_t$, and vector $b_t$.

We make the following assumption on $(\xi_t)$:\\
\par (H0) $(\xi_t)$ 
is interstage independent and
for $t=2,\ldots,T$, $\xi_t$ is a random vector taking values in $\mathbb{R}^K$ with a discrete distribution and
a finite support $\Theta_t=\{\xi_{t 1}, \ldots, \xi_{t M_t}\}$ with $p_{t i}=\mathbb{P}(\xi_t = \xi_{t i}),i=1,\ldots,M_t$, while $\xi_1$ is deterministic.\\

For this problem, we can write Dynamic Programming equations: the first stage problem is 
\begin{equation}\label{firststodp}
\mathcal{Q}_1( x_0 ) = \left\{
\begin{array}{l}
\min_{x_1 \in \mathbb{R}^n} f_1(x_1, x_0, \xi_1)  + \mathcal{Q}_2 ( x_1 )\\
x_1 \in X_1( x_{0}, \xi_1 )\\
\end{array}
\right.
\end{equation}
for $x_0$ given and for $t=2,\ldots,T$, $\mathcal{Q}_t( x_{t-1} )= \mathbb{E}_{\xi_t}[ \mathfrak{Q}_t ( x_{t-1},  \xi_{t}  )  ]$ with
\begin{equation}\label{secondstodp} 
\mathfrak{Q}_t ( x_{t-1}, \xi_{t}  ) = 
\left\{ 
\begin{array}{l}
\min_{x_t \in \mathbb{R}^n}  f_t ( x_t , x_{t-1}, \xi_t ) + \mathcal{Q}_{t+1} ( x_t )\\
x_t \in X_t ( x_{t-1}, \xi_t ),
\end{array}
\right.
\end{equation}
with the convention that $\mathcal{Q}_{T+1}$ is identically zero.

We set $\mathcal{X}_0=\{x_0\}$ and make the following assumptions (H1)-Sto on the problem data:
\par (H1)-Sto: for $t=1,\ldots,T$,  
\begin{itemize}
\item[1)] $\mathcal{X}_{t}$ is a nonempty, compact, and polyhedral set.
\item[2)] For every $j=1,\ldots,M_t$, the function
$f_t(\cdot, \cdot , \xi_{t j})$ is convex, proper, lower semicontinuous on $\mathcal{X}_t \small{\times} \mathcal{X}_{t-1}$ 
and $\mathcal{X}_{t} \small{\times} \mathcal{X}_{t-1}\subset \inte (\dom(f_t(\cdot, \cdot , \xi_{t j})))$.
\item[3)] For every $j=1,\ldots,M_t$, each component $g_{t i}(\cdot, \cdot, \xi_{t  j}), i=1,\ldots,p$, of function $g_{t}(\cdot, \cdot, \xi_{t  j})$ is
convex, proper, lower semicontinuous such that  $\mathcal{X}_{t} \small{\times} \mathcal{X}_{t-1} \subset \inte (\dom (g_{t i}(\cdot, \cdot , \xi_{t j})))$.
\item[4)] $X_1(x_0,\xi_1) \neq \emptyset$ and for every $t=2,\ldots,T$, 
for every $j=1,\ldots,M_t$, $\mathcal{X}_{t-1} \subset \inte (\dom( X_t(\cdot, \xi_{t j})))$.
% \item[4)] If $t \geq 2$, for every $j=1,\ldots,M_t$, there exists
% $
% {\bar x}_{t j}=({\bar x}_{t, j, t-1}, {\bar x}_{t, j, t}) \in \mbox{ri}(\mathcal{X}_t)\small{\times}\mathcal{X}_{t-1} 
% \cap \mbox{ri}(\{g_t(\cdot,\cdot,\xi_{t  j})\leq 0\})$ such that $\bar x_{t, j, t} \in X_t(\bar x_{t, j, t-1}, \xi_{t j})$.\\
\end{itemize}

\begin{rem} 
Nonlinear constraints of  form $h_{t i}(x_t,\xi_t)\leq 0$ or $h_{t i}(x_t)\leq 0$ at stage $t$
can be handled, adding the corresponding component functions $h_{t i}$  in $g_t$, as long 
as (H1)-Sto is satisfied. In particular, convexity of $h_{t i}(\cdot,\xi_{t j})$ is required for $j=1,\ldots,M_t$.
\end{rem}

It is easy to show that under Assumption (H1)-Sto, functions $\mathcal{Q}_t$ are convex
and Lipschitz continuous on $\mathcal{X}_{t-1}$:
\begin{lemma} \label{lm:2.1sto}
Let Assumption (H1)-Sto hold. 
Then $\mathcal{Q}_{t}$ is convex Lipschitz continuous on $\mathcal{X}_{t-1}$
for $t=2,\ldots,T+1$.
\end{lemma}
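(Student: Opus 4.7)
The plan is to prove the result by backward induction on $t$, reducing to the deterministic statement Lemma \ref{lm:2.1}(a)--(b) applied slicewise to each realization $\xi_{tj}$ and then averaging over $j$ using the finite discrete support provided by (H0).

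The base case $t=T+1$ is immediate since $\mathcal{Q}_{T+1}\equiv 0$ is trivially convex and Lipschitz. For the inductive step, I assume $\mathcal{Q}_{t+1}$ is convex and Lipschitz on $\mathcal{X}_t$ and establish the same for $\mathcal{Q}_t$. First I would show that, for each fixed $j\in\{1,\ldots,M_t\}$, the slice $\mathfrak{Q}_t(\cdot,\xi_{tj})$ defined by \eqref{secondstodp} is convex and Lipschitz on $\mathcal{X}_{t-1}$. This is essentially Lemma \ref{lm:2.1}(a)--(b) applied to the fixed-realization deterministic problem with data $f_t(\cdot,\cdot,\xi_{tj})$, $g_t(\cdot,\cdot,\xi_{tj})$ and the corresponding random components of $A_t,B_t,b_t$ inside $\xi_{tj}$: the joint convexity of $(x_t,x_{t-1})\mapsto f_t(x_t,x_{t-1},\xi_{tj})+\mathcal{Q}_{t+1}(x_t)+\delta_{X_t(x_{t-1},\xi_{tj})}(x_t)$, which follows from (H1)-Sto-1)--3) and the induction hypothesis, yields convexity of $\mathfrak{Q}_t(\cdot,\xi_{tj})$ after partial minimization over $x_t$ (via the discussion following Theorem 5.7 of \cite{Rockafellar70}). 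The domain conditions (H1)-Sto-2) and (H1)-Sto-4), together with the compactness of $\mathcal{X}_t$, guarantee that $\mathfrak{Q}_t(\cdot,\xi_{tj})$ is finite on an open neighborhood of $\mathcal{X}_{t-1}$, and Theorem 10.4 of \cite{Rockafellar70} then gives Lipschitz continuity on the compact set $\mathcal{X}_{t-1}$ with some constant $L_{tj}$.

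The final step is an averaging argument: since by (H0)
\[
\mathcal{Q}_t(\cdot)=\sum_{j=1}^{M_t}p_{tj}\,\mathfrak{Q}_t(\cdot,\xi_{tj})
\]
is a finite convex combination of functions that are each convex and Lipschitz on $\mathcal{X}_{t-1}$, $\mathcal{Q}_t$ is itself convex and Lipschitz on $\mathcal{X}_{t-1}$ with Lipschitz constant at most $\sum_{j=1}^{M_t}p_{tj}L_{tj}$, completing the induction.

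The main subtlety is the mismatch between the deterministic hypothesis (H1)-2), which places $\mathcal{X}_{t-1}$ in the interior of $\mathrm{Dom}(X_t^0)$ (a relative-interior condition), and the stochastic hypothesis (H1)-Sto-4), which uses $X_t$ directly. Verifying that the slicewise application of Lemma \ref{lm:2.1} still goes through therefore requires rechecking the nonemptiness argument in its proof; however, since $\mathcal{X}_t$ is polyhedral by (H1)-Sto-1) (strictly stronger than the generic convex-compact assumption (H1)-1)-a)) and $\mathcal{X}_t\times\mathcal{X}_{t-1}\subset\mathrm{int}(\mathrm{dom}(f_t(\cdot,\cdot,\xi_{tj})))\cap\mathrm{int}(\mathrm{dom}(g_{ti}(\cdot,\cdot,\xi_{tj})))$, this weaker hypothesis is enough to ensure that $X_t(x_{t-1},\xi_{tj})\cap\mathrm{dom}(f_t(\cdot,x_{t-1},\xi_{tj}))\cap\mathrm{dom}(\mathcal{Q}_{t+1})$ is nonempty for every $x_{t-1}$ in a neighborhood of $\mathcal{X}_{t-1}$, which is exactly what the proof of Lemma \ref{lm:2.1}(a) needs to conclude the interior-of-domain property.
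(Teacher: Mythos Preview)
Your proposal is correct and follows exactly the route the paper intends: its own proof simply reads ``The proof is analogue to the proof of Lemma \ref{lm:2.1},'' and you have carried out precisely that analogy, applying the argument of Lemma \ref{lm:2.1}(a)--(b) to each slice $\mathfrak{Q}_t(\cdot,\xi_{tj})$ and then taking the finite convex combination. Your remark about the discrepancy between (H1)-2) and (H1)-Sto-4) is apt and correctly resolved; indeed the proof of Lemma \ref{lm:2.1}(a) only ever uses the weaker inclusion $\mathcal{X}_{t-1}\subset\inte(\Dom(X_t))$ obtained via \eqref{eq:simobs}, which is exactly what (H1)-Sto-4) provides directly.
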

\begin{proof} The proof is analogue to the proof of Lemma \ref{lm:2.1}.$\hfill$ 
\end{proof}

\subsection{Forward StoDCuP}

The algorithm to be presented in this section for solving \eqref{pbtosolve} is an extension of the DCuP algorithm
to the stochastic case. All inequalities and equalities between random variables in the rest of the paper hold almost surely with respect to the sampling of the
algorithm.

Due to Assumption (H0), the $\displaystyle \prod_{t=2}^T M_t$ realizations of $(\xi_t)_{t=1}^T$ form a scenario tree of depth $T+1$
where the root node $n_0$ associated to a stage $0$ (with decision $x_0$ taken at that
node) has one child node $n_1$
associated to the first stage (with $\xi_1$ deterministic).

We denote by $\mathcal{N}$ the set of nodes, by
{\tt{Nodes}}$(t)$ the set of nodes for stage $t$ and
for a node $n$ of the tree, we define: 
\begin{itemize}
\item $C(n)$: the set of children nodes (the empty set for the leaves);
\item $x_n$: a decision taken at that node;
\item $p_n$: the transition probability from the parent node of $n$ to $n$;
\item $\xi_n$: the realization of process $(\xi_t)$ at node $n$\footnote{The same notation $\xi_{\tt{Index}}$ is used to denote
the realization of the process at node {\tt{Index}} of the scenario tree and the value of the process $(\xi_t)$
for stage {\tt{Index}}. The context will allow us to know which concept is being referred to.
In particular, letters $n$ and $m$ will only be used to refer to nodes while $t$ will be used to refer to stages.}:
for a node $n$ of stage $t$, this realization $\xi_n$ contains in particular the realizations
$b_n$ of $b_t$, $A_{n}$ of $A_{t}$, and $B_{n}$ of $B_{t}$.
\item $\xi_{[n]}$: the history of the realizations of process $(\xi_t)$ from the first stage node $n_1$ to node $n$:
 for a node $n$ of stage $t$, the $i$-th component of $\xi_{[n]}$ is $\xi_{\mathcal{P}^{t-i}(n)}$ for $i=1,\ldots, t$,
 where $\mathcal{P}:\mathcal{N} \rightarrow \mathcal{N}$ is the function 
 associating to a node its parent node (the empty set for the root node).
\end{itemize}

At each iteration of the algorithm, trial points
are computed on a sampled scenario and 
lower bounding affine functions, called cuts in the sequel, are built
for convex functions $\mathcal{Q}_{t},t=2,\ldots,T+1$, at these trial points.
More precisely, at iteration $k$ 
denoting by $x_{t-1}^k$ the trial point for stage $t-1$,
the cut
\begin{equation}\label{formulacut}
\mathcal{C}_t^k ( x_{t-1} ) = \theta_t^k +  \langle \beta_t^k , x_{t-1}  \rangle
\end{equation}
is built for $\mathcal{Q}_t$ with the convention that $\mathcal{C}_{T+1}^k$ is the null function (see below for the computation of 
$\theta_t^k$, $\beta_t^k$).
As in SDDP, we end up in iteration $k$ with an approximation
$\mathcal{Q}_t^k$
of $\mathcal{Q}_t$ which is a maximum of $k+1$ affine functions:
$\mathcal{Q}_t^k( x_{t-1} ) =\max_{0 \leq j \leq k} \;\mathcal{C}_t^j ( x_{t-1} )$.

Additionally, the variant we propose builds cutting plane approximations of convex functions 
$f_t(\cdot,\cdot, \xi_{t j})$ and $g_{t i}(\cdot,\cdot, \xi_{t j})$, $t=1,\ldots,T, i=1,\ldots,p, j=1,\ldots,M_t$,
computing linearizations of these functions.
At the end of iteration $k$, these approximations 
will be denoted by $f_{t j}^{k}$ and $g_{t i j}^{k}$
for $f_t(\cdot,\cdot, \xi_{t j})$ and $g_{t i}(\cdot,\cdot, \xi_{t j})$ respectively,
and take the form of a maximum of
$k+1$ affine functions. We use the notation
$$
\begin{array}{l}
f_{t j}^{k}(x_t, x_{t-1}) = \displaystyle \max_{\ell=0,\ldots,k} \; a_{t j}^{\ell}  x_t  +  b_{t j}^{\ell}  x_{t-1}  +  c_{t j}^{\ell},\\
g_{t i j}^{k}(x_t, x_{t-1}) = \displaystyle \max_{\ell=0,\ldots,k} \; d_{t i j}^{\ell}  x_t  +  e_{t i j}^{\ell}  x_{t-1}  +  h_{t i j}^{\ell},
\end{array}
$$
where $a_{t j}^{\ell}, b_{t j}^{\ell}, d_{t i j}^{\ell}$, and $e_{t i j}^{\ell}$ are $n$-dimensional  row vectors. 
The trial points of iteration $k$ are computed before updating these functions, therefore 
using approximations $f_{t j}^{k-1}, g_{t i j}^{k-1},$ and
$\mathcal{Q}_{t+1}^{k-1}$ of $f_t(\cdot,\cdot, \xi_{t j})$, $g_{t i}(\cdot,\cdot, \xi_{t j})$,
and $\mathcal{Q}_{t+1}$ available at the end of iteration $k-1$.
These trial points are decisions 
computed
at nodes $(n_1^k, n_2^k, \ldots, n_T^k)$ using these approximations, knowing that
$n_1^k=n_1$, and for $t \geq 2$, $n_t^k$ is a node of stage $t$, child of node $n_{t-1}^k$, i.e.,
these nodes correspond to a sample $({\tilde \xi}_1^k, {\tilde \xi}_2^k,\ldots, {\tilde \xi}_T^k)$
of $(\xi_1,\xi_2,\ldots,\xi_T)$.
At iteration $k$, the linearizations for $f_t(\cdot,\cdot, \xi_{t j})$, $g_{t i}(\cdot,\cdot, \xi_{t j})$
(resp. $\mathcal{Q}_{t}$) are computed at $(x_m^k, x_n^k)$ (resp. $x_n^k$)  where $n=n_{t-1}^k$, and $m$
is the child node of node $n$ such that $\xi_m = \xi_{t j}$. For convenience, for any node 
$m$ of stage $t$, we will denote by $j_t(m)$ the unique index
$j_t(m)$ such that 
$\xi_{m}=\xi_{t j_t(m)}$. Before detailing the steps of StoDCuP, we need more notation: 
for all $k \geq 1, t=1,\ldots,T, j=1,\ldots, M_t$, let 
$X_{t j}^k: \mathcal{X}_{t-1}  \rightrightarrows \mathcal{X}_t$ be the multifunction given by
\begin{equation} \label{Xtk}
X_{t j}^{k} ( x_{t-1} ) =\{ x_t \in \mathcal{X}_t: \;g_{t i j}^{k}(x_t , x_{t-1} ) \leq 0,i=1,\ldots,p,\;\displaystyle A_{t j} x_{t} + B_{t j} x_{t-1} = b_{t j} \},
\end{equation}
where $A_{t j}, B_{t j}, b_{t j}$ are respectively the realizations of $A_t, B_t$, and $b_t$ in $\xi_{t j}$
and let ${\underline{\mathfrak{Q}}}_{t j}^{k}: \mathcal{X}_{t-1}\rightarrow \mathbb{R}$
be the function
\begin{equation} \label{defxtkj0}
{\underline{\mathfrak{Q}}}_{t j}^{k}(x_{t-1}    )  =  \left\{
\begin{array}{l}
\displaystyle \min_{x_t} \; f_{t j}^{k}( x_t , x_{t-1} ) + \mathcal{Q}_{t+1}^{k}( x_t ) \\
x_t \in X_{t j}^{k}( x_{t-1} ).
\end{array}
\right.
\end{equation}

\begin{figure}
\centering
\begin{tabular}{c}
\includegraphics[scale=0.65]{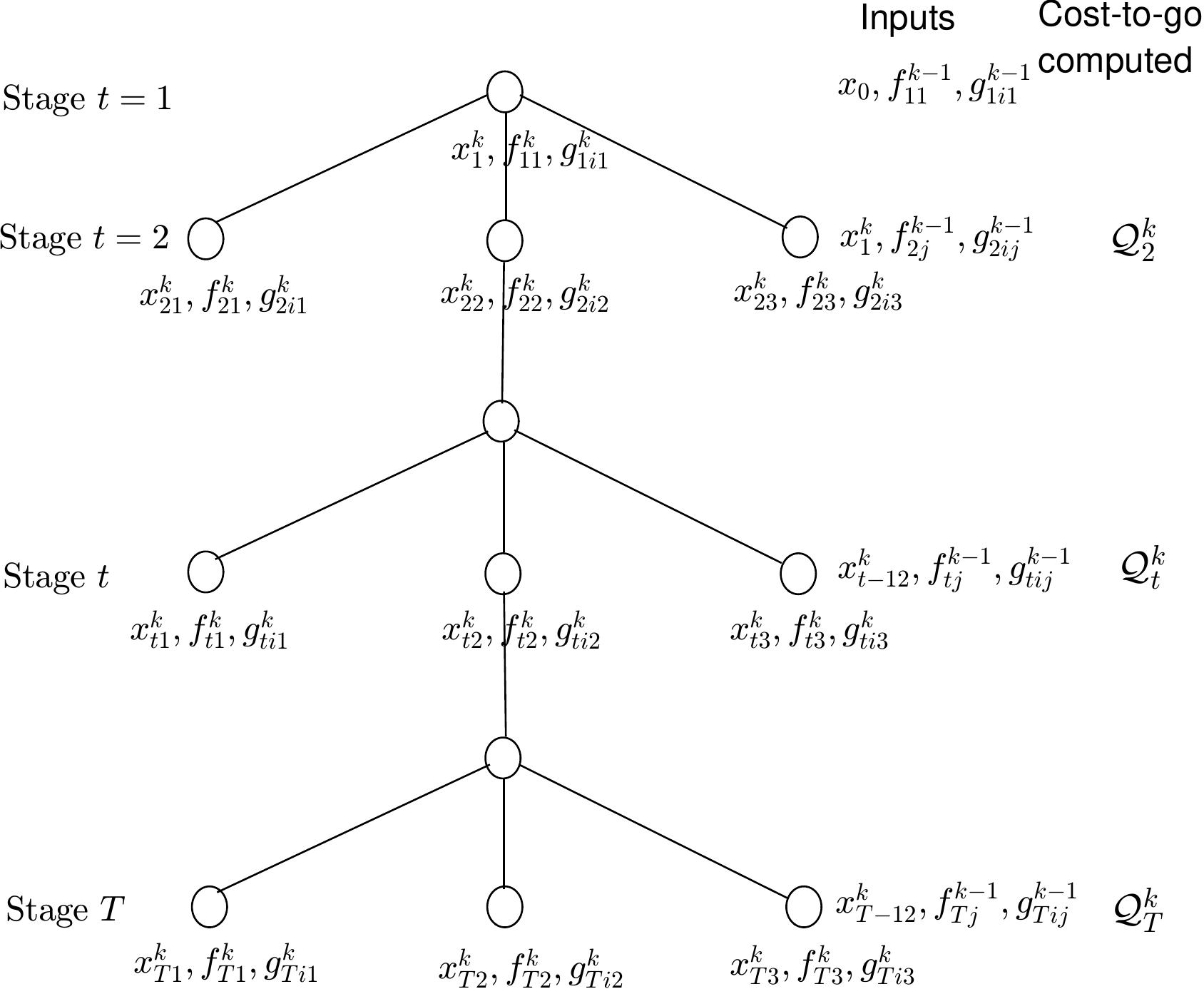}
\end{tabular}
\caption{\label{figureb} Variables updated in
iteration $k$ of StoDCuP.
In this representation, for simplicity,
every node had 3 child nodes and
the sampled scenario 
is $(\xi_1,\xi_{2 2},\ldots,\xi_{t2},\ldots,\xi_{T2})$
with corresponding decisions
$(x_1^k,x_{2 2}^k,\ldots,x_{t 2}^k,\ldots,x_{T 2}^k)$. The decisions computed for the nodes
of stage $t$ on this scenario are
denoted on this figure by
$x_{t 1}^k, x_{t 2}^k, x_{t 3}^k$
for nodes with realization of 
$\xi_t$ given by respectively
$\xi_{t 1}, \xi_{t 2}, \xi_{t 3}$.
For a given stage $t$, the 
inputs are $x_{t-1 2}^k$ (trial point),
$f_{t j}^{k-1}, g_{t i j}^{k-1}$ (for
all $i,j$) while the outputs
are $\mathcal{Q}_t^k$ and
for node with realization} $\xi_{t j}$ of $\xi_t$
decision $x_{t j}^k$
and functions $f_{t j}^k$, $g_{t i j}^k$.
\label{figvarstodcup}
\end{figure}

The detailed steps of the algorithm are described below (see the correspondence
with DCuP). We refer to Figure 
\ref{figvarstodcup} for the representation
of the variables updated in iteration
$k$ of StoDCuP.\\
\rule{\linewidth}{1pt}
\par {\textbf{Forward StoDCuP (Stochastic Dynamic Cutting Plane) with linearizations computed in a forward pass.}}\\
\rule{\linewidth}{1pt}
\begin{itemize}
\item[Step 0)] {\textbf{Initialization.}} For $t=1,\ldots,T$,
$j=1,\ldots,M_t$, $i=1,\ldots,p$,  take  
$f_{t j}^0, g_{t i j}^0: \mathcal{X}_t \small{\times} \mathcal{X}_{t-1} \to \mathbb{R}$
affine functions satisfying $f_{t j}^0 \leq f_t(\cdot,\cdot,\xi_{t j})$, 
$g_{t i j}^0 \leq g_{t i}(\cdot,\cdot,\xi_{t j})$,
and for $t=2,\ldots,T$, $\mathcal{Q}_t^0: \mathcal{X}_{t-1} \to \mathbb{R}$ 
is an affine function satisfying $\mathcal{Q}_t^0 \leq \mathcal{Q}_t$.
Set $x_{n_0} = x_0$, set the iteration count $k$ to 1, and $\mathcal{Q}_{T+1}^0 \equiv 0$. 
\item[Step 1)]{\textbf{Forward pass.}} Set
$\mathcal{C}_{T+1}^k=\mathcal{Q}_{T+1}^k \equiv 0$ and $x_0^k=x_0$.
\item[] Generate a sample $({\tilde \xi}_1^k, {\tilde \xi}_2^k,\ldots, {\tilde \xi}_T^k)$
of $(\xi_1, \xi_2,\ldots, \xi_T)$ corresponding to a set of nodes  $(n_1^k, n_2^k, \ldots, n_T^k)$ 
where $n_1^k=n_1$, and for $t \geq 2$, $n_t^k$ is a node of stage $t$, child of node $n_{t-1}^k$. Set $n_0^k=n_0$.
\item[] {\textbf{For}} $t=1,\ldots,T$, do:
\item[]\hspace*{0.7cm}Let $n=n_{t-1}^k$.
\item[]\hspace*{0.7cm}{\textbf{For }}every $m \in C(n)$,
\begin{itemize}
\item[]\hspace*{0.7cm}a) compute an optimal solution $x_m^k$ of
\begin{equation} \label{defxtkj}
{\underline{\mathfrak{Q}}}_{t j_t(m)}^{k-1}(x_{n}^k )  =  \left\{
\begin{array}{l}
\displaystyle \min_{x_m} \; f_{t j_t(m)}^{k-1}( x_m , x_{n}^{k} ) + \mathcal{Q}_{t+1}^{k-1}( x_m ) \\
x_m \in X_{t j_t(m)}^{k-1}( x_{n}^k  )
\end{array}
\right.
\end{equation}
\item[]\hspace*{0.7cm}where we recall that 
$$
\begin{array}{lcl}
f_{t j_t(m)}^{k-1}( x_m , x_{n}^{k} )&=&
\displaystyle \max_{\ell=0,\ldots,k-1} \; a_{t j_t(m)}^{\ell}  x_m  +  b_{t j_t(m)}^{\ell}  x_{n}^k  +  c_{t j_t(m)}^{\ell},\\
\mathcal{Q}_{t+1}^{k-1}( x_m )&=&\max_{0 \leq \ell \leq k-1} \;\mathcal{C}_{t+1}^{\ell} ( x_{m} ).
\end{array}
$$
\item[]\hspace*{0.7cm}b) Compute
function values and  
subgradients of convex functions $f_t(\cdot,\cdot,\xi_m)$ and $g_{t i}(\cdot,\cdot,\xi_m)$
\item[]\hspace*{0.8cm}at $(x_m^k , x_n^k )$ and let 
$\ell_{f_{t}(\cdot,\cdot,\xi_m)}((\cdot,\cdot); (x_m^{k}, x_{n}^{k} ) ) $ and 
$\ell_{g_{t i}(\cdot,\cdot,\xi_m)}((\cdot,\cdot); (x_m^{k}, x_{n}^{k} ))$
denote the 
\item[]\hspace*{0.7cm}corresponding linearizations.
\item[]\hspace*{0.7cm}c) Set
$$
\begin{array}{lcl}
f_{t j_t(m)}^{k} &=& \max\Big( f_{t j_t(m)}^{k-1} \,,\,  \ell_{f_{t}(\cdot,\cdot,\xi_m)}((\cdot,\cdot); (x_m^{k}, x_{n}^{k} ) )   \Big),\\
%g_t^{k+1} &=& (g_{t 1}^{k+1},\ldots,g_{t p}^{k+1})\mbox{ where } 
g_{ti}^{k} &= &\max\Big( g_{t i}^{k-1} \,,\,
\ell_{g_{t i}(\cdot,\cdot,\xi_m)}((\cdot,\cdot); (x_m^{k}, x_{n}^{k} )) \Big), \quad \forall i=1,\ldots,p.
\end{array}
$$
\item[]\hspace*{0.7cm}d) If $t \ge 2$ then compute 
$\beta_{t m}^{k} \in \partial {\underline{\mathfrak{Q}}}_{t j_t(m)}^{k-1}(x_{n}^k )$.
\end{itemize}
\hspace*{1cm}{\textbf{End For}}\\
\hspace*{1cm}{\textbf{If }}$t \geq 2$ compute: 
\begin{equation}\label{formulasbetathetastodcips}
\begin{array}{lcl}
\beta_{t}^{k} & =& \displaystyle \sum_{m \in C(n)} p_m \beta_{t m}^{k},\\
\theta_t^k &= &\displaystyle \sum_{m \in C(n)}  p_{m} 
\Big[ {\underline{\mathfrak{Q}}}_{t j_t(m)}^{k-1}(x_{n}^k ) 
- \langle \beta_{t m}^k  , x_n^k  \rangle 
   \Big],
\end{array}
\end{equation}
\begin{itemize}
\item[]\hspace*{0.5cm}yielding the new cut
$\mathcal{C}_t^k(x_{t-1})=\theta_t^k +
\langle \beta_t^k , x_{t-1} \rangle $
and $\mathcal{Q}_{t}^{k} = \max\{ \mathcal{Q}^{k-1}_{t}, \mathcal{C}_t^{k}  \}$.
\end{itemize}
\hspace*{1cm}{\textbf{End If}}\\
{\textbf{End For}}
\item[Step 2)] Do $k \leftarrow k+1$ and go to Step 2).
\end{itemize}
\rule{\linewidth}{1pt}

The following assumption will be made on the
sampling process in StoDCuP:\\
\par (H2) The samples of $(\xi_t)$ generated in StoDCuP are independent: $(\tilde \xi_2^k, \ldots, \tilde \xi_T^k)$ is a realization of
$\xi^k=(\xi_2^k, \ldots, \xi_T^k) \sim (\xi_2, \ldots,\xi_T)$ 
and $\xi^k, k\geq 1$, are independent.\\

 Recall that there are
$\displaystyle \prod_{t=2}^T M_t$ possible
scenarios (realizations) for
$(\xi_2,\ldots,\xi_T)$. Moreoever, by (H2),
for every such scenario $s_j$, $j=1,\ldots,\displaystyle \prod_{t=2}^T M_t$,
the events $E_n=\{\xi^n = s_j\}, n \geq 1$, are independent
and have a positive probability that only depends
on $j$. This gives $\sum_{n \geq 1} \mathbb{P}(E_n)=\infty$
and by the Borel-Cantelli lemma, this implies
that $\mathbb{P}(\displaystyle \varlimsup_{n \rightarrow \infty} E_n)=1$.
In what follows, several relations hold almost
surely. In this case, the corresponding
event of probability 1 is 
$\varlimsup_{n \rightarrow \infty} E_n$
corresponding to those realizations
of StoDCuP where every scenario $s_j$ is
sampled an infinite number of times.

\begin{rem}\label{remarkinfnode} As a consequence of the previous
observation, for every realization of StoDCuP,
and every node $n$ of the scenario tree,
an infinite number of scenarios sampled in StoDCuP
pass through that node $n$.\\
\end{rem}

We have for StoDCuP the following analogue of Lemma \ref{lipcontQt} for DCuP (the proof is similar
to the proof of Lemma \ref{lipcontQt}):
\begin{lemma}\label{lipcontQtstob}
Let Assumptions (H0) and (H1)-Sto hold. Then, the following statements hold for StoDCuP:
\begin{itemize}
\item[(a)] For $t=2,\ldots,T$, the sequence 
$\{\beta_t^k\}_{k=1}^\infty$ is almost surely  bounded.
\item[(b)] There exists   $L \ge 0$ such that
for each $t=2,\ldots,T$, $\mathcal{Q}^k_{t}$
 is $L$-Lipschitz continuous on $\mathcal{X}_{t-1}$ for every $k \ge 1$.
 \item[(c)] There exists $\hat L \ge 0$ such that
 for each $t=1,\ldots,T$, $j=1,\ldots,M_t$, functions $f_{t j}^k$ and $g_{t i j}^k$ are 
 $\hat L$-Lipschitz continuous on $\mathcal{X}_t \times \mathcal{X}_{t-1}$ for every $k \ge 1$ and $i=1,\ldots,p$.
 \end{itemize}
\end{lemma}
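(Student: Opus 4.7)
The plan is to closely mirror the proof of Lemma \ref{lipcontQt} for DCuP, adapting it node by node for the stochastic setting and establishing the three assertions (c), (b), (a) in that order.

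First, I would establish a stochastic analog of Lemma \ref{lm:2.2}. The monotonicity and boundedness properties $f_{tj}^k \le f_{tj}^{k+1} \le f_t(\cdot,\cdot,\xi_{tj})$, $g_{tij}^k \le g_{tij}^{k+1} \le g_{ti}(\cdot,\cdot,\xi_{tj})$, and the inclusions $X_t(x_{t-1},\xi_{tj}) \subset X_{tj}^{k+1}(x_{t-1}) \subset X_{tj}^k(x_{t-1}) \subset \mathcal{X}_t$ follow directly from the construction in Step 1 of StoDCuP. The crucial inequality $\mathcal{Q}_t^k \le \mathcal{Q}_t$ is proved inductively: if $\mathcal{Q}_{t+1}^{k-1} \le \mathcal{Q}_{t+1}$, $f_{tj}^{k-1} \le f_t(\cdot,\cdot,\xi_{tj})$, and $X_{tj}^{k-1}(\cdot) \supset X_t(\cdot,\xi_{tj})$, then $\underline{\mathfrak{Q}}_{tj}^{k-1} \le \mathfrak{Q}_{tj}$ for each $j$, so using convexity of $\underline{\mathfrak{Q}}_{tj_t(m)}^{k-1}$ and the fact that $\beta_{tm}^k$ is a subgradient at $x_{n_{t-1}^k}^k$,
\[
\mathcal{C}_t^k(x_{t-1}) = \sum_{m\in C(n_{t-1}^k)} p_m\bigl[\underline{\mathfrak{Q}}_{tj_t(m)}^{k-1}(x_{n_{t-1}^k}^k) + \langle \beta_{tm}^k, x_{t-1}-x_{n_{t-1}^k}^k\rangle\bigr] \le \sum_m p_m\,\mathfrak{Q}_{tj_t(m)}(x_{t-1}) = \mathcal{Q}_t(x_{t-1}),
\]
giving $\mathcal{Q}_t^k = \max\{\mathcal{Q}_t^{k-1},\mathcal{C}_t^k\} \le \mathcal{Q}_t$. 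The same induction shows that $\underline{\mathfrak{Q}}_{tj}^k$ is convex and that $\mathcal{X}_{t-1} \subset \inte(\dom(\underline{\mathfrak{Q}}_{tj}^k))$ using the stochastic analog of Lemma \ref{lm:2.1}(a) applied to $\mathfrak{Q}_{tj}$ under (H1)-Sto-4).

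For part (c), I would apply Lemma \ref{lm:2.3} once for every triple $(t,i,j)$ in the (finite) index set: with $K = \mathcal{X}_t\times\mathcal{X}_{t-1}$, $\phi^+ = f_t(\cdot,\cdot,\xi_{tj})$, $\phi^- = f_{tj}^0$, noting $K\subset\inte(\dom(\phi^+))$ by (H1)-Sto-2), we get a uniform Lipschitz constant for all $\{f_{tj}^k\}_k$; similarly for $g_{tij}^k$ via (H1)-Sto-3). Taking the maximum over the finitely many $(t,i,j)$ yields $\hat L$. For part (b), I would similarly apply Lemma \ref{lm:2.3} with $K=\mathcal{X}_{t-1}$, $\phi^+ = \mathcal{Q}_t$, $\phi^- = \mathcal{Q}_t^0$, using Lemma \ref{lm:2.1sto} to ensure $\mathcal{X}_{t-1}\subset\inte(\dom(\mathcal{Q}_t))$, and take $L$ to be the maximum of the resulting constants over $t=2,\ldots,T$.

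For part (a), the key observation is that the proof of Lemma \ref{lm:2.3} in fact furnishes a bound $\|\beta\|\le L$ on every subgradient $\beta\in\partial\phi(x)$ with $x\in K$ for any convex $\phi$ sandwiched between $\phi^-$ and $\phi^+$. I would apply this with $K=\mathcal{X}_{t-1}$, $\phi^+=\mathfrak{Q}_{tj}$, $\phi^- = \underline{\mathfrak{Q}}_{tj}^0$, using $\underline{\mathfrak{Q}}_{tj}^0 \le \underline{\mathfrak{Q}}_{tj}^k \le \mathfrak{Q}_{tj}$ and the inclusion $\mathcal{X}_{t-1}\subset\inte(\dom(\mathfrak{Q}_{tj}))$ derived as above. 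This produces a constant $L'$ such that $\|\beta_{tm}^k\|\le L'$ for every $k$, every $t\ge 2$, and every node $m$ considered in StoDCuP. The convex combination $\beta_t^k = \sum_{m\in C(n_{t-1}^k)} p_m \beta_{tm}^k$ then satisfies $\|\beta_t^k\|\le L'$, giving the (deterministic) boundedness claim, which in particular is almost sure.

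The main obstacle is really the bookkeeping for the stochastic analog of Lemma \ref{lm:2.2}: one has to verify that $\underline{\mathfrak{Q}}_{tj}^k$ is well defined, convex, subdifferentiable on $\mathcal{X}_{t-1}$ (so that the subgradient $\beta_{tm}^k$ in Step 1d of StoDCuP exists), and lower than $\mathfrak{Q}_{tj}$, all simultaneously by backward/forward induction on $t$ and $k$. Once this is in place, the three conclusions are almost immediate corollaries of Lemma \ref{lm:2.3} applied to the appropriate sandwich pairs.
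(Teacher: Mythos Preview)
Your proposal is correct and follows essentially the same approach as the paper, which simply states that the proof is similar to that of Lemma~\ref{lipcontQt}. Your treatment of part~(a)---bounding each $\beta_{tm}^k$ via the sandwich $\underline{\mathfrak{Q}}_{tj}^{0}\le \underline{\mathfrak{Q}}_{tj}^{k-1}\le \mathfrak{Q}_{tj}$ and then passing to the convex combination---is a clean variant; the paper's more explicit argument (visible in the proof of Lemma~\ref{lipcontQtstoinex}) bounds $\beta_t^k$ directly from $\mathcal{C}_t^k\le \mathcal{Q}_t$ and $\mathcal{C}_t^k(x_n^k)=\underline{\mathcal{Q}}_t^{k-1}(x_n^k)$, but the two routes are equivalent.
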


\begin{rem}[On  the cuts and linearizations computed] 
Assumption (H0) is fundamental for StoDCuP, due to the following claim:
\begin{itemize}
\item[(C)] StoDCuP builds a cut for $\mathcal{Q}_t, t=2,\ldots,T$, on any sampled scenario and
a single cut for each of the functions 
$f_t(\cdot,\cdot,\xi_{t j}), g_{t i}(\cdot,\cdot,\xi_{t j}), t=1,\ldots,T,j=1,\ldots,M_t$, $i=1,\ldots,p$, at each iteration.
\end{itemize}
The validity of the formulas of the cuts for $\mathcal{Q}_t$ will be checked in Lemma \ref{lemmacuts}.
The fact that a single cut is built for functions $f_t(\cdot,\cdot,\xi_{t j}), g_{t i}(\cdot,\cdot,\xi_{t j})$,
$i=1,\ldots,p$, $t=1,\ldots,T,j=1,\ldots,M_t$,
comes from the fact that at iteration $k$ and stage $t$ a cut is built for each of 
functions $f_t(\cdot,\cdot,\xi_m), g_{t i}(\cdot,\cdot,\xi_m)$, $i=1,\ldots,p$, $m \in C(n)$, where $n=n_{t-1}^k$,
and due to Assumption (H0), to each $m \in C(n)$, corresponds one and only one index $j=j_t(m)$
such that $\xi_m=\xi_{t j}=\xi_{t j_{t}(m)}$.
\end{rem}

\begin{rem} 
The algorithm can be extended to solve risk-averse problems.
It was shown in \cite{guiguesrom10} that dynamic programming equations can be written and that
SDDP can be applied for multistage stochastic linear optimization problems which
minimize some extended polyhedral risk measure of the cost.
As a special case, spectral risk measures are considered in  \cite{guiguesrom12} where
analytic formulas for some cut coefficients computed by SDDP are available. 
Similarly, StoDCuP can be extended to solve multistage nonlinear 
optimization problems with objective and constraint functions as in \eqref{pbtosolve}
if instead of minimizing the expected cost we minimize an extended polyhedral risk measure of the cost,
as long as Assumptions (H0) and (H1)-Sto are satisfied.
It is also possible to apply StoDCuP to solve 
risk-averse dynamic programming equations with nested conditional risk measures (see \cite{ruszshap2}, \cite{ruszshap1} for details on conditional risk mappings)
and objective and constraint functions as in \eqref{pbtosolve},
again,  as long as Assumptions (H0) and (H1)-Sto are satisfied.
Using SDDP in this risk-averse setting was proposed in \cite{shapsddp}.
\end{rem}

We can simulate the policy obtained after $k-1$ iterations of StoDCuP and define 
decisions $x_n^k$ at each node $n$ of the scenario tree as follows:\\
\rule{\linewidth}{1pt}
\par {\textbf{Simulation of StoDCuP after $k-1$ iterations.}}\\
\rule{\linewidth}{1pt}
Set $x_{n_0}^k=x_0$.\\
{\textbf{For }}$t=1,\ldots,T$,\\
\hspace*{0.7cm}{\textbf{For}} every node $n \in {\tt{Nodes}}(t-1)$,\\
\hspace*{1.4cm}{\textbf{For }}every $m \in C(n)$,\\
\hspace*{2.1cm}compute an optimal solution $x_m^k$ of
\begin{equation} \label{defxtkjsim}
{\underline{\mathfrak{Q}}}_{t j_t(m)}^{k-1}(x_{n}^k )  =  \left\{
\begin{array}{l}
\displaystyle \min_{x_m} \; f_{t j_t(m)}^{k-1}( x_m , x_{n}^k ) + \mathcal{Q}_{t+1}^{k-1}( x_m ) \\
x_m \in X_{t j_t(m)}^{k-1}( x_{n}^k  ).
\end{array}
\right.
\end{equation}
\hspace*{1.4cm}{\textbf{End For }}\\
\hspace*{0.7cm}{\textbf{End For}}\\
{\textbf{End For}}\\
\rule{\linewidth}{1pt}

We close this section providing 
in Lemma \ref{lemmaft} below
simple relations involving the linearizations of the objective and constraint functions
that will be used for the convergence analysis of StoDCuP.

\begin{lemma}\label{lemmaft}
Let Assumption (H1)-Sto hold. For every $t=1,\ldots,T$, $j=1,\ldots,M_t$, $i=1,\ldots,p$, we have almost surely
\begin{equation}\label{linearineq}
f_t(x_t, x_{t-1}, \xi_{t j}) \geq f_{t j}^k ( x_t, x_{t-1} ), g_{t i}(x_t, x_{t-1}, \xi_{t j}) \geq g_{t i j}^k ( x_t, x_{t-1} ),\;\forall k \geq 0, \forall x_t \in \mathcal{X}_t, \forall x_{t-1} \in \mathcal{X}_{t-1},
\end{equation}
and for every $k \geq 1$, 
\begin{equation}\label{approxfeasset}
X_{t}( x_{t-1} , \xi_{t j} ) \subset X_{t j}^k ( x_{t-1} ),\;\forall \;  x_{t-1} \in \mathcal{X}_{t-1}.
\end{equation}
For all $t=1,\ldots,T$, $i=1,\ldots,p$, for all $n \in {\tt{Nodes}}(t-1)$, for all $k \in \mathcal{S}_n$, we have for all $m \in C(n)$:
\begin{equation}\label{linearineq1}
f_t(x_m^k , x_{n}^k , \xi_{m}) = f_{t j_t(m)}^k ( x_m^k, x_{n}^k ) \mbox{ and }g_{t i}(x_m^k , x_{n}^k , \xi_{m}) = g_{t i j_t(m)}^k ( x_m^k, x_{n}^k ),\;\mbox{almost surely}
\end{equation}
For all $t=1,\ldots,T$, $i=1,\ldots,p$, for all $n \in {\tt{Nodes}}(t-1)$, for all $k \geq 1$, for all $m \in C(n)$,
we have
\begin{equation}\label{linearineq3}
g_{t i j_t(m)}^{k-1}(x_m^k, x_n^k) \leq 0,\;\mbox{almost surely},
\end{equation}
%For every $t \in \{1,\ldots,T\}$, for every node $n \in {\tt{Nodes}}(t-1)$,
%for all $k \geq 1$, we have for all $m \in C(n)$:
\begin{equation}\label{relinitsm}
0 \leq \max( g_{t i}(x_m^{k}, x_{n}^{k},\xi_m ) , 0 ) \leq g_{t i}( x_m^{k}, x_{n}^{k} , \xi_m ) - g_{t i j_t(m)}^{k-1}( x_m^{k}, x_{n}^{k}),\;\mbox{almost surely}
\end{equation}
\end{lemma}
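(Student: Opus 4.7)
The plan is to verify the five claims in order, using only the construction of the cutting plane models in the forward pass of StoDCuP together with the convexity assumptions in (H1)-Sto. Throughout, recall that each linearization $\ell_h(\cdot; z_0)$ of a convex function $h$ satisfies $\ell_h(\cdot; z_0) \leq h(\cdot)$ and $\ell_h(z_0; z_0) = h(z_0)$.

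First, I would prove \eqref{linearineq} by induction on $k$. The base case $k=0$ is the initialization in Step 0), which imposes $f_{tj}^0 \leq f_t(\cdot,\cdot,\xi_{tj})$ and $g_{tij}^0 \leq g_{ti}(\cdot,\cdot,\xi_{tj})$. For the inductive step, fix $t,j,i$. Either $j \ne j_t(m)$ for every $m \in C(n_{t-1}^{k})$, in which case the algorithm leaves $f_{tj}^{k} = f_{tj}^{k-1}$ and $g_{tij}^{k}=g_{tij}^{k-1}$ and the inequalities propagate trivially, or some $m \in C(n_{t-1}^k)$ satisfies $\xi_m = \xi_{tj}$. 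In that case Step c) sets $f_{tj}^{k} = \max(f_{tj}^{k-1}, \ell_{f_t(\cdot,\cdot,\xi_m)}((\cdot,\cdot);(x_m^k,x_{n_{t-1}^k}^k)))$; since $\xi_m = \xi_{tj}$ and $f_t(\cdot,\cdot,\xi_{tj})$ is convex (by (H1)-Sto-2), the new linearization lies below $f_t(\cdot,\cdot,\xi_{tj})$, and together with the induction hypothesis this gives $f_{tj}^{k} \leq f_t(\cdot,\cdot,\xi_{tj})$. The argument for $g_{tij}^{k}$ is identical, invoking (H1)-Sto-3.

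Next, \eqref{approxfeasset} is immediate: if $x_t \in X_t(x_{t-1},\xi_{tj})$ then $x_t \in \mathcal{X}_t$, $A_{tj} x_t + B_{tj} x_{t-1} = b_{tj}$, and $g_{ti}(x_t,x_{t-1},\xi_{tj}) \leq 0$ for every $i$; by \eqref{linearineq} applied componentwise, $g_{tij}^{k}(x_t,x_{t-1}) \leq 0$, so $x_t \in X_{tj}^{k}(x_{t-1})$ by \eqref{Xtk}. Then I would establish \eqref{linearineq1}: at the iterations $k \in \mathcal{S}_n$ at which the forward sample passes through $n$, the linearization computed in Step b) of stage $t$ at the realization $\xi_m$ evaluates, at its own base point, to the actual function value, i.e.\ $\ell_{f_t(\cdot,\cdot,\xi_m)}((x_m^k,x_n^k);(x_m^k,x_n^k)) = f_t(x_m^k,x_n^k,\xi_m)$, and similarly for $g_{ti}$. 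Combining with \eqref{linearineq} and the max definition in Step c) yields the stated equality.

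Finally, \eqref{linearineq3} holds because by construction $x_m^k$ is feasible for \eqref{defxtkj}, and $X_{tj_t(m)}^{k-1}(x_n^k)$ enforces $g_{tij_t(m)}^{k-1}(x_m^k,x_n^k) \leq 0$ for each $i$. For \eqref{relinitsm}, the lower bound is trivial, and the upper bound splits into two cases: if $g_{ti}(x_m^k,x_n^k,\xi_m) \leq 0$ then $\max(\cdot,0) = 0 \leq -g_{tij_t(m)}^{k-1}(x_m^k,x_n^k)$ by \eqref{linearineq3}; otherwise $\max(\cdot,0) = g_{ti}(x_m^k,x_n^k,\xi_m)$ and subtracting the nonpositive quantity $g_{tij_t(m)}^{k-1}(x_m^k,x_n^k)$ only increases the right-hand side. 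None of the steps is genuinely difficult; the only point requiring care is keeping the indexing straight—ensuring that the linearization added at iteration $k$ is placed in the model indexed by $j_t(m)$ and that the inductive argument in the first claim only touches those indices $j$ for which a child of the sampled node realizes $\xi_{tj}$.
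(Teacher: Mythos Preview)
Your approach matches the paper's proof essentially step for step: establish \eqref{linearineq} from the subgradient inequality and the update rule in Step c), derive \eqref{approxfeasset} from it, use tangency of each linearization at its base point for \eqref{linearineq1}, feasibility of $x_m^k$ in $X_{t j_t(m)}^{k-1}(x_n^k)$ for \eqref{linearineq3}, and a sign split for \eqref{relinitsm}.

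There is one small slip in your last case analysis. In the case $g_{ti}(x_m^k,x_n^k,\xi_m)\le 0$ of \eqref{relinitsm}, the right-hand side you must bound from below by $0$ is $g_{ti}(x_m^k,x_n^k,\xi_m)-g_{tij_t(m)}^{k-1}(x_m^k,x_n^k)$, not $-g_{tij_t(m)}^{k-1}(x_m^k,x_n^k)$. Citing \eqref{linearineq3} gives only $g_{tij_t(m)}^{k-1}\le 0$, which combined with $g_{ti}\le 0$ does \emph{not} imply $g_{ti}-g_{tij_t(m)}^{k-1}\ge 0$. The correct justification is \eqref{linearineq}, i.e.\ $g_{tij_t(m)}^{k-1}\le g_{ti}(\cdot,\cdot,\xi_m)$, which is precisely what the paper invokes in this case. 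With that one-word fix your argument is complete and coincides with the paper's.
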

\begin{proof} Let us show \eqref{linearineq}. The relation holds for $k=0$. 
Now let us fix $t \in \{1,\ldots,T\}$, $j \in \{1,\ldots,M_t\}$, $k \geq 1$ and 
$\ell \in \{1,\ldots,k\}$.
At iteration $\ell$, setting $n=n_{t-1}^{\ell}$, there exists one and only one node $m$ in the set  $C( n )$
such that $\xi_{m}=\xi_{t j}$ with $j=j_t(m)$ and by the subgradient inequality
for every $x_t \in \mathcal{X}_t$, for every $x_{t-1} \in \mathcal{X}_{t-1}$, we have 
\begin{equation}\label{firstflin}
\begin{array}{lll}
f_t( x_t , x_{t-1} , \xi_{t j} ) = f_t(x_t,x_{t-1},\xi_{m} )& \geq &  \ell_{f_t(\cdot,\cdot,\xi_m)}(x_t,x_{t-1};(x_m^{\ell}, x_n^{\ell})) \\ 
g_{t i}( x_t , x_{t-1} , \xi_{t j} ) = g_{t i}(x_t,x_{t-1},\xi_{m} )& \geq &  \ell_{g_{t i}(\cdot,\cdot,\xi_m)}(x_t,x_{t-1};(x_m^{\ell}, x_n^{\ell})),
\end{array}
\end{equation}
which, by Step c) of StoDCuP, immediately implies 
\eqref{linearineq}
and clearly inclusion \eqref{approxfeasset} is a consequence of \eqref{linearineq}.\hfill

Take $t \in \{1,\ldots,T\}$, $i \in \{1,\ldots,p\}$, take a node $n \in {\tt{Nodes}}(t-1)$ and
$k \in \mathcal{S}_n$. Then for any $m \in C(n)$, a linearization is built
for $f_t(\cdot,\cdot,\xi_m)$ and $g_{t i}(\cdot,\cdot,\xi_m)$ at $(x_m^k, x_n^k)$.
Therefore,
$$
\begin{array}{lcl}
f_t(x_m^k,x_n^k,\xi_m) &  \stackrel{\eqref{linearineq}}{\geq} &  f_{t j_t(m)}^k (x_m^k , x_n^k ) \\
& \geq  & \ell_{f_t(\cdot,\cdot,\xi_m)}(x_m^{k},x_{n}^{k};(x_m^{k}, x_n^{k})) =  f_t(x_m^k,x_n^k,\xi_m) \mbox{ since }n_{t-1}^k=n,\\
g_{t i}(x_m^k , x_n^k , \xi_m  )  & \stackrel{\eqref{linearineq}}{\geq}& g_{t i j_t(m)}^k (x_m^k , x_n^k ) \\
&  \geq     &   \ell_{g_{t i}(\cdot , \cdot,\xi_{m})}(x_m^k , x_n^k; ( x_m^{k}, x_{n}^{k} ) )=g_{t i}(x_m^k , x_n^k , \xi_m  ), \mbox{ since }n_{t-1}^k=n,
\end{array}
$$
and \eqref{linearineq1} follows. 

Relation \eqref{linearineq3} comes from the fact that $x_m^k \in X_{t j_t(m)}^{k-1}( x_{n}^k  )$
by definition of $x_m^k$ (see the simulation of StoDCuP).\hfill

Finally take a realization $\omega$ of StoDCuP. We show that 
\begin{equation}\label{relinitsmb}
0 \leq \max( g_{t  i}(x_m^{k}(\omega), x_{n}^{k}(\omega),\xi_m ) , 0 ) \leq g_{t i}( x_m^{k}(\omega), x_{n}^{k}(\omega) , \xi_m ) - g_{t i j_t(m)}^{k-1}(\omega)( x_m^{k}(\omega), x_{n}^{k}(\omega) ).
\end{equation}
If $g_{t i}(x_m^{k}(\omega), x_{n}^{k}(\omega), \xi_m ) \leq 0$ then \eqref{relinitsmb} holds because 
$g_{t i}(\cdot,\cdot,\xi_m) \geq  g_{t i j_t(m)}^{k-1}(\omega)$ and  if $g_{t  i}(x_m^{k}(\omega), x_{n}^{k}(\omega), \xi_m ) > 0$ then 
\eqref{relinitsmb} holds too because of inequality \eqref{linearineq3}.
Therefore, \eqref{relinitsmb} holds. \hfill
\end{proof}

\subsection{Implementation details for Steps b) and  d) of StoDCuP}\label{stodcupdeta}

In this section, we explain how to compute 
variables 
$a_{t j}^{\ell}$, $b_{t j}^{\ell}$, $c_{t j}^{\ell}$,
$d_{t i j}^{\ell}$, $e_{t i j}^{\ell}$, 
$h_{t i j}^{\ell}$, as well as cut coefficients
$\beta_{t m}^k$ in StoDCuP.

In Step b) of StoDCuP, we
compute  an arbitrary subgradient $[s_1 ; s_2]$ of 
convex function $f_t(\cdot,\cdot,\xi_m)$
at $(x_m^k , x_n^k )$ where 
$s_1, s_2 \in \mathbb{R}^n$
and set $a_{t j_t(m)}^k=s_1^{\top}$ and $b_{t j_t(m)}^k=s_2^{\top}$. For $i=1,\ldots,p$, we also 
compute an arbitrary subgradient 
$[s_{1 i} ; s_{2 i}]$ of convex function 
$g_{t i}(\cdot,\cdot,\xi_m)$
at $(x_m^k , x_n^k )$ where $s_{1 i}, s_{2 i} \in \mathbb{R}^n$; we set
$d_{t i j_t(m)}^k=s_{1 i}^{\top}$, 
$e_{t i j_t(m)}^k =s_{2 i}^{\top}$, and 
compute
$$
\begin{array}{l}
c_{t j_t(m)}^{k} =  f_t(x_m^k ,x_n^k,\xi_{m}) - a_{t j_t(m)}^k x_m^k  - b_{t j_t(m)}^k x_n^k,\\
h_{t i j_t(m)}^{k} =  g_{t i}(x_m^k ,x_n^k,\xi_{m}) - d_{t i j_t(m)}^k x_m^k - e_{t i j_t(m)}^k x_n^k.
\end{array}
$$

For the computation of $\beta_{t m}^k$,
it is convenient to
introduce $k \times n$  matrices
\begin{equation}\label{mat1s}
A_{t j}^{k} = \left[     
\begin{array}{c}
a_{t j}^0\\
a_{t j}^1 \\
\vdots \\
a_{t j}^{k}
\end{array}
\right],\;
B_{t j}^{k} = \left[    
\begin{array}{c}
b_{t j}^0\\
b_{t j}^1 \\
\vdots \\
b_{t j}^{k}
\end{array}
\right],\;
D_{t i j}^{k} = \left[    
\begin{array}{c}
d_{t i j}^0\\
d_{t i j}^1 \\
\vdots \\
d_{t i j}^{k}
\end{array}
\right],
E_{t i j}^{k} = \left[    
\begin{array}{c}
e_{t i j}^0\\
e_{t i j}^1 \\
\vdots \\
e_{t i j}^{k}
\end{array}
\right],\;
\beta_t^{0:k}=
\left[    
\begin{array}{c}
(\beta_{t}^0)^{\transp}\\
(\beta_t^1)^{\transp} \\
\vdots \\
(\beta_{t}^{k})^{\transp}
\end{array}
\right],
\end{equation}
$k$ dimensional vectors,
\begin{equation}\label{mat2s}
C_{t j}^{k} = \left[    
\begin{array}{c}
c_{t j}^0\\
c_{t j}^1 \\
\vdots \\
c_{t j}^{k}
\end{array}
\right],\;
H_{t i j}^{k} = \left[    
\begin{array}{c}
h_{t i j}^0\\
h_{t i j}^1 \\
\vdots \\
h_{t i j}^{k}
\end{array}
\right],\mbox{ and }
\theta_{t}^{0:k} = \left[    
\begin{array}{c}
\theta_{t}^0\\
\theta_{t}^1 \\
\vdots \\
\theta_{t}^{k}
\end{array}
\right],
\end{equation}
and matrices and vectors
\begin{equation}\label{mat3s}
D_{t j}^{k} = \left[    
\begin{array}{c}
D_{t 1 j}^k\\
D_{t 2 j}^k \\
\vdots \\
D_{t p j}^{k}
\end{array}
\right],\;
E_{t j}^{k} = \left[    
\begin{array}{c}
E_{t 1 j}^k\\
E_{t 2 j}^k \\
\vdots \\
E_{t p j}^{k}
\end{array}
\right],\;
H_{t j}^{k} = \left[    
\begin{array}{c}
H_{t 1 j}^k\\
H_{t 2 j}^k \\
\vdots \\
H_{t p j}^{k}
\end{array}
\right].
\end{equation}

If $\mathcal{X}_t=\{x_t : \mathbb{X}_t  x_t \geq {\bar x}_t\}$, we can write problem \eqref{defxtkj0} as
\begin{equation}\label{modelelinstodcup}
{\underline{\mathfrak{Q}}}_{t j}^{k}(x_{t-1} )  =  \left\{
\begin{array}{l}
\displaystyle \min_{x_t, f, \theta} \; f  + \theta \\
f {\textbf{e}} \geq A_{t j}^k x_t + B_{t j}^k x_{t-1} + C_{t j}^k,\\
A_{t j}  x_t + B_{t j} x_{t-1} = b_{t j},\\
D_{t j}^k x_t + E_{t j}^k x_{t-1} + H_{t j}^k \leq 0,\\
\theta {\textbf{e}} \geq \theta_{t+1}^{0:k} + \beta_{t+1}^{0:k} x_t,\;\mathbb{X}_t x_t \geq {\bar x}_t.
\end{array}
\right.
\end{equation}
Due to Assumption (H1)-Sto-4), for every $x_{t-1} \in \mathcal{X}_{t-1}$ and $j=1,\ldots,M_t$,
there exists $x_t \in \mathcal{X}_t$ such that $A_{t j}  x_t + B_{t j} x_{t-1} = b_{t j}$,
and $g_{t i}(x_t,x_{t-1},\xi_{t j}) \leq 0$, $i=1,\ldots,p$, which implies 
$g_{t i j}^k(x_t,x_{t-1}) \leq 0$, $i=1,\ldots,p$, $D_{t j}^k x_t + E_{t j}^k x_{t-1} + H_{t j}^k \leq 0$
and therefore the above problem \eqref{modelelinstodcup} is feasible.
Recalling (H1)-Sto-1), this linear program 
has a finite optimal value. Therefore
this optimal value is the optimal value of the dual problem and can be expressed as:
$$
{\underline{\mathfrak{Q}}}_{t j}^{k}(x_{t-1} )  =  \left\{
\begin{array}{l}
\displaystyle \max_{\alpha, \mu, \delta, \nu, \lambda} \;\alpha^{\top} (  B_{t j}^k x_{t-1}  +  C_{t j}^k ) + \mu^{\top} ( E_{t j}^k  x_{t-1}  +   H_{t j}^k  )   + \delta^{\top} \theta_{t+1}^{0:k} + \lambda^{\top} ( b_{t j}  - B_{t j} x_{t-1} ) + \nu^{\top} {\bar x}_t \\
(A_{t j}^k)^{\top} \alpha + (D_{t j}^k )^{\top} \mu + (\beta_{t+1}^{0:k})^{\top} \delta - \mathbb{X}_t^{\top}  \nu  - (A_{t j})^{\top} \lambda = 0,\\
{\textbf{e}}^{\top} \alpha = 1,\,{\textbf{e}}^{\top} \delta = 1, \alpha, \mu, \delta, \nu \geq 0.
\end{array}
\right.
$$
The above representation of 
${\underline{\mathfrak{Q}}}_{t j}^{k}$
allows us to obtain the formulas for
$\beta_{t m}^k, \beta_t^k, \theta_t^k$.
More precisely, consider iteration $k$ and stage $t \geq 2$
of the forward pass of StoDCuP.
Setting $n=n_{t-1}^k$ and $m \in C(n)$, let
$(\alpha_m^k, \mu_m^k, \delta_m^k, \nu_m^k, \lambda_m^k)$
be an optimal solution of the dual problem
{\small{
\begin{equation}\label{dualscupbetakm}
\begin{array}{l}
\displaystyle \max_{\alpha, \mu, \delta, \nu, \lambda} \;\alpha^{\top} (  B_{t j_t(m)}^{k-1} x_{n}^k  +  C_{t j_t(m)}^{k-1} ) + \mu^{\top} ( E_{t j_t(m)}^{k-1}  x_{n}^k  +   H_{t j_t(m)}^{k-1}     ) + \delta^{\top} \theta_{t+1}^{0:k-1} + \lambda^{\top} ( b_{t j_t(m)}  - B_{t j_t(m)} x_{n}^k ) + \nu^{\top} {\bar x}_t \\
(A_{t j_t(m)}^{k-1})^{\top} \alpha + (D_{t j_t(m)}^{k-1} )^{\top} \mu + (\beta_{t+1}^{0:k-1})^{\top} \delta - \mathbb{X}_t^{\top}  \nu  - (A_{t j_t(m)})^{\top} \lambda = 0,\\
{\textbf{e}}^{\top} \alpha = 1,\,{\textbf{e}}^{\top} \delta = 1, \alpha, \mu, \delta, \nu \geq 0.
\end{array}
\end{equation}
}}
By the discussion above, the optimal value
of \eqref{dualscupbetakm} is 
${\underline{\mathfrak{Q}}}_{t j}^{k}(x_n^k)$. We now show  in Lemma \ref{lemmacuts} below
that we can choose in StoDCuP,
\begin{equation}\label{formulasbetathetastodcip}
\begin{array}{lcl}
\beta_{t m}^{k} & =& \displaystyle \sum_{m \in C(n)} (B_{t j_t(m)}^{k-1})^{\top} \alpha_m^{k}  + (E_{t j_t(m)}^{k-1} )^{\top} \mu_m^k - B_{t j_t(m)}^{\top} \lambda_m^k,\\
\beta_{t}^{k} & =& \displaystyle \sum_{m \in C(n)} p_m \Big[  (B_{t j_t(m)}^{k-1})^{\top} \alpha_m^{k}  + (E_{t j_t(m)}^{k-1} )^{\top} \mu_m^k - B_{t j_t(m)}^{\top} \lambda_m^k \Big],\\
\theta_t^k &= &\displaystyle \sum_{m \in C(n)}  p_{m} \Big[  \langle \alpha_m^k ,  C_{t j_t(m)}^{k-1}  \rangle  + \langle  \mu_m^k , H_{t j_t(m)}^{k-1} \rangle  +  \langle  \delta_m^k ,  \theta_{t+1}^{0:k-1} \rangle  +  \langle \lambda_m^k ,  b_{t j_t(m)} \rangle  + \langle \nu_m^k  ,  {\bar x}_t \rangle  \Big].
\end{array}
\end{equation}
More precisely, we
show in Lemma \ref{lemmacuts}
that computations
\eqref{formulasbetathetastodcip} provide
valid cuts (lower bounding functions $\mathcal{C}_t^k$) 
for $\mathcal{Q}_t$, in particular that 
$\beta_{t m}^k \in \partial {\underline{\mathfrak{Q}}}_{t j_t(m)}^{k-1}(x_{n}^k)$, as required by Step d)
of StoDCuP, and $\beta_t^k \in \partial \mathcal{Q}_t(x_n^k)$:

\begin{lemma}\label{lemmacuts} Let Assumptions (H0) and (H1)-Sto hold. For every $t=2,\ldots,T+1$, for every $k \geq 1$,
we have almost surely
\begin{equation}\label{validcutsQ}
\mathcal{Q}_t( x_{t-1} ) \geq \mathcal{C}_t^k ( x_{t-1} ) \mbox{ and }\mathcal{Q}_t( x_{t-1} ) \geq \mathcal{Q}_t^k ( x_{t-1} ),\;\forall  x_{t-1} \in \mathcal{X}_{t-1}.
\end{equation}
For all $t=1,\ldots,T$, $j=1,\ldots,M_t$, for every $k \geq 1$, we have almost surely
\begin{equation}\label{approxvaluefunc}
{\underline{\mathfrak{Q}}}_{t j}^{k}(x_{t-1}  ) \leq \mathfrak{Q}_t ( x_{t-1} , \xi_{t j} ) \mbox{ for all }x_{t-1} \in \mathcal{X}_{t-1}.
\end{equation}
For all $t=2,\ldots,T$, for every $k \geq 1$, 
defining
$
{\underline{\mathcal{Q}}}_t^{k-1}(x_{n}^k ) = \sum_{j=1}^{M_t} p_{t j} {\underline{\mathfrak{Q}}}_{t j}^{k-1}(x_{n}^k ),  
$
we have for every $n \in {\tt{Nodes}}(t-1)$ and for 
all $k \in \mathcal{S}_n$:
\begin{equation}\label{erroroncutQ}
{\underline{\mathcal{Q}}}_t^{k-1}(x_{n}^k )=\mathcal{C}_t^k ( x_n^k ),\;\mbox{almost surely}
\end{equation}
\end{lemma}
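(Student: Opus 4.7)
The plan is to prove the three assertions together by backward induction on $t$, from $t=T+1$ down to $t=2$, with the base case $t=T+1$ being trivial since $\mathcal{Q}_{T+1} \equiv \mathcal{Q}_{T+1}^k \equiv 0$ by construction. Fix $t \in \{2,\ldots,T\}$ and assume the inductive hypothesis $\mathcal{Q}_{t+1}^{k} \leq \mathcal{Q}_{t+1}$ on $\mathcal{X}_t$ for all $k \geq 0$. I would handle the three claims in the order \eqref{approxvaluefunc}, then \eqref{erroroncutQ}, and finally \eqref{validcutsQ}.

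For \eqref{approxvaluefunc}, I would combine relations \eqref{linearineq} and \eqref{approxfeasset} of Lemma \ref{lemmaft} with the inductive hypothesis: for every $x_{t-1} \in \mathcal{X}_{t-1}$, the feasible set in the definition of ${\underline{\mathfrak{Q}}}_{t j}^k$ contains $X_t(x_{t-1},\xi_{t j})$, and the objective $f_{t j}^k(\cdot,x_{t-1}) + \mathcal{Q}_{t+1}^k(\cdot)$ lower bounds $f_t(\cdot,x_{t-1},\xi_{t j}) + \mathcal{Q}_{t+1}(\cdot)$ pointwise. Minimizing over the larger set of a smaller function yields ${\underline{\mathfrak{Q}}}_{t j}^k(x_{t-1}) \leq \mathfrak{Q}_t(x_{t-1},\xi_{t j})$. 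Next, \eqref{erroroncutQ} is a direct algebraic check: evaluating $\mathcal{C}_t^k(x_n^k) = \theta_t^k + \langle \beta_t^k, x_n^k\rangle$ using formulas \eqref{formulasbetathetastodcips}, the $\langle \beta_{t m}^k, x_n^k\rangle$ terms cancel, leaving $\sum_{m\in C(n)} p_m\, {\underline{\mathfrak{Q}}}_{t j_t(m)}^{k-1}(x_n^k)$; invoking Assumption (H0) to identify $C(n)$ with $\{1,\ldots,M_t\}$ via the bijection $m\mapsto j_t(m)$ with $p_m = p_{t j_t(m)}$ gives exactly ${\underline{\mathcal{Q}}}_t^{k-1}(x_n^k)$.

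The main work is \eqref{validcutsQ}. I would first argue that with the choice dictated by the dual formulas in \eqref{formulasbetathetastodcip}, one has $\beta_{t m}^k \in \partial {\underline{\mathfrak{Q}}}_{t j_t(m)}^{k-1}(x_n^k)$. This is obtained by specializing the subgradient characterization developed in Lemma \ref{lemmsubdiffqtk} to the polyhedral setting $\mathcal{X}_t = \{x_t : \mathbb{X}_t x_t \geq \bar x_t\}$: the LP \eqref{modelelinstodcup} is feasible (by Assumption (H1)-Sto-4) combined with $g_t^k \leq g_t$, so every $x_{t-1} \in \mathcal{X}_{t-1}$ admits an $x_t$ satisfying the linearized constraints) and has finite optimal value by compactness, so strong LP duality applies and the primal-dual optimality conditions yield the claimed subgradient formula. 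Armed with this inclusion, the subgradient inequality together with \eqref{approxvaluefunc} at iteration $k-1$ gives, for every $x_{t-1} \in \mathcal{X}_{t-1}$,
\[
{\underline{\mathfrak{Q}}}_{t j_t(m)}^{k-1}(x_n^k) + \langle \beta_{t m}^k, x_{t-1}-x_n^k\rangle \le {\underline{\mathfrak{Q}}}_{t j_t(m)}^{k-1}(x_{t-1}) \le \mathfrak{Q}_t(x_{t-1},\xi_{t j_t(m)}).
\]
Taking the convex combination with weights $p_m$ and again applying (H0), I get $\mathcal{C}_t^k(x_{t-1}) \le \mathcal{Q}_t(x_{t-1})$. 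Combined with $\mathcal{Q}_t^{k-1} \le \mathcal{Q}_t$ (valid for $k=1$ by initialization, and for $k \ge 2$ by the same reasoning applied to the previous iteration via a secondary induction on $k$) and the recursion $\mathcal{Q}_t^k = \max(\mathcal{Q}_t^{k-1},\mathcal{C}_t^k)$, this yields $\mathcal{Q}_t^k \le \mathcal{Q}_t$ and closes the induction.

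The main obstacle is the subgradient inclusion $\beta_{t m}^k \in \partial{\underline{\mathfrak{Q}}}_{t j_t(m)}^{k-1}(x_n^k)$: while the deterministic blueprint of Lemma \ref{lemmsubdiffqtk} applies, one must verify carefully that Slater-type qualification holds for \eqref{modelelinstodcup} (which relies crucially on (H1)-Sto-4 and on $g_{t i j}^{k} \leq g_{t i}(\cdot,\cdot,\xi_{t j})$) and keep the two nested inductions (backward on $t$, forward on $k$) aligned so that \eqref{approxvaluefunc} is available with the right index whenever it is invoked.
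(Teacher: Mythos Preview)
Your proposal is correct and follows essentially the same backward-induction route as the paper. The only cosmetic difference is that you establish \eqref{validcutsQ} by first certifying the subgradient inclusion $\beta_{t m}^k \in \partial {\underline{\mathfrak{Q}}}_{t j_t(m)}^{k-1}(x_n^k)$ (via the deterministic Lemma~\ref{lemmsubdiffqtk}) and then invoking the subgradient inequality, whereas the paper works directly with the dual objective $\mathcal{D}_{t j_t(m)}^{k-1}(\alpha_m^k,\mu_m^k,\delta_m^k,\nu_m^k,\lambda_m^k;\cdot)$, observing that dual feasibility of $(\alpha_m^k,\ldots,\lambda_m^k)$ at every $x_{t-1}$ gives ${\underline{\mathfrak{Q}}}_{t j_t(m)}^{k-1}(x_{t-1}) \geq \mathcal{D}_{t j_t(m)}^{k-1}(\alpha_m^k,\ldots;x_{t-1})$ and that the right-hand side, summed over $m$, is exactly $\mathcal{C}_t^k(x_{t-1})$; these are two phrasings of the same LP-duality fact. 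The paper's phrasing is slightly more economical (no separate subgradient lemma needed, and it carries over verbatim to the inexact variant of Lemma~\ref{lemmacutsbis} where only dual feasibility, not optimality, is available), but your argument is equally valid. The secondary induction on $k$ you mention is not actually needed: once $\mathcal{C}_t^\ell \le \mathcal{Q}_t$ is shown for every $\ell \ge 1$ and $\mathcal{Q}_t^0 \le \mathcal{Q}_t$ holds by initialization, $\mathcal{Q}_t^k = \max_{0 \le \ell \le k} \mathcal{C}_t^\ell \le \mathcal{Q}_t$ follows directly.
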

\begin{proof} Let us show \eqref{validcutsQ}-\eqref{approxvaluefunc} by backward induction on $t$. 
Relation \eqref{validcutsQ} clearly holds for $t=T+1$.
Now assume that for some $t \in \{1,\ldots,T\}$, we have
$\mathcal{Q}_{t+1}( x_{t} ) \geq \mathcal{Q}_{t+1}^k ( x_{t} )$
for all $x_{t} \in \mathcal{X}_{t}$ and all $k \geq 1$. 
Using Lemma \ref{lemmaft}, we have for all $k \geq 1$,
for all $j=1,\ldots,M_t$, for all $x_t \in \mathcal{X}_t$, $x_{t-1} \in \mathcal{X}_{t-1}$,
that $f_{t j}^k(x_t,x_{t-1}) \leq f_{t}(x_t,x_{t-1},\xi_{t j})$ and  
$X_{t} (x_{t-1},\xi_{t j}) \subset X_{t j}^k (x_{t-1})$, which, together with the induction hypothesis $\mathcal{Q}_{t+1}^k \leq \mathcal{Q}_{t+1}$,
implies 
\begin{equation}\label{proofapproxvaluefunc}
{\underline{\mathfrak{Q}}}_{t j}^{k}(x_{t-1}  ) \leq \mathfrak{Q}_t ( x_{t-1} , \xi_{t j} ) \mbox{ for all }x_{t-1} \in \mathcal{X}_{t-1},
\end{equation}
i.e., \eqref{approxvaluefunc}.
Now observe that due to Assumption (H1)-Sto, for every $x_{t-1} \in \mathcal{X}_{t-1}$, the optimization problem 
$$
{\underline{\mathfrak{Q}}}_{t j}^{k-1}(x_{t-1}    )  =  \left\{
\begin{array}{l}
\displaystyle \min_{x_t} \; f_{t j}^{k-1}( x_t , x_{t-1} ) + \mathcal{Q}_{t+1}^{k-1}( x_t ) \\
x_t \in X_{t j}^{k-1}( x_{t-1} ),
\end{array}
\right.
$$
is a linear program with feasible set that is
bounded (since $\mathcal{X}_t$ is compact) and nonempty (it contains the nonempty set $X_t(x_{t-1})$). Therefore it has a finite
optimal value which is also the optimal value of the dual problem given by
\begin{equation}\label{approxvaluefuncdual}
{\underline{\mathfrak{Q}}}_{t j}^{k-1}(x_{t-1}    )  =  \left\{
\begin{array}{l}
\displaystyle \max_{\alpha, \mu, \delta, \nu, \lambda} \;\mathcal{D}_{t j}^{k-1}(\alpha, \mu, \delta, \nu, \lambda;x_{t-1})\\
(A_{t j}^{k-1})^{\top} \alpha + (D_{t j}^{k-1} )^{\top} \mu + (\beta_{t+1}^{0:k-1})^{\top} \delta - \mathbb{X}_t^{\top}  \nu  - (A_{t j})^{\top} \lambda = 0,\\
{\textbf{e}}^{\top} \alpha = 1,\,{\textbf{e}}^{\top} \delta = 1, \alpha, \mu, \delta, \nu \geq 0,
\end{array}
\right.
\end{equation}
where 
{\small{
$$
\mathcal{D}_{t j}^{k-1}(\alpha, \mu, \delta, \nu, \lambda;x_{t-1})=
\alpha^{\top} (  B_{t j}^{k-1} x_{t-1}  +  C_{t j}^{k-1} ) + \mu^{\top} ( E_{t j}^{k-1}  x_{t-1}  +   H_{t j}^{k-1}     ) + \delta^{\top} \theta_{t+1}^{0:k-1} + \lambda^{\top} ( b_{t j}  - B_{t j} x_{t-1} ) + \nu^{\top} {\bar x}_t.
$$
}}
Now assume that $t \geq 2$. Let us take $m \in C(n_{t-1}^k )$. Recall that $j_t(m)$ is the unique index $j$ such that 
$\xi_{t j}=\xi_m$.
Clearly $(\alpha_m^k, \mu_m^k, \delta_m^k, \nu_m^k, \lambda_m^k)$ is feasible for dual problem \eqref{approxvaluefuncdual} 
written for $j=j_t(m)$ and therefore
for any $x_{t-1} \in \mathcal{X}_{t-1}$ we have
\begin{equation}\label{bounddual}
{\underline{\mathfrak{Q}}}_{t j_t(m)}^{k-1}(x_{t-1}) \geq \mathcal{D}_{t j_t(m)}^{k-1}(\alpha_m^k , \mu_m^k , \delta_m^k , \nu_m^k , \lambda_m^k ;x_{t-1}),
\end{equation}
which gives 
$$
\begin{array}{lcl}
\mathcal{Q}_t(x_{t-1} ) & = & \displaystyle \sum_{j=1}^{M_t} p_{t j} \mathfrak{Q}_{t}(x_{t-1}, \xi_{t j} )\\
& \stackrel{(H0)}{=}  & \displaystyle \sum_{m \in C( n_{t-1}^k  )} p_{m} \mathfrak{Q}_{t}(x_{t-1}, \xi_{m} )\\
& =  & \displaystyle \sum_{m \in C( n_{t-1}^k  )} p_{m} \mathfrak{Q}_{t}(x_{t-1}, \xi_{t j_t(m)} )\\
& \stackrel{\eqref{proofapproxvaluefunc}}{\geq}  & \displaystyle \sum_{m \in C( n_{t-1}^k  )} p_{m}  {\underline{\mathfrak{Q}}}_{t j_t(m)}^{k-1}(x_{t-1} )\\
& \stackrel{\eqref{bounddual}}{\geq} & \displaystyle \sum_{m \in C( n_{t-1}^k  )} p_{m} \mathcal{D}_{t j_t(m)}^{k-1}(\alpha_m^k , \mu_m^k , \delta_m^k , \nu_m^k , \lambda_m^k ;x_{t-1}) \\
& = & \mathcal{C}_t^k(  x_{t-1} ),
\end{array}
$$
for every $x_{t-1} \in \mathcal{X}_{t-1}$, where for the last equality, we have used \eqref{formulacut} and \eqref{formulasbetathetastodcip}. Therefore we have shown \eqref{validcutsQ}.

Now take $n \in {\tt{Nodes}}(t-1)$ and $k \in \mathcal{S}_n$.
Then by definition of $(\alpha_m^k, \mu_m^k, \delta_m^k, \nu_m^k, \lambda_m^k)$ and of $\mathcal{C}_t^k$, we get 
for any $m \in C(n)$:
\begin{equation}\label{eq1fin}
{\underline{\mathfrak{Q}}}_{t j_t(m)}^{k-1}(x_{n}^k    ) = \mathcal{D}_{t j_t(m)}^{k-1}(\alpha_m^k , \mu_m^k , \delta_m^k , \nu_m^k , \lambda_m^k ;x_{n}^k) 
\end{equation}
and
\begin{equation}\label{eq2fin}
\mathcal{C}_t^k ( x_n^k ) = \displaystyle \sum_{m \in C(n)} p_m  \mathcal{D}_{t j_t(m)}^{k-1}(\alpha_m^k , \mu_m^k , \delta_m^k , \nu_m^k , \lambda_m^k ;x_{n}^k)=
\displaystyle \sum_{m \in C(n)} p_m  {\underline{\mathfrak{Q}}}_{t j_t(m)}^{k-1}(x_{n}^k    ) ={\underline{\mathcal{Q}}}_t^{k-1}(x_{n}^k ).
\end{equation}
 \hfill 
\end{proof}

\subsection{Convergence analysis}

In what follows, if the stage associated to node $n$ is $\tau(n)$, we use the notation
\begin{equation}
\mathcal{S}_n = \{k \in \mathbb{N}^{*} : n_{\tau(n)}^k = n  \}.
\end{equation}
In other words, $\mathcal{S}_n$
the set of iterations $k$ where the sampled scenario passes through node $n$.

\begin{thm}[Convergence of StoDCuP]\label{convproofStoDCuP} 
Let Assumption (H0), (H1)-Sto, and (H2) hold. Then
\par (i) for every $t=1,\ldots,T$,$i=1,\ldots,p$, almost surely
\begin{equation}\label{limgtas}
\lim_{k \rightarrow +\infty} \max( g_{t i}(x_m^{k}, x_{n}^{k}, \xi_m ) , 0 )  = 0,\;\forall m \in 
{\tt{Nodes}}(t), n=\mathcal{P}(m). 
\end{equation}
For all $t=2,\ldots,T+1$, for all node $n \in {\tt{Nodes}}(t-1)$, we have almost surely
\begin{equation}\label{htstodcup}
\mathcal{H}(t):\;\lim_{k \rightarrow + \infty} \mathcal{Q}_t( x_{n}^{k} ) -  \mathcal{Q}_t^{k} ( x_{n}^{k} )=0.
\end{equation}

\par (ii) The limit of the sequence of first stage problems optimal values 
$( f_{1 1}^{k-1}(x_{n_1}^{k} , x_{0} ) + \mathcal{Q}_2^{k-1}( x_{n_1}^k )  )_{k \geq 1}$ is the optimal value
$\mathcal{Q}_1( x_0 ) $ of \eqref{firststodp} and 
any accumulation point of the sequence $(x_{n_1}^{k})$ is an optimal solution to the first stage problem \eqref{firststodp}.
\end{thm}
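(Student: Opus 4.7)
The plan is to extend the backward-induction proof of Theorem \ref{convprooftheorem} to the stochastic setting by restricting to the random subsequences $\mathcal{S}_n$ of iterations where the sampled scenario passes through node $n$. The key enabler is Remark \ref{remarkinfnode}: almost surely every $\mathcal{S}_n$ is infinite, and combined with monotonicity of the cut-accumulating approximations $\mathcal{Q}_t^k$, $f_{tj}^k$, and $g_{tij}^k$, convergence along $\mathcal{S}_n$ transfers to the full sequence.

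For the constraint-violation claim \eqref{limgtas}, fix $t$, $n\in{\tt Nodes}(t-1)$, $m\in C(n)$, and $i\in\{1,\ldots,p\}$. For $k\in\mathcal{S}_n$ the iterates $(x_m^k,x_n^k)$ lie in the compact set $\mathcal{X}_t\times\mathcal{X}_{t-1}$, the lower bounds $g_{tij_t(m)}^k$ are uniformly Lipschitz there by Lemma \ref{lipcontQtstob}(c), and \eqref{linearineq1} gives the tightness identity $g_{tij_t(m)}^k(x_m^k,x_n^k)=g_{ti}(x_m^k,x_n^k,\xi_m)$. Lemma \ref{techlemmasequence} with $k_0=1$ then yields $g_{ti}(x_m^k,x_n^k,\xi_m)-g_{tij_t(m)}^{k-1}(x_m^k,x_n^k)\to 0$ along $\mathcal{S}_n$, which combined with \eqref{relinitsm} proves \eqref{limgtas}. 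The same scheme applied to $f_t(\cdot,\cdot,\xi_m)$ delivers the auxiliary convergence $f_t(x_m^k,x_n^k,\xi_m)-f_{tj_t(m)}^{k-1}(x_m^k,x_n^k)\to 0$, to be used below.

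Next I would prove $\mathcal{H}(t)$ by backward induction on $t$; the base case $t=T+1$ is immediate. For the inductive step, fix $n\in{\tt Nodes}(t-1)$ and $k\in\mathcal{S}_n$. By \eqref{erroroncutQ} and $\mathcal{Q}_t^k\ge\mathcal{C}_t^k$,
\[
0\le\mathcal{Q}_t(x_n^k)-\mathcal{Q}_t^k(x_n^k)\le\mathcal{Q}_t(x_n^k)-\underline{\mathcal{Q}}_t^{k-1}(x_n^k)=\sum_{m\in C(n)}p_m\bigl[\mathfrak{Q}_t(x_n^k,\xi_m)-\underline{\mathfrak{Q}}_{tj_t(m)}^{k-1}(x_n^k)\bigr],
\]
and each summand is nonnegative by \eqref{approxvaluefunc} and equal to $\mathfrak{Q}_t(x_n^k,\xi_m)-f_{tj_t(m)}^{k-1}(x_m^k,x_n^k)-\mathcal{Q}_{t+1}^{k-1}(x_m^k)$, since $x_m^k$ attains $\underline{\mathfrak{Q}}_{tj_t(m)}^{k-1}(x_n^k)$. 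The proof would then combine three ingredients: (a) the auxiliary convergence $f_t(x_m^k,x_n^k,\xi_m)-f_{tj_t(m)}^{k-1}(x_m^k,x_n^k)\to 0$ from the previous paragraph; (b) the induction hypothesis $\mathcal{H}(t+1)$ at $m$ lifted from $\mathcal{Q}_{t+1}^k$ to $\mathcal{Q}_{t+1}^{k-1}$ via Lemma \ref{techlemmasequence} with $k_0=1$ and the uniform Lipschitz estimate in Lemma \ref{lipcontQtstob}(b); and (c) a contradiction argument mirroring \eqref{anteschaintd}--\eqref{liminftsupd}: were the limsup of $f_t(x_m^k,x_n^k,\xi_m)+\mathcal{Q}_{t+1}(x_m^k)-\mathfrak{Q}_t(x_n^k,\xi_m)$ strictly negative, compactness of $\mathcal{X}_t\times\mathcal{X}_{t-1}$ together with lower semicontinuity of $f_t(\cdot,\cdot,\xi_m)$, $g_t(\cdot,\cdot,\xi_m)$, and $\mathcal{Q}_{t+1}$ (from Lemma \ref{lm:2.1sto}) would furnish a feasible accumulation point $(x_m^\ast,x_n^\ast)$ contradicting the definition \eqref{secondstodp} of $\mathfrak{Q}_t(x_n^\ast,\xi_m)$.

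For part (ii), applying $\mathcal{H}(2)$ at $n_1$ with the deterministic $x_{n_0}^k\equiv x_0$ gives $f_{11}^{k-1}(x_{n_1}^k,x_0)+\mathcal{Q}_2^{k-1}(x_{n_1}^k)=\underline{\mathfrak{Q}}_{11}^{k-1}(x_0)\to\mathcal{Q}_1(x_0)$. Any accumulation point $x_1^\ast$ of $\{x_{n_1}^k\}$ is feasible for \eqref{firststodp} by \eqref{limgtas} and lower semicontinuity of $g_1(\cdot,\cdot,\xi_1)$, and lower semicontinuity of $f_1(\cdot,\cdot,\xi_1)+\mathcal{Q}_2$ paired with the value convergence yields $f_1(x_1^\ast,x_0,\xi_1)+\mathcal{Q}_2(x_1^\ast)\le\mathcal{Q}_1(x_0)$, hence optimality. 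The main obstacle will be the clean execution of step (c): one must track lower semicontinuity in the stochastic setting where $f_t,g_t$ depend on the random realization $\xi_m$, and couple the approximate feasibility $g_{tij_t(m)}^{k-1}(x_m^k,x_n^k)\le 0$ with the true constraint $g_{ti}(\cdot,\cdot,\xi_m)\le 0$ at the limit, for which Assumption (H1)-Sto-4) and the uniform Lipschitz bounds of Lemma \ref{lipcontQtstob} are essential.
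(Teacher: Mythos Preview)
Your backward-induction scheme along the subsequences $\mathcal{S}_n$, the use of Lemma~\ref{techlemmasequence} for the tightness of the $f$- and $g$-linearizations, and the contradiction step via lower semicontinuity all match the paper's argument. The gap is in your opening claim that ``monotonicity of the cut-accumulating approximations $\mathcal{Q}_t^k$, $f_{tj}^k$, and $g_{tij}^k$, [combined with $|\mathcal{S}_n|=\infty$,] transfers convergence along $\mathcal{S}_n$ to the full sequence.'' Monotonicity in $k$ controls $\mathcal{Q}_t^k(x)$ only at a \emph{fixed} $x$; but the simulated points $x_n^k$ move with $k$ even when $k\notin\mathcal{S}_n$, because the approximations $f_{tj}^{k-1}$, $g_{tij}^{k-1}$, $\mathcal{Q}_{t+1}^{k-1}$ used in \eqref{defxtkjsim} are updated at \emph{every} iteration along the sampled path. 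Nothing prevents $x_n^k$ for $k\notin\mathcal{S}_n$ from wandering to regions of $\mathcal{X}_{t-1}$ where few cuts for $\mathcal{Q}_t$ have been placed.

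The paper closes this gap by a genuinely probabilistic argument (its Appendix, Lemma~\ref{lemmaappendix}). The quantity $\mathcal{Q}_t(x_n^k)-\mathcal{Q}_t^k(x_n^k)$ (and likewise the $g$-gap) is measurable with respect to $\sigma(\xi^1,\ldots,\xi^{k-1})$, whereas the event $\{k\in\mathcal{S}_n\}$ depends only on $\xi^k$; hence if the set $K_\varepsilon$ of ``bad'' iterations were infinite, the indicators $y_n^{\mathcal{I}(j)}=\mathbf{1}(\mathcal{I}(j)\in\mathcal{S}_n)$ along $K_\varepsilon$ are i.i.d.\ with mean $\mathbb{E}[y_n^1]>0$, and the strong law of large numbers forces infinitely many bad iterations to lie in $\mathcal{S}_n$---contradicting the subsequence limit you did prove. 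This step is not cosmetic: without convergence for \emph{all} $k$ at stage $t+1$, your inductive step at stage $t$ cannot even be launched, since for $m\in C(n)$ you need $\mathcal{H}(t+1)$ at $m$ along $k\in\mathcal{S}_n$, and $\mathcal{S}_n\supsetneq\mathcal{S}_m$ in general. A minor additional point: in your step~(c) the contradiction should be drawn from a strictly negative \emph{liminf}, not limsup.
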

\begin{proof} 
We first show \eqref{limgtas}. Let us fix $t \in \{1,\ldots,T\}$, 
$i \in \{1,\ldots,p\}$, $m \in {\tt{Nodes}}(t)$, $n=\mathcal{P}(m)$.
Recall from Lemma \ref{lemmaft} that
\begin{equation}\label{relinitsm}
0 \leq \max( g_{t i}(x_m^{k}, x_{n}^{k},\xi_m ) , 0 ) \leq g_{t i}( x_m^{k}, x_{n}^{k} , \xi_m ) - g_{t i j_t(m)}^{k-1}( x_m^{k}, x_{n}^{k} ). 
\end{equation}
We now show that 
\begin{equation}\label{proofg}
\lim_{k \rightarrow +\infty}  g_{t i}( x_m^{k}, x_{n}^{k},\xi_m ) - g_{t i j_t(m)}^{k-1}( x_m^{k}, x_{n}^{k} ) =0,
\end{equation}
which will show \eqref{limgtas} due to relation \eqref{relinitsm}.

Recalling that set $\mathcal{S}_n$ is infinite (see
Remark \ref{remarkinfnode}), we denote by $k(1), k(2),\ldots,$ the iterations in $\mathcal{S}_n$ with $k(i) < k(i+1)$:
$\mathcal{S}_n=\{k(1),k(2),k(3),\ldots\}$. Let us first show that we have
\begin{equation}\label{limgtasSn}
\lim_{k \rightarrow +\infty, k \in \mathcal{S}_n} \max( g_{t i}(x_m^{k}, x_{n}^{k},\xi_m ) , 0 ) =  0. 
\end{equation}
For all $\ell \geq 1$, relation \eqref{linearineq1} gives
\begin{equation}\label{limgtas1}
g_{t i}(x_m^{k(\ell)} , x_{n}^{k(\ell)} , \xi_{m}) = g_{t i j_t(m)}^{k(\ell)} ( x_m^{k(\ell)}, x_{n}^{k(\ell)} ).
\end{equation}
% Using \eqref{linearineq3}, we get for all $\ell \geq 1$:
% \begin{equation}
% g_{t,i,j(m)}^{k(\ell)-1} (x_m^{k(\ell)} , x_{n}^{k(\ell)} ) \leq 0.
% \end{equation}
Let us now apply Lemma \ref{techlemmasequence} to $y^{\ell}=(x_m^{k(\ell)} , x_{n}^{k(\ell)})$, sequence $f^{\ell}=g_{t i j_t(m)}^{k(\ell)}$,
and $f=g_{t i}(\cdot,\cdot,\xi_m)$ (observe that the assumptions of the lemma are satisfied with $k_0=1$). Since 
$$
\lim_{\ell \rightarrow +\infty} f( y^{\ell} ) - f^{\ell}( y^{\ell} ) = 0,
$$
we deduce that
\begin{equation}\label{finalgbef}
\lim_{\ell \rightarrow +\infty} f( y^{\ell} ) - f^{\ell-1}( y^{\ell} ) = 
\lim_{\ell \rightarrow +\infty} g_{t i}( x_m^{k(\ell)} , x_{n}^{k(\ell)}, \xi_m  ) -  g_{t i j_t(m)}^{k(\ell - 1) } (x_m^{k(\ell)} , x_{n}^{k(\ell)} )=0.
\end{equation}
Since $k(\ell) \geq 1 + k(\ell - 1)$, we have 
$0 \leq g_{t i}(\cdot,\cdot,\xi_m) - g_{t i j_t(m)}^{k(\ell)-1}(\cdot,\cdot)  \leq g_{t i}(\cdot,\cdot,\xi_m) - g_{t i j_t(m)}^{k(\ell - 1)}(\cdot,\cdot)$ and therefore \eqref{finalgbef} implies
\begin{equation}\label{finalgbef0}
\lim_{\ell \rightarrow +\infty} g_{t i}( x_m^{k(\ell)} , x_{n}^{k(\ell)}, \xi_m  ) -  g_{t i j_t(m)}^{k(\ell)-1} (x_m^{k(\ell)} , x_{n}^{k(\ell)} )
=\lim_{k \rightarrow +\infty, k \in \mathcal{S}_n} g_{t i}( x_m^{k} , x_{n}^{k}, \xi_m  ) -  g_{t i j_t(m)}^{k-1} (x_m^{k} , x_{n}^{k} )=0.
\end{equation}
Finally, we show in the Appendix that
\begin{equation}\label{proofg0}
\lim_{k \rightarrow +\infty, k \notin \mathcal{S}_n}  g_{t i}( x_m^{k}, x_{n}^{k} ) - g_{t i j_t(m)}^{k-1}( x_m^{k}, x_{n}^{k} ) =0,
\end{equation}
which achieves the proof of \eqref{proofg} and therefore of \eqref{limgtas}.

Let us now show $\mathcal{H}(t)$ by backward induction on $t$. $\mathcal{H}(T+1)$ holds
since $\mathcal{Q}_{T+1}=\mathcal{Q}_{T+1}^k$.
Assume now that $\mathcal{H}(t+1)$ holds for some $t \in \{2,\ldots,T\}$
and let us show that $\mathcal{H}(t)$ holds.
Take a node $n \in {\tt{Nodes}}(t-1)$ and let us denote again by
$k(1), k(2),\ldots,$ the iterations in $\mathcal{S}_n$ with $k(i) < k(i+1)$:
$\mathcal{S}_n=\{k(1),k(2),k(3),\ldots\}$. Let us first show that 
\begin{equation}\label{limitinSn}
\lim_{k \rightarrow + \infty, k \in \mathcal{S}_n} \mathcal{Q}_t( x_{n}^{k} ) -  \mathcal{Q}_t^{k} ( x_{n}^{k} )=
\lim_{\ell \rightarrow + \infty} \mathcal{Q}_t( x_{n}^{k(\ell)} ) -  \mathcal{Q}_t^{k(\ell)} ( x_{n}^{k(\ell)} )=0. 
\end{equation}
By definition of $\mathcal{Q}_t^{k(\ell)}$, we have $\mathcal{Q}_t^{k(\ell)} ( x_n^{k(\ell)} ) \geq \mathcal{C}_t^{k(\ell)} ( x_n^{k(\ell)} ) $
and therefore for all $\ell \geq 1$ we get:
\begin{equation}\label{inductfirst}
\begin{array}{lcl}
0 \leq \mathcal{Q}_t ( x_n^{k(\ell)} ) - \mathcal{Q}_t^{k(\ell)} ( x_n^{k(\ell)} ) & \leq &  \mathcal{Q}_t ( x_n^{k(\ell)} ) - \mathcal{C}_t^{k(\ell)} ( x_n^{k(\ell)} ) \\
& = &  \mathcal{Q}_t ( x_n^{k(\ell)} ) - {\underline{\mathcal{Q}}}_t^{k(\ell)-1}(x_{n}^{k(\ell)} ),\\
& = &  \displaystyle  \sum_{m \in C( n_{t-1}^{k(\ell)}  ) }  p_{m} \Big[ \mathfrak{Q}_t( x_{n}^{k(\ell)} , \xi_m    )    -  {\underline{\mathfrak{Q}}}_{t j_t(m)}^{k(\ell)-1}(x_{n}^{k(\ell)} )\Big].
\end{array}
\end{equation} 
By definiton of $x_m^k$, we have
\begin{equation}\label{defepssolxmk}
{\underline{\mathfrak{Q}}}_{t j_t(m)}^{k(\ell)-1}(x_{n}^{k(\ell)} )
= f_{t j_t(m)}^{k(\ell)-1}(x_m^{k(\ell)}, x_n^{k(\ell)} ) +\mathcal{Q}_{t+1}^{k(\ell)-1}( x_m^{k(\ell)} ),
\end{equation}
which, plugged into \eqref{inductfirst}, gives
\begin{equation}\label{second}
0 \leq \mathcal{Q}_t ( x_n^{k(\ell)} ) - \mathcal{Q}_t^{k(\ell)} ( x_n^{k(\ell)} )  
\leq \displaystyle  \sum_{m \in C( n_{t-1}^{k(\ell)}  ) }  p_{m} \Big[ 
\mathfrak{Q}_t( x_{n}^{k(\ell)} , \xi_m    )    -  f_{t j_t(m)}^{k(\ell)-1}(x_{m}^{k(\ell)}, x_n^{k(\ell)} )  -  \mathcal{Q}_{t+1}^{k(\ell)-1}( x_m^{k(\ell)}   )\Big].
\end{equation} 
Let us apply Lemma \ref{techlemmasequence} to $y^{\ell}=(x_m^{k(\ell)} , x_{n}^{k(\ell)})$, sequence $f^{\ell}=f_{t j_t(m)}^{k(\ell)}$,
and $f=f_{t}(\cdot,\cdot,\xi_m)$ (observe that the assumptions of the lemma are satisfied).
Due to \eqref{linearineq1}, we have
$$
\lim_{\ell \rightarrow +\infty} f(y^{\ell}) - f^{\ell}( y^{\ell} )  = 0
$$
and therefore 
\begin{equation}\label{limitffirst}
\lim_{\ell \rightarrow +\infty} f(y^{\ell}) - f^{\ell-1}( y^{\ell} )= 
\lim_{\ell \rightarrow +\infty} f_{t}(x_m^{k(\ell)} , x_{n}^{k(\ell)},\xi_m)-  f_{t j_t  (m)}^{k(\ell-1)}(x_m^{k(\ell)} , x_{n}^{k(\ell)})=0.
\end{equation}
Since $k(\ell) \geq k( \ell - 1 ) +1$, we have $0 \leq  f_{t}(x_m^{k(\ell)} , x_{n}^{k(\ell)},\xi_m)-  f_{t j_t(m)}^{k(\ell)-1}(x_m^{k(\ell)} , x_{n}^{k(\ell)})  \leq f_{t}(x_m^{k(\ell)} , x_{n}^{k(\ell)},\xi_m)-  f_{t j_t(m)}^{k(\ell-1)}(x_m^{k(\ell)} , x_{n}^{k(\ell)})$
which combined with \eqref{limitffirst} gives
\begin{equation}\label{limitsecond}
\lim_{\ell \rightarrow +\infty} f_{t}(x_m^{k(\ell)} , x_{n}^{k(\ell)},\xi_m)-  f_{t j_t(m)}^{k(\ell)-1}(x_m^{k(\ell)} , x_{n}^{k(\ell)})=0.
\end{equation}
Using \eqref{defepssolxmk} and \eqref{approxvaluefunc}, we get
$$
f_{t j_t(m)}^{k(\ell)-1}(x_m^{k(\ell)}, x_n^{k(\ell)} ) +\mathcal{Q}_{t+1}^{k(\ell)-1}(x_m^{k(\ell)}) ={\underline{\mathfrak{Q}}}_{t j_t(m)}^{k(\ell)-1}(x_{n}^{k(\ell)}  ) \leq \mathfrak{Q}_t ( x_{n}^{k(\ell)} , \xi_{m} ).
$$
Therefore the sequence 
$( f_{t j_t(m)}^{k(\ell)-1}(x_m^{k(\ell)}, x_n^{k(\ell)} ) +\mathcal{Q}_{t+1}^{k(\ell)-1}(x_m^{k(\ell)})- \mathfrak{Q}_t ( x_{n}^{k(\ell)} , \xi_{m} ) )_{\ell \geq 1}$
is bounded and has a finite limit superior which satisfies
\begin{equation}\label{limsup1}
\varlimsup_{\ell \rightarrow +\infty}  f_{t j_t(m)}^{k(\ell)-1}(x_m^{k(\ell)}, x_n^{k(\ell)} ) +\mathcal{Q}_{t+1}^{k(\ell)-1}(x_m^{k(\ell)})  - \mathfrak{Q}_t ( x_{n}^{k(\ell)} , \xi_{m} )    \leq 0.
\end{equation}
Applying Lemma \ref{techlemmasequence} to $y^{\ell}=x_{m}^{k(\ell)}$, sequence $f^{\ell}=\mathcal{Q}_{t+1}^{k(\ell)}$,
and $f=\mathcal{Q}_{t+1}$ (observe that the assumptions of the lemma are satisfied), since from the induction hypothesis we know that
$$
\lim_{\ell \rightarrow +\infty} f(y^{\ell}) - f^{\ell}( y^{\ell} )  = 0
$$
we deduce that
\begin{equation}\label{limitffirst1}
\lim_{\ell \rightarrow +\infty} f(y^{\ell}) - f^{\ell-1}( y^{\ell} )= 
\lim_{\ell \rightarrow +\infty} \mathcal{Q}_{t+1}(  x_m^{k(\ell)} )  - \mathcal{Q}_{t+1}^{k(\ell-1)}(  x_m^{k(\ell)}  )  =0.
\end{equation}
Since $k(\ell) \geq k( \ell - 1 ) +1$, we have $0 \leq \mathcal{Q}_{t+1}(  x_m^{k(\ell)} )  - \mathcal{Q}_{t+1}^{k(\ell)-1}(  x_m^{k(\ell)}  ) \leq \mathcal{Q}_{t+1}(  x_m^{k(\ell)} )  - \mathcal{Q}_{t+1}^{k(\ell-1)}(  x_m^{k(\ell)}  )$, which combines with
\eqref{limitffirst1} to give
\begin{equation}\label{limitsecondf}
 \lim_{\ell \rightarrow +\infty} \mathcal{Q}_{t+1}(  x_m^{k(\ell)} )  - \mathcal{Q}_{t+1}^{k(\ell)-1}(  x_m^{k(\ell)}  )  =0.
\end{equation}
Combining \eqref{limitsecond}, \eqref{limsup1}, and \eqref{limitsecondf}, we obtain
\begin{equation}\label{limsup1b}
\varlimsup_{\ell \rightarrow +\infty}  f_{t}(x_m^{k(\ell)}, x_n^{k(\ell)}, \xi_m ) +\mathcal{Q}_{t+1}(x_m^{k(\ell)}) - \mathfrak{Q}_t ( x_{n}^{k(\ell)} , \xi_{m} )    \leq 0.
\end{equation}
Let us now show by contradiction that 
\begin{equation}\label{liminfT}
\varliminf_{k \rightarrow +\infty} f_{t}(x_m^{k(\ell)}, x_n^{k(\ell)}, \xi_m ) +\mathcal{Q}_{t+1}(x_n^{k(\ell)}) - \mathfrak{Q}_t ( x_{n}^{k(\ell)} , \xi_{m} )    \geq 0.
\end{equation}
Assume that \eqref{liminfT} does not hold.
Using the fact that sequence $( x_m^{k}, x_{n}^{k}  )_{k \in \mathcal{S}_n}$
belongs to the compact set $\mathcal{X}_t \small{\times} \mathcal{X}_{t-1}$, 
and the lower semicontinuity of $f_t(\cdot,\cdot,\xi_m)$, $g_t(\cdot,\cdot,\xi_m)$,
$\mathcal{Q}_{t}$, $\mathfrak{Q}_{t}(\cdot,\xi_m)$, there is a subsequence 
$( x_m^{k}, x_{n}^{k}  )_{k \in K}$ with $K \subset \mathcal{S}_n$
converging to some $(\bar x_m, \bar x_n) \in \mathcal{X}_t \small{\times} \mathcal{X}_{t-1}$
such that 
$$
\begin{array}{l}
f_{t}({\bar x}_m , {\bar x}_n, \xi_m ) +\mathcal{Q}_{t+1}({\bar x}_n ) - \mathfrak{Q}_t ( \bar x_n, \xi_{m} )< 0
\end{array}
$$
and $\bar x_m \in X_t( \bar x_n , \xi_m)$. This is in contradiction with the definition of $\mathfrak{Q}_t$. Therefore we must have
$$
\begin{array}{lll}
0 & = & \lim_{\ell \rightarrow +\infty}  f_{t}(x_m^{k(\ell)}, x_n^{k(\ell)}, \xi_m ) +\mathcal{Q}_{t+1}(x_m^{k(\ell)}) - \mathfrak{Q}_t ( x_{n}^{k(\ell)} , \xi_{m} )\\
& = &  \lim_{\ell \rightarrow +\infty}  f_{t j_t(m)}^{k(\ell)-1}(x_m^{k(\ell)}, x_n^{k(\ell)}) +\mathcal{Q}_{t+1}^{k(\ell)-1}(x_m^{k(\ell)}) - \mathfrak{Q}_t ( x_{n}^{k(\ell)} , \xi_{m} )\\
\end{array}
$$
which, plugged into \eqref{second} gives 
\begin{equation}\label{limitQinSn}
\lim_{k \rightarrow +\infty, k \in \mathcal{S}_n} \mathcal{Q}_t( x_n^k ) - \mathcal{Q}_t^k ( x_n^k ) = 0. 
\end{equation}
Finally, we show in the Appendix that
\begin{equation}\label{limitQNotinSn}
\lim_{k \rightarrow +\infty, k \notin \mathcal{S}_n} \mathcal{Q}_t( x_n^k ) - \mathcal{Q}_t^k ( x_n^k ) = 0,
\end{equation}
which achieves the proof of $\mathcal{H}(t)$.

\par (ii) The proof of (ii) can easily be obtained from (i), see Theorem 4.1-(ii) in \cite{guiguessiopt2016} for details.
\hfill
\end{proof}

\begin{rem}[Stopping criterion]
The stopping criterion is similar to SDDP.
We can stop the algorithm when the gap $\frac{Ub-Lb}{Ub}$ is less than a threshold, for instance
$5\%$, where $Ub$ and $Lb$ are upper and lower bounds, respectively, defined as follows. 
Due to Lemma \ref{lemmacuts}, we can take as a lower
bound on the optimal value of problem \eqref{pbtosolve} the value
$Lb={\underline{\mathfrak{Q}}}_{1 1}^{k-1}(x_{0} )$.
The upper bound $Ub$ corresponds to the upper end of a 100(1-$\alpha$)\%-one-sided confidence interval (with for instance $\alpha=0.05$) on the optimal value for 
$N$ policy realizations (using the costs of decisions taken on $N$ independent sampled scenarios).
\end{rem}

\subsection{Other variants}\label{sec:othervariants}

It is easy to adapt several 
recent enhancements of SDDP to
the forward StoDCuP method we have
just presented.
More precisely, we can extend
forward StoDCuP to forward-backward StoDCuP
which builds the trial points
and cuts for the objective and
constraint functions corresponding
to the sampled scenario in the forward pass
and to build cuts for the cost-to-go functions
$\mathcal{Q}_t$ in a backward pass.
In this case, the backward pass also
builds cuts for all functions
$f_t(\cdot,\cdot,\xi_{t j}),$
$g_{t i}(\cdot,\cdot,\xi_{t j})$
$t=1,\ldots,T$, $j=1,\ldots,M_t$,
$i=1,\ldots,p$.
It is also easy to
incorporate in StoDCuP regularization as in
\cite{guilejtekregsddp}, to apply
multicut variants as in \cite{guiguesbandarra18}, \cite{birgemulti},
and cut selection strategies for the bundles of cuts of $\mathcal{Q}_t$,
for instance along the lines of  \cite{dpcuts0}, \cite{guiguesejor17}, \cite{guiguesbandarra18}. 
Observe, however, that all linearizations for 
$f_t(\cdot,\cdot,\xi_{t j})$ and $g_{t i}(\cdot,\cdot,\xi_{t j})$
are tight and therefore no cut selection is needed for these 
linearizations.

\section{Inexact cuts in StoDCuP}\label{variantsstodcup}

\if{ 
\subsection{Forward-backward StoDCuP}

Similarly to DCuP, we can extend forward StoDCuP presented in the previous section 
to forward-backward StoDCuP. In this variant, at iteration $k$, we still compute
in a forward pass trial points $x_n^k$ for nodes $n \in \{n_1,n_2^k,\ldots,n_T^k\}$
with $n_t^k$ a child node of node $n_{t-1}^k$. For each $t=1,\ldots,T$, and 
$j=1,\ldots,M_t$, $i=1,\ldots,p$, a linearization is also computed in the forward pass for 
$f_t(\cdot,\cdot, \xi_{t j})$ and $g_{t i}(\cdot,\cdot, \xi_{t j})$ at these trial points.
However, cuts for $\mathcal{Q}_t$ are computed in a backward pass and in this backward pass
an additional linearization is also built for  $f_t(\cdot,\cdot, \xi_{t j})$ and $g_{t i}(\cdot,\cdot, \xi_{t j})$
using points computed in both the backward and forward pass. 
For the cut computed at iteration $k$ for $\mathcal{Q}_t$, we will still use the notation:
$$
\mathcal{C}_t^k ( x_{t-1} ) = \theta_t^k + \langle \beta_t^k , x_{t-1}  \rangle
$$
with the convention that $\mathcal{C}_{T+1}^k$ is the null function (see below for the computation of 
$\theta_t^k$, $\beta_t^k$). 
We end up iteration $k$ with approximation
$\mathcal{Q}_t^k (  x_{t-1}  ) =\max_{0 \leq j \leq k} \;\mathcal{C}_t^j ( x_{t-1} )$ of $\mathcal{Q}_t$.

Therefore, at iteration $k$, two approximations of
functions $f_t(\cdot,\cdot, \xi_{t j})$ and $g_{t i}(\cdot,\cdot, \xi_{t j})$ are computed
which will be denoted by $f_{t j}^{2k-1}$ and $g_{t i j}^{2k-1}$, respectively,
in the end of the forward pass, and by $f_{t j}^{2k}$ and $g_{t j}^{2k}$, respectively,
in the end of the backward pass.

The detailed steps of forward-backward StoDCuP are described below.\\
\rule{\linewidth}{1pt}
\par {\textbf{Forward-Backward StoDCuP (Stochastic Dynamic Cutting Plane) with linearizations computed in forward
and backward passes.}}\\
\rule{\linewidth}{1pt}
\begin{itemize}
\item[Step 1)] {\textbf{Initialization.}} 
For $t=1,\ldots,T$,  take 
$f_{t j}^0, g_{t i j}^0: \mathcal{X}_t \small{\times} \mathcal{X}_{t-1} \to \mathbb{R}$
affine functions satisfying $f_{t j}^0 \leq f_t(\cdot,\cdot,\xi_{t j})$, 
$g_{t i j}^0 \leq g_{t i}(\cdot,\cdot,\xi_{t j})$,
and for $t=2,\ldots,T$, $\mathcal{Q}_t^0: \mathcal{X}_{t-1} \to \mathbb{R}$ 
is an affine function satisfying $\mathcal{Q}_t^0 \leq \mathcal{Q}_t$.
Set the iteration count $k$ to 1 and $\mathcal{Q}_{T+1}^0 \equiv 0$.
\item[Step 2)] {\textbf{Forward pass.}} \\
Generate a sample $({\tilde \xi}_1^k, {\tilde \xi}_2^k,\ldots, {\tilde \xi}_T^k)$
of $(\xi_1, \xi_2,\ldots, \xi_T)$. \\
{\textbf{For }}$t=1,\ldots,T$,\\
\hspace*{0.7cm}{\textbf{For }}$j=1,\ldots,M_t$,\\
\hspace*{1.4cm}{\textbf{If }}$\xi_{t j}={\tilde \xi}_t^k$ {\textbf{ then }}compute an optimal solution $x_t^k$ of
\begin{equation} \label{defxtkjbis}
{\underline{\mathfrak{Q}}}_{t j}^{2k-2}(x_{t-1}^k )  =  \left\{
\begin{array}{l}
\displaystyle \inf_{x_t} \; f_{t j}^{2k-2}( x_t , x_{t-1}^{k} ) + \mathcal{Q}_{t+1}^{k-1}( x_t ) \\
x_t \in X_{t j}^{2k-2}( x_{t-1}^{k}),
\end{array}
\right.
\end{equation}
\hspace*{1.8cm}where $x_{0}^{k} = x_0$ and where for all $k \geq 1$,
\begin{equation} \label{Xtkbis}
X_{t j}^{2k-2} ( x_{t-1}^k) =\{ x_t \in \mathcal{X}_t: \;g_{t i j}^{2k-2}(x_t, x_{t-1}^k) \leq 0,i=1,\ldots,p,\;\displaystyle A_{t j} x_{t} + B_{t j} x_{t-1}^k = b_{t j} \}.
\end{equation}
\hspace*{1.8cm}Compute $f_t( x_t^{k}, x_{t-1}^{k} , \xi_{t j} )$, 
$g_{t i}( x_t^{k}, x_{t-1}^{k}, \xi_{t j} )$, and subgradients
of $f_t(\cdot,\cdot,\xi_{t j})$,\\
\hspace*{1.8cm} $g_{t i}(\cdot,\cdot,\xi_{t j})$ at $( x_t^{k}, x_{t-1}^{k} )$ 
with corresponding linearizations $\ell_{f_t(\cdot,\cdot,\xi_{t j})}(\cdot, \cdot; ( x_t^{k}, x_{t-1}^{k} ) ) $\\
\hspace*{1.8cm} and $\ell_{g_{t i}(\cdot,\cdot,\xi_{t j})}(\cdot, \cdot; ( x_t^{k}, x_{t-1}^{k} ) )$. Compute
$$
\begin{array}{l}
f_{t j}^{2k-1}(\cdot,\cdot)   \leftarrow \max\Big(f_{t j}^{2k-2}(\cdot,\cdot) , \ell_{f_t(\cdot,\cdot,\xi_{t j})}(\cdot, \cdot; ( x_t^{k}, x_{t-1}^{k} ) )  \Big),\\
g_{t i j}^{2k-1}(\cdot,\cdot)   \leftarrow \max\Big(g_{t i j}^{2k-2}(\cdot,\cdot) , \ell_{g_{t i}(\cdot,\cdot,\xi_{t j})}(\cdot, \cdot; ( x_t^{k}, x_{t-1}^{k} ) )  \Big).\\
\end{array}
$$
\hspace*{1.4cm}{\textbf{Else}}
$$
f_{t j}^{2k-1} = f_{t j}^{2k-2}, \;g_{t i j}^{2k-1} = g_{t i j}^{2k-2}.
$$
\hspace*{1.4cm}{\textbf{End If}}\\
\hspace*{0.7cm}{\textbf{End For}}\\
{\textbf{End For}}
\item[Step 3)] {\textbf{Backward pass.}}\\
Set $\theta_{T+1}^k=0$ and $\beta_{T+1}^k=0$.\\
{\textbf{For }}$t=T,\ldots,2$,\\
\hspace*{0.8cm}{\textbf{For }}$j=1,\ldots,M_t$,\\
\hspace*{1.5cm}Compute an optimal solution $x_{t j}^{B k}$ of \\
\begin{equation}\label{primalpbisddpbis}
{\underline{\mathfrak{Q}}}_{t j}^{2k-1}(x_{t-1}^k ) = \left\{
\begin{array}{l}
\displaystyle \inf_{x_t} \;f_{t j}^{2k-1}( x_t , x_{t-1}^k ) + \mathcal{Q}_{t+1}^k ( x_t )\\ 
x_t \in X_{t j}^{2k-1}(x_{t-1}^k ).\\
\end{array}
\right.
\end{equation}
\hspace*{1.4cm} Compute $f_t( x_{t j}^{B k}, x_{t-1}^{k} , \xi_{t j})$, 
$g_{t i}( x_{t j}^{B k}, x_{t-1}^{k}, \xi_{t j} )$ and subgradients
of $f_t(\cdot,\cdot,\xi_{t j})$ and\\
\hspace*{1.5cm}$g_{t i}(\cdot,\cdot,\xi_{t j})$ at $( x_{t j}^{B k}, x_{t-1}^{k} )$ with corresponding
linearizations \\
\hspace*{1.5cm}$\ell_{f_t(\cdot,\cdot,\xi_{t j})}(\cdot, \cdot; ( x_{t j}^{B k}, x_{t-1}^{k} ) ) $
and $\ell_{g_{t i}(\cdot,\cdot,\xi_{t j})}(\cdot, \cdot; ( x_{t j}^{B k}, x_{t-1}^{k} ) )$.\\
\hspace*{1.5cm}Compute
$$
\begin{array}{l}
f_{t j}^{2k}(\cdot,\cdot)   \leftarrow \max\Big(f_{t j}^{2k-1}(\cdot,\cdot) , \ell_{f_t(\cdot,\cdot,\xi_{t j})}(\cdot, \cdot; ( x_{t j}^{B k}, x_{t-1}^{k} ) )  \Big),\\
g_{t i j}^{2k}(\cdot,\cdot)   \leftarrow \max\Big(g_{t i j}^{2k-1}(\cdot,\cdot) , \ell_{g_{t i}(\cdot,\cdot,\xi_{t j})}(\cdot, \cdot; ( x_{t j}^{B k}, x_{t-1}^{k} ) )  \Big).\\
\end{array}
$$
\hspace*{1.5cm}Compute (for instance using Lemma 2.1 in \cite{guiguessiopt2016}) a subgradient $\beta_t^{k j}$ 
of ${\underline{\mathfrak{Q}}}_{t j}^{2k-1}$ at  $x_{t-1}^k$ and\\
\hspace*{1.5cm}the cut coefficients: 
$$
\begin{array}{lcl}
\theta_{t}^{k}& =& \sum_{j=1}^{M_t}  p_{t j} (
{\underline{\mathfrak{Q}}}_{t j}^{2k-1}(x_{t-1}^k ) - \langle \beta_t^{k j} , x_{t-1}^k \rangle)
\mbox{ and }\beta_t^k = \sum_{j=1}^{M_t} p_{t j} \beta_t^{k j}.
\end{array}
$$
\hspace*{0.8cm}{\textbf{End For}}\\
{\textbf{End For}}\\
Compute an optimal solution $x_{1}^{B k}$ of \\
\begin{equation}\label{primalpbisddpbis}
\left\{
\begin{array}{l}
\displaystyle \inf_{x_1} \;f_{1 1}^{2k-1}( x_1 , x_{0}) + \mathcal{Q}_{2}^k ( x_1 )\\ 
x_1 \in X_{1 1}^{2k-1}(x_{0}).\\
\end{array}
\right.
\end{equation}
Compute $f_1( x_{1}^{B k}, x_{0} , \xi_{1})$, 
$g_{1 i}( x_{1}^{B k}, x_{0}, \xi_{1} )$, and subgradients
of $f_1(\cdot,\cdot,\xi_1)$, $g_{1 i}(\cdot,\cdot,\xi_1)$ at $( x_{1}^{B k}, x_{0} )$ with \\
corresponding
linearizations $\ell_{f_1(\cdot,\cdot,\xi_{1})}(\cdot, \cdot; ( x_{1}^{B k}, x_{0} ) ) $
and $\ell_{g_{1 i}(\cdot,\cdot,\xi_{1})}(\cdot, \cdot; ( x_{1}^{B k}, x_{0} ) )$.\\
Compute
$$
\begin{array}{l}
f_{1 1}^{2k}(\cdot,\cdot)   \leftarrow \max\Big(f_{1 1}^{2k-1}(\cdot,\cdot) , \ell_{f_1(\cdot,\cdot,\xi_{1})}(\cdot, \cdot; ( x_{1}^{B k}, x_{0} ) )  \Big),\\
g_{1 i 1}^{2k}(\cdot,\cdot)   \leftarrow \max\Big(g_{1 i 1}^{2k-1}(\cdot,\cdot) , \ell_{g_{1 i}(\cdot,\cdot,\xi_{1})}(\cdot, \cdot; ( x_{1}^{B k}, x_{0} ) )  \Big).\\
\end{array}
$$
\item[Step 4)] Do $k \leftarrow k+1$ and go to Step 2).\\
\end{itemize}

\subsection{Inexact cuts in StoDCuP}

}\fi

In this section, we present an extension of StoDCuP to solve problem \eqref{pbtosolve}.
Since all subproblems of forward StoDCuP presented in Section \ref{stodcup} are linear programs, it is
easy to derive an inexact variant of StoDCuP that computes $\varepsilon_t^k$-optimal solutions
(instead of optimal solutions in StoDCuP) of the subproblems solved for iteration $k$ and stage $t$. We show in Lemma \ref{lemmacutsbis} below that the cuts computed by this variant are still valid and that the distance between 
the cuts and ${\underline{\mathcal{Q}}}_t^{k-1}(\cdot ) = \sum_{j=1}^{M_t} p_{t j} {\underline{\mathfrak{Q}}}_{t j}^{k-1}(\cdot)$ at the trial point $x_n^k$ for stage $t$ and iteration $k$
is at most $\varepsilon_t^k$. This variant of StoDCuP, called inexact StoDCuP, is given below
and the convergence of the method is proved in Theorem \ref{convproofStoDCuPinex}:\\
\rule{\linewidth}{1pt}
\par {\bf Inexact StoDCuP.}\\
\rule{\linewidth}{1pt}
\begin{itemize}
\item[Step 1)] {\textbf{Initialization.}} For $t=1,\ldots,T$,  take 
$f_{t j}^0, g_{t i j}^0: \mathcal{X}_t \small{\times} \mathcal{X}_{t-1} \to \mathbb{R}$
affine functions satisfying $f_{t j}^0 \leq f_t(\cdot,\cdot,\xi_{t j})$, 
$g_{t i j}^0 \leq g_{t i}(\cdot,\cdot,\xi_{t j})$,
and for $t=2,\ldots,T$, $\mathcal{Q}_t^0: \mathcal{X}_{t-1} \to \mathbb{R}$ 
is an affine function satisfying $\mathcal{Q}_t^0 \leq \mathcal{Q}_t$.
Set $x_{n_0} = x_0$, set the iteration count $k$ to 1, and $\mathcal{Q}_{T+1}^0 \equiv 0$. 
\item[Step 2)] Generate a sample $({\tilde \xi}_1^k, {\tilde \xi}_2^k,\ldots, {\tilde \xi}_T^k)$
of $(\xi_1, \xi_2,\ldots, \xi_T)$ corresponding to a set of nodes  $(n_1^k, n_2^k, \ldots, n_T^k)$ 
where $n_1^k=n_1$, and for $t \geq 2$, $n_t^k$ is a node of stage $t$, child of node $n_{t-1}^k$. Set $n_0^k=n_0$.\\
Do $\theta_{T+1}^k=0$ and $\beta_{T+1}^k=0$.\\
{\textbf{For }}$t=1,\ldots,T$,\\
\hspace*{0.7cm}Let $n=n_{t-1}^k$.\\
\hspace*{0.7cm}{\textbf{For }}every $m \in C(n)$,
\begin{itemize}
\item[]\hspace*{0.7cm}a) compute an $\varepsilon_t^k$-optimal feasible solution $x_m^k$ of
\begin{equation} \label{defxtkj}
{\underline{\mathfrak{Q}}}_{t j_t(m)}^{k-1}(x_{n}^k  )  =  \left\{
\begin{array}{l}
\displaystyle \min_{x_m} \; f_{t j_t(m)}^{k-1}( x_m , x_{n}^{k} ) + \mathcal{Q}_{t+1}^{k-1}( x_m ) \\
x_m \in X_{t j_t(m)}^{k-1}( x_{n}^k  ).
\end{array}
\right.
\end{equation}
\item[] \item[]\hspace*{0.7cm}b) Compute
function values and  
subgradients of convex functions $f_t(\cdot,\cdot,\xi_m)$ and $g_{t i}(\cdot,\cdot,\xi_m)$
\item[]\hspace*{0.8cm}at $(x_m^k , x_n^k )$ and let 
$\ell_{f_{t}(\cdot,\cdot,\xi_m)}((\cdot,\cdot); (x_m^{k}, x_{n}^{k} ) ) $ and 
$\ell_{g_{t i}(\cdot,\cdot,\xi_m)}((\cdot,\cdot); (x_m^{k}, x_{n}^{k} ))$
denote the 
\item[]\hspace*{0.7cm}corresponding linearizations. 
\item[]\hspace*{0.7cm}c) Set
$$
\begin{array}{lcl}
f_{t j_t(m)}^{k} &=& \max\Big( f_{t j_t(m)}^{k-1} \,,\,  \ell_{f_{t}(\cdot,\cdot,\xi_m)}((\cdot,\cdot); (x_m^{k}, x_{n}^{k} ) )   \Big),\\
%g_t^{k+1} &=& (g_{t 1}^{k+1},\ldots,g_{t p}^{k+1})\mbox{ where } 
g_{ti}^{k} &= &\max\Big( g_{t i}^{k-1} \,,\,
\ell_{g_{t i}(\cdot,\cdot,\xi_m)}((\cdot,\cdot); (x_m^{k}, x_{n}^{k} )) \Big), \quad \forall i=1,\ldots,p.
\end{array}
$$
\item[]\hspace*{0.7cm}d) 
Using the notation of Section
\ref{stodcupdeta}, in particular 
\eqref{mat1s}, \eqref{mat2s}, and \eqref{mat2s}, if $t\geq 2$ compute
\item[]\hspace*{1.2cm}an 
$\varepsilon_t^k$-optimal feasible solution $(\alpha_m^k, \mu_m^k, \delta_m^k, \nu_m^k, \lambda_m^k)$ of the dual problem
{\small{
$$
\begin{array}{l}
\displaystyle \max_{\alpha, \mu, \delta, \nu, \lambda} \;\alpha^{\top} (  B_{t j_t(m)}^{k-1} x_{n}^k  +  C_{t j_t(m)}^{k-1} ) + 
\mu^{\top} ( E_{t j_t(m)}^{k-1}  x_{n}^k  +   H_{t j_t(m)}^{k-1}     ) + \delta^{\top} \theta_{t+1}^{0:k-1}\\
\hspace*{1.1cm} + \lambda^{\top} ( b_{t j_t(m)}  - B_{t j_t(m)} x_{n}^k ) + \nu^{\top} {\bar x}_t \\
(A_{t j_t(m)}^{k-1})^{\top} \alpha + (D_{t j_t(m)}^{k-1} )^{\top} \mu + (\beta_{t+1}^{0:k-1})^{\top} \delta - \mathbb{X}_t^{\top}  \nu  - (A_{t j_t(m)})^{\top} \lambda = 0,\\
{\textbf{e}}^{\top} \alpha = 1,\,{\textbf{e}}^{\top} \delta = 1, \alpha, \mu, \delta, \nu \geq 0.
\end{array}
$$
}}
\end{itemize}
\hspace*{0.7cm}{\textbf{End For}}\\
\hspace*{0.7cm}{\textbf{If }}$t \geq 2$ compute: 
\begin{equation}\label{formulabetatheta}
\begin{array}{lcl}
\beta_{t}^{k} & =& \displaystyle \sum_{m \in C(n)} p_m \Big[  (B_{t j_t(m)}^k )^{\top} \alpha_m^{k-1}  + (E_{t j_t(m)}^{k-1} )^{\top} \mu_m^k - B_{t j_t(m)}^{\top} \lambda_m^k \Big],\\
\theta_t^k &= &\displaystyle \sum_{m \in C(n)}  p_{m} \Big[  \langle \alpha_m^k ,  C_{t j_t(m)}^{k-1}  \rangle  + \langle  \mu_m^k , H_{t j_t(m)}^{k-1} \rangle  +  \langle  \delta_m^k ,  \theta_{t+1}^{0:k-1} \rangle  +  \langle \lambda_m^k ,  b_{t j_t(m)} \rangle  + \langle \nu_m^k  ,  {\bar x}_t \rangle  \Big].
\end{array}
\end{equation}
\hspace*{0.7cm}{\textbf{End If}}\\
{\textbf{End For}}
\item[Step 4)] Do $k \leftarrow k+1$ and go to Step 2).
\end{itemize}
\rule{\linewidth}{1pt}
Clearly Lemma \ref{lemmaft} still holds for Inexact StoDCuP. The quality of the cuts computed for $\mathcal{Q}_t$ by Inexact StoDCuP is given in Lemma \ref{lemmacutsbis}:
\begin{lemma}[Validity and quality of cuts computed by Inexact StoDCuP] \label{lemmacutsbis}
Let Assumptions (H0) and (H1)-Sto hold. For every $t=2,\ldots,T+1$, for every $k \geq 1$,
we have 
\begin{equation}\label{validcutsQb}
\mathcal{Q}_t( x_{t-1} ) \geq \mathcal{C}_t^k ( x_{t-1} ) \mbox{ and }\mathcal{Q}_t( x_{t-1} ) \geq \mathcal{Q}_t^k ( x_{t-1} ),\;\forall  x_{t-1} \in \mathcal{X}_{t-1}.
\end{equation}
For all $t=1,\ldots,T$, $j=1,\ldots,M_t$, for every $k \geq 1$, we have
\begin{equation}\label{approxvaluefuncb}
{\underline{\mathfrak{Q}}}_{t j}^{k}(x_{t-1}  ) \leq \mathfrak{Q}_t ( x_{t-1} , \xi_{t j} ) \mbox{ for all }x_{t-1} \in \mathcal{X}_{t-1}.
\end{equation}
For all $t=2,\ldots,T$, for every $k \geq 1$, 
defining
$
{\underline{\mathcal{Q}}}_t^{k-1}(x_{n}^k ) = \sum_{j=1}^{M_t} p_{t j} {\underline{\mathfrak{Q}}}_{t j}^{k-1}(x_{n}^k )$,  
we have for every $n \in {\tt{Nodes}}(t-1)$ and for 
all $k \in \mathcal{S}_n$:
\begin{equation}\label{erroroncutQbis}
0 \leq {\underline{\mathcal{Q}}}_t^{k-1}(x_{n}^k ) - \mathcal{C}_t^k ( x_n^k ) \leq \varepsilon_t^k.
\end{equation}
\end{lemma}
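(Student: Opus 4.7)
The plan is to mirror the proof of Lemma \ref{lemmacuts}, accounting at each step for the dual inexactness. The single new ingredient is that we now build the cut $\mathcal{C}_t^k$ from an $\varepsilon_t^k$-optimal feasible dual solution, rather than an optimal one, so the cut at $x_n^k$ may undershoot ${\underline{\mathcal{Q}}}_t^{k-1}(x_n^k)$ by at most $\varepsilon_t^k$; everywhere else the dual construction continues to yield a valid lower bound, because any feasible dual solution does.

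First I would establish \eqref{validcutsQb} and \eqref{approxvaluefuncb} jointly by backward induction on $t$, exactly as in the proof of Lemma \ref{lemmacuts}. The base case $t=T+1$ is immediate. For the induction step, the hypothesis $\mathcal{Q}_{t+1}^k \le \mathcal{Q}_{t+1}$ combines with Lemma \ref{lemmaft} (which still holds for Inexact StoDCuP, since the linearizations in Step c) are subgradient-based and do not depend on the subproblem being solved exactly) to yield $f_{tj}^k \le f_t(\cdot,\cdot,\xi_{tj})$ and $X_t(x_{t-1},\xi_{tj}) \subset X_{tj}^k(x_{t-1})$, which directly implies \eqref{approxvaluefuncb}. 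Then, as in the proof of Lemma \ref{lemmacuts}, the LP \eqref{defxtkj} has finite value and coincides with its dual. Feasibility of $(\alpha_m^k,\mu_m^k,\delta_m^k,\nu_m^k,\lambda_m^k)$ for this dual yields
\[
{\underline{\mathfrak{Q}}}_{t j_t(m)}^{k-1}(x_{t-1}) \ge \mathcal{D}_{t j_t(m)}^{k-1}(\alpha_m^k,\mu_m^k,\delta_m^k,\nu_m^k,\lambda_m^k;x_{t-1})
\]
for every $x_{t-1}\in\mathcal{X}_{t-1}$, whose affine (in $x_{t-1}$) right-hand side is precisely $\theta_t^k + \langle \beta_t^k, x_{t-1}\rangle = \mathcal{C}_t^k(x_{t-1})$ after taking the $p_m$-weighted sum and using Assumption (H0) to convert the sum over $C(n_{t-1}^k)$ into the expectation defining $\mathcal{Q}_t$. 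This gives \eqref{validcutsQb}, and the bound $\mathcal{Q}_t^k \le \mathcal{Q}_t$ follows by taking the max with $\mathcal{Q}_t^{k-1} \le \mathcal{Q}_t$.

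Next I would prove the quantitative bound \eqref{erroroncutQbis}. Fix $t\ge 2$, $n\in \mathtt{Nodes}(t-1)$ and $k\in \mathcal{S}_n$, so that $n=n_{t-1}^k$ and the forward pass has processed node $n$. On the one hand, by construction of $\theta_t^k,\beta_t^k$ in \eqref{formulabetatheta},
\[
\mathcal{C}_t^k(x_n^k) = \sum_{m\in C(n)} p_m\, \mathcal{D}_{t j_t(m)}^{k-1}(\alpha_m^k,\mu_m^k,\delta_m^k,\nu_m^k,\lambda_m^k;x_n^k).
\]
On the other hand, ${\underline{\mathcal{Q}}}_t^{k-1}(x_n^k) = \sum_{m\in C(n)} p_m {\underline{\mathfrak{Q}}}_{tj_t(m)}^{k-1}(x_n^k)$, and each ${\underline{\mathfrak{Q}}}_{tj_t(m)}^{k-1}(x_n^k)$ is the common optimal primal/dual value of the LP \eqref{defxtkj} at $x_{t-1}=x_n^k$. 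By the $\varepsilon_t^k$-optimality of $(\alpha_m^k,\mu_m^k,\delta_m^k,\nu_m^k,\lambda_m^k)$ for that dual,
\[
0 \le {\underline{\mathfrak{Q}}}_{t j_t(m)}^{k-1}(x_n^k) - \mathcal{D}_{t j_t(m)}^{k-1}(\alpha_m^k,\mu_m^k,\delta_m^k,\nu_m^k,\lambda_m^k;x_n^k) \le \varepsilon_t^k
\]
for every $m\in C(n)$. Taking the $p_m$-weighted sum and using $\sum_{m\in C(n)} p_m = 1$ gives exactly \eqref{erroroncutQbis}.

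The routine part is the induction in the first step, which is a direct transcription of the argument in Lemma \ref{lemmacuts} with "optimal dual" replaced by "feasible dual". The only place where a subtlety appears is the quantitative bound, and there the only obstacle is to make sure that the dual objective evaluated at a feasible (but possibly non-optimal) point still has value $\theta_t^k + \langle\beta_t^k,x_n^k\rangle$; this is a direct algebraic check from \eqref{formulabetatheta} and the definition of $\mathcal{D}_{tj}^{k-1}$, so no genuine difficulty arises.
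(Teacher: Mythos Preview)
Your proposal is correct and follows essentially the same approach as the paper's own proof: reuse the backward induction of Lemma \ref{lemmacuts} for \eqref{validcutsQb}--\eqref{approxvaluefuncb}, noting that only feasibility (not optimality) of the dual solutions and Lemma \ref{lemmaft} are used there, and then derive \eqref{erroroncutQbis} from the $\varepsilon_t^k$-optimality of the dual solution via the $p_m$-weighted sum. The paper's proof is organized identically.
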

\begin{proof} The proofs of  \eqref{validcutsQ} and \eqref{approxvaluefunc}
in Lemma \ref{lemmacuts}
can be used to prove \eqref{validcutsQb} and \eqref{approxvaluefuncb} for Inexact StoDCuP, observing that only feasibility and not optimality of the
primal and dual solutions computed as well as Lemma \ref{lemmaft} (which, as we have already observed, holds) are needed in these proofs.

Now take $n \in {\tt{Nodes}}(t-1)$ and $k \in \mathcal{S}_n$.
Then recalling that 
{\small{
$$
\mathcal{D}_{t j}^{k-1}(\alpha, \mu, \delta, \nu, \lambda;x_{t-1})=
\alpha^{\top} (  B_{t j}^{k-1} x_{t-1}  +  C_{t j}^{k-1} ) + \mu^{\top} ( E_{t j}^{k-1}  x_{t-1}  +   H_{t j}^{k-1}     ) + \delta^{\top} \theta_{t+1}^{0:k-1} + \lambda^{\top} ( b_{t j}  - B_{t j} x_{t-1} ) + \nu^{\top} {\bar x}_t,
$$
}}
by definition of $(\alpha_m^k, \mu_m^k, \delta_m^k, \nu_m^k, \lambda_m^k)$ and of $\mathcal{C}_t^k$, we get 
\begin{equation}\label{eq1finb}
{\underline{\mathfrak{Q}}}_{t j_t(m)}^{k-1}(x_{n}^k    ) -  \varepsilon_t^k  \leq \mathcal{D}_{t j_t(m)}^{k-1}(\alpha_m^k , \mu_m^k , \delta_m^k , \nu_m^k , \lambda_m^k ;x_{n}^k) \leq 
{\underline{\mathfrak{Q}}}_{t j_t(m)}^{k-1}(x_{n}^k    )
\end{equation}
and
\begin{equation}\label{eq2finb}
\mathcal{C}_t^k ( x_n^k ) = \displaystyle \sum_{m \in C(n)} p_m  \mathcal{D}_{t j_t(m)}^{k-1}(\alpha_m^k , \mu_m^k , \delta_m^k , \nu_m^k , \lambda_m^k ;x_{n}^k).
\end{equation}
Since ${\underline{\mathcal{Q}}}_t^{k-1}(x_{n}^k ) = \sum_{m \in C( n_{t-1}^k )} p_{m} {\underline{\mathfrak{Q}}}_{t j_t(m)}^{k-1}(x_{n}^k ) $, 
$p_m \geq 0,$ and $\sum_{m \in C(n)} p_m=1$, relations \eqref{eq1finb} and \eqref{eq2finb} imply \eqref{erroroncutQbis}. 
\hfill 
\end{proof}
Lemma \ref{lipcontQtstoinex} below  is  the analogue of Lemma \ref{lipcontQtstob}:
\begin{lemma}\label{lipcontQtstoinex}
Let Assumptions (H0) and (H1)-Sto hold and assume that sequences $\varepsilon_t^k$ are bounded: 
$|\varepsilon_t^k| \leq {\hat \varepsilon}$ for all $t,k$, for some $0 \leq {\hat \varepsilon}<+\infty$.
Then, the following statements hold for Inexact StoDCuP:
\begin{itemize}
\item[(a)] For $t=2,\ldots,T$, the sequences $\{\theta_t^k\}_{k=1}^\infty$ and 
$\{\beta_t^k\}_{k=1}^\infty$ are almost surely bounded.
\item[(b)] There exists   $L \ge 0$ such that
for each $t=2,\ldots,T$, $\mathcal{Q}^k_{t}$
 is $L$-Lipschitz continuous on $\mathcal{X}_{t-1}$ for every $k \ge 1$.
 \item[(c)] There exists $\hat L \ge 0$ such that
 for each $t=1,\ldots,T$, $j=1,\ldots,M_t$, functions $f_{t j}^k$ and $g_{t i j}^k$ are 
 $\hat L$-Lipschitz continuous on $\mathcal{X}_t \times \mathcal{X}_{t-1}$ for every $k \ge 1$ and $i=1,\ldots,p$.
 \end{itemize}
\end{lemma}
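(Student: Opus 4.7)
My plan follows the pattern of the deterministic proof of Lemma \ref{lipcontQt} and of the exact stochastic Lemma \ref{lipcontQtstob}, with (b) and (c) essentially unchanged and the new content concentrated in (a). I would tackle them in the order (c), (b), (a).

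Parts (c) and (b) are immediate from Lemma \ref{lm:2.3}. The inexactness does not affect the primal linearizations used to update $f_{tj}^k$ and $g_{tij}^k$, so Lemma \ref{lemmaft} still yields $f_{tj}^0 \leq f_{tj}^k \leq f_t(\cdot,\cdot,\xi_{tj})$ (and analogously for $g$) on the compact $K=\mathcal{X}_t\times\mathcal{X}_{t-1}\subset\inte(\dom f_t(\cdot,\cdot,\xi_{tj}))$, the inclusion being granted by (H1)-Sto-2) and (H1)-Sto-3); one application of Lemma \ref{lm:2.3} per triple $(t,i,j)$ produces the uniform $\hat L$. For (b), the function $\mathcal{Q}_t^k$ is convex as a pointwise maximum of affine cuts, is bounded above by $\mathcal{Q}_t$ by Lemma \ref{lemmacutsbis}, is bounded below by $\mathcal{Q}_t^0$ by construction, and lives on $\mathcal{X}_{t-1}\subset\inte(\dom\mathcal{Q}_t)$ (from Lemma \ref{lm:2.1sto} and the argument mirroring Lemma \ref{lm:2.1}-(a)); Lemma \ref{lm:2.3} again gives a uniform $L$.

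The substance is in (a), which I would split in two. First, I would show that the value functions $\underline{\mathfrak{Q}}_{tj}^{k-1}$ of problem \eqref{defxtkj} are $L'$-Lipschitz on a fixed enlargement $\mathcal{X}_{t-1}+\bar B(0,r)$ of $\mathcal{X}_{t-1}$, with $L'$ and $r>0$ independent of $k$. Monotonicity in $k$---$f_{tj}^{k-1}$ and $\mathcal{Q}_{t+1}^{k-1}$ non-decreasing and $X_{tj}^{k-1}(\cdot)$ non-increasing because $g_{tij}^{k-1}$ is non-decreasing---yields $\underline{\mathfrak{Q}}_{tj}^{k-1}\geq\underline{\mathfrak{Q}}_{tj}^{0}$, while the natural extension of \eqref{approxvaluefuncb} to the neighborhood delivers $\underline{\mathfrak{Q}}_{tj}^{k-1}\leq\mathfrak{Q}_t(\cdot,\xi_{tj})$. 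Assumption (H1)-Sto-4) supplies $r>0$ with $\mathcal{X}_{t-1}+\bar B(0,r)\subset\inte(\dom\mathfrak{Q}_t(\cdot,\xi_{tj}))$, and Lemma \ref{lm:2.3} then produces $L'$.

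Second, I would recognize $\beta_{tm}^k$ as an $\hat\varepsilon$-subgradient of $\underline{\mathfrak{Q}}_{tj_t(m)}^{k-1}$ at $x_n^k$: in the notation of Section \ref{stodcupdeta}, the dual objective $\mathcal{D}_{tj_t(m)}^{k-1}(\alpha_m^k,\mu_m^k,\delta_m^k,\nu_m^k,\lambda_m^k;\cdot)$ is affine in $x_{t-1}$ with gradient $\beta_{tm}^k$ (see \eqref{formulabetatheta}), it minorizes $\underline{\mathfrak{Q}}_{tj_t(m)}^{k-1}$ by weak LP duality, and at $x_n^k$ its value sits within $\varepsilon_t^k\leq\hat\varepsilon$ of $\underline{\mathfrak{Q}}_{tj_t(m)}^{k-1}(x_n^k)$ by $\varepsilon$-optimality. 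Testing the $\varepsilon$-subgradient inequality at $x_n^k+ru$ with $\|u\|=1$ against the Lipschitz bound of Step 1 (permissible since $\bar B(x_n^k,r)\subset \mathcal{X}_{t-1}+\bar B(0,r)$) yields $\|\beta_{tm}^k\|\leq L'+\hat\varepsilon/r$, and averaging over $m\in C(n)$ bounds $\|\beta_t^k\|$. For $\theta_t^k$ I rewrite $\theta_t^k=\mathcal{C}_t^k(x_n^k)-\langle\beta_t^k,x_n^k\rangle$ and use the sandwich $\underline{\mathcal{Q}}_t^{k-1}(x_n^k)-\hat\varepsilon\leq\mathcal{C}_t^k(x_n^k)\leq\mathcal{Q}_t(x_n^k)$ from \eqref{validcutsQb}--\eqref{erroroncutQbis}, the uniform bounds on $\mathcal{Q}_t$ and on $\underline{\mathfrak{Q}}_{tj}^{k-1}$ over the compact $\mathcal{X}_{t-1}$, the just-proved bound on $\|\beta_t^k\|$, and compactness of $\mathcal{X}_{t-1}$. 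The main obstacle is Step 1: pinning down a single Lipschitz constant $L'$ for the $k$-dependent $\underline{\mathfrak{Q}}_{tj}^{k-1}$; once it is in place the bound on the cut coefficients is routine manipulation of $\varepsilon$-subgradients.
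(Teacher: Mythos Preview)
Your proof is correct; parts (b) and (c) match the paper exactly. For (a), the paper takes a slightly more economical route that avoids what you flag as ``the main obstacle.'' Instead of first establishing uniform Lipschitzness of the $k$-dependent value functions $\underline{\mathfrak{Q}}_{tj}^{k-1}$ on an enlargement and then bounding each $\beta_{tm}^k$ as an $\hat\varepsilon$-subgradient, the paper works directly with the \emph{aggregate} cut $\mathcal{C}_t^k$: from $\mathcal{C}_t^k\le\mathcal{Q}_t$ on $\mathcal{X}_{t-1}+\bar B(0;\varepsilon)$ (Lemma~\ref{lemmacutsbis}) and $\mathcal{C}_t^k(x_n^k)\ge\underline{\mathcal{Q}}_t^{k-1}(x_n^k)-\hat\varepsilon\ge\min_{\mathcal{X}_{t-1}}\underline{\mathcal{Q}}_t^0-\hat\varepsilon$ (monotonicity in $k$), one tests at $x_{t-1}=x_n^k+\tfrac{\varepsilon}{2}\beta_t^k/\|\beta_t^k\|$ to bound $\|\beta_t^k\|$ in one stroke. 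So only \emph{finiteness} of $\min_{\mathcal{X}_{t-1}}\underline{\mathcal{Q}}_t^0$ and $\max_{\mathcal{X}_{t-1}+\bar B(0;\varepsilon)}\mathcal{Q}_t$ is needed, not a uniform Lipschitz constant for $\underline{\mathfrak{Q}}_{tj}^{k-1}$; this is exactly the calculation inside the proof of Lemma~\ref{lm:2.3} done once at the aggregate level. Your component-wise argument is a valid alternative and yields the extra information that each $\beta_{tm}^k$ is bounded, but the paper's version is shorter and shows your Step~1 is not actually required.
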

\begin{proof}
(a) Using (H1)-Sto, there is $\varepsilon>0$ such that for every $t \in \{2,\ldots,T\}$,
every $x_{t-1} \in \mathcal{X}_{t-1} + {\bar B}(0;\varepsilon)$,
and every $j=1,\ldots,M_t$, the set $X_{t j}^{0}(x_{t-1})$ is nonempty and $f_{t j}^0(\cdot,x_{t-1})+\mathcal{Q}_{t+1}^0(\cdot)$
is continuous on this set. Therefore ${\underline{\mathfrak{Q}}}_{t j}^{0}$ is convex and finite 
on $\mathcal{X}_{t-1} + {\bar B}(0;\varepsilon)$, implying that
${\underline{\mathfrak{Q}}}_{t j}^{0}$ is Lipschitz continuous on
$\mathcal{X}_{t-1}$. It follows that ${\underline{\mathcal{Q}}}_t^{0}$ is also Lipschitz 
continuous on $\mathcal{X}_{t-1}$ and we can define 
$\displaystyle \min_{x_{t-1} \in \mathcal{X}_{t-1} } {\underline{\mathcal{Q}}}_t^{0}( x_{t-1} ) \in \mathbb{R}$.
Similarly to DCuP, due to (H1)-Sto, we can also choose $\varepsilon>0$ in such
a way that
$\mathcal{Q}_t$ is Lipschitz continuous on $\mathcal{X}_{t-1} + {\bar B}(0;\varepsilon)$,
implying that we can define 
$\max_{x_{t-1} \in \mathcal{X}_{t-1}+ \bar B(0;{{\varepsilon}})}\mathcal{Q}_t(x_{t-1} )<+\infty$. 
We can now easily extend the proof of Lemma \ref{lipcontQtstob}: for every  
$x_{t-1} \in \mathcal{X}_{t-1} + \bar B(0;\varepsilon)$, denoting $n=n_{t-1}^k$,
we have for $k \geq 2$:
$$
\begin{array}{rcl}
\displaystyle \max_{x_{t-1} \in \mathcal{X}_{t-1}+ \bar B(0;{{\varepsilon}})}\mathcal{Q}_t(x_{t-1} )     
\geq \mathcal{Q}_t( x_{t-1}) 
 &  \stackrel{\eqref{validcutsQb}}{\geq} & \mathcal{C}_t^{k}( x_{t-1})   \\
&  =  & \mathcal{C}_t^{k}( x_{n}^k ) + \langle \beta_t^k  , x_{t-1} - x_{n}^k \rangle \;\;\;\;[\mathcal{C}_t^k\mbox{ is affine],}\\
&  \stackrel{\eqref{erroroncutQbis}}{\geq}  &  {\underline{\mathcal{Q}}}_t^{k-1}(x_n^k) -  \varepsilon_t^k  + 
\langle \beta_t^k  , x_{t-1} - x_{n}^k \rangle,\\
& \geq & \displaystyle \min_{x_{t-1} \in \mathcal{X}_{t-1} } {\underline{\mathcal{Q}}}_t^{0}( x_{t-1} ) - 
{\hat \varepsilon}  + \langle \beta_t^k  , x_{t-1} - x_{n}^k \rangle.
\end{array}
$$
For $\beta_t^k \neq 0$, take $x_{t-1}=x_{n}^k + \frac{{{\varepsilon}}}{2} \frac{\beta_t^k}{\|\beta_t^k\|}$ to obtain 
$$
\|\beta_t^k\| \leq L:= \frac{2}{{{\varepsilon}}}\left( {\hat{\varepsilon}} +   \displaystyle \max_{x_{t-1} \in \mathcal{X}_{t-1}+ \bar B(0;{{\varepsilon}})} \mathcal{Q}_t(x_{t-1} )
- \displaystyle \min_{x_{t-1} \in \mathcal{X}_{t-1}} {\underline{\mathcal{Q}}}_t^{0}( x_{t-1} )\right).
$$
Using \eqref{erroroncutQbis}, we also have  for $n=n_{t-1}^k$:
$$
-{\hat{\varepsilon}} + \min_{x_{t-1} \in \mathcal{X}_{t-1}}
{\underline{\mathcal{Q}}}_t^0(x_{t-1} ) \leq \theta_t^k = \mathcal{C}_t^k( x_n^k  ) \leq 
\max_{x_{t-1} \in \mathcal{X}_{t-1}} \mathcal{Q}_t( x_{t-1}).
$$
\par (b) immediately follows from (a) and (c) from (H1)-Sto.$\hfill$
\end{proof}

\begin{thm}[Convergence of Inexact StoDCuP]\label{convproofStoDCuPinex} 
Let Assumptions (H0), (H1)-Sto, and (H2) hold and assume that 
$\lim_{k \rightarrow +\infty} \varepsilon_t^k = 0$ for $t=1,\ldots,T$.
Then the conclusions of Theorem \ref{convproofStoDCuP} hold: 
for every $t=1,\ldots,T$,$i=1,\ldots,p$, almost surely
\eqref{limgtas} and \eqref{htstodcup} hold and the limit of the sequence of first stage problems optimal values 
$( f_{1 1}^{k-1}(x_{n_1}^{k} , x_{0} ) + \mathcal{Q}_2^{k-1}( x_{n_1}^k )  )_{k \geq 1}$ is the optimal value
$\mathcal{Q}_1( x_0 ) $ of \eqref{firststodp} and 
any accumulation point of the sequence $(x_{n_1}^{k})$ is an optimal solution to the first stage problem \eqref{firststodp}.
\end{thm}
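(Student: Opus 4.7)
The plan is to follow the argument of Theorem \ref{convproofStoDCuP} step by step, tracking exactly where the inexactness tolerance $\varepsilon_t^k$ enters and exploiting $\varepsilon_t^k \to 0$ to absorb the perturbation as a vanishing additive remainder. Two ingredients already do most of the heavy lifting: Lemma \ref{lemmaft} remains valid for Inexact StoDCuP (its proof uses only feasibility of $x_m^k$ and tightness of the linearizations of $f_t, g_{ti}$ at the trial points, never optimality), and Lemmas \ref{lemmacutsbis} and \ref{lipcontQtstoinex} respectively quantify the cut gap $0 \le {\underline{\mathcal{Q}}}_t^{k-1}(x_n^k) - \mathcal{C}_t^k(x_n^k) \le \varepsilon_t^k$ and supply uniform Lipschitz constants $L, \hat L$ for $\mathcal{Q}_t^k$ and $f_{tj}^k, g_{tij}^k$. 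With these in hand, Lemma \ref{techlemmasequence} applies verbatim to the sequences of interest.

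The constraint violation limit \eqref{limgtas} carries over unchanged: inequality \eqref{relinitsm} needs only $x_m^k \in X_{t j_t(m)}^{k-1}(x_n^k)$, which still holds by construction in Step a). One then argues on $\mathcal{S}_n$ using Lemma \ref{techlemmasequence} applied to $f=g_{ti}(\cdot,\cdot,\xi_m)$, $f^\ell = g_{tij_t(m)}^{k(\ell)}$, and extends to $k\notin\mathcal{S}_n$ by the appendix argument exactly as in the exact case, since optimality is never used.

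For property $\mathcal{H}(t)$, I proceed by backward induction on $t$ as in Theorem \ref{convproofStoDCuP}, with only two perturbed estimates. First, $\varepsilon_t^k$-optimality of $x_m^k$ replaces \eqref{defepssolxmk} by
\[
f_{tj_t(m)}^{k-1}(x_m^k,x_n^k) + \mathcal{Q}_{t+1}^{k-1}(x_m^k) \;\le\; {\underline{\mathfrak{Q}}}_{tj_t(m)}^{k-1}(x_n^k) + \varepsilon_t^k.
\]
Second, \eqref{erroroncutQbis} gives $\mathcal{C}_t^k(x_n^k) \ge {\underline{\mathcal{Q}}}_t^{k-1}(x_n^k) - \varepsilon_t^k$. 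Chaining $\mathcal{Q}_t^k \ge \mathcal{C}_t^k$ with these two estimates and restricting to $k\in\mathcal{S}_n$ gives
\[
0 \;\le\; \mathcal{Q}_t(x_n^k) - \mathcal{Q}_t^k(x_n^k) \;\le\; \sum_{m \in C(n)} p_m\bigl[\mathfrak{Q}_t(x_n^k,\xi_m) - f_{tj_t(m)}^{k-1}(x_m^k,x_n^k) - \mathcal{Q}_{t+1}^{k-1}(x_m^k)\bigr] + 2\varepsilon_t^k.
\]
Since $\varepsilon_t^k \to 0$, the $2\varepsilon_t^k$ term is harmless. Lemma \ref{techlemmasequence} applied to $f_{tj_t(m)}^k \uparrow f_t(\cdot,\cdot,\xi_m)$ and (by the induction hypothesis) to $\mathcal{Q}_{t+1}^k \uparrow \mathcal{Q}_{t+1}$ provides the analogues of \eqref{limitsecond} and \eqref{limitsecondf}. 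The contradiction step against $\mathfrak{Q}_t(\bar x_n,\xi_m)$ is then run on an accumulation point $(\bar x_m,\bar x_n)$ of $(x_m^k,x_n^k)_{k\in K\subset\mathcal{S}_n}$, using lower semicontinuity of $f_t(\cdot,\cdot,\xi_m)$ and $\mathcal{Q}_{t+1}$ together with the already-proved \eqref{limgtas} to guarantee $\bar x_m \in X_t(\bar x_n,\xi_m)$. The extension from $k\in\mathcal{S}_n$ to $k\notin\mathcal{S}_n$ uses monotonicity of $\mathcal{Q}_t^k$ and the uniform Lipschitz constant from Lemma \ref{lipcontQtstoinex}(b), exactly as in the appendix. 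Finally, assertion (ii) follows from (i) by the argument of Theorem 4.1-(ii) in \cite{guiguessiopt2016}, since at the root node $n_1$ the $\varepsilon_1^k$-optimal value converges to $\mathcal{Q}_1(x_0)$ by the same Lemma \ref{techlemmasequence} plus vanishing noise argument.

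The main obstacle is purely bookkeeping: making sure the two additive $\varepsilon_t^k$ terms never become trapped inside a $\varliminf$ that would otherwise be used to derive a strict inequality contradicting optimality of $\mathfrak{Q}_t$. Because the perturbations are additive and uniformly tend to $0$ along every subsequence, they are always absorbed into the vanishing remainder before the contradiction step is invoked, but this needs to be written out carefully in the backward induction.
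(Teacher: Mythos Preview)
Your proposal is correct and follows essentially the same approach as the paper: the paper's proof also singles out exactly the two places where inexactness enters (the cut gap $\mathcal{C}_t^k(x_n^k) \ge \underline{\mathcal{Q}}_t^{k-1}(x_n^k) - \varepsilon_t^k$ from Lemma~\ref{lemmacutsbis} and the primal $\varepsilon_t^k$-optimality of $x_m^k$), combines them into the same $2\varepsilon_t^k$ additive remainder in the analogue of \eqref{second}, and then observes that all remaining steps of the proof of Theorem~\ref{convproofStoDCuP} carry over verbatim. Your identification of which lemmas survive unchanged (Lemma~\ref{lemmaft}) and which need the inexact versions (Lemmas~\ref{lemmacutsbis}, \ref{lipcontQtstoinex}) matches the paper exactly.
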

\begin{proof}
The proof is an adaptation of the proof of Theorem \ref{convproofStoDCuP} and uses
Lemmas \ref{lemmaft}, \ref{lemmacutsbis}, and \ref{lipcontQtstoinex}. We highlight these
adaptations below.

Using Lemma \ref{lemmacutsbis}, for Inexact StoDCuP relation \eqref{inductfirst} becomes
\begin{equation}\label{inductfirstb}
\begin{array}{lcl}
0 \leq \mathcal{Q}_t ( x_n^{k(\ell)} ) - \mathcal{Q}_t^{k(\ell)} ( x_n^{k(\ell)} ) & \leq &  \mathcal{Q}_t ( x_n^{k(\ell)} ) - \mathcal{C}_t^{k(\ell)} ( x_n^{k(\ell)} ) \\
& \stackrel{\eqref{erroroncutQ}}{\leq}& \varepsilon_t^{k(\ell)} + \mathcal{Q}_t ( x_n^{k(\ell)} ) - {\underline{\mathcal{Q}}}_t^{k(\ell)-1}(x_{n}^{k(\ell)} ),\\
& = & \varepsilon_t^{k(\ell)} + \displaystyle  \sum_{m \in C( n_{t-1}^{k(\ell)}  ) }  p_{m} \Big[ \mathfrak{Q}_t( x_{n}^{k(\ell)} , \xi_m    )    -  {\underline{\mathfrak{Q}}}_{t j_t(m)}^{k(\ell)-1}(x_{n}^{k(\ell)} )\Big].
\end{array}
\end{equation}
Also, by definiton of $x_m^k$, we now have
\begin{equation}\label{defepssolxmkb}
{\underline{\mathfrak{Q}}}_{t j_t(m)}^{k(\ell)-1}(x_{n}^{k(\ell)} )
\leq f_{t j_t(m)}^{k(\ell)-1}(x_m^{k(\ell)}, x_n^{k(\ell)} ) +\mathcal{Q}_{t+1}^{k(\ell)-1}( x_m^{k(\ell)}  ) \leq {\underline{\mathfrak{Q}}}_{t j_t(m)}^{k(\ell)-1}(x_{n}^{k(\ell)} ) + \varepsilon_t^{k(\ell)},
\end{equation}
which, plugged into \eqref{inductfirstb} gives
\begin{equation}\label{secondb}
0 \leq \mathcal{Q}_t ( x_n^{k(\ell)} ) - \mathcal{Q}_t^{k(\ell)} ( x_n^{k(\ell)} )  
\leq 2 \varepsilon_t^{k(\ell)} + \displaystyle  \sum_{m \in C( n_{t-1}^{k(\ell)}  ) }  p_{m} \Big[ 
\mathfrak{Q}_t( x_{n}^{k(\ell)} , \xi_m    )    -  f_{t j_t(m)}^{k(\ell)-1}(x_{m}^{k(\ell)}, x_n^{k(\ell)} )  -  \mathcal{Q}_{t+1}^{k(\ell)-1}( x_m^{k(\ell)}   )\Big].
\end{equation} 
The remaining relations and arguments used in the convergence proof of StoDCuP apply to prove the theorem.
\hfill
\end{proof}

\if{
\vspace*{0.4cm}
{\color{red{
\begin{remark} The extensions
of StoDCuP given in
Section \ref{sec:othervariants}
also apply to Inexact StoDCuP.
\end{remark}
}
}\fi

\section{Numerical experiments}\label{sec:numexp}

We consider the multistage nondifferentiable nonlinear  stochastic 
program given by the following DP equations:
the Bellman function for stage 
$t=1,\ldots,T$, is
$\mathcal{Q}_t(x_{t-1})=\mathbb{E}_{\xi_t,\Psi_t,U_t}
[\mathfrak{Q}_t(x_{t-1},\xi_t,\Psi_t,U_t)]$ and for
$t=1,\ldots,T$, $\mathfrak{Q}_t(x_{t-1},\xi_t,\Psi_t,U_t)$ is given by
\begin{equation}\label{pbsim}
\begin{array}{l}
\min \;f_t(x_t,x_{t-1},\xi_t,U_t) + \mathcal{Q}_{t+1}(x_t)\\
-100\,{\textbf{e}} \leq x_t \leq 100\,{\textbf{e}},\\
\max(  4(x_t - {\textbf{e}})^T (x_t-{\textbf{e}}), 
x_t^T \xi_t \xi_t^T x_t + x_t^T \xi_t + 1 ) \leq \Psi_t,
\end{array}
\end{equation}
where $x_t \in \mathbb{R}^n$,
$
f_t(x_t,x_{t-1},\xi_t,U_t)=
\max( (x_t-x_{t-1})^T \xi_t \xi_t^T (x_t-x_{t-1})
+ x_t^T \xi_t + 1 ,
x_t^T \xi_t \xi_t^T x_t + x_t^T {\textbf{e}} + U_t ),
$ {\textbf{e}} is a vector of size $n$ of
ones, and $\mathcal{Q}_{T+1}$ is the null function. In these equations, $\xi_t$ is a discretization of a Gaussian random vector
with mean vector $m_t$ having entries $1$ or $-1$ and covariance matrix $\Sigma_t=A_t A_t^T + 0.5 I$
where $A_t$ has entries in $[-0.5,0.5]$; $U_t$ is a discrete random variable taking values $+10$, $-10$,
and $\Psi_t$ has discrete distribution with support contained
in $[10^4,10^5]$. The number of realizations $M_t$ for $(\xi_t,\Psi_t,U_t)$
is fixed to $M_t=M$ for each stage. We assume
that 
$(\xi_1,\Psi_1,U_1)$ is known
and $(\xi_2,\Psi_2,U_2),\ldots,(\xi_T,\Psi_T,U_T)$ are independent.

We generate 6 instances of this problem with parameters
$T,n,M$ given by 
$(T,n,M)=(3,10,2),$ $(3,10,10)$, $(5,10,10)$, $(5,10,20)$, $(10,200,10)$, and $(10,200,20)$.
The instances are chosen taking realizations $\Psi_{t j}$
of $\Psi_{t}$ sufficiently large, in such a way that  
Assumption (H1)-Sto-4) holds.\footnote{We checked
that the instances generate nontrivial
nondifferentiable problems in the sense that
no function in the max dominates the other on the
set $\mathcal{X}_t:=\{x_t \in \mathbb{R}^n: -100\,{\textbf{e}} \leq x_t \leq 100\,{\textbf{e}}\}$.} It is easy to check that the remaining assumptions
(H1)-Sto and (H0) are satisfied and therefore 
StoDCuP and Inexact StoDCuP (IStoDCuP) can both be applied
to solve the problem. 
%For our experiments, we implemented forward-backward
%variants of StoDCuP and Inexact StoDCuP, see Section \ref{sec:othervariants}.
Since the problem is nondifferentiable,
SDDP and Inexact SDDP from \cite{guiguesinexact2018}
cannot be applied directly. However, it is
possible to reformulate the problem 
as a differentiable problem replacing in
\eqref{pbsim} each max with 2 quadratic constraints.
The number of variables and of linear and quadratic constraints
of the deterministic equivalent corresponding to this reformulation
is given in Table \ref{tablevar} for all instances.

\begin{table}
\centering
{\small{
\begin{tabular}{|c|c|c|c|}
\hline
{\tt{Instance}}   & Variables  & Linear constraints    & Quadratic constraints \\
\hline
$T,n,M$& $(n+2)(1+M^{T-1})$&$(2n+1)(1+M^{T-1})$&$4(1+M^{T-1})$\\
\hline
3,10,2&60&105&20\\
\hline
3,10,10&1212&2121&404\\
\hline
5,10,10&120 012&210 021&40 004\\
\hline
5,10,20&1.92e6&3.36e6&6.4e5\\
\hline
10,200,10&2.02e11&4.01e11&4e9\\
\hline
10,200,20&1.0342e14&2.0531e14&2.0480e12\\
\hline
\end{tabular}
}}
\caption{Number of variables and constraints
of the deterministic equivalents of the 6 instances.}\label{tablevar}
\end{table}

Using this reformulation, we implemented 
ISDDP given in \cite{guiguesinexact2018} and
SDDP, using Mosek \cite{mosek} to solve the subproblems.
Unfortunately,  none of the 6 instances could be solved by these
implementations because essentially all suproblems
to be solved within SDDP and ISDDP 
cannot be solved by Mosek due to the fact that all
the matrices of the quadratic forms are ill-conditioned,
yielding an error in the convexity check performed by
Mosek (even if of course in theory all subproblems
are convex) which is
done using Cholesky factorizations of those matrices.
Rather than a flaw of Mosek which is an efficient solver
for conic problems, the problem comes from
the subproblems under
consideration which are difficult to solve because of
the degeneracy of the quadratic forms.\footnote{We also implemented ISDDP using the inexact
cuts from Section 2 of \cite{guisvmont20} and such variant could not solve
our instances neither, again because Mosek failed
to solve all quadratic subproblems of the corresponding ISDDP.}
In this condition, StoDCup and IStoDCuP (considering
the variants which linearize all nonlinear functions
at all iterations for all subproblems) which only have to solve
linear subproblems are possible solution methods to
solve the original problem. The corresponding Matlab implementation
can be found at {\tt{https://github.com/vguigues/StoDCuP}}.\footnote{The tests were run in file {\tt{TestStoDCuP.m}} and
the functions implementing StoDCup and IStoDCuP
are {\tt{inexact\_stodcup\_quadratic.m}} and 
{\tt{inexact\_stodcup\_quadratic\_cut\_selection.m}}, this latter being 
a variant with cut selection, denoted IStoDCuP CS in this section.} Both StoDCuP and IStoDCuP were 
warm-started constructing 20 linearizations
of each function $f_{t}(\cdot,\cdot,\xi_{t j})$
and $g_{t i}(\cdot,\cdot,\xi_{t j})$
at points randomly selected in the set
$\mathcal{X}_t:=\{x_t \in \mathbb{R}^n: -100\,{\textbf{e}} \leq x_t \leq 100\,{\textbf{e}}\}$.

For IStoDCuP to be well defined, we also need
to set the level of accuracy of the computed
solutions along the iterations of the method.
It makes sense to increase the accuracy (or equivalently to decrease the relative error)  of the
solutions  as the algorithm progresses and eventually
for a given iteration 
to increase the accuracy with the stage.
In our experiments the
relative error of the subproblem solutions (Mosek
parameter MSK\_DPAR\_INTPNT\_TOL\_REL\_GAP
 whose range is any value $\geq 10^{-14}$ and default value is
$10^{-8}$) is given in Table \ref{tableacc}; see also
Remark 2 in \cite{guiguesinexact2018} for other choices of sequences
of noises $\varepsilon_t^k$. For StoDCuP, this parameter
was set to $10^{-10}$ for all iterations.

\begin{table}
\centering
{\small{
\begin{tabular}{|c|c|c|c|c|c|c|c|}
\hline
 Iteration   & 1--10  & 11--20    & 21--40   & 41--140
&  141--240  & 241--350 & $>350$ \\
\hline
MSK\_DPAR\_INTPNT\_TOL\_REL\_GAP   & 10  & 5    & 3   &1 
&   0.5 & 0.1& e-6\\
\hline
 \end{tabular}
}}
\caption{Relative error of the subproblem solutions in
IStoDCuP along iterations (Mosek parameter MSK\_DPAR\_INTPNT\_TOL\_REL\_GAP).}\label{tableacc}
\end{table}

Same as SDDP, methods StoDCuP and IStoDCuP 
compute at each iteration
a lower bound on the optimal value which
is the optimal value of the first stage
problem solved in the forward pass
and upper bounds computed as SDDP by Monte-Carlo
simulations, from iterations 200 on, using 
the last 200 forward scenarios. We
also run the methods with the smoothed upper
bounds used in
\cite{pythonlibrarysddp,shapguichen} which consists in using all
previous forward passes to compute the upper
bound but this implementation needed many more iterations to satisfy the stopping criterion
for the large instances and the corresponding results will not be
reported. We should also recall (see \cite{guiguesinexact2018})
that for both  IStoDCuP  and StoDCuP the first stage problems are solved
with high accuracy to get valid lower bounds
from the optimal values of the first stage forward
subproblems. 
The algorithms stopped when a relative
gap of at most 0.1 was achieved for the first
four instances while for the last two instances, 
the algorithms 
were run for 900 and
600 iterations, respectively.

As mentioned in Section 
\ref{sec:othervariants}, the cut
selection methods proposed
in \cite{guiguesejor17,guiguesbandarra18,dpcuts0}
for SDDP
can be directly applied to StoDCuP.
The convergence of DDP, single cut SDDP, and multicut SDDP
combined with these cut selection
methods 
was proved in 
\cite{guiguesejor17, guiguesbandarra18}.
For the three largest instances,
we tested another cut selection strategy
for the inexact variant IStoDCuP
of StoDCuP, denoted by IStoDCuP CS, which consists, in the backward passes, from a given iteration 
$I$ and for the next $L-1$
iterations, to simultaneously add
a new cut  
(computed at the trial points
computed in the forward pass)
for each cost-to-go function
and to eliminate the oldest cut.
As long as $L$ is not too large,
we only eliminate, progressively, the cuts computed
with loose accuracy (the cuts
computed for the first $L$ iterations).
Therefore, with this method, in the end of iterations
$I,I+1,\ldots,I+L-1$, the number
of cuts for each cost-to-go function
is constant, equal to $I$,
and then from iteration $I+L$ on,
one cut is added for each cost-to-go function
at each iteration as in IStoDCuP if we choose
one sampled scenario per forward pass.
In our experiments, this cut selection strategy was run
taking $I=L=350$.

The evolution of the upper and lower bounds
along the iterations of StoDCuP, IStoDCup, and IStoDCuP CS to solve the 6 instances is given in Figure
\ref{figureb} while the cumulated CPU time
is given in Figure \ref{figurt}.  All methods were implemented in Matlab and run on an Intel Core i7, 1.8GHz, processor
with 12,0 Go of RAM. More precisely, the number
of iterations and CPU time required
to solve all instances is given in Table \ref{tablesumm}
and the bounds and cumulated CPU time
for some iterations are given in
Table \ref{tablebounds}.

We observe that the sequences of upper bounds
tend to  decrease, the sequences of lower bounds are
increasing, and all these sequences converge to the
same values for a given instance; which 
illustrates the validity of StoDCuP and IStoDCuP
to solve a multistage stochastic nondifferentiable
convex problem and 
is a good
indication that both methods have been well implemented.

In all instances, at least one of the inexact variants of StoDCuP
was quicker than StoDCuP and provided policies
of similar quality. 
A general behavior we expect for IStoDCuP is to
have quicker iterations but to need more iterations, as for instance $T=5$, $n=10$, $M=10$, or a similar
number of iterations, as for instances $T=3$, $n=10$, $M=2$ and 
$T=5, n=10, M=20$ (in this latter
the number of iterations
before gettting a gap smaller than 0.1
is 1376, 1387, and 1642 for
respectively IStoDCuP CS, StoDCuP,
and IStoDCuP (see Table \ref{tablebounds})).
However, it may happen that StoDCuP requires more iterations
as for instance $T=3$, $n=10$, $M=10$. The inexact variant with cut selection tested
on the three largest instances allowed us to decrease 
the gap with respect to IStoDCuP while still being quicker
than StoDCuP. It is also interesting to see that on the largest
instance this inexact variant also yielded a much smaller gap
than StoDCuP after completing the 600 iterations (see Table \ref{tablebounds} and Figure \ref{figureb}).

\begin{table}
\centering
{\small{
\begin{tabular}{|c|c|c|c|c|}
\hline
$(T,n,M)$   & Iterations StoDCuP & 
\begin{tabular}{c}Iterations Inexact\\StoDCuP
\end{tabular}  & CPU time StoDCuP    & \begin{tabular}{c}CPU Time Inexact\\StoDCuP\end{tabular}\\
\hline
$3,10,2$     &   216    &   216       &    10.13   &  7.63     \\
\hline
$3,10,10$     &    586   &   451      &    597.2   &  148.4     \\
\hline
$5,10,10$     &  1061      &    1221     &   4345    &2825       \\
\hline
$5,10,20$     &  1387  &   1376     &   4493    &       3784\\
\hline
$10,200,10$     &   900     &  900       &    62 536  &   55 061    \\
\hline
$10,200,20$     &    600     &  600      &    26 414   &  25 276  \\
\hline
 \end{tabular}
}}
\caption{Number of iterations and CPU time (in seconds)
for each instance and method. For Inexact
StoDCuP, we report the quickest,
among IStoDCuP and IStoDCuP CS.}\label{tablesumm}
\end{table}

\begin{figure}
\centering
\begin{tabular}{cc}
\includegraphics[scale=0.6]{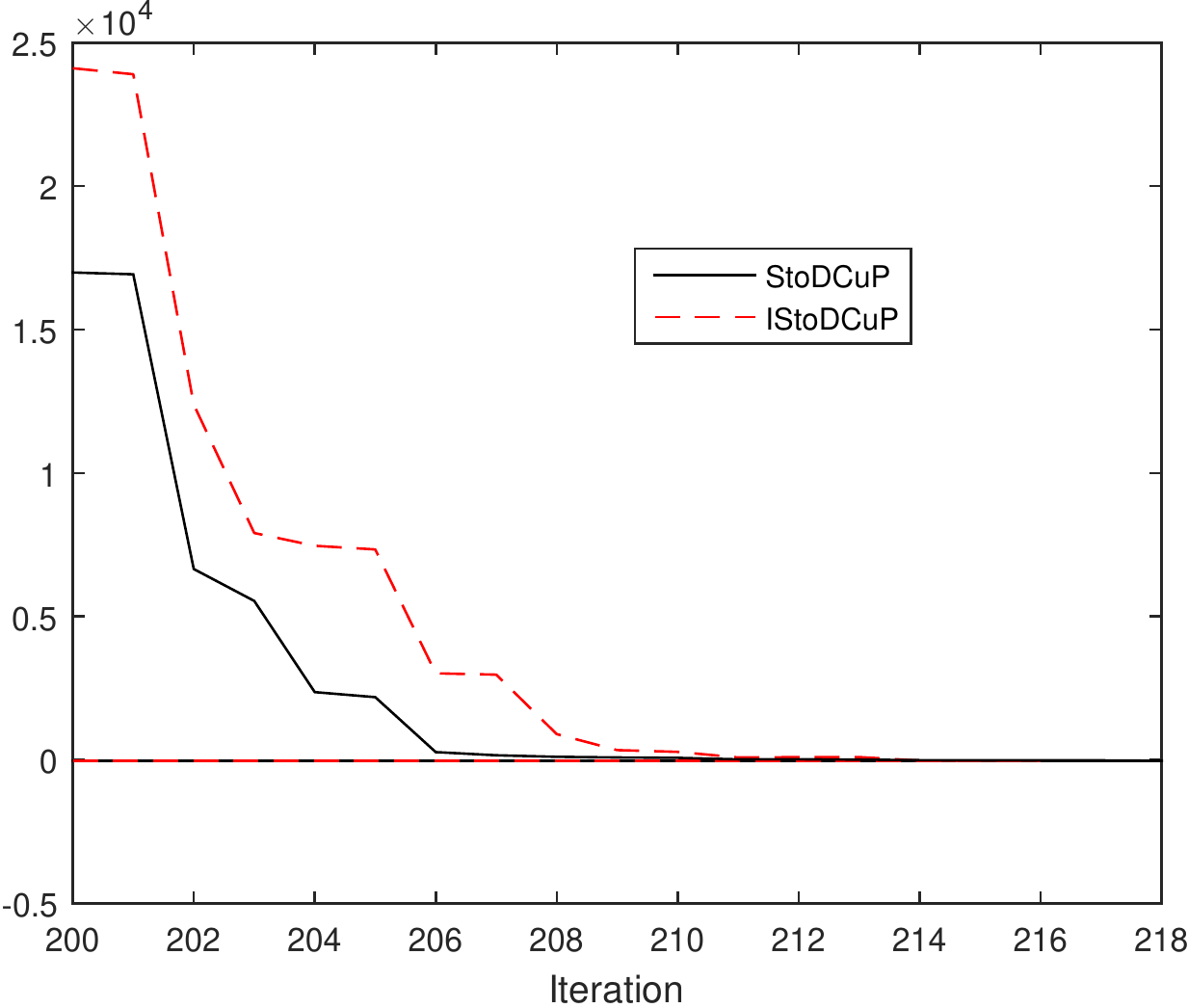}&
\includegraphics[scale=0.6]{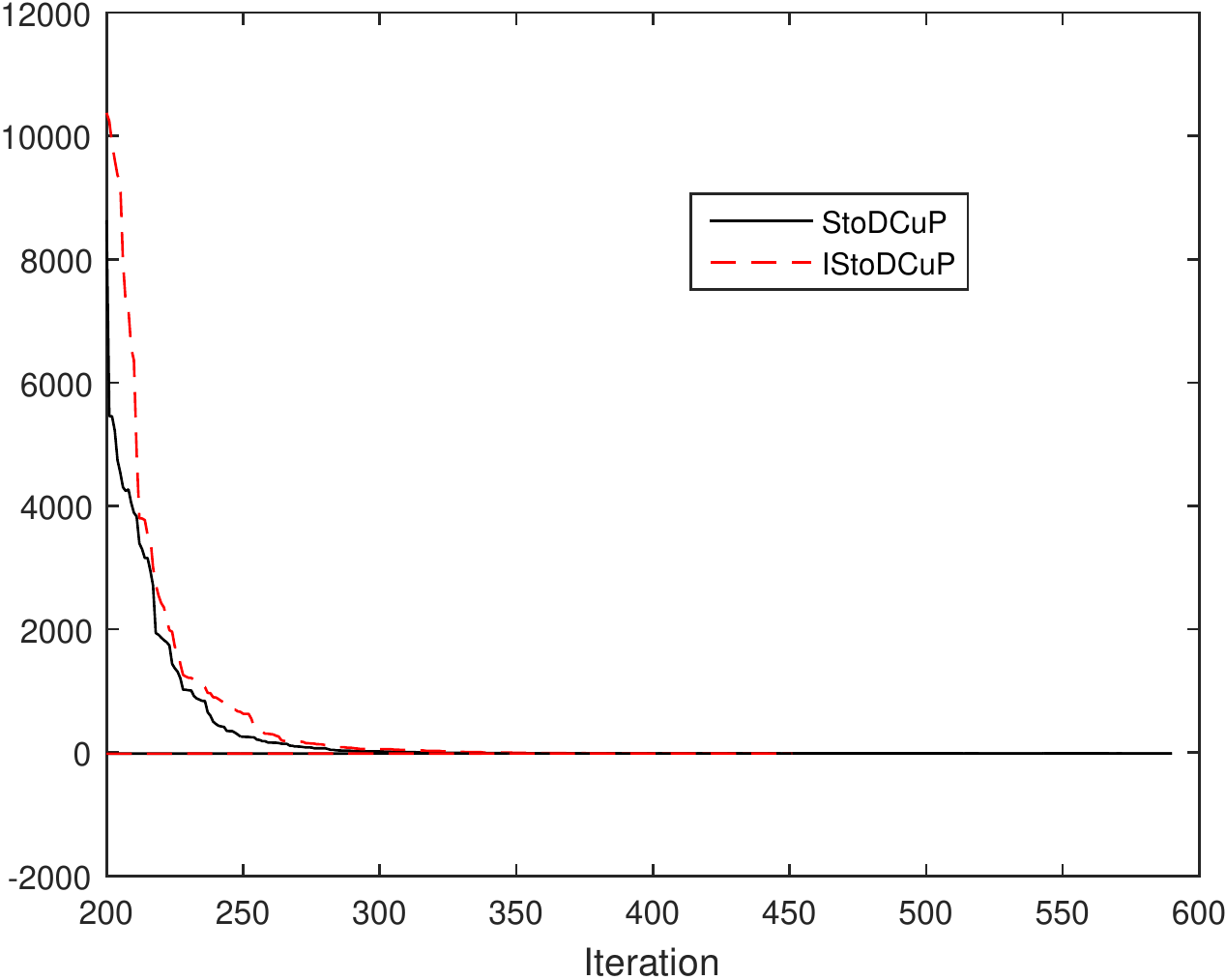}\\
$T=3, n=10, M=2$& $T=3, n=10, M=10$\\
\includegraphics[scale=0.6]{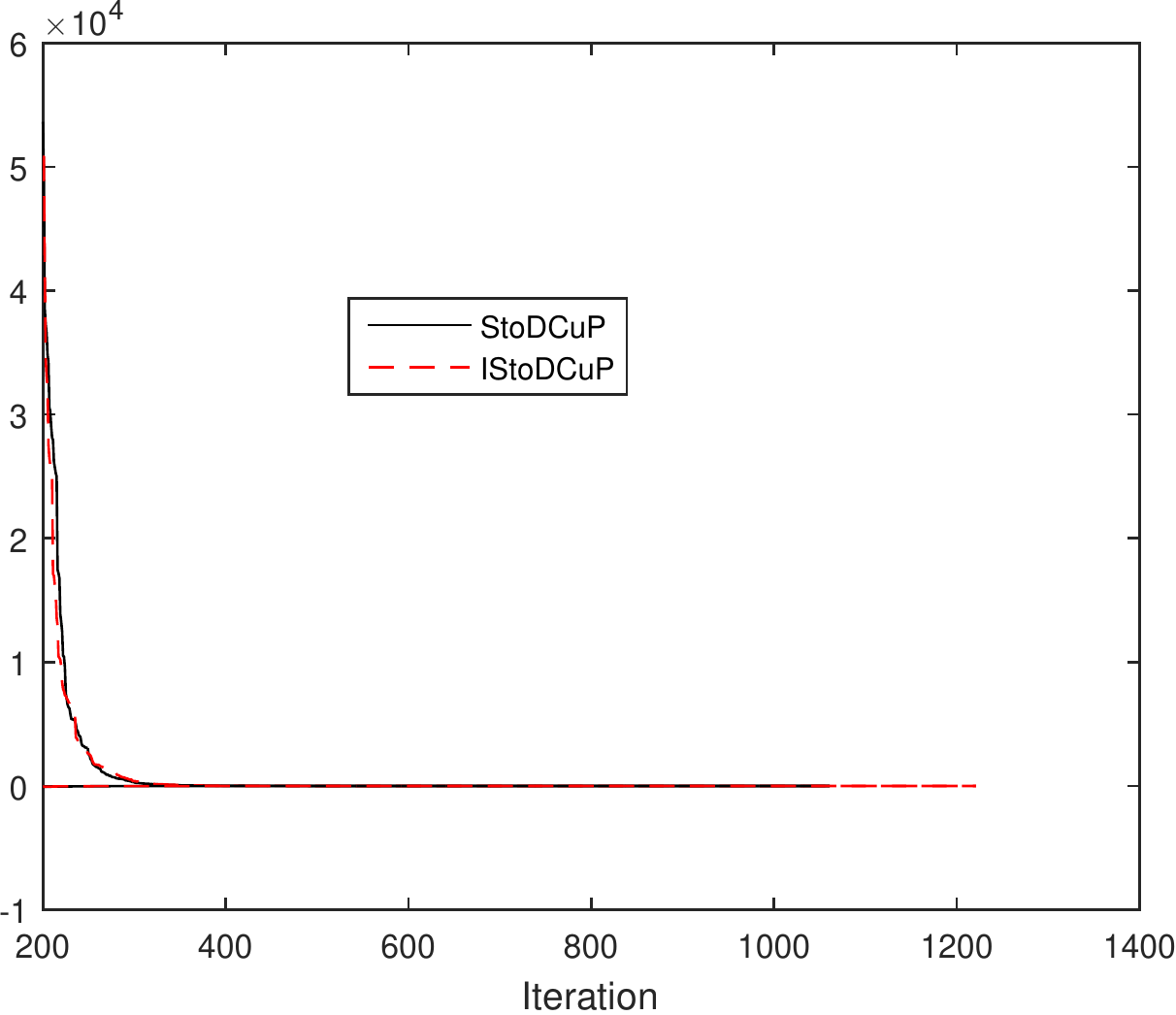}&
\includegraphics[scale=0.6]{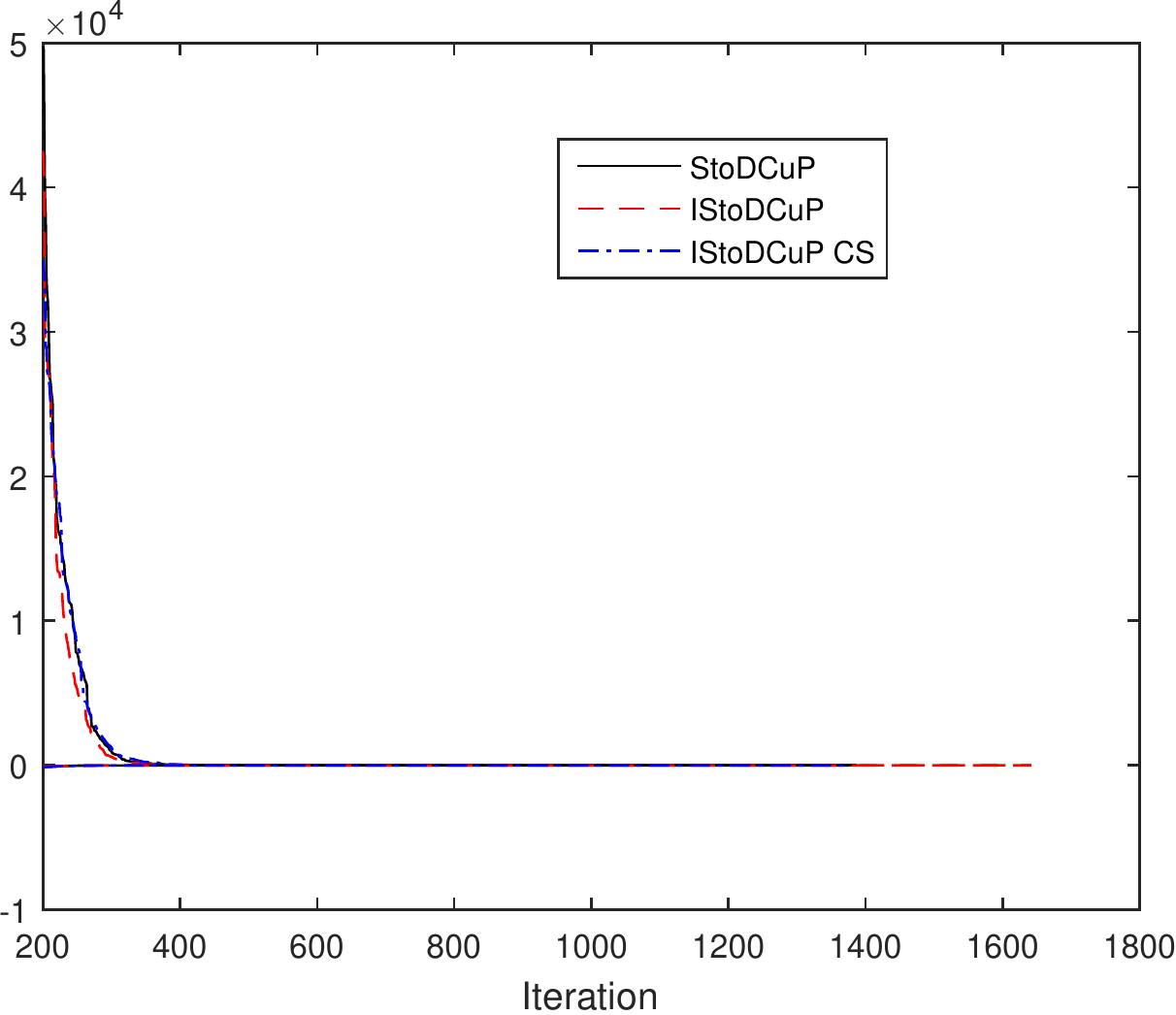}\\
$T=5, n=10, M=10$&$T=5, n=10, M=20$\\
\includegraphics[scale=0.6]{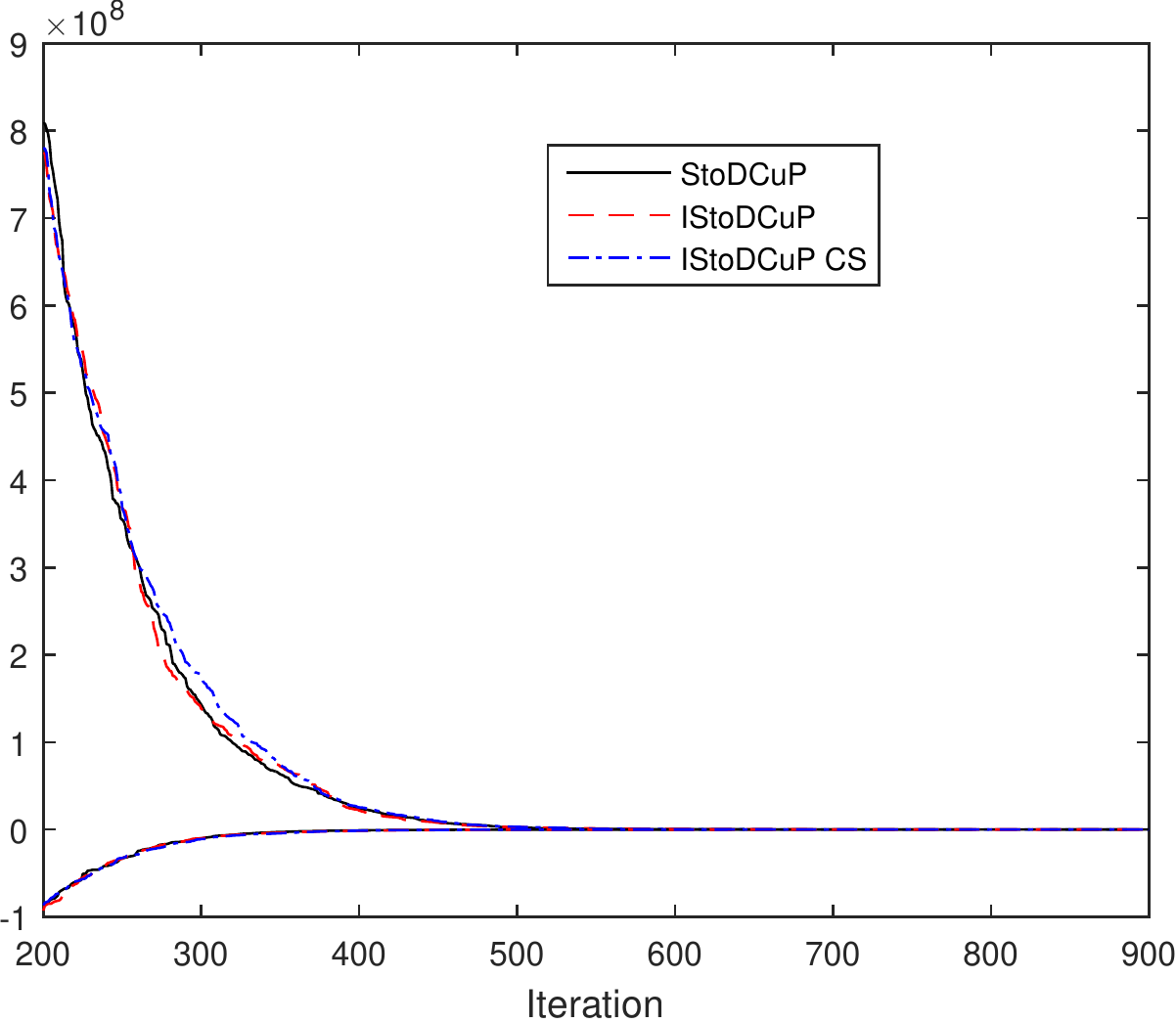}&
\includegraphics[scale=0.6]{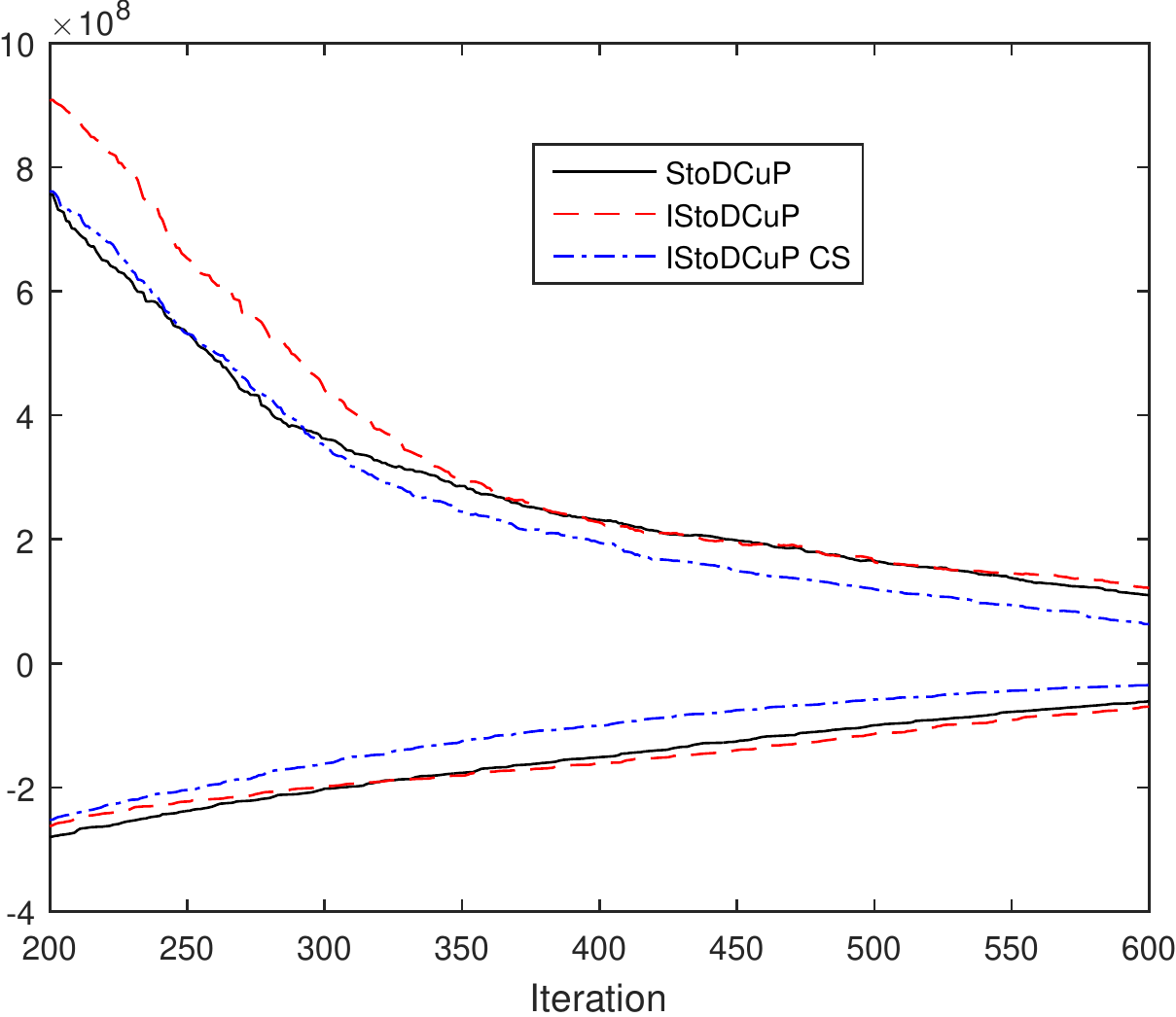}\\
$T=10, n=200, M=10$&$T=10, n=200, M=20$
\end{tabular}
\caption{\label{figureb} Upper and lower bounds computed by StoCuP, IStoDCuP, and IStoDCuP CS along the iterations to solve
the instances.}
\end{figure}

\begin{figure}
\centering
\begin{tabular}{cc}
\includegraphics[scale=0.6]{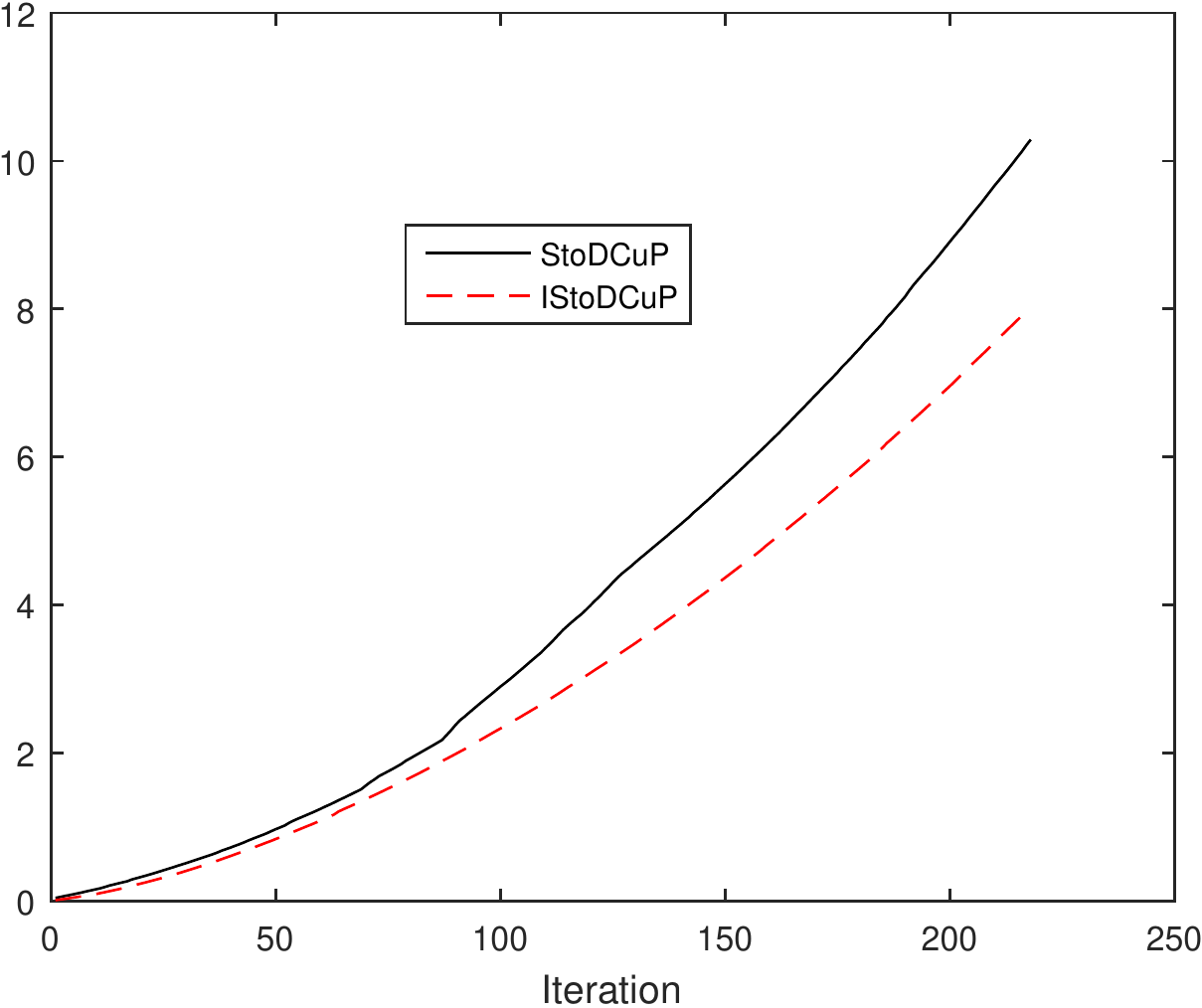}&
\includegraphics[scale=0.6]{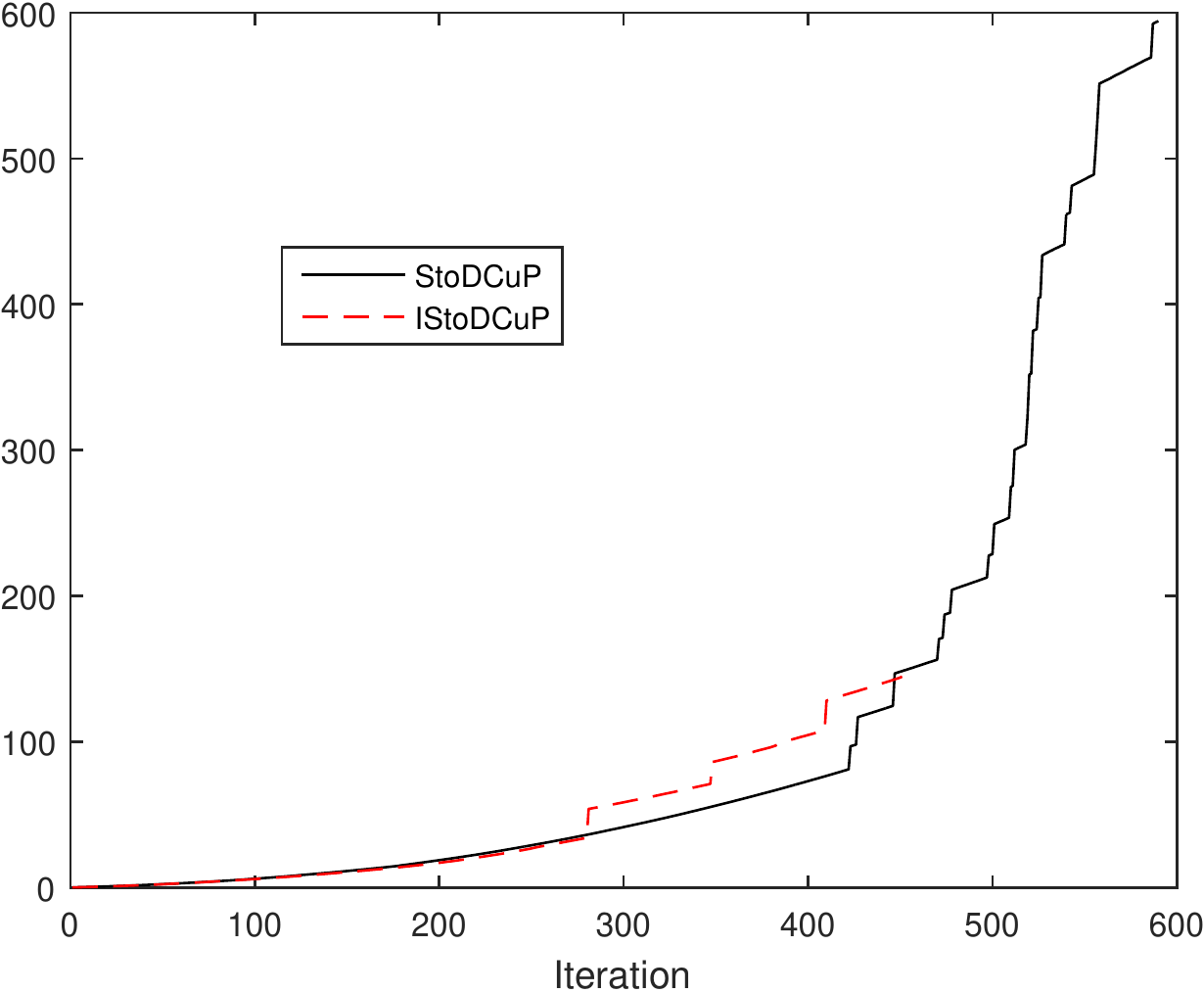}\\
$T=3, n=10, M=2$& $T=3, n=10, M=10$ \\
\includegraphics[scale=0.6]{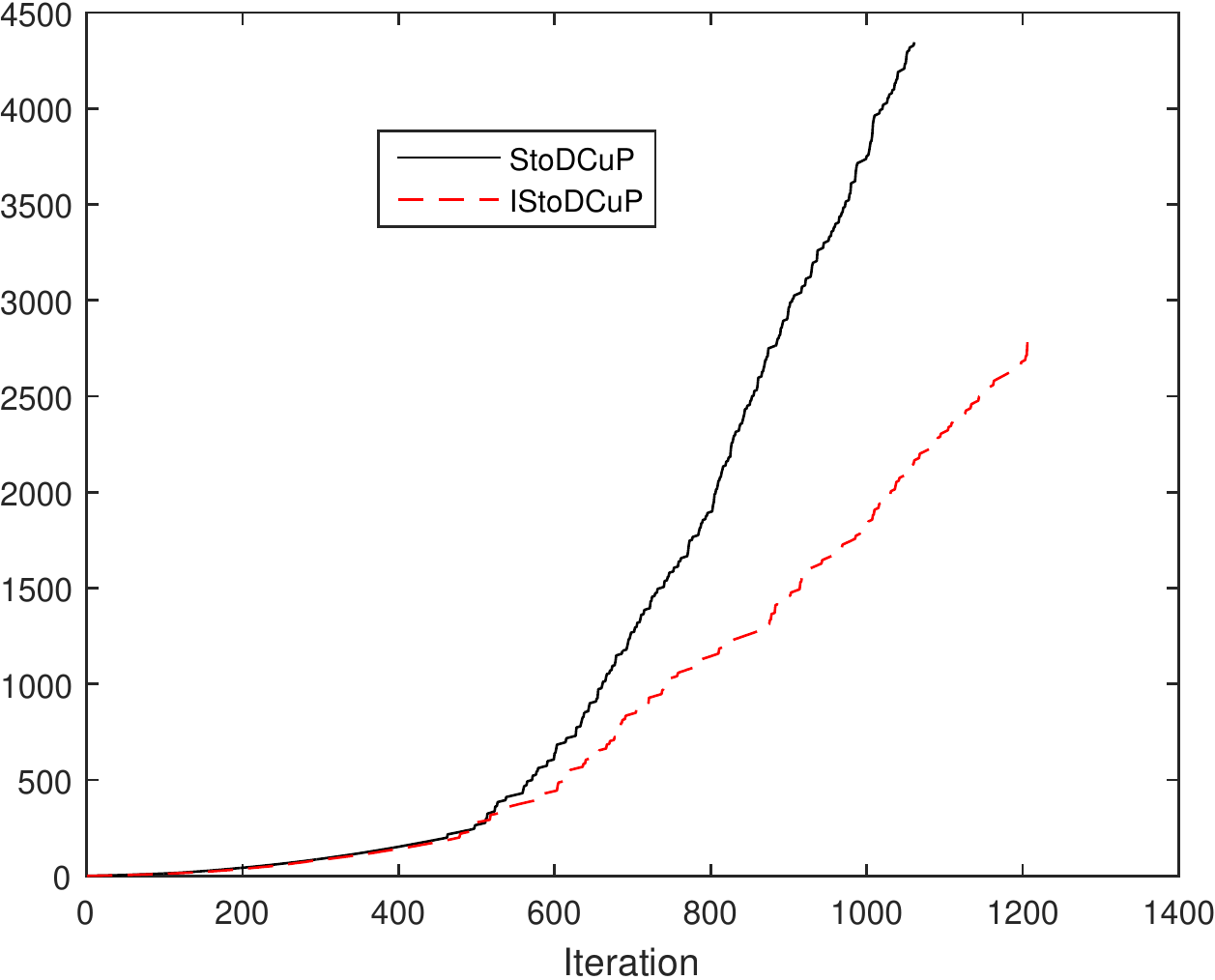}&
\includegraphics[scale=0.6]{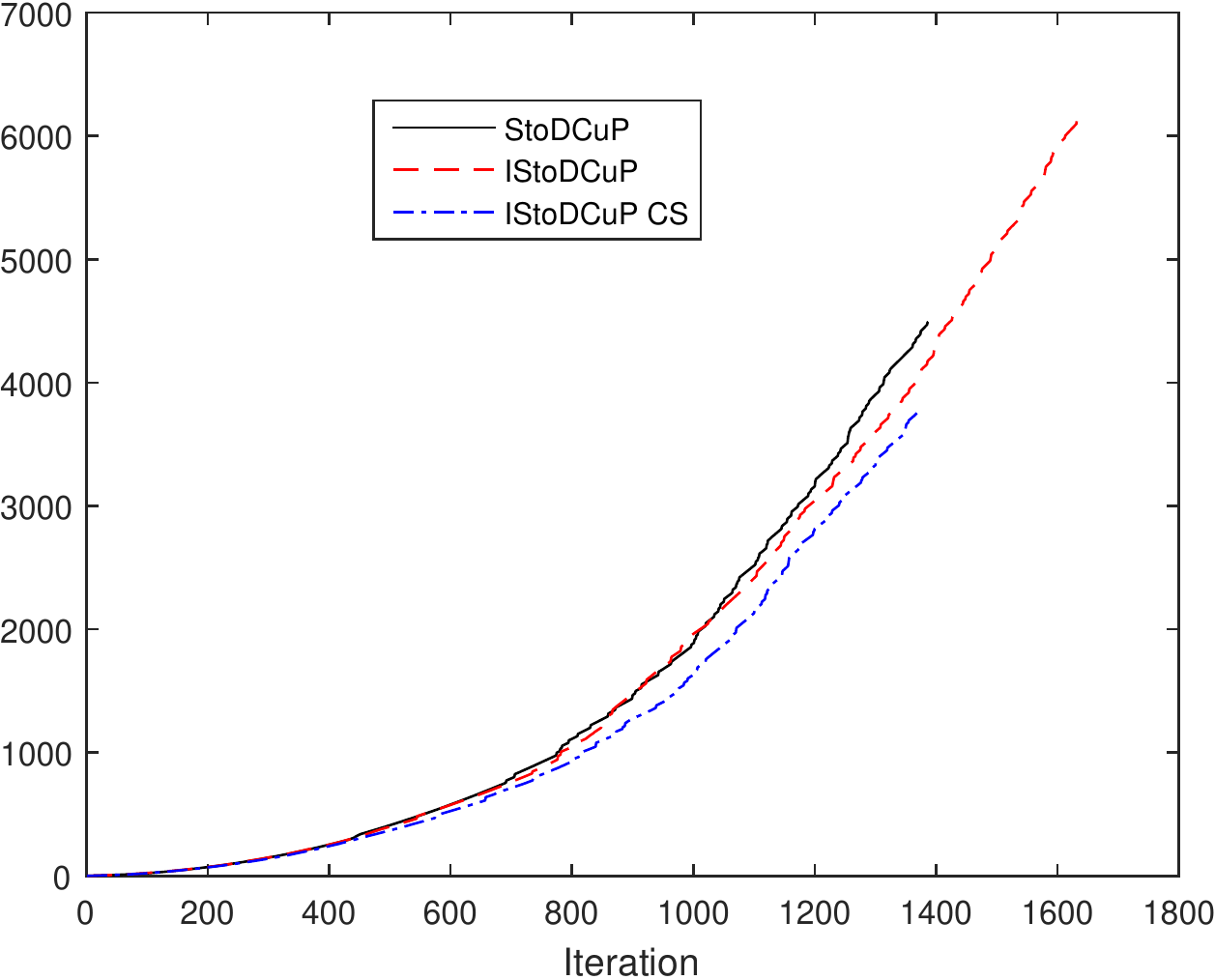}\\
$T=5, n=10, M=10$& $T=5, n=10, M=20$\\
\includegraphics[scale=0.6]{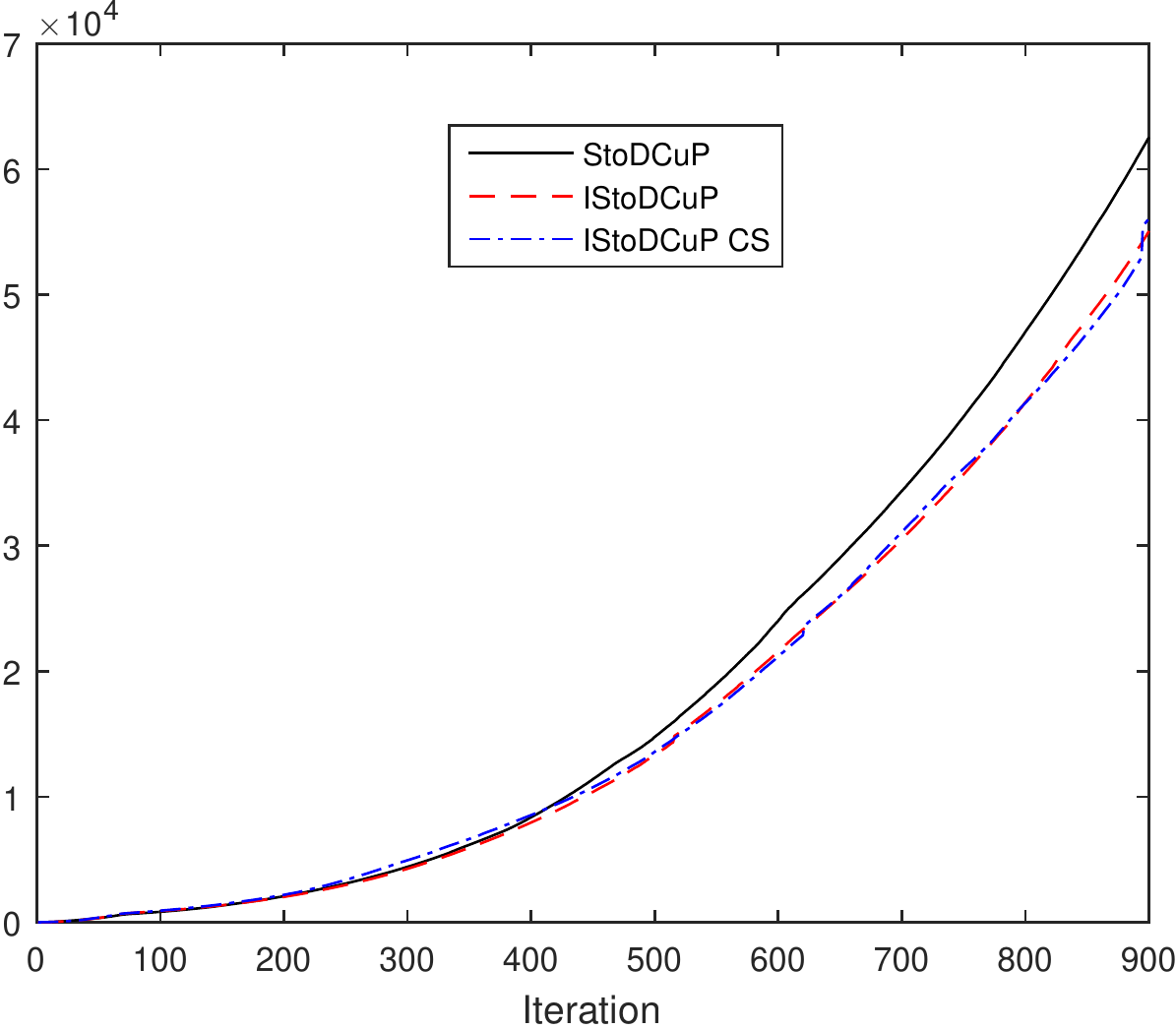}&
\includegraphics[scale=0.6]{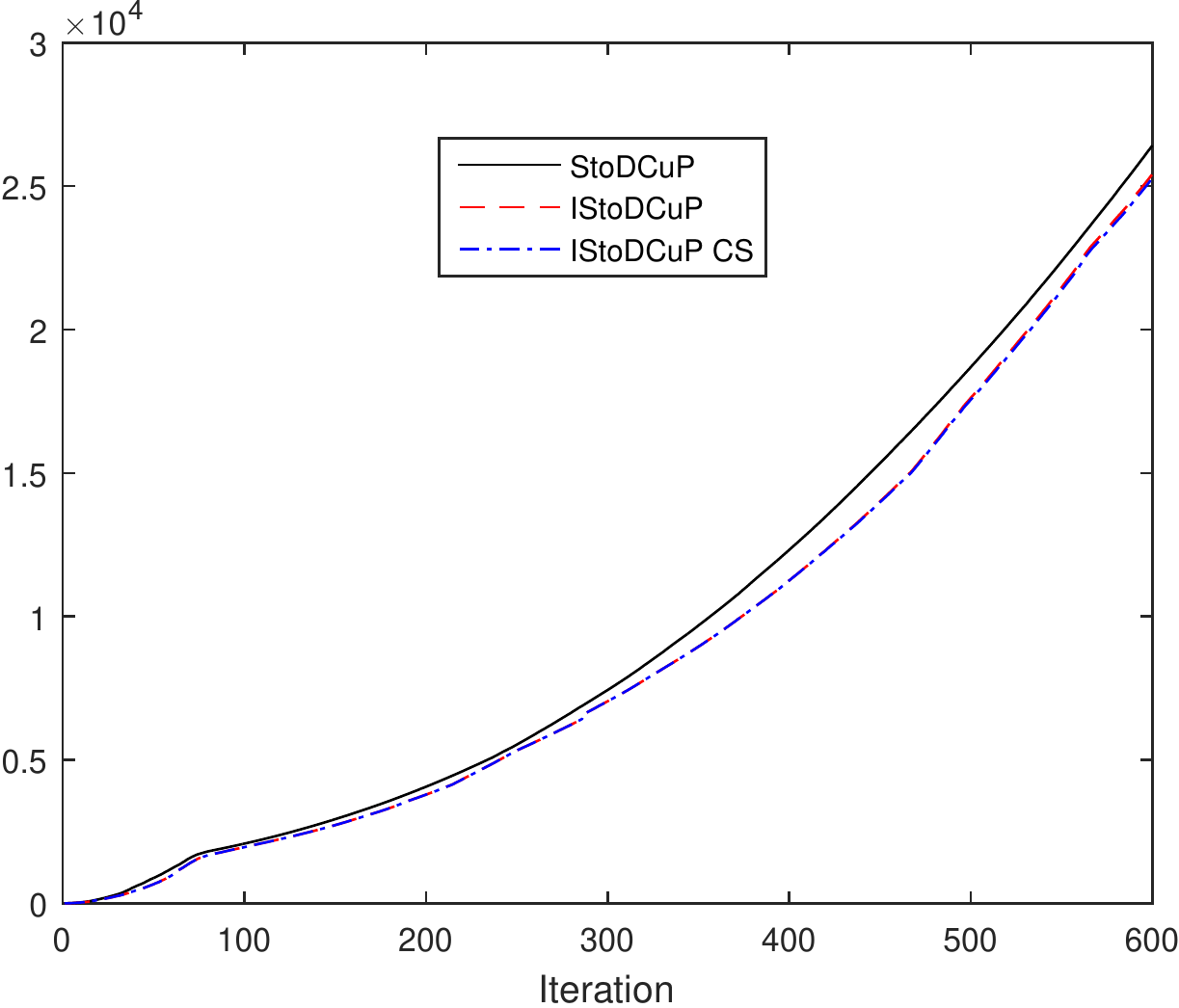}\\
$T=10, n=200, M=10$& $T=10, n=200, M=20$
\end{tabular}
\caption{\label{figurt} Cumulated CPU time in seconds
along the iterations of StoCuP, IStoDCuP, and IStoDCuP CS  to solve
the instances.}
\end{figure}

\begin{table}
\centering
{\small{
\begin{tabular}{|c|c|c|c|c|c|c|}
\hline
 Iteration   & UB IStoDCuP  & UB StoDCuP    &  LB IStoDCuP  & LB StoDCuP
&   Time IStoDCuP & Time StoDCuP \\
\hline
10 &  - & -  &   -36 424  & -15 037& 0.09 & 0.16   \\
\hline
200 & 24 109  & 16 990  &  -29.3120  &-29.3123 & 6.98  & 8.91   \\
\hline
210 & 282.4  & 73.9  & -29.3120 & -29.3123 & 7.57 & 9.67 \\
\hline
216 & -26.99  & -27.34  &  -29.3120  & -29.3123 & 7.63   & 10.13   \\
\hline
 \end{tabular}
}}
\begin{center}
$T=3, n=10, M=2$
\end{center}

{\small{
\begin{tabular}{|c|c|c|c|c|c|c|}
\hline
 Iteration   & UB IStoDCuP  & UB StoDCuP    &  LB IStoDCuP  & LB StoDCuP
&   Time IStoDCuP & Time StoDCuP \\
\hline
10 & -  & -  & -98 584    &-82 872 & 0.29 & 0.35  \\
\hline
210 & 6361.8  & 3889.3   &  -19.023 & -17.44 & 20.6  & 18.5\\
\hline
451 & -14.97 & -13.58  & -16.27  & -16.25&  148.4  & 144.6  \\
\hline
 \end{tabular}
}}
\begin{center}
$T=3, n=10, M=10$
\end{center}

{\small{
\begin{tabular}{|c|c|c|c|c|c|c|}
\hline
 Iteration   & UB IStoDCuP  & UB StoDCuP    &  LB IStoDCuP  & LB StoDCuP
&   Time IStoDCuP & Time StoDCuP \\
\hline
10 & -  & -  & -440 000    & -352 310 & 0.65 & 0.72   \\
\hline
400 &  9.68 & 13.84  & -12.09 & -12.58 & 140.9 & 151.7 \\
\hline
800 & -6.01 & -8.69  & -10.83  & -10.85& 1144.6   & 1897.2   \\
\hline
1061 & -7.84 & -9.78  &-10.72   &-10.72 &  2166  &  4345  \\
\hline
1221 & -9.78 &  - & -10.69  & - & 2825   & -  \\
\hline
 \end{tabular}
}}
\begin{center}
$T=5, n=10, M=10$
\end{center}

{\small{
\begin{tabular}{|c|c|c|c|c|}
\hline
Iteration &  600 & 1376  & 1387  & 1642\\ 
\hline
UB IStoDCuP CS   & -0.586  & -4.5343   & -&-  \\
\hline
UB IStoDCuP   & -1.6317 & -3.7448 &-3.7993  & -4.5153\\
\hline
UB StoDCuP    & -0.0327 & -3.9773 &-4.5648 &  -\\
\hline
LB IStoDCuP CS  & -5.5886 & -4.9595 & -&-  \\
\hline
LB IStoDCuP    & -5.6431 &  -4.9584  & -4.9552 & -4.9078\\
\hline
LB StoDCuP    & -5.7420 & -4.9623 & -4.9591 & -\\
\hline
Time IStoDCuP CS    & 525 &  3784 &- &-\\
\hline
Time IStoDCuP     &  575 & 4106  & 4180& 6178\\
\hline
Time StoDCuP    & 579& 4424 & 4493& -\\
\hline
\end{tabular}
}}
\begin{center}
$T=5, n=10, M=20$
\end{center}

{\small{
\begin{tabular}{|c|c|c|c|}
\hline
Iteration &  400 & 600  & 900  \\ 
\hline
UB IStoDCuP CS   & 2.5343e7  & 3.6465e5   & 143.2  \\
\hline
UB IStoDCuP   & 2.1689e7   & 4.6095e5   & 338.7 \\
\hline
UB StoDCuP    & 2.3785e7  & 3.0292e5   & -50.4  \\
\hline
LB IStoDCuP CS  & -1.3343e6  & -4.0214e4    & -444.4  \\
\hline
LB IStoDCuP    & -1.4643e6  & -5.0292e4   &  -436.4 \\
\hline
LB StoDCuP    & -0.9529e6  &  -2.0954e4  &  -428.9 \\
\hline
Time IStoDCuP CS    & 8 534.6  &  21 166   &  56 082\\
\hline
Time IStoDCuP     & 7 946.7  &  21 557  &  55 061 \\
\hline
Time StoDCuP    &  8 364.2 & 24 015   & 62 536 \\
\hline
\end{tabular}
}}
\begin{center}
$T=10, n=200, M=10$
\end{center}

{\small{
\begin{tabular}{|c|c|c|c|}
\hline
Iteration &  400 & 500  & 600  \\ 
\hline
UB IStoDCuP CS   & 1.943e8  & 1.1955e8   & 0.6321e8  \\
\hline
UB IStoDCuP   &  2.2722e8 &  1.6320e8  &  1.2129e8 \\
\hline
UB StoDCuP    & 2.3129e8  & 1.6563e8   & 1.0990e8  \\
\hline
LB IStoDCuP CS  & -1.0060e8  & -0.5826e8   &  -0.3522e8 \\
\hline
LB IStoDCuP    &  -1.6151e8 & -1.1376e8   &  -0.6979e8 \\
\hline
LB StoDCuP    & -1.5124e8  & -0.9974e8   & -0.6059e8  \\
\hline
Time IStoDCuP CS    & 1.1254e4  & 1.7554e4   & 2.5276e4  \\
\hline
Time IStoDCuP     & 1.1254e4  &  1.7618e4  & 2.5418e4  \\
\hline
Time StoDCuP    & 1.2320e4  & 1.8689e4   &  2.6414e4 \\
\hline
\end{tabular}
}}
\begin{center}
$T=10, n=200, M=20$
\end{center}
\caption{Cumulated CPU time (Time) in seconds and upper (UB) and lower (LB) bounds computed by StoDCuP, IStoDCuP, and
IStoDCuP CS for some iterations and the 6 instances.}
\label{tablebounds}
\end{table}

\section{Conclusion}

We introduced StoDCuP, a variant of 
SDDP which builds linearizations of
some or all nonlinear
constraint and objective
functions along the iterations
of the method, as well as an inexact
variant of StoDCuP which is able
to cope with approximate primal-dual
solutions of the subproblems solved
along the iterations.
We have shown the convergence of 
StoDCuP and of Inexact StoDCuP
for vanishing error
terms $\varepsilon_t^k$.

Our numerical experiments have
illustrated on a difficult
nonlinear nondifferentiable
multistage stochastic program
that StoDCuP can be an alternative
solution method to SDDP and that
its inexact variant can converge
quicker than StoDCuP. An interesting
feature of the inexact variant
is its flexiblity, able to cope
with any approximate primal-dual
solution to the subproblems, allowing to further study 
the impact of the calibration of error
terms $\varepsilon_t^k$ on
the performance of Inexact StoDCuP.
For DCuP, the calibration seems
simpler, see for instance Remark 2
in \cite{guiguesinexact2018}
on the calibration of the error
terms for Inexact DDP which also
applies to Inexact DCuP.

%https://docs.mosek.com/whitepapers/qmodel.pdf

\section*{Acknowledgments} The research of the first author was 
partially supported by an FGV grant, CNPq grants 401371/2014-0, 311289/2016-9, 204872/2018-9,
and FAPERJ grant E-26/201.599/2014. 
Research of the second author was partially supported by CNPq grant 401371/2014-0.

\addcontentsline{toc}{section}{References}
\bibliographystyle{plain}
\bibliography{Decomp_Algo_MSP_1-15-2020_Submitted}

\section*{Data availability statement}

All (simulated) data generated or analysed during this study can be obtained
following the steps
given in Section \ref{sec:numexp}
of  this article.

\section*{Appendix}

\if{

------------------------------------------

\[
\bar  v( \bar y) = \inf \{ f(x) : A(x) + B(\bar y) \le 0, \, x \in X \} \quad (1)
\]
\[
v(b) = \inf \{ f(x) : A(x) + b \le 0, \, x \in X \} \quad (2)
\]
Clearly,
\[
\tilde v = v \circ B
\]
So,
\[
B^* \partial v(B\bar y) \subset \partial (v \circ B)(\bar y) = \partial {\bar v}(\bar y)
\]
How to compute $\partial v(b)$? Assume that $b$ is such that there exists $x \in \ri X$ such that
$A(x) +  b \le 0$. Then, $\partial v(b) \ne \emptyset$ and, if $\bar x$ is an optimal solution
of (2),  then $\partial v(b)$  consists of all $\lambda$ satisfying
\[
0 \in \nabla f(\bar x)  + A^* \lambda +N_X(\bar x), \quad \lambda \ge 0, \quad \lambda^T[A(\bar x)+b]=0
\]
How about $B^* \partial v(B\bar y)$? Assume that $\bar y$ is such that there exists $x \in \ri X$ such that
$A(x) +  B(\bar y) \le 0$.  If $\bar x$ is an optimal solution
of (1),  Then,  $B^* \partial v(B\bar y)$ consists of all $B^*\lambda$ such that $\lambda$ satisfies
\[
0 \in \nabla f(\bar x)  + A^* \lambda +N_X(\bar x), \quad \lambda \ge 0, \quad \lambda^T[A(\bar x)+B(\bar y)]=0
\]
Is the above also a characterization of $\partial \bar  v (\bar y)$? This would be true if
$Im(B) \cap \ri (\dom v) \ne \emptyset$, or equivalently,
there exists $x \in \ri X$ such that
$A(x) +  B(\bar y) < 0$.

}\fi

%Take $f(x)=c^\top x$. We can write
%$$
%v(b)=
%\left\{ 
%\begin{array}{l}
%\max \theta_b(\lambda)\\
%\lambda \geq 0
%\end{array}
%\right.
%$$
%where $\theta_b(\lambda)=\inf \{L_b(x;\lambda) : x \in  X\}$ with
%$$
%L_b(x;\lambda)=c^\top x + \lambda^\top (Ay + b).
%$$
%Let $\bar x$ such that $v(\bar b)=c^\top \bar x$ and 
%$\bar \lambda$ such that $v(\bar b)=\theta_{\bar b}( \bar \lambda )$.
%The first order optimality conditions for $\bar x$ give 
%$$
%\min_{x \in X} \langle \nabla_x L_b(\bar x , \bar \lambda )  , x - \bar x \rangle = 0.
%$$
%Next using the subgradient inequality for convex function 
%$(x,b) \rightarrow L_{b}(x,\bar \lambda )$ we get 
%$$
%L_b(x,\bar \lambda ) \geq 
%L_{\bar b}(x,\bar \lambda ) + 
%\langle \nabla_x L_{\bar b}(\bar x, \bar \lambda   \rangle 
%$$

To prove \eqref{proofg0} and \eqref{limitQNotinSn}, we will need the following lemma 
(the proof of (ii) of this lemma was given in \cite{lecphilgirar12} for a more general sampling scheme
and the proof of (i), that we detail,  is similar to the proof of (ii)):
\begin{lemma}\label{lemmaappendix} Assume that Assumptions (H0), (H1)-Sto, and (H2) hold 
for StoDCuP. Define random variables 
$y_n^k = 1(k \in \mathcal{S}_n)$.

(i) Let $\varepsilon>0$, $t \in \{1,\ldots,T\}$, $n \in {\tt{Nodes}(t-1)}$, $m \in C(n)$, $i \in \{1,\ldots,p\}$ and set 
$$
K_{\varepsilon, m , i}=\left\{k \geq 1 : g_{t i}( x_m^{k}, x_{n}^{k} , \xi_m ) - g_{t i j_t(m)}^{k-1}( x_m^{k}, x_{n}^{k} ) \geq \varepsilon \right\}.
$$
Let 
$$
\Omega_0( \varepsilon )=\{ \omega \in \Omega : |K_{\varepsilon, m , i}(\omega)|  \mbox{ is infinite}  \}
$$
and assume that $\Omega_0( \varepsilon ) \neq \emptyset$. Define on the sample space 
$\Omega_0 ( \varepsilon )$ the random variables $\mathcal{I}_{\varepsilon, m , i}(j), j \geq 1$, where
$\mathcal{I}_{\varepsilon, m , i}(1)=\min\{k \geq 1: k \in  K_{\varepsilon, m , i}(\omega) \}$
and for $j \geq 2$
$$
\mathcal{I}_{\varepsilon, m , i}(j)=\min\{ k > \mathcal{I}_{\varepsilon, m , i}(j-1) : k \in  K_{\varepsilon, m , i}(\omega)\},
$$
i.e., $\mathcal{I}_{\varepsilon, m , i}(j)(\omega)$ is the index of $j$th iteration $k$
such that $g_{t i}( x_m^{k}, x_{n}^{k} , \xi_m ) - g_{t i j_t
(m)}^{k-1}( x_m^{k}, x_{n}^{k} ) \geq \varepsilon$. Then random 
variables $(y_n^{\mathcal{I}_{\varepsilon, m , i}(j)})_{j \geq 1}$ defined on sample space 
$\Omega_0(\varepsilon)$ are independent, have the distribution of $y_n^1$ and therefore by the Strong 
Law of Large numbers we have 
\begin{equation}\label{sllngi}
\mathbb{P}\left( \lim_{N \rightarrow +\infty} \frac{1}{N} \sum_{j=1}^N  y_n^{\mathcal{I}_{\varepsilon, m , i}(j)}= \mathbb{E}[y_n^1]  \right) =1.
\end{equation}
(ii) Let $\varepsilon>0$, $t \in \{1,\ldots,T\}$, $n \in {\tt{Nodes}(t-1)}$,  and set 
$$
K_{\varepsilon, n}=\left\{k \geq 1 : \mathcal{Q}_t (  x_{n}^{k} ) - \mathcal{Q}_t^k ( x_{n}^{k} ) \geq \varepsilon \right\}.
$$
Let 
$$
\Omega_1( \varepsilon )=\{\omega \in \Omega : |K_{\varepsilon, n}(\omega)|  \mbox{ is infinite}  \}
$$
and assume that $\Omega_1( \varepsilon ) \neq \emptyset$. Define on the sample space 
$\Omega_1( \varepsilon )$ the random variables $\mathcal{I}_{\varepsilon, n}(j), j \geq 1$, where
$\mathcal{I}_{\varepsilon, n}(1)=\min\{k \geq 1: k \in  K_{\varepsilon, n}(\omega) \}$
and for $j \geq 2$
$$
\mathcal{I}_{\varepsilon, n}(j)=\min\{ k > \mathcal{I}_{\varepsilon, n}(j-1) : k \in  K_{\varepsilon, n}(\omega)\},
$$
i.e., $\mathcal{I}_{\varepsilon, n}(j)(\omega)$ is the index of $j$th iteration $k$
such that $\mathcal{Q}_t (  x_{n}^{k} ) - \mathcal{Q}_t^k ( x_{n}^{k} ) \geq \varepsilon$.
Then random variables $(y_n^{\mathcal{I}_{\varepsilon, n}(j)})_{j \geq 1}$ defined on sample space $\Omega_1( \varepsilon )$ are independent, have the distribution of
$y_n^1$ and therefore by the Strong Law of Large numbers we have 
\begin{equation}\label{sllngi2}
\mathbb{P}\left( \lim_{N \rightarrow +\infty} \frac{1}{N} \sum_{j=1}^N  y_n^{\mathcal{I}_{\varepsilon, n}(j)}= \mathbb{E}[y_n^1]  \right) =1.
\end{equation}
\end{lemma}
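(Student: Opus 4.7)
The plan is to adapt the stopping-time argument used in \cite{lecphilgirar12} to prove (ii). The key structural observation is that the event $\{k \in K_{\varepsilon, m, i}\}$ depends only on the sampling history through iteration $k-1$, whereas $y_n^k$ depends only on the sample at iteration $k$, which by (H2) is independent of this prior history. This decoupling, combined with the strong Markov property of the i.i.d.\ sequence of samples, will force the subsampled values $y_n^{\mathcal{I}_{\varepsilon, m, i}(j)}$ to be i.i.d.\ with the law of $y_n^1$, after which \eqref{sllngi} is immediate from the SLLN.

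First, I would introduce the filtration $\mathcal{F}_k := \sigma(\tilde{\xi}^1, \ldots, \tilde{\xi}^k)$ (with $\mathcal{F}_0$ trivial) and verify by induction on $k$ that the trial points $x_m^k$, $x_n^k$ and the polyhedral approximation $g_{t i j_t(m)}^{k-1}$ are all $\mathcal{F}_{k-1}$-measurable. Indeed, $g_{t i j_t(m)}^{k-1}$ is the maximum of affine linearizations produced at iterations $j \le k-1$ using trial points which, by the inductive hypothesis, are $\mathcal{F}_{j-1}$-measurable; and $x_m^k, x_n^k$ are determined by the simulation subproblem \eqref{defxtkjsim}, whose data are $\mathcal{F}_{k-1}$-measurable. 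Consequently $\{k \in K_{\varepsilon, m, i}\} \in \mathcal{F}_{k-1}$ for every $k \geq 1$, and hence each $\mathcal{I}_{\varepsilon, m, i}(j)$ is a stopping time with respect to the shifted filtration $(\mathcal{F}_{k-1})_{k \geq 1}$.

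Next, since $y_n^k = \mathbf{1}(n_{t-1}^k = n)$ is a function of $\tilde{\xi}^k$ alone, (H2) makes $y_n^k$ independent of $\mathcal{F}_{k-1}$ and equidistributed with $y_n^1$. Applying the strong Markov property successively at $\mathcal{I}_{\varepsilon, m, i}(1) < \mathcal{I}_{\varepsilon, m, i}(2) < \ldots$ (all finite on $\Omega_0(\varepsilon)$), one obtains inductively that conditionally on $\mathcal{F}_{\mathcal{I}_{\varepsilon, m, i}(j-1)}$ (which carries the previous subsampled values $y_n^{\mathcal{I}_{\varepsilon, m, i}(1)}, \ldots, y_n^{\mathcal{I}_{\varepsilon, m, i}(j-1)}$ and the previous stopping times), the variable $y_n^{\mathcal{I}_{\varepsilon, m, i}(j)}$ has the law of $y_n^1$. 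Thus $(y_n^{\mathcal{I}_{\varepsilon, m, i}(j)})_{j \geq 1}$ on $\Omega_0(\varepsilon)$ is i.i.d.\ with the same distribution as $y_n^1$, and the SLLN yields \eqref{sllngi}. The main technical hurdle is the bookkeeping in the inductive measurability argument, particularly ensuring that the simulation-defined $x_m^k$ is consistent with the forward-pass $x_m^k$ when $\mathcal{P}(m) = n_{t-1}^k$ so that the events under consideration are truly $\mathcal{F}_{k-1}$-measurable; once this is cleanly established the probabilistic part mirrors exactly the argument of \cite{lecphilgirar12} for (ii).
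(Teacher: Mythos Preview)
Your proposal is correct and rests on the same key observation as the paper's proof: the event $\{k\in K_{\varepsilon,m,i}\}$ is determined by the approximations $f_{tj}^{k-1},g_{tij}^{k-1},\mathcal{Q}_{t+1}^{k-1}$ and the (simulation-defined) trial points $x_m^k,x_n^k$, all of which are measurable with respect to the samples of iterations $1,\ldots,k-1$, while $y_n^k$ depends only on the sample at iteration $k$ and is therefore independent of that past. The paper, however, does not phrase this via filtrations, stopping times, and the strong Markov property; instead it carries out the bare-hands computation: it introduces the indicators $w^k=\mathbf{1}(k\in K_{\varepsilon,m,i})$, sums $\mathbb{P}(y_n^{\mathcal{I}(1)}=\bar y_1,\ldots,y_n^{\mathcal{I}(p)}=\bar y_p)$ over all admissible values $\bar{\mathcal{I}}_1<\cdots<\bar{\mathcal{I}}_p$, and peels off the last factor using the independence of $y_n^{\bar{\mathcal{I}}_p}$ from $(w^1,\ldots,w^{\bar{\mathcal{I}}_p},y_n^{\bar{\mathcal{I}}_1},\ldots,y_n^{\bar{\mathcal{I}}_{p-1}})$. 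Your stopping-time formulation is cleaner and makes the role of (H2) more transparent, but it requires you to be precise about which filtration you use: the correct statement is that $\mathcal{I}(j)-1$ is an $(\mathcal{F}_k)$-stopping time (equivalently, $\{\mathcal{I}(j)=k\}\in\mathcal{F}_{k-1}$), so that the strong Markov property for i.i.d.\ sequences gives $y_n^{\mathcal{I}(j)}$ independent of $\mathcal{F}_{\mathcal{I}(j)-1}\supset\sigma(y_n^{\mathcal{I}(1)},\ldots,y_n^{\mathcal{I}(j-1)})$; your parenthetical about conditioning on $\mathcal{F}_{\mathcal{I}(j-1)}$ is slightly misaligned with your shifted filtration and should be adjusted accordingly. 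The paper's direct computation avoids this bookkeeping at the cost of a longer argument.
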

\begin{proof}
(i) Define on the sample space 
$\Omega_0( \varepsilon )$ the random variables $(w_{\varepsilon, m , i}^k)_k$  by 
$$
w_{\varepsilon, m , i}^k(\omega) =\left\{ 
\begin{array}{ll}
1  &   \mbox{if }k \in K_{\varepsilon, m , i}(\omega) \\
0 & \mbox{otherwise.}
\end{array}
\right.
$$ 
To alleviate notation ($\varepsilon, m , n, i$ being fixed), let us
put $w^k := w_{\varepsilon, m , i}^k$, $\mathcal{I}(j):=\mathcal{I}_{\varepsilon, m , i}(j)$,
For ${\overline{y}}_j \in \{0,1\}$, we have 
\begin{equation}\label{firsteq}
\mathbb{P}\Big(y_n^{\mathcal{I}(j)} = {\overline{y}}_j \Big)
= \sum_{{\overline{\mathcal{I}}}_j =1}^{\infty} \mathbb{P}\Big(y_n^{{\overline{\mathcal{I}}}_j} = {\overline{y}}_j ;  \mathcal{I}(j) =  {\overline{\mathcal{I}}}_j    \Big). 
\end{equation}
Observe that the event $\mathcal{I}(j) =  {\overline{\mathcal{I}}}_j$ can be written 
as the union $\bigcup_{1 \leq {\overline{\mathcal{I}}}_1 < {\overline{\mathcal{I}}}_2 < \ldots < {\overline{\mathcal{I}}}_j} E({\overline{\mathcal{I}}}_1,\ldots,{\overline{\mathcal{I}}}_j)$   of events 
$$
E({\overline{\mathcal{I}}}_1,\ldots,{\overline{\mathcal{I}}}_j):=
\left\{ 
\begin{array}{l}
w^{\overline{\mathcal{I}}_1} =\ldots =w^{\overline{\mathcal{I}}_j}=1,\\
w^{\ell} = 0, 1 \leq \ell < \overline{\mathcal{I}}_j, \ell \notin \{ \overline{\mathcal{I}}_1,\ldots, \overline{\mathcal{I}}_j \}
\end{array}
\right\}.
$$
Due to Assumption (H2) observe that random variable $y_n^{{\overline{\mathcal{I}}}_j}$ is independent of 
random variables $w^{i}, i=1,\ldots,\overline{\mathcal{I}}_j$, and therefore events 
$\{y_n^{{\overline{\mathcal{I}}}_j} = {\overline{y}}_j \}$ and $\{ \mathcal{I}(j) =  {\overline{\mathcal{I}}}_j \}$ are independent which gives
\begin{equation}\label{disty1}
\begin{array}{lll}
\mathbb{P}\Big(y_n^{\mathcal{I}(j)} = {\overline{y}}_j \Big)
&=&\displaystyle  \sum_{{\overline{\mathcal{I}}}_j =1}^{\infty} \mathbb{P}\Big(y_n^{{\overline{\mathcal{I}}}_j} = {\overline{y}}_j ;  \mathcal{I}(j) =  {\overline{\mathcal{I}}}_j    \Big) \\
&=&\displaystyle  \sum_{{\overline{\mathcal{I}}}_j =1}^{\infty} \mathbb{P}\Big(y_n^{{\overline{\mathcal{I}}}_j} =  {\overline{y}}_j \Big) \mathbb{P}\Big(  \mathcal{I}(j) =  {\overline{\mathcal{I}}}_j    \Big)
= \displaystyle \mathbb{P}\Big(y_n^{1} =  {\overline{y}}_j \Big)  \sum_{{\overline{\mathcal{I}}}_j =1}^{\infty}  \mathbb{P}\Big(  \mathcal{I}(j) =  {\overline{\mathcal{I}}}_j    \Big)= \mathbb{P}\Big(y_n^{1} =  {\overline{y}}_j \Big)
\end{array}
\end{equation}
where we have used the fact that $y_n^{1}$ and $y_n^{{\overline{\mathcal{I}}}_j}$ have the same distribution (from (H2)). 

Next for ${\overline{y}}_1,\ldots,{\overline{y}}_p \in \{0,1\}$, we have
$$
\begin{array}{lll}
 \mathbb{P}\Big(y_n^{\mathcal{I}(1)} = {\overline{y}}_1, \ldots, y_n^{\mathcal{I}(p)} = {\overline{y}}_p \Big)
 & = & \displaystyle \sum_{1 \leq {\overline{\mathcal{I}}}_1 < {\overline{\mathcal{I}}}_2 <  \ldots < {\overline{\mathcal{I}}}_p}^{\infty}
 \mathbb{P}\Big(y_n^{{\overline{\mathcal{I}}}_1} = {\overline{y}}_1 ; \ldots;y_n^{{\overline{\mathcal{I}}}_p} = {\overline{y}}_p ;
 \mathcal{I}(1) =  {\overline{\mathcal{I}}}_1   ;\ldots;\mathcal{I}(p) =  {\overline{\mathcal{I}}}_p    \Big).
\end{array} 
$$
By the same reasoning as above, the event 
$$
\left\{
y_n^{{\overline{\mathcal{I}}}_1} = {\overline{y}}_1 ; \ldots;y_n^{{\overline{\mathcal{I}}}_{p-1}} = {\overline{y}}_{p-1} ;
 \mathcal{I}(1) =  {\overline{\mathcal{I}}}_1   ;\ldots;\mathcal{I}(p) =  {\overline{\mathcal{I}}}_p \right\}
$$
can be expressed in terms of random variables $y_n^{{\overline{\mathcal{I}}}_1},\ldots,y_n^{{\overline{\mathcal{I}}}_{p-1}},w_n^{{\overline{\mathcal{I}}}_{1}},\ldots,w_n^{{\overline{\mathcal{I}}}_{p}}$,
and is therefore independent of
event $\{ y_n^{{\overline{\mathcal{I}}}_p} = {\overline{y}}_p \}$. It follows that  
\begin{equation}
\begin{array}{l}
 \mathbb{P}\Big(y_n^{\mathcal{I}(j)} = {\overline{y}}_j, 1 \leq j \leq p \Big)
 =   \displaystyle \sum_{1 \leq {\overline{\mathcal{I}}}_1 < {\overline{\mathcal{I}}}_2 <  \ldots < {\overline{\mathcal{I}}}_p}^{\infty}
\mathbb{P}\Big(y_n^{{\overline{\mathcal{I}}}_p} = {\overline{y}}_p  \Big) 
 \mathbb{P}\Big(y_n^{{\overline{\mathcal{I}}}_j} = {\overline{y}}_j, 1 \leq j \leq p-1 ;
 \mathcal{I}(j) =  {\overline{\mathcal{I}}}_j, 1 \leq j \leq p    \Big)\\
 =  \mathbb{P}\Big(y_n^{1} = {\overline{y}}_p  \Big)  \displaystyle \sum_{1 \leq {\overline{\mathcal{I}}}_1 < {\overline{\mathcal{I}}}_2 <  \ldots < {\overline{\mathcal{I}}}_p}^{\infty}
 \mathbb{P}\Big(y_n^{{\overline{\mathcal{I}}}_j} = {\overline{y}}_j, 1 \leq j \leq p-1 ;
 \mathcal{I}(j) =  {\overline{\mathcal{I}}}_j, 1 \leq j \leq p    \Big)\\
 =  \mathbb{P}\Big(y_n^{1} = {\overline{y}}_p  \Big)  \displaystyle \sum_{1 \leq {\overline{\mathcal{I}}}_1 < {\overline{\mathcal{I}}}_2 <  \ldots < {\overline{\mathcal{I}}}_{p-1}}^{\infty}
 \mathbb{P}\Big(y_n^{{\overline{\mathcal{I}}}_j} = {\overline{y}}_j, 1 \leq j \leq p-1 ;
 \mathcal{I}(j) =  {\overline{\mathcal{I}}}_j, 1 \leq j \leq p-1    \Big)\\
 =  \mathbb{P}\Big(y_n^{1} = {\overline{y}}_p  \Big)  \mathbb{P}\Big(y_n^{\mathcal{I}(j)} = {\overline{y}}_j, 1 \leq j \leq p-1 \Big).
 \end{array} 
\end{equation}
By induction this implies
\begin{equation}
\begin{array}{lll}
 \mathbb{P}\Big( y_n^{\mathcal{I}(j)} = {\overline{y}}_j, 1 \leq j \leq p \Big) 
& = & \displaystyle \prod_{j=1}^p \mathbb{P}\Big(y_n^{1} = {\overline{y}}_j  \Big) \stackrel{\eqref{disty1}}{=} \displaystyle \prod_{j=1}^p \mathbb{P}\Big( y_n^{\mathcal{I}(j)}  = {\overline{y}}_j  \Big)
\end{array} 
\end{equation}
which shows that random variables $(y_n^{\mathcal{I}(j)})_{j \geq 1}$ are independent.

The proof of (ii) is similar to the proof of (i).\hfill
\end{proof}

\par {\textbf{Proof of \eqref{proofg0} and \eqref{limitQNotinSn}.}} 
As in \cite{lecphilgirar12}, we can now use the previous lemma to prove \eqref{proofg0} and \eqref{limitQNotinSn}.
Let us prove \eqref{proofg0}. By contradiction, assume that \eqref{proofg0} does not hold. Then there is 
$\varepsilon>0$ such that the set $\Omega_0( \varepsilon  )$ defined in Lemma \ref{lemmaappendix}
is nonempty. By Lemma \ref{lemmaappendix}, this implies that \eqref{sllngi} holds.
But due to \eqref{finalgbef0}, only a finite number of indices ${\mathcal{I}_{\varepsilon, m , i}(j)}$ can be in $\mathcal{S}_n$
(with corresponding variable $y_n^{{\mathcal{I}_{\varepsilon, m , i}(j)}}$ being one) and therefore 
$\mathbb{P}\left( \lim_{N \rightarrow +\infty} \frac{1}{N} \sum_{j=1}^N  y_n^{\mathcal{I}_{\varepsilon, m , i}(j)}= 0  \right) =1$, which is a contradiction with \eqref{sllngi}.

The proof of \eqref{limitQNotinSn} is similar to the proof of \eqref{proofg0}, by contradiction and using 
\eqref{limitQinSn} and Lemma \ref{lemmaappendix}-(ii) (see also \cite{lecphilgirar12}, \cite{guiguessiopt2016}). $\hfill \square$

\end{document}